\documentclass[11pt]{article}

\usepackage[margin=1.3in]{geometry}
\usepackage{amsmath,amssymb,amsthm,mathtools}
\usepackage{microtype}
\usepackage{enumitem}
\usepackage[hidelinks]{hyperref}

\newtheorem{theorem}{Theorem}[section]
\newtheorem{proposition}[theorem]{Proposition}
\newtheorem{lemma}[theorem]{Lemma}
\newtheorem{corollary}[theorem]{Corollary}

\theoremstyle{definition}
\newtheorem{definition}[theorem]{Definition}
\theoremstyle{remark}
\newtheorem{remark}[theorem]{Remark}

\DeclareMathOperator{\Mat}{Mat}
\DeclareMathOperator{\Jac}{Jac}
\DeclareMathOperator{\Aut}{Aut}
\DeclareMathOperator{\GL}{GL}
\DeclareMathOperator{\Spec}{Spec}

\DeclareMathOperator{\rank}{rank}
\DeclareMathOperator{\Frac}{Frac}
\DeclareMathOperator{\Crit}{Crit}
\DeclareMathOperator{\im}{ima}

\DeclareMathOperator{\codim}{codim}
\DeclareMathOperator{\fpt}{fpt}
\DeclareMathOperator{\Grass}{Gr}
\DeclareMathOperator{\Ann}{Ann}
\DeclareMathOperator{\Hess}{Hess}
\DeclareMathOperator{\Stab}{Stab}
\DeclareMathOperator{\JacMat}{JacMat}
\DeclareMathOperator{\Tj}{Tj}
\DeclareMathOperator{\Char}{char}
\DeclareMathOperator{\Hom}{Hom}
\DeclareMathOperator{\Spf}{Spf}
\DeclareMathOperator{\init}{in}

\DeclareMathOperator{\height}{ht}
\newcommand{\m}{\mathfrak m}
\newcommand{\nF}{\mathfrak n_F}
\newcommand{\Ttilde}{\widetilde T}
\newcommand{\A}{\mathbb A}
\newcommand{\R}{\mathcal R}
\newcommand{\Gl}{\mathcal G_l}
\newcommand{\Gr}{\mathcal G_r}
\newcommand{\Glr}{\mathcal G_{lr}}
\DeclareMathOperator{\rmod}{Rmod}
\DeclareMathOperator{\propermod}{pmod}
\DeclareMathOperator{\Fmod}{Fmod}
\DeclareMathOperator{\ord}{ord}
\DeclareMathOperator{\length}{length}
\DeclareMathOperator{\Span}{Span}

\title{Determinacy and Modality in Positive Characteristic}
\author{Yotam Svoray}
\date{ }

\AtEndDocument{\bigskip{\footnotesize%
  \textsc{Einstein Institute of Mathematics, The Hebrew University of Jerusalem, Edmond J.
Safra Campus, Giv’at Ram, Jerusalem, 91904, Israel} \par  
   \textit{E-mail address}: \texttt{ yotam.svoray@huji.mail.ac.il} 
}}

\begin{document}
\maketitle

\begin{abstract}
We study finite determinacy and modality in positive characteristic through three complementary constructions: prescribed-jet transversality, Frobenius deformation directions, and a finite flat critical cover. For matrix germs over an arbitrary field, prescribed-jet transversality to the rank stratification shows that finite left--right determinacy is equivalent to finite codimension of the tangent image. Analogous results hold for pure right equivalence, for one-sided actions over infinite fields, and in specified ranges over finite fields. These methods also yield a characterization of finite contact determinacy for positive-dimensional ideals and imply expected-height, Cohen--Macaulayness, reducedness, and normality results for determinantal ideals. For isolated hypersurface singularities over an algebraically closed field of characteristic \(p>0\), we introduce a Frobenius algebra whose length and Loewy structure govern the Milnor number, determinacy, and essential corank. Using a finite flat critical cover, we prove that proper modality coincides with right modality. It follows that, in fixed characteristic, bounded right modality gives an ambient-dimension-independent bound on determinacy and, up to nonsingular quadratic suspension, a finite algebraic parametrization by modular families. The same Frobenius methods yield uniform finiteness results for \(F\)-jumping numbers and show that the \(F\)-pure threshold is determined at a single Frobenius level.
\end{abstract}

\section{Introduction}\label{sec:introduction}

Finite determinacy is one of the main tools in the study and classification of
singularities.  It allows one to replace a formal power series, or a matrix of power
series, by a finite jet without changing its equivalence class.  Over a field of
characteristic zero, finite determinacy is closely related to the tangent space of the
corresponding orbit.  In positive characteristic the relation is subtler, since an
orbit map need not be separable and its tangent image can be strictly smaller than the
tangent space to the orbit; see \cite{GP18,GLS}.\\

Understanding this subtly lets us approach this problem from a common principle. Specifically, finite determinacy allows geometric information obtained from sufficiently high-order perturbations to be transported back to the original germ. For matrix germs, the perturbations are chosen by prescribed-jet transversality. For hypersurface germs, Frobenius supplies deformation directions that survive modulo the infinitesimal right action, while the critical-cover construction converts Milnor numbers into local degrees of a finite map. These mechanisms lead, respectively, to tangent-image criteria, uniform bounds at fixed modality, and the comparison of proper and right modality. We state the structural results first and then record the $F$-singularity consequences.\\

Consequently, we begin with matrix germs over an arbitrary field.  Greuel--Pham proved that finite
codimension of the appropriate tangent image implies finite determinacy over an
arbitrary field, and proved the converse when the relevant orbit maps are eventually
separable \cite{GP18}.  The converse over arbitrary fields is stated as
Conjecture~3.1.16 in \cite{GLS}.  Our prescribed-jet approximation theorem,
Theorem~\ref{thm:stratified-jet}, applies to maps into a variety with a finite smooth
stratification.  Over finite fields we use Poonen's closed-point sieve with prescribed Taylor data~\cite{Poonen}, imposing simultaneously the first-jet smoothness conditions required on the finitely many strata; over infinite fields the corresponding statement follows from an incidence-dimension argument. Applied to the rank stratification, it gives
Theorem~\ref{thm:jettrans}.  The one-sided actions are then reduced to kernel and image
maps to Grassmannians by Proposition~\ref{prop:one-sided-criterion}, and the remaining
punctured critical loci are eliminated by the Frobenius gauges of
Lemma~\ref{lem:frobenius-gauge-flexibility}.  The pure right action is treated separately
in Theorem~\ref{thm:pure-right}.

\begin{theorem}\label{thm:intro-matrix}
Let $K$ be a field, put
$R=K[[x_1,\ldots,x_s]]$, $\m=\langle x_1,\ldots,x_s\rangle$, and
$M_{m,n}=\Mat(m,n,R)$.  We write
$\R=\Aut_K(R)$,
$\Gl=\GL_m(R)\rtimes\R$,
$\Gr=\GL_n(R)^{\mathrm{op}}\rtimes\R$, and
$\Glr=(\GL_m(R)\times\GL_n(R)^{\mathrm{op}})\rtimes\R$.
For $1\le t\le\min(m,n)$, let $I_t(A)$ be the ideal generated by the
$t\times t$ minors of $A$ and put
$c_t=(m-t+1)(n-t+1)$. Then:
\begin{enumerate}[label=\textup{(\roman*)},leftmargin=*]
\item For $G\in\{\R,\Glr\}$, finite $G$-determinacy is equivalent to
finite codimension of the $G$-tangent image.  If $K$ is infinite, the
same equivalence holds for $G\in\{\Gl,\Gr\}$.  Over an arbitrary field,
the one-sided equivalence also holds when $n\le m$ and $s\le m-n+1$ for $\Gl$, and when $m\le n$ and $ s\le n-m+1$ for $\Gr$; see Theorems~\ref{thm:glr},
\ref{thm:one-sided-complete}, \ref{thm:one-sided-ranges},
and~\ref{thm:pure-right}.

\item A proper ideal $I\subseteq R$ with $\dim\left(\frac{R}{I}\right)>0$
is finitely contact determined if and only if
$\frac{R}{I}$ is an isolated complete-intersection singularity; see
Corollary~\ref{cor:icis-all-fields}.

\item If $A$ is finitely determined for one of the actions
$\R,\Gl,\Gr,\Glr$, then, for every
$1\le t\le\min\{m,n\}$ we have $\height I_t(A)
=
\min\{s,(m-t+1)(n-t+1)\}$;
see Theorem~\ref{thm:det-geometry-new}.

\item Under the hypotheses of \textup{(iii)}, the quotient $\frac{R}{I_t(A)}$
is Cohen--Macaulay.  Moreover, it is reduced if $(m-t+1)(n-t+1)<s$, and it is a normal domain if $(m-t+1)(n-t+1)\le s-2$; 
see Theorem~\ref{thm:det-geometry-new}.
\end{enumerate}
\end{theorem}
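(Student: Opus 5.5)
The theorem is a summary of results proved later. The plan is therefore to assemble it from the prescribed-jet transversality theorem (Theorem~\ref{thm:stratified-jet} and its rank-stratification specialization Theorem~\ref{thm:jettrans}), together with the Greuel--Pham sufficiency direction. In every case of (i), the implication ``finite codimension of the tangent image $\Rightarrow$ finite determinacy'' is the known result of \cite{GP18}, valid over any field. The work lies in the converse.

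For the converse in (i), assume $A$ is $k$-determined. By Theorem~\ref{thm:jettrans}, choose a perturbation $A'=A+B$ with $B\in\m^{k+1}\Mat(m,n,R)$ whose jet is transverse to the rank stratification of $\Mat(m,n,K)$ off the origin. By determinacy $A'\sim A$, so it suffices to prove finite codimension of the tangent image for $A'$; this property is invariant under the group action.
\begin{itemize}
\item For $\Glr$, transversality makes the punctured germ a submersion onto each rank stratum in the normal directions. The tangent image then contains $\m^N\Mat(m,n,R)$ away from the origin, and a Nullstellensatz/Artin--Rees argument bounds the codimension (Theorem~\ref{thm:glr}).
\item For $\R$, the same approach is applied to the Jacobian data (Theorem~\ref{thm:pure-right}).
\item For $\Gl$ and $\Gr$, use Proposition~\ref{prop:one-sided-criterion}. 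It translates finiteness into the kernel/image maps to Grassmannians having only an isolated critical locus. Transversality handles the generic points; the remaining punctured critical loci are removed by perturbing with the Frobenius gauges of Lemma~\ref{lem:frobenius-gauge-flexibility}. This is where infiniteness of $K$ is used, to pick gauges avoiding a proper closed set (Theorem~\ref{thm:one-sided-complete}).
\item In the numerical ranges $s\le m-n+1$ (and symmetrically for $\Gr$), the dimension count forces the critical locus to be empty outside the origin, so no gauge choice is needed and finite fields are allowed (Theorem~\ref{thm:one-sided-ranges}).
\end{itemize}
I expect this one-sided Frobenius-gauge elimination over arbitrary fields to be the main obstacle.

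For (ii), view $I=\langle f_1,\dots,f_r\rangle$ as a $1\times r$ matrix, so that contact equivalence is the $\Gl$-action with $m=1$. If $R/I$ is an ICIS, the classical tangent computation gives finite codimension, and hence determinacy. Conversely, suppose $I$ is finitely determined. After a high-order transverse perturbation, which preserves the isomorphism type of $R/I$, the generators meet the stratum $\{0\}$ transversally off the origin. Hence $V(I)$ is a smooth complete intersection of codimension $r$ on the punctured spectrum. Minimality of the generators together with $\dim R/I>0$ then forces $r=\height I$, so $R/I$ is an ICIS (Corollary~\ref{cor:icis-all-fields}).

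For (iii) and (iv), again replace $A$ by a transverse representative. The preimage of the rank-$<t$ locus, which has codimension $c_t$, has codimension exactly $c_t$ on the punctured spectrum, or is empty when $c_t>s$. This gives $\height I_t(A)=\min\{s,c_t\}$. Once the height is the expected one, the Eagon--Northcott/Hochster--Eagon theorem yields Cohen--Macaulayness; when $c_t>s$ the ideal is $\m$-primary and the claim is trivial. For reducedness when $c_t<s$, transversality makes $R/I_t$ generically reduced off the singular strata, so it satisfies $R_0$, and Cohen--Macaulayness gives $S_1$. For normality when $c_t\le s-2$, the deeper strata have codimension at least $c_t+2$ inside $V(I_t)$'s complement count, which gives $R_1$. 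Serre's criterion then gives normality, and connectedness of the punctured spectrum (by Hartshorne's theorem, since $\dim\ge2$ and $R/I_t$ is CM) makes it a domain (Theorem~\ref{thm:det-geometry-new}).
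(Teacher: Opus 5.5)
Your treatment of $\Glr$, of the finite-field ranges, and of parts (ii)--(iv) is sound and close to the paper's. There are two small slips: a $1\times r$ row is acted on by $\Gr$, not by $\Gl$ with $m=1$; and in (iv) no connectedness theorem is needed, since a normal Noetherian local ring is already a domain. The genuine gaps are in the other two cases of (i). For $\R$ with $q=mn>1$, ``the same approach'' fails. Theorem~\ref{thm:jettrans} gives finite $\Glr$-codimension, and no perturbation can give finite $\R$-codimension. Finite codimension of $\sum_\nu R\,\partial A/\partial x_\nu$ makes $I_q(\JacMat(A))$ either $\m$-primary or the unit ideal. The bound $\height I_q(\JacMat(A))\le s-q+1<s$ leaves only the unit ideal, i.e.\ $\rank\JacMat(A)(0)=q$. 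This is a condition on the $1$-jet, which perturbations in $\m^{k+1}$ do not change. So you must show that finite $\R$-determinacy \emph{forces} a submersion, and that needs a rigidity argument your sketch does not contain. The paper argues by contradiction:
\begin{itemize}
\item If the rank at the origin is $<q$, Lemma~\ref{lem:critical-height-new} gives a right-equivalent $G$ whose critical-value ideal $\Delta_G$ contains a nonzero polynomial.
\item By determinacy, every sufficiently deep target automorphism $\psi$ satisfies $\psi\circ G=G\circ\varphi$ for a source automorphism $\varphi$. Hence $\Delta_G$ is invariant under deep triangular automorphisms.
\item Lemma~\ref{lem:deep-shear-new} then gives $\sqrt{\Delta_G}=\mathfrak m_T$, so every $G_i$ lies in $\sqrt{I_q(\JacMat(G))}$.
\item The ICIS criterion of Corollary~\ref{cor:icis-all-fields} makes $I_q(\JacMat(G))$ $\m$-primary, contradicting the same height bound.
\end{itemize}

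For $\Gl$ (dually $\Gr$) over an infinite field, the elimination you describe does not work either. A Frobenius gauge has $dV=0$, so the kernel map $V^{-1}\cdot F$ of $AV$, with $F=\kappa_r\circ A$, has exactly the same critical scheme as $F$. Such perturbations cannot remove critical points. Infiniteness of $K$ is used instead for a \emph{generic} deep gauge (Lemma~\ref{lem:deep-gauge-general-position}). Avoiding a proper closed set only yields $\dim\Crit(V^{-1}\cdot F)\le t-1$, where $t=\dim\Grass(n-r,n)$; this is generic behaviour and does not make the critical locus empty. The missing idea is to use determinacy as a rigidity statement. For every deep $V$ one has $AV\sim_{\Gl}A$, hence $V^{-1}\cdot F=F\circ\varphi_V$ for a source automorphism $\varphi_V$ (Lemma~\ref{lem:deep-gauge-rigidity}). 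This does two things:
\begin{itemize}
\item It transfers the bound $\dim\Crit\le t-1$ back to $F$.
\item For Frobenius $V$, it shows that $\varphi_V$ preserves $\Crit(F)$. So the value of $F$ at a generic point of $\Crit(F)$, translated by a Zariski-dense set of elements of $\GL_n$ (Lemma~\ref{lem:frobenius-gauge-flexibility}), stays in the fixed closure $\overline{F(\Crit(F))}$ of dimension $\le t-1$. That is impossible unless $\Crit(F)$ is empty off the origin.
\end{itemize}
The same density argument on the whole rank-$r$ stratum gives $t\le d$, which is needed before the general-position lemma applies. Finally, Frobenius gauges exist only in characteristic $p$. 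In characteristic $0$ you need the separate separability argument of \cite[Theorem~3.1.21]{GLS} and \cite[Theorem~4.3]{GP18}.
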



Keeping these distinctions in view, Theorem~\ref{thm:intro-matrix} gives the tangent-image criterion for the full left--right and pure right actions over every field, and for all four actions over every infinite field. Over finite fields, the one-sided criteria are established in the dimension ranges stated in Assertion~\textup{(i)}.\\

Yet another setting is that of isolated hypersurface singularities over an algebraically closed field $k$ of characteristic $p>0$. The Frobenius algebra and Frobenius modality are defined in Definition~\ref{def:Fmod}. Proposition~\ref{prop:Flower} shows that the Frobenius directions give genuine moduli. Their length and Loewy structure then control the Milnor number, separability of high jet orbits, determinacy, multiplicity, and essential corank; see Theorems~\ref{thm:length}, \ref{thm:smooth}, and \ref{thm:eta-radius}.\\

Another main input is the splitting lemma recorded in Lemma~\ref{lem:standard-inputs}\ref{std:split}. We write $c(f)$ for the \emph{essential corank}, namely the number of variables in its residual germ after a nonsingular quadratic summand has been split off. Finally, the critical-cover argument of Section~\ref{sec:boundedness} identifies proper modality with right modality in Theorem~\ref{thm:pmod-rmod-general}.

\begin{theorem}[Modality and uniform determinacy]\label{thm:intro-hypersurface}
Let $k$ be algebraically closed of characteristic $p>0$, and let $f$ be an isolated
hypersurface germ.
\begin{enumerate}[label=\textup{(\roman*)},leftmargin=*]
\item We have $\Fmod(f)\le\propermod(f)=\rmod(f)$; see Proposition~\ref{prop:Flower} and
Theorem~\ref{thm:pmod-rmod-general}.
\item We have
$\mu(f)+c(f)\le p^{c(f)}\bigl(\Fmod(f)+1\bigr)$. Consequently, we have that
$\mu(f)\le p^{\,2+\sqrt[3]{6\rmod(f)}}\bigl(\rmod(f)+1\bigr)$; see Corollary~\ref{cor:stable-milnor-growth}.

\item For every $D\ge0$, the residual number of variables, Milnor number,
multiplicity, and right-classification radius of all isolated germs with
$\rmod(f)\le D$ are bounded in terms of $p$ and $D$.  Moreover, there is an integer
$\widehat N_{p,D}$, independent of the original ambient dimension, such that every such
$f$ is right $\widehat N_{p,D}$-determined; see Theorem~\ref{thm:uniform} annd
Corollary~\ref{cor:uniform-full-determinacy}.
\end{enumerate}
\end{theorem}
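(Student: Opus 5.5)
Theorem~\ref{thm:intro-hypersurface} is a summary of results proved later, so the plan is to assemble it from those sections in the order (i), (ii), (iii).

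\textbf{Part (i).} For the inequality $\Fmod(f)\le\propermod(f)$ I would follow Proposition~\ref{prop:Flower}. The idea is to exhibit, inside a high jet space, a family of deformations $f+\sum_j t_j g_j$ of dimension $\Fmod(f)$. Here the $g_j$ run over a basis of the Frobenius algebra, meaning Frobenius directions such as $p$-th powers $h^p$. These directions cannot be absorbed by the tangent image of the right action, since $\partial(h^p)=0$. Consequently they give distinct orbits generically, and so yield proper moduli.

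For the equality $\propermod(f)=\rmod(f)$, the inequality $\propermod\le\rmod$ should be formal, since proper families are special families. For the reverse inequality I would use the finite flat critical cover of Section~\ref{sec:boundedness}. On a stratum of constant Milnor number, the Milnor number becomes the local degree of the critical-point map, which is finite and flat. This makes the relevant families proper over the parameter space, so that a right modality count can be realized by a proper family (Theorem~\ref{thm:pmod-rmod-general}).

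\textbf{Part (ii).} I would first apply the splitting lemma, Lemma~\ref{lem:standard-inputs}\ref{std:split}, to reduce to the residual germ $g$ in $c=c(f)$ variables. The Milnor number and $\Fmod$ are unchanged by nonsingular quadratic suspension. Next I would invoke Theorem~\ref{thm:length}, which relates $\mu$ to the length of the Frobenius algebra. That algebra is the Jacobian algebra viewed as a module over the $p$-th-power subring, which is free of rank $p^{c}$ over $k[[x^p]]$. This gives $\mu(g)+c\le p^{c}(\Fmod(g)+1)$.

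For the consequence, I would bound $c(f)$ by the modality. A corank-$c$ germ has $3$-jet living in a space of cubics of dimension $\binom{c+2}{3}\approx c^3/6$, while the group acting on it has dimension about $c^2$. Modality at least the difference therefore gives $c\le 2+\sqrt[3]{6\rmod(f)}$. Substituting this bound and part (i) into the first inequality yields the stated bound on $\mu(f)$.

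\textbf{Part (iii).} Bounded $\rmod$ bounds $c$ and $\mu$ by part (ii), and multiplicity is at most $\mu+1$. The radius of right classification is bounded via Theorem~\ref{thm:eta-radius} in terms of the Loewy length of the Frobenius algebra, which is at most its length. The determinacy of the residual germ is then bounded, for example by $2\mu$ or by the Loewy bound from Theorem~\ref{thm:smooth}. Finally, a nonsingular quadratic suspension $g+\sum y_i^2$ keeps determinacy $\max(2,N)$ by the splitting lemma. This gives $\widehat N_{p,D}$ independent of the ambient dimension, as in Theorem~\ref{thm:uniform} and Corollary~\ref{cor:uniform-full-determinacy}.

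\textbf{Main obstacle.} I expect the hardest step to be $\rmod\le\propermod$ in part (i). The difficulty is ensuring that the critical-cover degree argument controls orbit dimensions despite the inseparability of orbit maps. My first attempt would be to use flatness, so that $\mu$ is constant on the strata, and then apply upper semicontinuity of fibre dimension.
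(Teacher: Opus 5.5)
Your parts (ii) and (iii) follow the paper closely. The bounds $\ord(g)\le\mu(g)+1$ and $2\mu$-determinacy of the residual germ are harmless alternatives. Part (i), however, has a genuine gap exactly at the step you flag, $\rmod(f)\le\propermod(f)$.

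Here ``proper'' refers to $\propermod(f)=\dim_0\mathcal C_{\mu(f)}(F)$, not to properness of a morphism. Finite flatness of the critical cover only fixes the \emph{total} fibre length $M=\mu(f)$; the local lengths $\lambda(x)=\mu(G_x)$ jump, and upper semicontinuity of fibre dimension points the wrong way. The real difficulty is that $r=\rmod(f)$ may be realized by a Rosenlicht stratum $Y_0$ of nearby germs of smaller Milnor number $m<M$. Those $r$ moduli must then be transported into the $\mu$-constant stratum of $f$ itself, a positive-characteristic analogue of Gabrielov's theorem. The paper does this with three ingredients absent from your plan:
\begin{enumerate}[label=\textup{(\alph*)},leftmargin=*]
\item Lemma~\ref{lem:rosenlicht-to-pmod}, which gives $\propermod(g)\ge r$ for a general member $g$ of $Y_0$.
\item The codimension formula $\codim_{X,x}\Lambda_{\ge m}=m-1-\propermod(G_x)$ of Proposition~\ref{prop:critical-length-pmod}. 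It produces a component $S\ni x_0$ of $\Lambda_{\ge m}$ with $\dim S\ge M+n-m+r$.
\item Lazarsfeld's higher purity (Lemma~\ref{lem:lazarsfeld-purity}) combined with $\lambda=\iota e_\pi$ (Lemma~\ref{lem:lambda-qe}). This gives $S^*\subseteq S\cap\Lambda_{\ge M}$ through $x_0$ with $\codim_S S^*\le (M-m)/\iota$.
\end{enumerate}
Comparing with $\dim_{x_0}\Lambda_{\ge M}=n+\propermod(f)$ yields $\propermod(f)\ge r+(M-m)(1-1/\iota)\ge r$. A purity statement bounding the codimension in which the local degree can climb from $m$ to $M$ is indispensable here, and nothing in your sketch plays that role.

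The inequality $\propermod(f)\le\rmod(f)$ is also not formal in characteristic $p$. Right-completeness only gives $\R\text{-mod}_F(f)\le\rmod(f)$. The relative modality of $\mathcal C_{\mu(f)}(F)$ equals its dimension only if right orbits meet it in zero-dimensional sets. Complementarity of the semiuniversal parameters to the tangent image $\m\Jac(f)$ gives transversality to orbits only when that tangent image is the full orbit tangent space, i.e.\ when the high-jet orbit maps are separable. This is Theorem~\ref{thm:smooth} via Corollary~\ref{cor:mu-sep}, which is what Theorem~\ref{thm:pmodlower} uses.

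The same separability is silently required by your ``distinct orbits generically'' step for $\Fmod\le\rmod$, and the Milnor bound in (ii) depends on that inequality. For $\Fmod\le\propermod$ alone you need only that the $h_j$ have zero derivatives and are independent modulo $\m\Jac(f)$, so they span a $\mu$-constant plane in a semiuniversal unfolding.

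Two smaller points:
\begin{enumerate}[label=\textup{(\arabic*)},leftmargin=*]
\item In characteristic $2$ the residual germ may have a nonzero square quadratic part, so the $3$-jet is not a pure cubic. Your cubic count then needs the observation of Lemma~\ref{lem:char2-corank} that the $\m^2$-terms of a coordinate change contribute nothing cubic through a square.
\item The claim that $g+Q$ is $\max\{2,N\}$-determined ``by the splitting lemma'' needs an argument. One option is to show that the residual part of $g+Q+h$ with $h\in\m^{N+1}$ agrees with $g$ modulo $\m^{N+1}$. The other is the paper's direct containment $\m^{B+2}\subseteq\m^2\Jac(f)$ in the full ring.
\end{enumerate}
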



Leveraging a Rosenlicht stratification of those jet spaces and a subsequent finite-multisection argument yields a finite algebraic parametrization. The relevant finite-multisection principle is Lemma~\ref{lem:finite-modular-principle}, and stable cancellation is Lemma~\ref{lem:stable-cancellation}.

\begin{theorem}[Finite modular classification]\label{thm:intro-classification}
Fix $p>0$ and $D\ge0$.  Up to nonsingular quadratic suspension, the isolated germs $f$
with $\rmod(f)\le D$ are covered by finitely many algebraic families
$F_i(x,t)$, with $t\in T_i$, such that:
\begin{enumerate}[label=\textup{(\roman*)},leftmargin=*]
\item $\dim T_i\le D$, the number of residual variables is bounded in terms of $p,D$,
and the degrees of the representatives are bounded in terms of $p,D$;
\item every stable right-equivalence class occurs;
\item each stable class occurs at only finitely many points of each $T_i$.
\end{enumerate}
The families may be refined so that Milnor number and right modality are constant.
\end{theorem}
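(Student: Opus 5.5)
We prove Theorem~\ref{thm:intro-classification} by reducing to a finite-dimensional jet space and stratifying it, using the uniform bounds of Theorem~\ref{thm:intro-hypersurface}(iii). Fix $p$ and $D$.

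\emph{Step 1: reduction to a fixed jet space.} By Theorem~\ref{thm:uniform} and the splitting lemma (Lemma~\ref{lem:standard-inputs}\ref{std:split}), the residual germ $g$ of any isolated $f$ with $\rmod(f)\le D$ has $c(f)\le c_{p,D}$ variables. By Corollary~\ref{cor:uniform-full-determinacy}, $g$ is right $N:=\widehat N_{p,D}$-determined. Stable cancellation (Lemma~\ref{lem:stable-cancellation}) shows that stable right classes of such $f$ correspond bijectively to right classes of residual germs in $c$ variables, for each $c\le c_{p,D}$. So it suffices to work in the affine jet space $J_c=\m/\m^{N+1}$ in $k[[x_1,\dots,x_c]]$, with the algebraic group $\R_N$ of $N$-jets of automorphisms acting on it. Consider the subset $W_c\subseteq J_c$ of jets that are $N$-determined, have $\mu<\infty$, have $\rmod\le D$, and have no quadratic part. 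We will check that $W_c$ is constructible. Isolatedness and $N$-determinacy are open or constructible conditions, via the tangent-image and Milnor-number bounds. For modality, we use that $\rmod$ is upper semicontinuous and equals $\propermod$ (Theorem~\ref{thm:pmod-rmod-general}); it is then computable as the maximal orbit-family dimension on a neighbourhood in $J_c$.

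\emph{Step 2: Rosenlicht stratification.} Apply Rosenlicht's theorem to the $\R_N$-action on $W_c$. This gives a finite partition of $W_c$ into locally closed invariant subvarieties $S_j$, each with a geometric quotient $S_j\to S_j/\R_N$. The orbits in $S_j$ all have the same dimension, so $\dim S_j/\R_N$ equals the local orbit-family dimension. Since $\rmod\le D$ on $S_j$, this gives $\dim S_j/\R_N\le D$. We refine the stratification further so that $\mu$ and $\rmod$ are constant on each stratum; both are constructible functions with finitely many values.

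\emph{Step 3: finite multisections.} We need families whose parameter space surjects onto $S_j/\R_N$ with finite fibres. For this we apply Lemma~\ref{lem:finite-modular-principle} to each quotient map. It gives finitely many varieties $T_i$ with morphisms $\sigma_i\colon T_i\to S_j$ such that:
\begin{itemize}
\item the composite $T_i\to S_j/\R_N$ is quasi-finite;
\item together the images of the $T_i$ cover the quotient;
\item $\dim T_i\le D$.
\end{itemize}
Set $F_i(x,t)$ to be the polynomial of degree $\le N$ given by $\sigma_i(t)$. Then the degrees and the numbers of variables are bounded in terms of $p$ and $D$, which is (i).
\begin{itemize}
\item Property (ii) holds because every residual class is represented by its $N$-jet, which lies in some $S_j$.
\item Property (iii) holds because each fibre of $T_i\to S_j/\R_N$ is finite, and points of the quotient are orbits, i.e.\ right classes by determinacy.
\end{itemize}

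\emph{Main obstacle.} We expect the hard step to be the constructibility of the condition $\rmod\le D$ inside $J_c$ in positive characteristic. Orbit maps may be inseparable, so the usual tangent-space computation of orbit dimension fails. To handle this, we would first express modality through the finite flat critical cover used for Theorem~\ref{thm:pmod-rmod-general}, where it is read off from local degrees. Alternatively, we would compute orbit dimensions set-theoretically, through fibre dimensions of the action morphism $\R_N\times J_c\to J_c\times J_c$. By Chevalley's theorem these are upper semicontinuous and constructible, so they are independent of separability.
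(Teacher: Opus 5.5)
Your overall strategy is the paper's proof of Theorem~\ref{thm:finite-modular-classification}. You reduce by the splitting lemma and Theorem~\ref{thm:uniform} to finitely many jet spaces of residual germs in at most $C_{p,D}$ variables, show that the bounded-modality locus there is invariant and constructible, apply Lemma~\ref{lem:finite-modular-principle}, and obtain (ii) and (iii) from uniform determinacy and Lemma~\ref{lem:stable-cancellation}.

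There is, however, a concrete error in your definition of $W_c$: you require the jets to have \emph{no quadratic part}. That describes residual germs only for $p\neq2$. By Lemma~\ref{lem:standard-inputs}\ref{std:split}, in characteristic $2$ a residual germ only satisfies $\Hess(g)(0)=0$, and its quadratic part may be a nonzero square $\ell^2$. No coordinate change removes it: if $\varphi$ has linear part $A$, the quadratic part of $\varphi(g)$ is $(\ell\circ A)^2\neq0$. For instance, take $p=2$ and $g=x^2+x^3\in k[[x]]$. Then $\Jac(g)=\langle x^2\rangle$, $\mu=2$, $\Hess(g)(0)=0$, and $\rmod(g)=1$, since $\Fmod(g)=1$ and $\rmod\le\mu-1$. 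So $g$ is residual with $c=1$, and every residual representative of its stable class has nonzero quadratic part. Hence for $p=2$, $D\ge1$ your atlas misses this class, indeed the whole family $x^2+ax^3$, $a\neq0$, in which $a$ is a right invariant, and (ii) fails. The repair is the paper's choice: use the closed right-invariant locus $\mathcal H_{c,L}$ of jets in $\m_c^2/\m_c^{L+1}$ with vanishing Hessian. This also discards the linear terms you admitted by working in $\m/\m^{N+1}$.

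The constructibility step, which you flag as the main obstacle, is left as a sketch and is misdiagnosed. Inseparability is not the issue: Rosenlicht modality is defined through geometric quotients, not tangent spaces. Its upper semicontinuity under an algebraic group action is \cite[Proposition~2.7]{GreuelNguyen2016}, so neither $\propermod$ nor the critical cover is needed. What does need care is the jet level. At level $\widehat N_{p,D}$, usually far below $2M_{p,D}$, the conditions ``$N$-determined'' and ``$\mu<\infty$'' are not evidently constructible on jets. You must also know that this level is sufficiently large in the sense of Definition~\ref{def:rosenlicht-modality}, so that the $\R_N$-modality of the jet equals $\rmod$.

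The paper sidesteps this by working at a level $L\ge\max\{2M_{p,D},M_{p,D}+1,N_{p,D}\}$. There the truncated Milnor number $\dim_k k[[x_1,\ldots,x_c]]/(\Jac(h)+\m_c^{M_{p,D}+1})$ is independent of the lift $h$, and the condition that it be at most $M_{p,D}$ is determinantal. Nakayama identifies it with $\mu$, and Lemma~\ref{lem:standard-inputs}\ref{std:suff} makes $L$ sufficiently large, so the bounded-modality locus is constructible by upper semicontinuity. Only after the multisections are built does one truncate to degree $N_{p,D}$, using $N_{p,D}$-determinacy to see that truncation preserves orbit-finiteness.
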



Lastly, we apply the bounded polynomial representatives to $F$-singularity invariants. The denominator theorem of Blickle--Mustaţă--Smith then places all $F$-jumping numbers in $(0,1]$ in a finite set depending only on $p$ and the modality bound. The resulting uniform separation of possible thresholds implies that one sufficiently high Frobenius level determines the $F$-pure threshold. The remaining applications use the results of Katzman--Lyubeznik--Zhang and Jiang~\cite{KLZ2011,Jiang2021} on test ideals, and of Blickle--Stäbler~\cite{StaeblerBS} on positive-characteristic Bernstein--Sato roots. 

\begin{theorem}[$F$-singularity consequences]\label{thm:intro-Fapplications}
Fix $p>0$ and $D\ge0$, and let $f$ be isolated with $\rmod(f)\le D$.
\begin{enumerate}[label=\textup{(\roman*)},leftmargin=*]
\item Every $F$-jumping number of $\langle f\rangle$ in $(0,1]$ belongs to a finite set
$\Xi_{p,D}$ depending only on $p,D$; see Theorem~\ref{thm:universal-Fjump-spectrum}.
\item There is an integer $E_{p,D}$ such that the single Frobenius-level invariant
$\nu_{E_{p,D}}(f)$ determines $\fpt(f)$; see Theorem~\ref{thm:one-level-fpt}. 
\item If $\frac{R}{\langle f\rangle}$ is reduced, then the number of $F$-jumping coefficients in
$[0,1]$ and the colength of its finitistic test ideal are bounded in terms of $p,D$.
Moreover, a power of the maximal ideal, with exponent depending only on $p,D$, is
contained in the Jacobian ideal and hence in the finitistic test ideal; see Corollary~\ref{cor:test-uniform}. 
\item Only finitely many eventual sequences of level-$e$ positive-characteristic
Bernstein--Sato root sets occur at fixed $p,D$; Corollary~\ref{cor:finite-BS-sequences}.
\end{enumerate}
In addition, the numerical $F$-jumping spectrum is uniformly $\m$-adically constant in
the sense of Corollary~\ref{cor:uniform-m-adic-constancy}.
\end{theorem}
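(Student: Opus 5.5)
The plan is to deduce all four assertions from the uniform bounds of Theorem~\ref{thm:intro-hypersurface}(iii) and the finite classification of Theorem~\ref{thm:intro-classification}, combined with the cited $F$-singularity results.

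The first step is a reduction to finitely many polynomial representatives of bounded degree in boundedly many variables. By Theorem~\ref{thm:intro-hypersurface}(iii), every isolated $f$ with $\rmod(f)\le D$ is right $\widehat N_{p,D}$-determined. Hence $f$ is right equivalent to its $\widehat N_{p,D}$-jet, and after splitting off a nonsingular quadratic form we get $f\sim g(y)+q(z)$, where $g$ lies in a bounded number $c\le c_{p,D}$ of variables and has degree $\le \widehat N_{p,D}$. $F$-jumping numbers and test ideals are invariant under automorphisms of $R$. So the main technical point of this step is stability under quadratic suspension. For $p\neq 2$, a nondegenerate quadric $q$ in separate variables gives a Thom--Sebastiani-type behaviour of test ideals in $(0,1]$. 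The route I would try first is to avoid a Thom--Sebastiani formula altogether. Instead I would use that $f$ is itself a polynomial of degree $\le\max(2,\widehat N_{p,D})$ after the coordinate change, and work with that polynomial directly.

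Step one, assertion (i). The denominator theorem of Blickle--Mustaţă--Smith says that the $F$-jumping numbers of a polynomial of degree $\le d$ are rational with denominators $p^a(p^b-1)$, where $a,b$ are bounded in terms of $d$ and the ambient dimension; the number of variables enters the bound. This is where the ambient-dimension independence is delicate, since $n$ is unbounded. I therefore expect this to be the main obstacle. I would address it by showing that the jumping numbers in $(0,1]$ of $g+q$ are determined by those of $g$. Concretely, I would use the Fedder-type computation $\tau(f^{t})$ via $(f^{\lceil tp^e\rceil})^{[1/p^e]}$. For $q$ a sum of pairwise-disjoint products $z_iz_i'$ in separate variables, the root operation splits variable-by-variable, so the critical exponents of $g+q$ lie in a set computed from those of $g$ and of $q$, and the latter has $\fpt=1$. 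With $c$ and $\deg g$ bounded, finiteness of the possible denominators and numerators in $(0,1]$ then yields the finite set $\Xi_{p,D}$.

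Step two, assertion (ii). Distinct elements of the finite set $\Xi_{p,D}$ are separated by some $\delta_{p,D}>0$. Since $\nu_e(f)/p^e\le\fpt(f)<(\nu_e(f)+1)/p^e$, I choose $E_{p,D}$ with $p^{-E_{p,D}}<\delta_{p,D}$. Then $\fpt(f)$ is the unique element of $\Xi_{p,D}\cup\{1\}$ in that interval, so $\nu_{E_{p,D}}(f)$ determines it.

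Step three, assertions (iii) and (iv). The number of jumping coefficients is at most $|\Xi_{p,D}|+1$. The bounded $\mu(f)$ from Theorem~\ref{thm:intro-hypersurface} together with finite determinacy gives $\m^{N}\subseteq\Jac(f)$ with $N$ depending on $p,D$; one can take $N=\mu+1$ for the residual germ. By Katzman--Lyubeznik--Zhang and Jiang, $\Jac(f)$ is contained in the finitistic test ideal when $R/\langle f\rangle$ is reduced, and this bounds the colength. For (iv), Blickle--Stäbler express Bernstein--Sato roots via $F$-jumping numbers of $f$ in $(0,1]$, so their level-$e$ root sets are eventually determined by data in the finite set $\Xi_{p,D}$. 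This gives finitely many eventual sequences. The $\m$-adic constancy follows because $\widehat N_{p,D}$-determinacy makes the spectrum depend only on the $\widehat N_{p,D}$-jet: any $h\equiv f \bmod \m^{\widehat N_{p,D}+1}$ is right equivalent to $f$.
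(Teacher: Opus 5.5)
Your arguments for (ii), (iii), (iv) and for the $\m$-adic constancy are essentially the paper's. You bound the jumping coefficients by $|\Xi_{p,D}|+1$ and the Jacobian power via $\mu$ rather than via the Katzman--Lyubeznik--Zhang bound $\Tj(f)+1$ and Lemma~\ref{lem:loewy}, which is fine. But all of these rest on (i), and there the proposal has a genuine gap.

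The step ``the root operation splits variable-by-variable, so the critical exponents of $g+q$ lie in a set computed from those of $g$ and of $q$'' is not justified, and it is false in the generality you state it. For a single product in disjoint variables the product $p^e$-basis does give $(g^jq^{a-j})^{[1/p^e]}=(g^j)^{[1/p^e]}(q^{a-j})^{[1/p^e]}$. However, $(g+q)^a=\sum_j\binom aj g^jq^{a-j}$ is a sum. Writing $g^j=\sum_\alpha g_{j,\alpha}^{p^e}y^\alpha$ and $q^{a-j}=\sum_\beta q_{a-j,\beta}^{p^e}z^\beta$, its root is generated by the mixed elements $\sum_j\binom aj g_{j,\alpha}q_{a-j,\beta}$. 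These are not determined by the roots of the powers of $g$ and $q$ separately. There is no Thom--Sebastiani rule of this kind in characteristic $p$: $\fpt(x^2)=\frac12$ and $\fpt(y^3)=\frac13$ for every $p>3$, while $\fpt(x^2+y^3)$ is $\frac56$ or $\frac56-\frac1{6p}$ according to $p$ modulo $6$. So as written you never produce a candidate set independent of the ambient dimension. Assertions (ii) and (iv), which you derive from (i), inherit the gap.

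The missing idea is that no comparison with $g$ is needed; the paper uses a dichotomy on the rank of the quadratic block.
\begin{itemize}
\item \emph{Rank at least two.} The block contains a hyperbolic plane $uv$ in variables disjoint from the rest of $f$. Then $u^{p^e-1}v^{p^e-1}$ occurs with coefficient one in $f^{p^e-1}$ and cannot cancel, so $f^{p^e-1}\notin\m^{[p^e]}$. Hence $\fpt(f)=1$ and the spectrum in $(0,1]$ is $\{1\}$, whatever $g$ is.
\item \emph{Rank at most one.} After the determinacy coordinate change, $f$ is a polynomial of degree at most $\widehat N_{p,D}$ in at most $C_{p,D}+1$ variables. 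The Blickle--Musta\c{t}\u{a}--Smith denominator bound is then uniform. You only need compatibility of test ideals with localization and completion, and the remark that a local jump at $\m$ is a global jump.
\end{itemize}
Your description of $q$ as a sum of products $z_iz_i'$ omits both the rank-one form $z_0^2$, which is exactly the case requiring the second branch, and the odd-rank forms $z_0^2+\sum z_iz_i'$.

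A minor slip in (ii): the correct interval is $\nu_e(f)/p^e<\fpt(f)\le(\nu_e(f)+1)/p^e$. For $f=x_1$ the right-hand inequality is an equality. Your separation argument works with either version.
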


\textbf{Organization.}  In Section~\ref{sec:matrix} we study matrix
determinacy and prove the prescribed-jet approximation results.  In
Section~\ref{sec:frobenius} we study high-jet stabilizers and define the Frobenius
algebra.  In Section~\ref{sec:boundedness} we use
Lemma~\ref{lem:standard-inputs}\ref{std:split} and the critical cover to obtain uniform
bounds and to prove equality of proper and right modality.  In
Section~\ref{sec:classification} we prove the finite algebraic parametrization theorem.
Finally, in Section~\ref{sec:F-applications} we give the applications to $F$-singularity
invariants.\\

\textbf{Acknowledgments.} We wish to thank Gari Chua, Yotam Hendel, Francesco M. Saettone, and Karl Schwede for  productive
mathematical discussions and their inputs on the different versions of this paper. 

\section{Matrix determinacy over arbitrary fields}\label{sec:matrix}

In this section, we study finite determinacy of matrices over an arbitrary field.  We
first recall the tangent-image criterion of Greuel--Pham and the two questions from
\cite{GLS} that we address.  We then prove the prescribed-jet approximation theorem and
apply it to the full left--right action, to one-sided actions, and to determinantal
ideals.\\

Retain the notation $K,m,n,s,R,\m,M_{m,n},\R,\Gl,\Gr,\Glr$ from the
Introduction.  Put
$P=K[x_1,\ldots,x_s]$, $\m_0=\langle x_1,\ldots,x_s\rangle\subseteq P$, and
$S_0=P_{\m_0}$.  Thus $R$ is the $\m_0$-adic completion of $S_0$ and
$\m=\m_0R$.  We write $M_m(R)=M_{m,m}(R)$, $\height(I)$ for the height of an
ideal $I$, and $\kappa(\mathfrak p)$ for the residue field of $R_{\mathfrak p}$.  The notation $\Grass(r,N)$ denotes the Grassmannian of
$r$-dimensional subspaces of an $N$-dimensional vector space.  If $M$ is a
finite-length module over a local ring $A$, then $\length_A(M)$ denotes its
module length.  For a vector-valued germ $H=(H_1,\ldots,H_q)$, we write
$\JacMat(H)=\left(\frac{\partial H_i}{\partial x_j}\right)_{i,j}$ for its Jacobian matrix. For $\ell\ge0$ put
$R_\ell=\frac{R}{\m^{\ell+1}}$ and $ D_\ell=\dim_K R_\ell=\binom{s+\ell}{s}$. We use multi-index notation throughout.  If
$\alpha=(\alpha_1,\ldots,\alpha_s)\in\mathbb N^s$, then
$|\alpha|=\sum_i\alpha_i$ and $x^\alpha=x_1^{\alpha_1}\cdots x_s^{\alpha_s}$.
For $\beta\le\alpha$, the inequality is coordinatewise and
$\binom{\alpha}{\beta}=\prod_i\binom{\alpha_i}{\beta_i}$.  The symbol $p$ is reserved
for a prime number and, in the hypersurface part, always denotes the characteristic.\\

We use the four matrix actions $\R,\Gl,\Gr,\Glr$ introduced in the
Introduction; these are the conventions of Greuel--Lossen--Shustin
\cite[\S3.1.3]{GLS}.  For $G\in\{\R,\Gl,\Gr,\Glr\}$ and $k\ge0$, the \emph{$k$-jet} of
$A\in\m M_{m,n}$ is its class $j^kA$ modulo $\m^{k+1}M_{m,n}$.  The matrix
$A$ is \emph{$k$-$G$-determined} if every $B\in\m M_{m,n}$ with $j^kB=j^kA$ is
$G$-equivalent to $A$; it is \emph{finitely $G$-determined} if this holds for some $k$.\\

For an algebraic jet-group action, the \emph{tangent image} at a jet is the image of
the differential at the identity of the orbit map.  Passing back to formal power series
gives the tangent-image modules used here.  For example, the extended left--right
tangent image is
$\Ttilde_A^{\,e}(\Glr A)=M_m(R)A+A M_n(R)+\sum_{\nu=1}^sR\,\frac{\partial A}{\partial x_\nu}$,
while the ordinary tangent image has $\m$ instead of $R$ in front of the
derivative terms.  The analogous formulas are obtained by omitting the
right, left, or both matrix terms for $\Gl,\Gr,\R$, respectively;
see \cite[Definition~3.1.11]{GLS} and \cite[\S2]{GP18}.\\

Theorem~3.1.12 of \cite{GLS}, originating in \cite{GP18}, proves that
finite codimension of the appropriate tangent image is sufficient for
finite determinacy over an arbitrary field.  The general problem is the
necessity of this condition.  For the pure right and full left--right
actions, we prove necessity over an arbitrary field; see
Theorems~\ref{thm:pure-right} and~\ref{thm:glr}.  For the one-sided
actions, we prove necessity over every infinite field in
Theorem~\ref{thm:one-sided-complete}.  Over an arbitrary field, including
a finite field, we obtain the one-sided equivalence in the dimension
ranges of Theorem~\ref{thm:one-sided-ranges}.  The one-sided argument over
infinite fields reduces the problem to critical loci of the kernel and
image maps to Grassmannians and eliminates these loci by combining
deep-gauge general position with Frobenius gauges.\\

\medskip

\noindent\textbf{Question A (GLS Conjecture 3.1.16).}
Is the tangent-image criterion also necessary for finite determinacy over every field for
$G\in\{\R,\Gl,\Gr,\Glr\}$?\\

The book records this as open in positive characteristic; see
\cite[Conjecture 3.1.16 and Remark 3.1.16.1]{GLS}.  It is known when the
relevant orbit maps are eventually separable, in particular in
characteristic zero; see \cite[Theorem 3.1.21]{GLS} and
\cite[Theorem 4.3]{GP18}.  It is also known in several special cases,
including scalar functions and one-column left--right matrices under the
hypotheses stated in \cite[Theorems 3.1.9 and 3.1.23]{GLS}.  Theorems~\ref{thm:glr} and~\ref{thm:pure-right} give an affirmative
answer for the full left--right and pure right actions over every field.
For the one-sided actions, Theorem~\ref{thm:one-sided-complete} gives an
affirmative answer over every infinite field, while
Theorem~\ref{thm:one-sided-ranges} gives the corresponding
arbitrary-field statement in the ranges $n\le m$ and $s\le m-n+1$ for $\Gl$, and $m\le n$ and $s\le n-m+1$ for $\Gr$.\\

\begin{remark}
Finite determinacy for smooth and analytic map germs goes back to the work of Mather on
finitely determined map germs \cite{Mather1968}.  For hypersurface functions over
fields of arbitrary characteristic, finite right determinacy is equivalent to finiteness
of the Milnor number by Boubakri--Greuel--Markwig \cite{BoubakriGreuelMarkwig2012}.
For matrices in positive characteristic, Greuel--Pham introduced the tangent image in
order to separate the infinitesimal image of the orbit map from the tangent space of a
possibly inseparable orbit \cite{GP18}.  This distinction is the source of Question~A.
For determinacy and Mather--Yau-type phenomena over complete regular local rings in
mixed characteristic, see \cite{SvorayDVR2025}.
\end{remark}

\vspace{5mm}

For ideals, let $I=\langle h_1,\ldots,h_r\rangle\subseteq R$ be minimally generated.  Two ideals
$I,J\subseteq R$ are \emph{contact equivalent} if a $K$-algebra automorphism of $R$
sends $I$ onto $J$; finite contact determinacy means finite determinacy for this
equivalence, expressed through a minimal generator column.

Let $I=\langle h_1,\ldots,h_r\rangle\subseteq R$ be
minimally generated. We say that $R/I$ is an \emph{isolated complete-intersection singularity} (ICIS),
if $h_1,\ldots,h_r$ is a
regular sequence and $\dim_K
   \left(\frac{R}
   {I+I_r(\JacMat(h_1,\ldots,h_r))}\right)
   <\infty$.  Thus the Jacobian condition holds at every non-maximal prime of $\frac{R}{I}$. For algebraic representatives over a perfect ground field, this is equivalent to the usual punctured smoothness condition over $K$. Note that we also include the nonsingular case. Regard the generators as the column
$A_I=(h_1,\ldots,h_r)^{\mathsf T}\in M_{r,1}(R)$.  Contact equivalence of such ideals agrees with
left--right equivalence of their minimal generator columns; see
\cite[Proposition~3.1.26]{GLS} and \cite[Proposition~4.3]{GP19}.\\

\medskip

\noindent\textbf{Question B (GLS Problem 3.1.29).}
If $K$ is finite and $\frac{R}{I}$ has positive dimension, does finite contact
determinacy imply that $\frac{R}{I}$ is an ICIS?\\

The reverse implication and the equivalence between finite Tjurina number
and the ICIS condition are already valid over arbitrary fields; the
missing case recorded in \cite[Problem 3.1.29]{GLS} is precisely the
finite-field implication from finite contact determinacy.

\begin{lemma}
\label{lem:dense-transfer}
Let a group $G$ act on an $\m$-adically filtered set $E$, and let $P$ be a
$G$-invariant property.  Suppose that for every $A\in E$ and every $N$
there exists $B\in A+\m^N E$ satisfying $P$.  Then every finitely
$G$-determined element of $E$ satisfies $P$.
\end{lemma}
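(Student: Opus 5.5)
The argument is a direct unwinding of the definitions; I would not invoke any earlier result. Let $A\in E$ be finitely $G$-determined. By definition there is some $k$ such that every $B\in E$ with $j^kB=j^kA$ is $G$-equivalent to $A$. Here two elements have the same $k$-jet precisely when their difference lies in the $(k+1)$-st filtration step, so the condition reads $B\in A+\m^{k+1}E$. The plan is to pick an element in this neighbourhood that has $P$, and then move $P$ back to $A$ along the group action.

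The key steps, in order, are as follows.

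\begin{enumerate}[label=\textup{(\arabic*)},leftmargin=*]
\item Apply the density hypothesis with $N=k+1$. This gives some $B\in A+\m^{k+1}E$ satisfying $P$.
\item Since $j^kB=j^kA$, the choice of $k$ gives $g\in G$ with $g\cdot B=A$.
\item Because $P$ is $G$-invariant, $B$ satisfies $P$ and so does $g\cdot B=A$.
\end{enumerate}

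The only point needing care is the bookkeeping of what "$\m$-adically filtered set" means. I will read $A+\m^NE$ as the set of elements agreeing with $A$ modulo the $N$-th filtration step, and the $k$-jet as the class modulo step $k+1$. With this reading, "same $k$-jet" and "membership in $A+\m^{k+1}E$" coincide, or at least the second implies the first. This is exactly how the filtration is used for $E=\m M_{m,n}$ in the matrix setting: $j^kB=j^kA$ if and only if $B-A\in\m^{k+1}M_{m,n}$.

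If the convention is shifted by one, I simply choose $N$ larger, say $N=k+1$ or $N=k+2$. This is harmless because the hypothesis holds for every $N$, and $A+\m^{N}E$ shrinks as $N$ grows. So I expect no genuine obstacle. The main task is to state the indexing convention explicitly so that step (2) is justified.
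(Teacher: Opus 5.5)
Your proposal is correct and is essentially the paper's own proof: take $N=k+1$ for a determinacy order $k$, use $k$-determinacy to get $A\sim_G B$, and transfer $P$ by $G$-invariance. Your remark on the indexing convention (membership in $A+\m^{k+1}E$ implies equal $k$-jets, and enlarging $N$ is harmless) is the right bookkeeping and matches how the lemma is used for $E=\m M_{m,n}$.
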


\begin{proof}
If $A$ is $k$-$G$-determined, choose $B\in A+\m^{k+1}E$ satisfying $P$.
Then $A\sim_G B$, so $P(A)$ follows from $G$-invariance.
\end{proof}

\begin{proposition}
\label{prop:generic-rank}
Let $G\in\{\R,\Gl,\Gr,\Glr\}$ and let $A\in\m M_{m,n}$ be finitely $G$-determined.  Then $\rank_{\Frac(R)}A=\min(m,n)$.
\end{proposition}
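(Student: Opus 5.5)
We plan to apply Lemma~\ref{lem:dense-transfer} with $E=\m M_{m,n}$ and $P$ the property ``$\rank_{\Frac(R)}A=\min(m,n)$''. First we check that $P$ is $G$-invariant for every $G\in\{\R,\Gl,\Gr,\Glr\}$. Multiplying on the left or right by invertible matrices over $R$ does not change the rank over $\Frac(R)$. Let $\phi\in\R$ be a $K$-algebra automorphism of $R$. Since $R$ is a domain, $\phi$ extends to an automorphism of $\Frac(R)$. It therefore sends a nonzero $r\times r$ minor of $A$ to the corresponding nonzero minor of $\phi(A)$. Hence the generic rank is preserved under all four group actions.

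The main step is density: for every $A\in\m M_{m,n}$ and every $N$, we must produce $B\in A+\m^N M_{m,n}$ of full generic rank. We may assume $m\le n$, the other case being symmetric by transposition. Put $B=A+x_1^N C$, where $C=(I_m\mid 0)$ and $I_m$ is the $m\times m$ identity matrix.

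Consider the leading $m\times m$ minor of $B$, namely $\det(A'+x_1^N I_m)$, where $A'$ is the leading $m\times m$ block of $A$. This is the characteristic polynomial $\chi_{A'}(\lambda)=\det(\lambda I+A')$ evaluated at $\lambda=x_1^N$. As a polynomial in $\lambda$ over the domain $R$, $\chi_{A'}$ is monic of degree $m$, so it has at most $m$ roots in $\Frac(R)$.

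If $x_1^N$ happens to be a root, we replace $N$ by some larger $N'\ge N$. The elements $x_1^{N'}$ are pairwise distinct as $N'$ varies, so at most $m$ choices of $N'$ can fail. For any other choice, the minor is a nonzero element of $R$, which gives $\rank_{\Frac(R)}B=m=\min(m,n)$. Moreover $B-A=x_1^{N'}C\in\m^{N}M_{m,n}$, as required. This works over an arbitrary field $K$, finite or not, because it uses only that $R$ is a domain. If $s=0$ there are no variables, but then $\m=0$ and the only element of $E$ is $A=0$; we treat this degenerate case separately, or note that it is excluded since $s\ge1$.

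Lemma~\ref{lem:dense-transfer} then shows that every finitely $G$-determined $A$ has full generic rank. We expect no real obstacle here. The only points needing care are the invariance of $P$ under the automorphisms in $\R$, which must be extended to $\Frac(R)$, and the avoidance of the finitely many bad exponents $N'$.
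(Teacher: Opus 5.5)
Your proof is correct and follows essentially the paper's route: perturb the diagonal of a maximal square block by a high power of $x_1$ to obtain a nonzero maximal minor, then transfer full generic rank back to $A$ through determinacy and $G$-invariance. The only difference is how nonvanishing is secured. The paper varies scalar coefficients $\lambda\in K^r$ and uses that a nonzero polynomial of degree at most one in each variable cannot vanish on all of $K^r$. You instead vary the exponent and use that the monic polynomial $\det(\lambda I+A')$ has at most $m$ roots in $\Frac(R)$; both arguments work over every field.
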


\begin{proof}
Put $r=\min(m,n)$ and suppose that $A$ is $k$-$G$-determined.  Fix an
$r\times r$ submatrix $A_0$ of $A$, and choose $N\ge k+1$.  For
$\lambda=(\lambda_1,\ldots,\lambda_r)\in K^r$, perturb the diagonal of
that block by
$\lambda_1x_1^N,\ldots,\lambda_rx_1^N$
(with $x_1$ used repeatedly; no distinct variables are needed), leaving
all other entries unchanged.  Call the resulting matrix $B_\lambda$.
Then
$B_\lambda-A\in\m^N M_{m,n}$.
The determinant of the chosen block of $B_\lambda$ is a polynomial
$P(\lambda_1,\ldots,\lambda_r)\in R[\lambda_1,\ldots,\lambda_r]$
which has degree at most one in each $\lambda_i$ and whose coefficient of
$\lambda_1\cdots\lambda_r$ is $x_1^{Nr}\ne0$.\\

A nonzero polynomial of degree at most one in each variable cannot vanish
on all of $K^r$: this follows by induction on $r$ from the one-variable
fact, valid over every field.  Hence for some $\lambda\in K^r$ the chosen
$r\times r$ minor of $B_\lambda$ is nonzero.  Therefore
$\rank_{\Frac(R)}B_\lambda=r$. Finite determinacy gives $B_\lambda\sim_G A$.  Generic rank is invariant
under source automorphisms and under invertible row and column operations.
Therefore $A$ also has generic rank $r$.
\end{proof}

\begin{proposition}
\label{prop:dimension-bounds}
Let $A\in\m M_{m,n}$ be finitely determined.  Then:
\begin{enumerate}[label=\textup{(\roman*)}]
\item if $A$ is finitely $\R$-determined, then
$mn\le s$;
\item if $A$ is finitely $\Gl$-determined, then
$mn\le m^2+s$;
\item if $A$ is finitely $\Gr$-determined, then
$mn\le n^2+s$.
\end{enumerate}
In particular, when $n>m$, finite $\Gl$-determinacy implies $m(n-m)\le s$, and when $m>n$, finite $\Gr$-determinacy implies $n(m-n)\le s$.
\end{proposition}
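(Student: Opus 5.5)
The plan is a dimension count on jet spaces. We compare the dimension of the jet group acting on $k$-jets with the dimension of the target jet space, and then use finite determinacy to force a unipotent subgroup to have an open orbit in a suitable high-order stratum. Suppose $A$ is $k$-$G$-determined and fix $N\gg k$. Determinacy implies that the affine space $j^kA+\m^{k+1}M_{m,n}/\m^{N+1}M_{m,n}$ of $N$-jets is contained in the $G^{(N)}$-orbit of $j^NA$. Since elements of the orbit that already agree with $A$ to order $k$ can be moved along a subgroup of jets close to the identity, one obtains that the degree-$(N)$ homogeneous layer $\m^N M_{m,n}/\m^{N+1}M_{m,n}$ is covered by the action of the \emph{deep} part of the group. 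The argument of Lemma~\ref{lem:dense-transfer} is the right mechanism here: every $N$-jet perturbation of order $N$ must be realized by a group element.

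The key step is to compute which group elements can change the degree-$N$ component when $A\in\m M_{m,n}$ and $N$ is large. A left factor $U=I+U_1$ with $U_1\in \m^{j}M_m(R)$ changes $A$ in degree $N$ only through $U_1A$. Since $A\in\m M_{m,n}$, the contributions to degree $N$ come from the components of $U_1$ in degrees $<N$. One has to track this carefully: the cleaner route is to fix the ``lowest relevant layer'' and count dimensions of $\m^{N-1}$-parts. The estimate I aim for is
\[
\dim_K\bigl(\m^N M_{m,n}/\m^{N+1}M_{m,n}\bigr)=mn\binom{N+s-1}{s-1},
\]
against the dimension of the image of the relevant homogeneous group parts. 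For $\R$, a coordinate change $x\mapsto x+\phi$ with $\phi$ of order $N-\mathrm{ord}(A)+1$ contributes at most $s\cdot\dim(\text{degree } N-\mathrm{ord}A+1 \text{ forms})$, which grows like $s\binom{N+s-1}{s-1}$ to leading order in $N$. For $\Gl$, the additional factor $\GL_m(R)$ adds at most $m^2\binom{N+s-1}{s-1}$ to leading order. Comparing leading coefficients in $N$ (both are polynomials of degree $s-1$ in $N$) gives $mn\le s$, $mn\le m^2+s$, and $mn\le n^2+s$ respectively. The ``in particular'' statements follow by subtracting.

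To make the dimension count rigorous over an arbitrary (possibly finite) field, I would avoid point-counting. Instead I would argue algebraically: the map from the group's homogeneous layers to the target layer is a polynomial (in fact, modulo higher order, affine-linear plus higher terms) morphism of affine spaces over $K$. Its image over $\bar K$ has dimension bounded by the source dimension. Surjectivity on $K$-points alone does not give dimension bounds for finite $K$, so I would use the fact that determinacy for all $B$ over $K$ with given jet is preserved after base change only if handled carefully. The alternative is to use the graded structure: modulo $\m^{N+1}$, the action of elements congruent to the identity to high order on the top layer is \emph{linear} (given by $U_1\mapsto U_1 A_{\mathrm{low}}$ and $\phi\mapsto \sum \phi_\nu\partial_\nu A$), so the set of reachable top-layer jets is a $K$-subspace plus a bounded-dimension contribution from lower group parts. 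The remaining lower-degree part of the group acts through finitely many layers, whose dimension grows like $\binom{N+s-2}{s-1}$, which is of lower order in $N$. Counting $K$-dimensions of linear spaces works over every field.

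The main obstacle is making precise the decomposition of group elements. Group elements that do not lie deep in the filtration also move the top layer, nonlinearly, so I must show that their total contribution has dimension of lower order in $N$. I would handle this by fibering: fix the $(N-1)$-jet of the group element, and note that the top-layer image is then an affine translate of the linear image above. Thus the reachable set is a union, parametrized by a variety of dimension $O(N^{s-1})$ with a smaller leading coefficient, of affine subspaces of dimension at most $(s+m^2)\binom{N+s-1}{s-1}$. Over a finite field I would instead compare cardinalities $q^{\dim}$, which yields the same leading-coefficient inequality.
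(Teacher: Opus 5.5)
There is a genuine gap, and it sits exactly at the step you call the main obstacle: the comparison cannot be made one homogeneous layer at a time. Two of your claims fail.

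First, determinacy does not give you that the top layer $\m^NM_{m,n}/\m^{N+1}M_{m,n}$ is swept out by the deep part of the group. That claim is essentially the hard converse the whole section is about. Modulo $\m^{N+1}$, a deep element $(I+U,\,x+\phi)$ moves $A$ by $UA+\sum_\nu\phi_\nu\,\partial A/\partial x_\nu$. This lies in the tangent image $T=M_m(R)A+\sum_\nu\m\,\partial A/\partial x_\nu$. So if the deep part covered the top layer, you would get $\m^NM_{m,n}\subseteq T+\m^{N+1}M_{m,n}$, hence $\m^NM_{m,n}\subseteq T$ by Nakayama, i.e.\ finite codimension of the tangent image. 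That is the necessity direction of Question~A. For $\Gl$ the paper needs Theorem~\ref{thm:one-sided-complete} for it over infinite fields, and leaves it open over finite fields outside the ranges of Theorem~\ref{thm:one-sided-ranges}. A jet with the same $(N-1)$-jet as $A$ may require non-deep group elements; in positive characteristic this is exactly where inseparability enters.

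Second, the non-deep part is not parametrized by a variety of dimension $O(N^{s-1})$ with smaller leading coefficient. The $(N-1)$-jets of elements of $\GL_m(R)\rtimes\R$ form a variety of dimension about $(m^2+s)D_{N-1}$, which is of order $N^s$. Restricting to the only elements that can move $j^NA$ within the fiber over $j^{N-1}A$, namely those fixing $j^{N-1}A$, does not help. That stabilizer has dimension at least $(m^2+s-mn)D_{N-1}+O(1)$, again of order $N^s$ whenever the desired inequality is strict. A union of affine translates indexed by such a family gives no bound on an $N^{s-1}$-dimensional layer, and controlling the stabilizer dimension requires counting the whole orbit anyway.

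The repair is to make the count global, which is what the paper does; it also removes your worry about finite fields. If $A$ is $k$-determined, the entire cylinder of $N$-jets with $k$-jet $j^kA$ lies in the single orbit $G_N\cdot j^NA$. This cylinder has dimension $mn(D_N-D_k)$.

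Over an infinite field the $K$-points of the cylinder are Zariski dense, so the orbit closure has dimension at least $mn(D_N-D_k)$. The jet group has dimension $s(D_N-1)$, $m^2D_N+s(D_N-1)$, or $n^2D_N+s(D_N-1)$ for $\R$, $\Gl$, $\Gr$ respectively. Both sides are now of degree $s$ in $N$. Dividing by $D_N$ and letting $N\to\infty$ gives (i)--(iii).

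Over $\mathbb F_q$, compare cardinalities instead. For $\Gl$, $q^{mn(D_N-D_k)}$ is at most $|\GL_m(R_N)|\cdot|\Aut_K(R_N)|$, whose $\log_q$ is $(m^2+s)D_N+O(1)$; the other two cases are the same. The ``in particular'' statements then follow by subtraction, as you say.
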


\begin{proof}
Suppose $A$ is $k$-determined and fix $\ell>k$.  The set of $\ell$-jets
having the same $k$-jet as $A$ is an affine cylinder of dimension
$mn(D_\ell-D_k)$.
For an infinite field, all its $K$-points lie in the $\ell$-jet orbit of
$A$, hence the Zariski closure of that orbit has dimension at least this
number. The $\ell$-jet automorphism group is the open subset of $\m R_\ell^s$
on which the linear part is invertible.  Hence its dimension is exactly
$s(D_\ell-1)$.  Likewise, $\GL_m(R_\ell)$ and $\GL_n(R_\ell)$ are open
subsets of $M_m(R_\ell)$ and $M_n(R_\ell)$, so their dimensions are exactly
$m^2D_\ell$ and $n^2D_\ell$.  Since orbit dimension is at most group
dimension, division by $D_\ell$ followed by $\ell\to\infty$ gives the
asserted inequalities.\\

If $K=\mathbb F_q$, we use cardinalities instead.  The cylinder contains
$q^{mn(D_\ell-D_k)}$ points.  Reduction modulo $\m$ gives
\[
 \begin{aligned}
 |\Aut_K(R_\ell)|&=|\GL_s(\mathbb F_q)|q^{s(D_\ell-1-s)},\\
 |\GL_m(R_\ell)|&=|\GL_m(\mathbb F_q)|q^{m^2(D_\ell-1)},\\
 |\GL_n(R_\ell)|&=|\GL_n(\mathbb F_q)|q^{n^2(D_\ell-1)}.
 \end{aligned}
\]
The orbit has cardinality at most that of the acting jet group.  Taking $\log_q$,
dividing by $D_\ell$, and passing to the limit gives the same bounds.
\end{proof}

We next prove the approximation result over finite fields.  We use the Bertini theorem of Poonen with Taylor conditions to keep a prescribed finite jet fixed while imposing smoothness away from the origin.\\

We first explain what we mean by prescribing Taylor data.  If $Z$ is a finite subscheme of an ambient affine or projective space, by \emph{Taylor
data on $Z$} we mean a prescribed section of $H^0(Z,\mathcal O_Z)$, namely the
restriction of the defining polynomial or hypersurface section to $Z$.  In the case
used repeatedly below, $Z=\Spec\!\left(\frac{P}{\m_0^N}\right)$ is the $(N-1)$-st
infinitesimal neighborhood of the origin, and prescribing Taylor data is exactly
prescribing a residue class modulo $\m_0^N$, equivalently an $(N-1)$-jet.  In
multi-index notation, if $h=\sum_{\alpha\in\mathbb N^s}c_{\alpha}x^{\alpha}$,
then this Taylor datum is represented by
$\sum_{|\alpha|<N}c_{\alpha}x^{\alpha}$.  Thus \emph{Taylor data} here does not mean
a choice of derivatives divided by factorials; it means the scheme-theoretic
restriction to the chosen finite subscheme.  This is the terminology used by Poonen in
his Bertini theorem with Taylor conditions \cite[Theorem~1.2]{Poonen}.

\begin{lemma}
\label{lem:poonen}
Let $K=\mathbb F_q$.  Fix a quasiprojective embedding of a $K$-scheme $X$ into
projective space, let $Z$ be a finite subscheme of the ambient projective space, and suppose $U=X-(X\cap Z)$ is smooth of pure dimension $d>0$.  Given arbitrary prescribed Taylor
data on $Z$, there exists, in sufficiently large degree, a hypersurface
having exactly those Taylor data and whose intersection with $U$ is smooth
of pure dimension $d-1$.
\end{lemma}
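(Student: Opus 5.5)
This is essentially Poonen's Bertini theorem with Taylor conditions \cite[Theorem~1.2]{Poonen}, and the plan is to derive the statement from it. Let $X\subseteq\mathbb P^M_{\mathbb F_q}$ be the given quasiprojective embedding, and let $Z\subseteq\mathbb P^M$ be the finite subscheme. Let $T\in H^0(Z,\mathcal O_Z)$ be the prescribed Taylor datum. After a suitable identification $\mathcal O_Z(d)\cong\mathcal O_Z$ (trivialize $\mathcal O(1)$ on $Z$ by a linear form not vanishing at any point of $Z$, enlarging $q$ is not needed since such a form of some degree always exists; if no linear form works, use a form of degree $e$ and restrict to degrees divisible by $e$), $T$ becomes a prescribed restriction of homogeneous polynomials of degree $d$. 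Poonen's theorem then gives the following. The set $\mathcal P_T$ of homogeneous $f$ with $f|_Z=T$ and with $H_f\cap U$ smooth of dimension $\dim U-1$ has density
\[
\frac{\#\{\text{admissible } T\}^{-1}\cdot\text{(correction)}}{}\quad\text{equal to}\quad \frac{1}{\#H^0(Z,\mathcal O_Z)}\prod_{P\in U\ \mathrm{closed}}\bigl(1-q^{-(d+1)\deg P}\bigr),
\]
with $d=\dim U$. The relevant hypothesis in Poonen is $U=X-(X\cap Z)$ smooth, and this is exactly our assumption.

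First, I check that the product over closed points of $U$ converges to a positive number. This holds because $\#U(\mathbb F_{q^r})=O(q^{rd})$ by Lang--Weil (or by a trivial bound from the embedding), so $\sum_P q^{-(d+1)\deg P}<\infty$. Hence the density is strictly positive, and in particular $\mathcal P_T$ contains polynomials of every sufficiently large degree. If the density is computed through a trivialization valid only for degrees in an arithmetic progression, I use large degrees in that progression; this suffices for the statement.

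Second, I check the dimension claim. Since $H_f\cap U$ is smooth, every component has dimension $d-1$ unless it is empty or $f$ contains a component of $U$. In Poonen's formulation, smoothness of $H_f\cap U$ means smooth of dimension $d-1$, so there is nothing extra to prove. Alternatively, I can exclude the polynomials vanishing on a component: for each of the finitely many components of $U$, these form a density-zero set, because vanishing at enough closed points imposes independent conditions in large degree.

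The main obstacle is purely bookkeeping: matching "Taylor data on $Z$" for an affine polynomial with Poonen's projective sections, via the trivialization of $\mathcal O_Z(1)$ above. Once this is done, the statement follows by combining the positivity of the density with the observation that a set of positive density is nonempty in all large (admissible) degrees.
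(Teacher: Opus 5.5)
Your proposal follows the paper's argument: apply Poonen's Theorem~1.2 with $T$ the singleton consisting of the prescribed restriction to $Z$, and use that the density $\frac{1}{\#H^0(Z,\mathcal O_Z)}\prod_{P}\bigl(1-q^{-(d+1)\deg P}\bigr)$ is positive, so admissible hypersurfaces exist in all sufficiently large degrees. The trivialization step is unnecessary, because Poonen's own normalization of $f|_Z$ (dividing by the first coordinate invertible on each connected component of $Z$) is exactly what ``Taylor data'' means here; trivializing by some other form would make the target datum depend on the degree, which his fixed-$T$ statement does not literally cover, and you should also delete the garbled first fraction in your display.
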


\begin{proof}
This is the Bertini theorem of Poonen with Taylor conditions
\cite[Theorem~1.2]{Poonen}.  If $T$ is the singleton consisting of the
prescribed restriction on $Z$, the density appearing in the theorem of Poonen
is positive.  Importantly, $X$ need not be smooth along $Z$; only
$U=X-(X\cap Z)$ is required to be smooth.
\end{proof}

For the stratified argument we use the following direct consequence of the
infinitely-many-local-conditions theorem of Poonen \cite[Theorem~1.3]{Poonen}.  We record it explicitly because the simultaneous
conditions are one of the main inputs in the proof of Theorem~\ref{thm:stratified-jet}.

\begin{lemma}[Local-conditions principle over finite fields]\label{lem:poonen-local}
Let $K=\mathbb F_q$, let $Z$ be a finite subscheme of a projective space, and let $X_1,\ldots,X_u$ be smooth quasiprojective subschemes. Fix Taylor data on $Z$. Then, in sufficiently large degree, there exists a hypersurface realizing those Taylor data such that, away from $Z$, its intersection with every positive-dimensional $X_i$ is smooth of the expected dimension and it avoids every zero-dimensional $X_i$.
\end{lemma}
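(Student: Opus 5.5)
The plan is to deduce the statement from Poonen's Theorem~1.3 \cite{Poonen}, the "infinitely many local conditions" sieve. That theorem computes the density of homogeneous polynomials $h$ satisfying, at each closed point $P$, a local condition $T_P$ on the restriction $h|_{Y_P}$ to a finite subscheme $Y_P$. The density equals $\prod_P \mu_P(T_P)$, provided an error-control hypothesis holds for points of large degree. Throughout we work with a fixed projective embedding, and for the condition at $Z$ we take $T_Z$ to be the singleton given by the prescribed Taylor data.

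For closed points $P\notin Z$ we use the conditions below, imposed simultaneously for all $i$. If $P\in X_i$ with $\dim X_i>0$, let $Y_P$ be the first-order neighbourhood of $P$ in the ambient projective space. We require that $h|_{Y_P}$ does not lie in $\m_{X_i,P}^2$ for any of the finitely many $X_i$ through $P$. This is exactly the condition that $H\cap X_i$ is smooth of the expected dimension at $P$. If $P$ is a point of a zero-dimensional $X_i$, we require $h(P)\ne0$, so that $H$ misses $P$. Only finitely many such points exist, and each imposes a single condition of positive local density.

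Next we estimate the local densities. Since $X_i$ is smooth of dimension $d_i$, the conditions at $P$ for each $i$ exclude a subspace of $\mathcal O_{Y_P}$ of codimension $d_i+1$. The proportion excluded is therefore at most $\sum_i q^{-(d_i+1)\deg P}$. Hence the product of local densities over $P\notin Z$ is $\prod_P\bigl(1-O(q^{-(\min d_i+1)\deg P})\bigr)$. Because $\#\{P\in X_i:\deg P=r\}=O(q^{d_ir})$, this product converges to a positive number. For $P$ of small degree every local factor is positive, since each $\mu_P$ has positive measure as long as the excluded subspaces do not cover the whole space. Their union has density $\le \sum_i q^{-(d_i+1)\deg P}<1$ after we enlarge the set handled as "small", or else we use that a finite union of proper subspaces over a field with $q^{\deg P}\ge2$ elements has proportion strictly less than $1$. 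The factor at $Z$ is $q^{-\dim H^0(\mathcal O_Z)}>0$. Points of $Z$ themselves carry no smoothness condition.

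The main obstacle is the error hypothesis of Theorem~1.3, which requires that the set of $h$ failing the condition at some point of degree $\ge r$ has upper density tending to $0$. This is exactly the medium- and high-degree estimate in Poonen's proof of the Bertini theorem, Theorem~1.1/1.2 of \cite{Poonen}. That estimate applies to each smooth $X_i\smallsetminus Z$ separately, and a finite union of sets of upper density $\to0$ still has upper density $\to0$. Applied to each $X_i\smallsetminus Z$ (quasiprojective and smooth), this gives the hypothesis for the combined condition. We conclude that the set of admissible $h$ has positive density, so it is nonempty in all sufficiently large degrees. This yields the required hypersurface.
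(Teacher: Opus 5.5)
Your route is the same as the paper's. You apply Poonen's Theorem~1.3 with three ingredients: the first-order smoothness conditions along every $X_i$ through each closed point off $Z$, the singleton Taylor condition at $Z$, and Poonen's medium- and high-degree estimates, applied to each $X_i\setminus Z$ and combined over the finitely many $i$, to supply the error hypothesis. Your account is just more explicit than the paper's one-paragraph proof.

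One step is justified incorrectly and needs repair: positivity of the local factor at a low-degree point $P$ lying on several $X_i$.
\begin{itemize}
\item Your union bound $\sum_i q^{-(d_i+1)\deg P}$ need not be $<1$. For example, take $q=2$, $\deg P=1$, and four of the $X_i$ curves through $P$; the bound is then $1$.
\item Reclassifying points as ``small'' does not change this bound at the point in question.
\item Your fallback claim is false. A finite union of proper subspaces of a vector space over a finite field can exhaust it; the $q+1$ lines of $\mathbb F_q^2$ are an example.
\end{itemize}
The correct reason is that every excluded set lies in the single hyperplane $\{h(P)=0\}$: failure of smoothness of $H\cap X_i$ at $P$, or failure to avoid a zero-dimensional $X_i$, forces $P\in H$. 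Hence every jet with $h(P)\neq0$ is admissible, and the local factor is at least $1-q^{-\deg P}>0$. The paper records exactly this in the proof of Theorem~\ref{thm:stratified-jet}: every jet with nonzero value at $P$ is allowed.

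Also read your exponent $\min d_i$ as the minimum over those $i$ with $P\in X_i$. Then convergence of $\prod_P$ follows from $\sum_i\sum_{P\in X_i}q^{-(d_i+1)\deg P}<\infty$. A single global minimum would not beat the point counts of a higher-dimensional $X_j$.
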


\begin{proof}
Apply \cite[Theorem~1.3]{Poonen}. At a closed point $P\notin Z$, simultaneous smoothness is a condition on the value and first derivatives of the hypersurface section. For each $X_i$ containing $P$, the bad first jets are those for which the value and all tangent derivatives along $X_i$ vanish. These are precisely the local smoothness conditions occurring in Poonen's closed-point sieve. The prescribed Taylor class changes only the finitely many local conditions supported on $Z$. Poonen's convergence estimates for the sieve therefore give a positive limiting density, and hence such hypersurfaces exist in sufficiently large degree.
\end{proof}

\begin{remark}
Classical Bertini arguments over infinite fields obtain smooth members by removing a
proper closed subset of a linear system.  Over a finite field such an open set need not
have a rational point.  Poonen replaced this genericity argument by a closed-point sieve
and proved positive-density Bertini theorems, including prescribed Taylor conditions
\cite{Poonen}.  Recent work of Bertucci develops further forms of Taylor conditions over
finite fields \cite{Bertucci}; Ajit--Bertucci prove complementary finite-field Bertini
theorems preserving Hilbert--Samuel multiplicity \cite{AjitBertucci2026}.
\end{remark}

\begin{proposition}
\label{prop:icis-density}
Let $K=\mathbb F_q$ and let $r<s$.  For arbitrary
$h_1,\ldots,h_r\in\m$ and every $N\ge1$, there exist $g_1,\ldots,g_r\in\m$ such that $g_i\equiv h_i\pmod{\m^N}$ and $\frac{R}{\langle g_1,\ldots,g_r\rangle}$ is an isolated complete-intersection
singularity.
\end{proposition}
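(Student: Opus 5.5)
We build the $g_i$ one at a time by repeated use of Lemma~\ref{lem:poonen}, working with polynomial representatives in $P=K[x_1,\ldots,x_s]$. Replacing each $h_i$ by its truncation $\sum_{|\alpha|<N}c_\alpha x^\alpha$ changes nothing modulo $\m^N$, so we may assume $h_i\in\m_0$ are polynomials. Let $Z=\Spec(P/\m_0^N)$, a finite subscheme of $\A^s\subseteq\mathbb P^s$. The Taylor datum on $Z$ for the $i$-th step is the class of $h_i$ modulo $\m_0^N$. Since $N\ge1$, any polynomial with this Taylor datum vanishes at the origin, so after passing to $R$ it lies in $\m$.

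\textbf{Inductive construction.} Set $X_0=\A^s$. Suppose polynomials $g_1,\ldots,g_{i-1}$ have been chosen with $g_j\equiv h_j\pmod{\m_0^N}$, and suppose $X_{i-1}=V(g_1,\ldots,g_{i-1})\subseteq\A^s$ satisfies the following: $U_{i-1}=X_{i-1}-\{0\}$ is smooth of pure dimension $s-i+1$, and the scheme-theoretic complete intersection is smooth there, i.e.\ $\JacMat(g_1,\ldots,g_{i-1})$ has full rank along $U_{i-1}$. If $U_{i-1}$ is empty we can simply take $g_i=h_i$, though the dimension count below shows this case does not occur. Otherwise, since $i\le r<s$, we have $d=s-i+1\ge2>0$, and Lemma~\ref{lem:poonen} applies to $X_{i-1}$ with the Taylor datum $h_i \bmod \m_0^N$ on $Z$. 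Note that $X_{i-1}\cap Z$ is supported at the origin, so $X_{i-1}-(X_{i-1}\cap Z)=U_{i-1}$.

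Lemma~\ref{lem:poonen} produces a projective hypersurface $H$. We dehomogenize it to get a polynomial $g_i$ with $g_i\equiv h_i\pmod{\m_0^N}$ such that $V(g_i)\cap U_{i-1}$ is smooth of pure dimension $s-i$. One point needs checking: the smoothness must be that of the scheme $V(g_1,\ldots,g_i)$, not just of its reduction. Poonen's theorem gives smoothness of the scheme-theoretic intersection $H\cap U_{i-1}$, which is exactly the required Jacobian rank condition. Because $X_{i-1}$ is defined by the $g_j$ and is a local complete intersection away from the origin, the Jacobian of $(g_1,\ldots,g_i)$ has rank $i$ along $U_i$. The same Taylor datum guarantees $0\in X_i$. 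This completes the induction, ending with $X_r$.

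\textbf{Passing to $R$.} Let $g_1,\ldots,g_r\in\m$ be the final polynomials, viewed in $R$, and let $J=I_r(\JacMat(g_1,\ldots,g_r))$. The locus $V(g_1,\ldots,g_r,J)\subseteq\A^s$ is contained in $\{0\}$. Hence $\m_0$ is the only possible prime of $P$ containing $\langle g\rangle+J$ near the origin, so $(P/(\langle g\rangle+J))_{\m_0}$ has finite length. Completion preserves this, giving $\dim_K R/(\langle g\rangle+J)<\infty$.

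It remains to show that $g_1,\ldots,g_r$ is a regular sequence in $R$. Every component of $X_r$ through the origin has dimension at least $s-r\ge1$ by Krull. Such a component is therefore not $\{0\}$, so it meets $U_r$, where $X_r$ has pure dimension $s-r$. Thus $\dim S_0/\langle g\rangle=s-r$, and this dimension is preserved in $R$. In the regular (hence Cohen--Macaulay) local ring $R$, elements $g_i$ of $\m$ with $\height\langle g\rangle=r$ form a regular sequence. Minimality of the generators is not needed, since the statement only asks for an ICIS quotient.

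\textbf{Main obstacle.} The delicate step is the inductive application of Lemma~\ref{lem:poonen}, for two reasons. First, $X_{i-1}$ is typically singular at the origin, and this is exactly why the lemma only requires smoothness off $Z$. Second, one has to ensure that ``smooth of pure dimension $d-1$'' refers to the scheme $V(g_1,\ldots,g_i)$ and not to its reduction. If the lemma's conclusion is only available for reduced structures, the fallback is to invoke Lemma~\ref{lem:poonen-local} directly. That lemma is formulated through first-jet conditions on the hypersurface section, and those conditions are precisely the Jacobian rank condition we need.
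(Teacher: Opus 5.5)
Your proposal is correct and follows the paper's proof essentially step for step. Like the paper, you truncate to polynomial Taylor data on $Z=\Spec(P/\m_0^N)$ and apply Lemma~\ref{lem:poonen} successively to keep $V(g_1,\ldots,g_i)\setminus\{0\}$ smooth of pure dimension $s-i$. You then use the Krull height count and the Jacobian criterion (valid because $\mathbb F_q$ is perfect) to get a regular sequence with an $\m$-primary Jacobian condition, and both properties survive completion.

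Two small refinements. First, minimality of the generators is automatic rather than unneeded, since a regular sequence in $\m$ always minimally generates the ideal it generates. Second, Poonen's theorem is already scheme-theoretic, so your fallback to Lemma~\ref{lem:poonen-local} is unnecessary.
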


\begin{proof}
The successive construction in the proof of Theorem~\ref{thm:icis}
uses no finite-determinacy assumption until its final transport step.
Applying Lemma~\ref{lem:poonen} successively produces a complete
intersection that is smooth off the origin and realizes the prescribed
jet.  The height and Jacobian arguments given below then show that its
completion is an ICIS.
\end{proof}

\begin{theorem}
\label{thm:icis}
Let $K=\mathbb F_q$, let $R=K[[x_1,\ldots,x_s]]$, and let $I$ be a proper
ideal with $\dim \left(\frac{R}{I}\right)>0$.  If $I$ is finitely contact determined, then
$\frac{R}{I}$ is an isolated complete-intersection singularity.
\end{theorem}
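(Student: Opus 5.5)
The argument follows Lemma~\ref{lem:dense-transfer}: approximate $I$ to arbitrarily high order by an ICIS, then transport the ICIS property back through finite determinacy. Let $I=\langle h_1,\ldots,h_r\rangle$ be minimally generated. Suppose the column $A_I$ is $k$-$\Glr$-determined; by \cite[Proposition~3.1.26]{GLS} this is the same as finite contact determinacy.

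\textbf{Step 1: the case $r<s$.} Finite $\Glr$-determinacy forces this. If $r\ge s$, replace $h_i$ by high-order perturbations $g_i\equiv h_i\pmod{\m^{k+1}}$ chosen so that $\langle g_1,\ldots,g_r\rangle$ is $\m$-primary; over $\mathbb F_q$ this can be done by adding $x_1^N,\ldots,x_s^N$ to $s$ of the generators. The perturbed column is $\Glr$-equivalent to $A_I$, so $\dim R/I=0$, contradicting the hypothesis. Hence $r<s$.

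\textbf{Step 2: build the approximating ICIS.} Work with polynomial truncations. We construct $g_1,\ldots,g_r\in P$ inductively, with $g_i\equiv h_i\pmod{\m_0^N}$ and $N\ge k+1$, using Lemma~\ref{lem:poonen}. Take $Z=\Spec(P/\m_0^N)$ and let $X_j=V(g_1,\ldots,g_j)\subseteq\A^s$ inside projective space. Assume inductively that $X_j\setminus Z$ is smooth of pure dimension $s-j$. Lemma~\ref{lem:poonen} then gives a hypersurface $g_{j+1}$ with the prescribed $(N-1)$-jet of $h_{j+1}$ such that $X_{j+1}\setminus Z$ is smooth of pure dimension $s-j-1$. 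Dehomogenizing and restricting to the affine chart is harmless. Here $s-j>0$ because $j<r<s$.

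\textbf{Step 3: show the completion is an ICIS.} Let $J=\langle g_1,\ldots,g_r\rangle R$. Every component of $X_r$ through the origin has dimension $s-r$: it is either $\{0\}$ or meets the punctured locus in dimension exactly $s-r$, and the origin alone would have dimension $0<s-r$, which Krull's principal ideal theorem rules out. So $\height J=r$, and in the regular local ring $R$ the $g_i$ form a regular sequence. Smoothness of $X_r\setminus\{0\}$ of codimension $r$ means the $r\times r$ Jacobian minors generate the unit ideal at every non-origin point of $X_r$. Therefore $J+I_r(\JacMat(g))$ is $\m_0$-primary in $S_0$, and hence also after completion, giving $\dim_K R/(J+I_r(\JacMat g))<\infty$. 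This is the part of the argument I expect to be most delicate. One must check that non-maximal primes of $R/J$ correspond to points of $X_r\setminus\{0\}$ near $0$, using faithful flatness of completion. One also uses that $\mathbb F_q$ is perfect, so smoothness over $K$ is the Jacobian criterion.

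\textbf{Step 4: transport.} Since $g_i\equiv h_i\pmod{\m^{k+1}}$, the columns $A_J$ and $A_I$ are $\Glr$-equivalent. Thus $I$ and $J$ are contact equivalent: there is an automorphism $\varphi$ with $\varphi(J)=I$. The ICIS property is invariant under $K$-algebra automorphisms, because height, the regular-sequence property, and the Jacobian ideal modulo $I$ all transform compatibly via the chain rule. The minimal number of generators is also preserved, since $g_i$ still generate minimally modulo $\m^2$. Hence $R/I$ is an ICIS.
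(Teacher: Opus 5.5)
Your Steps 2--4 follow the paper's proof. You apply Lemma~\ref{lem:poonen} successively with the fat point $Z=\Spec(P/\m_0^N)$. You use the Krull-height argument showing that every component through the origin meets the punctured locus and has codimension $r$. You invoke the Jacobian criterion over the perfect field $\mathbb F_q$, pass to the completion by faithful flatness, and transport the ICIS property through $k$-determinacy.

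The divergence is Step~1, and as written it has a gap. The paper does not perturb there. It quotes the field-independent result \cite[Theorem~2.6]{GP19} (equivalently \cite[Theorem~3.1.22(2)]{GLS}): a finitely determined $r\times1$ column with $r\ge s$ generates an ideal containing a power of $\m$. Re-deriving this by density transfer is a legitimate, more self-contained alternative. However, you assert with no argument that $\langle h_1+x_1^N,\ldots,h_s+x_s^N\rangle$ is $\m$-primary. Nothing you wrote rules out the $h_j$ cancelling the added powers along some curve through the origin. In the case you want to exclude, $\dim R/I>0$, the $h_j$ do have a common positive-dimensional zero locus. Showing that this particular perturbation always works for large $N$ would need a separate argument, which you do not give.

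The repair is easy and works over $\mathbb F_q$. Choose $g_j\in h_j+\m^N$ successively by Davis's prime-avoidance lemma, which needs no infinite residue field. Suppose every minimal prime of $\langle g_1,\ldots,g_{j-1}\rangle$ has height $j-1<s$. Then none of them contains $\m^N$, hence none contains $\langle h_j\rangle+\m^N$. So some $a_j\in\m^N$ puts $g_j=h_j+a_j$ outside all of them, and every minimal prime of $\langle g_1,\ldots,g_j\rangle$ has height $j$. After $s$ steps the ideal is $\m$-primary.

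Alternatively, run your own Step~2 through $j=s$. At the last step Lemma~\ref{lem:poonen} applies with $d=1$ and makes $V(g_1,\ldots,g_s)\setminus\{0\}$ zero-dimensional, which already gives an $\m$-primary ideal at the origin.

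A minor point in Step~4: minimal generation of $J$ by the $g_i$ does not follow from their congruence with the $h_i$ ``modulo $\m^2$'', since the $h_i$ may lie in $\m^2$. It does follow from $\varphi(J)=I$, or from the regular-sequence property you proved in Step~3.
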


\begin{proof}
Choose a minimal system $I=\langle h_1,\ldots,h_r\rangle$.  We first show that $r<s$.
The field-independent part of \cite[Theorem~3.1.22(2)]{GLS}, equivalently
\cite[Theorem~2.6]{GP19}, implies that if a finitely determined
$r\times1$ column has $r\ge s$, then the ideal generated by its entries
contains a power of $\m$.  This would make $\frac{R}{I}$ zero-dimensional.  Hence
$r<s$.\\

Choose a determinacy order $k$, and choose $N\ge k+1$.  Let
$Z=\Spec\!\left(\frac{P}{\m_0^N}\right)$.
For each $i$, let $h_i^{<N}\in\m_0$ be the polynomial truncation of $h_i$
modulo $\m^N$.  Its class in $\frac{P}{\m_0^N}=H^0(Z,\mathcal O_Z)$ is the Taylor
data to be prescribed.\\

We construct polynomials $g_1,\ldots,g_r$ successively, each satisfying
$g_i\equiv h_i^{<N}\pmod{\m_0^N}$, equivalently $g_i-h_i\in\m^NR$ after
completion.  Put $X_0=\A_K^s$, and after choosing the
first $i$ equations put
$X_i=V(g_1,\ldots,g_i)$.
We require $X_i-\{0\}$ to be smooth of pure dimension $s-i$.\\

The assertion holds for $X_0$.  Assuming it for $X_{i-1}$, apply
Lemma~\ref{lem:poonen} to $X_{i-1}$, with the fixed fat point $Z$ and the
prescribed Taylor class of $h_i^{<N}$.  Since $Z$ is supported at the origin,
the smooth open set in \cite[Theorem~1.2]{Poonen} is precisely
$X_{i-1}-\{0\}$.  We obtain $g_i$ such that $X_i-\{0\}$ is smooth of pure
dimension $s-i$.  Induction gives $X_r-\{0\}$ smooth of dimension
$s-r>0$.\\

Let $J=\langle g_1,\ldots,g_r\rangle S_0$.  Since $J$ has $r$
generators, every minimal prime has height at most $r$.  A minimal prime
cannot be $\m_0$, because $\height\m_0=s>r$.  Therefore each irreducible
component meets the punctured spectrum.  On the punctured spectrum $\frac{S_0}{J}$
is smooth of pure dimension $s-r$, hence each minimal prime has height
exactly $r$.  Thus $J$ has height $r$.  Since $S_0$ is regular, hence Cohen--Macaulay, and $J$ has height $r$, the $r$ generators form a regular sequence \cite[Lemmas~10.106.3 and~10.104.2, Tags~00NQ and~02JN]{stacks-project}.\\

Because a finite field is perfect, the Jacobian criterion applies \cite[Lemma~10.137.15, Tag~00TE]{stacks-project}.  Let
$\JacMat(g)$ be the $r\times s$ Jacobian matrix.  Smoothness of
$X_r-\{0\}$ implies that
$J+I_r(\JacMat(g))$
is either the unit ideal or has radical $\m_0$.  Hence it contains a power
of $\m_0$.  Since completion is faithfully flat, it preserves the regular sequence
\cite[Lemma~10.68.5, Tag~00LM]{stacks-project}; the maximal-ideal-primary Jacobian
condition is preserved as well.  Consequently
$\frac{R}{\langle g_1,\ldots,g_r\rangle}$ is an ICIS in the sense of
\cite[Definition 3.1.27 and Theorem 3.1.28]{GLS}.\\

By construction, every $g_i-h_i$ lies in
$\m^N\subseteq\m^{k+1}$.  Finite contact $k$-determinacy therefore gives
an isomorphism of complete local $K$-algebras
$\frac{R}{I}\cong \frac{R}{\langle g_1,\ldots,g_r\rangle}$.
The complete-intersection property and regularity away from the closed
point are intrinsic properties of this local algebra.  Hence $\frac{R}{I}$ is an
ICIS.
\end{proof}

The converse is already part of the arbitrary-field theory summarized in
\cite[Theorem~3.1.28]{GLS}.  Combining that theorem with the known infinite-field case and Theorem~\ref{thm:icis} gives a field-independent statement.  Thus no
assumption on the cardinality of the ground field remains in the contact-determinacy
criterion.

\begin{corollary}
\label{cor:icis-all-fields}
Let $K$ be an arbitrary field and let $I\subseteq R$ be proper with
$\dim \left(\frac{R}{I}\right)>0$.  Then $I$ is finitely contact determined if and only if $\frac{R}{I}$ is an ICIS.
\end{corollary}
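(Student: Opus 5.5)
The plan is to assemble Corollary~\ref{cor:icis-all-fields} from three ingredients: the arbitrary-field sufficiency direction, the known infinite-field necessity, and Theorem~\ref{thm:icis} for finite fields.

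For the implication ``ICIS $\Rightarrow$ finitely contact determined'' I would invoke the arbitrary-field theory of \cite[Theorem~3.1.28]{GLS}. Write $I=\langle h_1,\ldots,h_r\rangle$ minimally, with column $A_I\in M_{r,1}(R)$. First, the ICIS condition gives finite Tjurina number, i.e.\ finite codimension of the extended contact tangent image. Via \cite[Proposition~3.1.26]{GLS}, this is the extended $\Glr$-tangent image of $A_I$. The tangent-image sufficiency criterion \cite[Theorem~3.1.12]{GLS}, valid over every field, then gives finite $\Glr$-determinacy of $A_I$, hence finite contact determinacy of $I$. One small check: the ordinary tangent image differs from the extended one only by the finitely many terms $R\,\partial A_I/\partial x_\nu$ modulo $\m\,\partial A_I/\partial x_\nu$, so finite codimension passes from one to the other.

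For the converse I would split according to $|K|$.

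\begin{itemize}
\item If $K$ is finite, this is exactly Theorem~\ref{thm:icis}.
\item If $K$ is infinite, I would cite the known infinite-field case recorded in \cite[Theorem~3.1.28]{GLS}.
\end{itemize}

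Alternatively, the infinite-field case can be rerun with the argument of Theorem~\ref{thm:icis}, replacing Lemma~\ref{lem:poonen} by classical Bertini with a fixed jet. The jet-preserving linear system $h_i^{<N}+\m_0^N\cdot P_{\le d}$ is base-point free off the origin, so a general member cuts $X_{i-1}-\{0\}$ smoothly; over an infinite field, a nonempty open subset of this affine space has rational points. Smoothness here is geometric smoothness over $K$. The remaining steps of the proof of Theorem~\ref{thm:icis} go through unchanged: the height count forcing a regular sequence, the Jacobian ideal being $\m_0$-primary, faithful flatness of completion, and the transport by $k$-determinacy.

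I expect the main obstacle to be the Jacobian criterion over an imperfect infinite field. There, regularity and smoothness differ, and the definition of ICIS used in the paper is the Jacobian one. I would avoid this issue by working directly with the Jacobian ideal. Geometric smoothness of $X_r-\{0\}$ gives $J+I_r(\JacMat(g))$ $\m_0$-primary with no perfectness needed, since Jacobian smoothness implies smoothness over any field. This matches the paper's definition, so the perfectness invoked in Theorem~\ref{thm:icis} is not actually required. Finally, finite contact determinacy transports both the regular-sequence property and the $\m$-primary Jacobian condition. For the latter, the Tjurina algebra is a contact invariant by \cite[Proposition~3.1.26]{GLS}.
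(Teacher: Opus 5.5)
Your argument is correct and is essentially the paper's. Sufficiency comes from the arbitrary-field theory of \cite[Theorem~3.1.28]{GLS}, which you unpack via the tangent-image criterion. Necessity is split into the known infinite-field case and Theorem~\ref{thm:icis} for finite fields.

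One caveat concerns only your optional alternative for infinite fields. In positive characteristic, base-point freeness off the origin does not give smooth general members; a linear system spanned by $p$-th powers is a counterexample. The Bertini step must instead use that the translation space $\m_0^N\cap P_{\le d}$ separates first jets at every point off the origin, as in the infinite-field part of the proof of Theorem~\ref{thm:stratified-jet}.
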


We next consider matrix germs.  The natural stratification for the left--right action is the stratification by rank.  We first record its tangent spaces and then impose transversality to all rank strata simultaneously.\\

Let
$Y=\Mat_{m,n}(K)$ and set $ d=mn$.
For $0\le r\le\min(m,n)$, let $\Sigma_r\subseteq Y$ be the smooth locally
closed variety of matrices of rank exactly $r$.

\begin{lemma}
\label{lem:rank}
For $C\in\Sigma_r$, one has $T_C\Sigma_r=M_m(K)C+C M_n(K)$.
\end{lemma}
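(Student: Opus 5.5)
We have to show that $T_C\Sigma_r=M_m(K)C+CM_n(K)$ for $C\in\Sigma_r$. The plan is to bring $C$ to a normal form, compute the tangent space there directly, and match the two dimensions.

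\textbf{Reduction to a normal form.} The group $\GL_m(K)\times\GL_n(K)$ acts on $Y$ by $(g,h)\cdot C=gCh^{-1}$. This action preserves $\Sigma_r$, and its differential at $C$ sends $(X,Z)\in M_m(K)\times M_n(K)$ to $XC-CZ$. Both sides of the claimed identity are equivariant under this action. Hence it suffices to treat
\[
C=\begin{pmatrix} I_r&0\\0&0\end{pmatrix}.
\]

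\textbf{The right-hand side.} Write a matrix in block form $\begin{pmatrix}a&b\\c&d\end{pmatrix}$, where $a$ is $r\times r$ and $d$ is $(m-r)\times(n-r)$. Then $XC$ ranges over matrices whose last $n-r$ columns vanish, and $CZ$ ranges over matrices whose last $m-r$ rows vanish. So $M_m(K)C+CM_n(K)$ consists exactly of the matrices with $d=0$. Its dimension is
\[
mn-(m-r)(n-r).
\]

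\textbf{The tangent space.} Near $C$, take the open set where the block $a$ is invertible. A matrix in this set has rank $r$ if and only if its Schur complement vanishes:
\[
d-ca^{-1}b=0.
\]
This exhibits $\Sigma_r$ locally as the graph of the morphism $(a,b,c)\mapsto ca^{-1}b$. Therefore $\Sigma_r$ is smooth there, of dimension $mn-(m-r)(n-r)$. Its tangent space at $C$ is the kernel of the differential of $d-ca^{-1}b$ at $(a,b,c,d)=(I,0,0,0)$. Since $b=c=0$ at this point, that differential is just $\delta d$. So the tangent space is $\{d=0\}$, which agrees with the right-hand side.

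An alternative route gives only one inclusion directly. The curves $t\mapsto (I+tX)C(I+tZ)$ lie in $\Sigma_r$ for small $t$, which shows that the right-hand side is contained in $T_C\Sigma_r$. Equality then follows from the dimension count above.

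\textbf{Main obstacle.} The delicate point is that $K$ is arbitrary, possibly finite and of positive characteristic. Arguments that use orbit maps to compute tangent spaces can fail there through inseparability, and "small $t$" has no meaning over a finite field. This is why the proof relies on the explicit Schur-complement chart. That chart is defined over $\mathbb Z$, so the scheme-theoretic tangent space is computed by formal differentiation. The computation is therefore valid in every characteristic, and it also confirms that $\Sigma_r$ is smooth as a scheme and not merely reduced.
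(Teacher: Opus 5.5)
Your proof is correct, but its key step differs from the paper's.

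The paper's argument is what you call the alternative route. It obtains only the inclusion $M_m(K)C+CM_n(K)\subseteq T_C\Sigma_r$, by differentiating the orbit map. It uses the normal form solely to show that the left-hand side has dimension $r(m+n-r)$. It then closes the gap by invoking the standing fact that $\Sigma_r$ is smooth of exactly that dimension.

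You instead compute $T_C\Sigma_r$ directly from the Schur-complement chart $d=ca^{-1}b$. This costs you the equivariance reduction for both sides. In exchange, it proves the smoothness and the dimension formula that the paper takes for granted, so your argument is self-contained.

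Your worry about ``small $t$'' is unnecessary. The orbit morphism $\GL_m\times\GL_n\to Y$, $(g,h)\mapsto gCh^{-1}$, has reduced source and image in $\Sigma_r$, so it factors through $\Sigma_r$. Its differential at the identity therefore lands in $T_C\Sigma_r$ over every field, finite or not. Equivalently, $(I+\varepsilon X)C(I+\varepsilon Z)$ is a $K[\varepsilon]/(\varepsilon^2)$-point of $\Sigma_r$. Inseparability can only obstruct the reverse inclusion. That is exactly why both proofs need $\dim T_C\Sigma_r=\dim\Sigma_r$: the paper assumes it, and your chart proves it.
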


\begin{proof}
The rank-$r$ stratum is the
$\GL_m(K)\times\GL_n(K)$-orbit of $C$.  Differentiating the orbit map gives
$M_m(K)C+C M_n(K)\subseteq T_C\Sigma_r$.
Putting $C$ into normal form with an $r\times r$ identity block shows that
the left-hand side consists precisely of matrices whose lower-right
$(m-r)\times(n-r)$ block vanishes.  Its dimension is $r(m+n-r)$, which is
the dimension of $\Sigma_r$.  Therefore equality holds in every characteristic.
\end{proof}

For a morphism $F:W\to V$ between smooth $K$-schemes and a smooth locally
closed subscheme $S\subseteq V$, we say that $F$ is \emph{transverse to $S$ at}
$w\in W$ with $F(w)\in S$ if $dF_w(T_wW)+T_{F(w)}S=T_{F(w)}V$; it is transverse to
$S$ on an open set if this holds at every point of the inverse image of $S$ in that
open set.

\begin{theorem}
\label{thm:stratified-jet}
Let $V=\A_K^d=\coprod_{\alpha=1}^tS_\alpha$ be a finite decomposition into smooth
locally closed $K$-subschemes.  For $A\in\m R^d$ and $N\ge2$, there is
$B\in A+\m^N R^d$ such that:
\begin{enumerate}[label=\textup{(\roman*)},leftmargin=*]
\item $B$ is the completion of an algebraic map $F:W\to V$ from an étale
neighborhood $W\to\A_K^s$ of the origin;
\item $F$ is transverse to every $S_\alpha$ away from the distinguished point.
\end{enumerate}
\end{theorem}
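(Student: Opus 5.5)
We look for $B$ of the form $B=A^{<N}+\Phi$. Here $A^{<N}\in P^d$ is the polynomial truncation of $A$ modulo $\m_0^N$, and $\Phi\in\m_0^N P^d$ is a polynomial correction. Transversality is then imposed one point at a time on $\A^s_K\setminus\{0\}$ by a jet-space incidence count. This gives an algebraic map $F:\A^s_K\to V$ (so $W=\A^s_K$ already suffices, and the étale neighborhood in (i) only gives room if the construction needs to shrink). Transversality of $F$ to $S_\alpha$ at $w$ depends only on the first jet $(F(w),dF_w)$. Equivalently, the graph of $F$ meets $\A^s\times S_\alpha$ transversally inside $\A^s\times V$.

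We phrase this as a Bertini problem on $X=\A^s\times V$ with smooth strata $X_\alpha=\A^s\times S_\alpha$. The correction $\Phi$ ranges over tuples of polynomials in $\m_0^N P$. Write $F=(F_1,\ldots,F_d)$. We adjust one coordinate at a time, exactly as in the proof of Theorem~\ref{thm:icis}, with Taylor data on $Z=\Spec(P/\m_0^N)$ given by the class of $A_i^{<N}$. For each $i$ consider the hypersurface $H_i=\{y_i=F_i(x)\}$ in $\A^s\times\A^d$. Intersecting successively with $H_1,\ldots,H_d$ cuts each $X_\alpha$ down to the preimage $F^{-1}(S_\alpha)$. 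Smoothness of expected dimension at every stage is exactly transversality. At a point $w\ne0$, the condition is the vanishing of a first jet of the hypersurface section along each smooth stratum, which is the local condition in Lemma~\ref{lem:poonen-local}.

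\textbf{Finite fields.} Lemma~\ref{lem:poonen-local} is applied to the strata $X_\alpha\cap H_1\cap\cdots\cap H_{i-1}$ minus the fiber over the origin. This gives $F_i$ with the prescribed class modulo $\m_0^N$, so that every stratum intersection remains smooth of expected dimension, or empty in the zero-dimensional case. The hypersurfaces here are graphs $y_i=F_i(x)$, not arbitrary hypersurfaces. To handle this we run Poonen's sieve only over polynomials in $x$. Equivalently, we apply the sieve on the $\A^s$-factor via the projection, noting that the local bad-jet densities are still $O(q^{-\dim-1})$ per point, since $F_i$ and $dF_i$ at $w$ are free for $w\ne0$ once the degree is large. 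Projective closures are handled by a fixed quasiprojective embedding.

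\textbf{Infinite fields.} We use an incidence-dimension argument. Take corrections $\Phi$ in a large finite-dimensional space $L\subseteq\m_0^NP^d$ that surjects onto first jets at every point $w\ne0$. Since $N\ge2$, monomials $x^\alpha$ with $|\alpha|\ge N$ separate values and first derivatives away from the origin, for instance via $x_j^N$-multiples of affine functions. Then the bad locus
\[
\{(w,\Phi): w\ne0,\ F(w)\in S_\alpha,\ \text{not transverse}\}
\]
fibers over each $w$ in a subvariety of codimension $\ge s+1$ in $L$. Hence its projection to $L$ is not dense, and a $K$-point outside it exists because $K$ is infinite.

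\textbf{Main obstacle.} The hard step is the surjectivity of $\m_0^N P$ onto first jets uniformly on $\A^s\setminus\{0\}$, together with the graph-restricted version of Poonen's sieve. The sieve must be checked to give positive density for polynomials with fixed Taylor class and for the transversality conditions to all strata simultaneously, including the large-degree error terms. I would first try a substitution argument: replace $F_i$ by $A_i^{<N}+x_j^{N}g$ on the chart $x_j\ne0$, and cover $\A^s\setminus\{0\}$ by these $s$ charts. This reduces the problem to Poonen's theorem applied to $g$ on each chart, at the cost of possibly passing to an étale neighborhood, which is the flexibility allowed by (i).
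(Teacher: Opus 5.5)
\textbf{Infinite fields.} Your infinite-field paragraph is correct, and it takes a different route from the paper. You vary all $d$ coordinates of $\Phi\in\m_0^NP^d$ at once, so the bad first jets at each $w\ne0$ have codimension at least $s+1$. The incidence count then produces a polynomial $F$ on all of $\A^s_K$. The paper instead builds hypersurfaces $h_i$ one at a time over both finite and infinite fields.

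\textbf{The gap is the finite-field step.} Lemma~\ref{lem:poonen-local} concerns arbitrary hypersurfaces in the ambient projective space. For a graph hypersurface $y_i=F_i(x)$, the varying part $F_i(x)$ of the section $y_i-F_i(x)$ is pulled back from $\A^s$. Hence $dF_i$ vanishes on the vertical directions $\ker(T_PX\to T_w\A^s)$ of $X=X_\alpha\cap H_1\cap\cdots\cap H_{i-1}$. Wherever $dy_i$ also vanishes on those directions, the local condition loses codimension. Your per-point bound $O(q^{-\dim-1})$ then fails, and the intermediate smoothness you impose can be impossible.

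Here is a concrete example. Let $p\ge5$, $s=1$, $d=2$. Stratify $V$ by the smooth curve $S_1=\{y_1=y_2^p\}$ and its complement, and take $A_1=x^2-x^3$, $N=4$.
\begin{itemize}
\item At the first step, $(\A^1\times S_1)\cap H_1\cong\{y_2^p=F_1(x)\}$ has gradient $(F_1'(x),0)$. It is therefore singular over every nonzero root of $F_1'$.
\item Every admissible $F_1$ has $F_1'=x(2-3x+x^2(\cdots))$. The second factor is nonconstant with constant term $2$, so it has a nonzero root.
\item Hence no $F_1$ of the kind you claim exists.
\item Yet transversality of $(F_1,F_2)$ to $S_1$ only requires $F_1'(w)\ne0$ at points where $F_1(w)=F_2(w)^p$. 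A suitable choice of $F_2$ arranges this.
\end{itemize}
So the claim that ``smoothness at every stage is exactly transversality'' is false at the intermediate stages.

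Two possible repairs do not work as stated. Shrinking around the origin could be tried, but nothing in your argument controls whether these intermediate singular loci accumulate at $0$. Your chart substitution $A_i^{<N}+x_j^Ng$ changes nothing: it is still a graph pulled back from $\A^s$, and separate charts yield incompatible $g$'s.

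\textbf{The missing idea is to give up graphs.} The paper takes each $h_i\in K[x,y]$ arbitrary, subject only to the prescribed $(N-1)$-jet $y_i-A_i^{<N}(x)$ at $z=(0,0)$.
\begin{itemize}
\item Poonen's theorem then applies verbatim on $\A^s\times V$ to $\Gamma_{i-1}\setminus\{z\}$ and to $(\Gamma_{i-1}\cap(\A^s\times S_\alpha))\setminus\{z\}$. There the full linear system controls the $y$-derivatives as well.
\item After $d$ steps, $(\partial h_\alpha/\partial y_\beta)(z)=I_d$ makes $\Gamma=V(h_1,\ldots,h_d)\to\A^s$ étale at $z$.
\item The formal implicit function theorem yields $B$ with $B-A\in\m^NR^d$.
\item On an étale neighborhood $W\subseteq\Gamma$, the $y$-coordinates define $F$. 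It is transverse to every $S_\alpha$ because $T_wW$ is the graph of $dF_w$.
\end{itemize}
This is exactly why part (i) allows an étale neighborhood: $\Gamma$ is in general not a graph over $\A^s$.
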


\begin{proof}
Write the coordinates of $V$ as $y_1,\ldots,y_d$, put
$E=\A_K^s\times V$, and let $z=(0,0)$.  Write
$A=(A_1,\ldots,A_d)$, and let $A_\beta^{<N}$ be the polynomial truncation of
$A_\beta$ modulo $\m^N$.\\

We construct polynomial equations $h_1,\ldots,h_d$ recursively so that
$h_\beta$ has the prescribed $(N-1)$-jet
$y_\beta-A_\beta^{<N}(x)$
at $z$.  Put $\Gamma_i=V(h_1,\ldots,h_i)$.  The inductive requirement is
that, away from $z$, $\Gamma_i$ and every nonempty
$\Gamma_i\cap(\A_K^s\times S_\alpha)$
are smooth of the expected dimensions. Assume $h_1,\ldots,h_{i-1}$ have been chosen.  We regard the relevant affine
schemes as quasiprojective subschemes of one projective space and homogenize the
polynomials.  This allows us to use one ambient linear system throughout; no condition is
imposed at infinity.\\

Suppose first that $K$ is finite.  By the inductive hypothesis, the quasiprojective
schemes
\[
 X_0=\Gamma_{i-1}\setminus\{z\},\qquad
 X_\alpha=\bigl(\Gamma_{i-1}\cap(\A_K^s\times S_\alpha)\bigr)\setminus\{z\}
\]
are smooth whenever nonempty; zero-dimensional members are allowed.  We use these finitely
many schemes as the $X_j$ in Lemma~\ref{lem:poonen-local}.  At $z$ we prescribe the unique
coset modulo the $N$-th power of the maximal ideal giving the required $(N-1)$-jet.  This
coset is open and closed in the completed local ring and hence has positive Haar measure and
boundary of measure zero.\\

Now let $P\ne z$ be a closed point.  For every positive-dimensional $X_j$ containing $P$ we
allow precisely those first jets for which either $h_i(P)\ne0$, or $h_i(P)=0$ and
\[
 dh_i(P)|_{T_PX_j}\ne0.
\]
This is exactly the condition that the hypersurface section of $X_j$ be smooth of dimension
$\dim X_j-1$ at $P$.  If $X_j$ is zero-dimensional, the expected dimension is $-1$, and we
require $h_i(P)\ne0$.  At points lying on none of the $X_j$ there is no condition.  For each fixed $X_j$ containing $P$, the bad first jets form a proper linear subspace of the value-and-first-derivative space. Imposing the condition simultaneously for all $X_j$ through $P$ removes a finite union of such proper linear subspaces. In particular, the allowed set is a finite-jet cylinder of positive local measure—every jet with $h_i(P)\ne0$ is allowed—and its boundary has measure zero.  At a point of a stratum $S_\alpha$, the tangent space of $\Gamma_{i-1}\cap(\mathbb A_K^s\times S_\alpha)$ is contained in $T_P\Gamma_{i-1}$. Hence, whenever both conditions occur, the imposed condition on the stratified intersection is compatible with the ambient smoothness condition.\\

All hypotheses of Lemma~\ref{lem:poonen-local} are therefore satisfied, and an admissible
$h_i$ exists.  This is the same closed-point-sieve mechanism used in
\cite[Theorem~3.3]{Poonen}, now with the finite Taylor condition at $z$ kept independent of
the punctured smoothness conditions.\\

Now suppose that $K$ is infinite.  Let $L_D$ be the affine space of polynomials of degree
at most $D$ with the prescribed jet at $z$.  For $D\gg0$, its translation space separates
first jets at every geometric point of $E\setminus\{z\}$.  Indeed, at a point
$P\ne z$ one may choose a linear form vanishing at $z$ and nonvanishing at $P$, raise it
to the $N$-th power, and multiply by affine linear functions to obtain arbitrary value and
tangent data at $P$.  Let $X$ be one of the finitely many smooth schemes occurring in the
induction and put $e=\dim X$.  For fixed $P\in X$, failure of a hypersurface section to
have the expected smoothness is given by the $e+1$ independent linear conditions $h_i(P)=0$ and $dh_i(P)|_{T_PX}=0$ when $e>0$, and by the single condition $h_i(P)=0$ when $e=0$.  Hence the corresponding incidence variety in $L_D\times X$ has dimension at most $\dim L_D-1$. The Zariski closure of its projection in $L_D$ therefore has dimension at most $\dim L_D-1$, and is a proper closed subset.  Removing the finitely many bad
projections leaves a nonempty Zariski-open subset of $L_D$, and an infinite field has a
$K$-rational point in this open set.  Choosing that point completes the induction.\\

After $d$ steps put $\Gamma=\Gamma_d$.  At $z$ we have
$\left(\frac{\partial h_\alpha}{\partial y_\beta}\right)(z)=I_d$,
so the projection $\Gamma\to\A_K^s$ is étale at $z$.  The formal
implicit-function theorem identifies the completed germ of $\Gamma$ at
$z$ with the graph of a unique formal map $B(x)$, and the prescribed jets
give $B-A\in\m^N R^d$.  After shrinking to an étale neighborhood
$W$ of the distinguished point, the $y$-coordinate map $F:W\to V$ is
transverse to every $S_\alpha$ away from that point.
\end{proof}

\begin{theorem}
\label{thm:jettrans}
Let $K$ be an arbitrary field, let
$A\in\m M_{m,n}(K[[x_1,\ldots,x_s]])$, and let $N\ge2$.  There exists $B$
with the same $(N-1)$-jet as $A$ such that the extended $\Glr$-tangent
image of $B$ has finite $K$-codimension.
\end{theorem}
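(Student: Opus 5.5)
Apply Theorem~\ref{thm:stratified-jet} with $V=Y=\Mat_{m,n}(K)\cong\A_K^{mn}$ and the rank stratification $V=\coprod_{r}\Sigma_r$. This is a finite decomposition into smooth locally closed subschemes, and it is defined over the prime field, hence over $K$. The theorem gives $B\in A+\m^N M_{m,n}$, so $B$ has the same $(N-1)$-jet as $A$. Moreover $B$ is the completion of an algebraic map $F:W\to Y$ on an étale neighborhood $W$ of the origin, and $F$ is transverse to every $\Sigma_r$ away from the distinguished point $w_0$. It remains to show that the $R$-module
\[
Q=\frac{M_{m,n}}{\Ttilde^{\,e}_B(\Glr B)}
\]
has finite length.

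First form the algebraic counterpart of the tangent image. Let $\mathcal O=\mathcal O(W)$ and
\[
T_F=M_m(\mathcal O)F+F\,M_n(\mathcal O)+\sum_\nu \mathcal O\,\frac{\partial F}{\partial x_\nu}\subseteq \Mat_{m,n}(\mathcal O).
\]
The derivations $\partial/\partial x_\nu$ extend to $W$ because $W\to\A_K^s$ is étale. Let $\mathcal Q=\Mat_{m,n}(\mathcal O)/T_F$, a finitely generated $\mathcal O$-module. Completion at $w_0$ is flat, and it commutes with forming $T_F$, since the derivations and products are compatible with completion. Hence $\widehat{\mathcal Q}_{w_0}\cong Q$.

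Next we show that $\mathcal Q$ vanishes at every point $w\neq w_0$ of $W$. By Nakayama it suffices to show that $\mathcal Q\otimes\kappa(w)=0$, i.e.\ that the fiber of $T_F$ spans $\Mat_{m,n}(\kappa(w))$. Put $C=F(w)\in\Sigma_r$ for some $r$. The fiber of $T_F$ at $w$ contains
\[
M_m(\kappa(w))C+C\,M_n(\kappa(w))+dF_w(T_wW).
\]
By Lemma~\ref{lem:rank}, after base change to $\kappa(w)$ the first two terms span $T_C\Sigma_r$. Transversality then gives
\[
T_C\Sigma_r+dF_w(T_wW)=T_CY,
\]
which is all of $\Mat_{m,n}(\kappa(w))$. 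Non-closed points $w$ need separate treatment. Either transversality in Theorem~\ref{thm:stratified-jet} is read scheme-theoretically, or one notes that the support of $\mathcal Q$ is closed, so if it contained a non-closed point it would contain a closed point $\neq w_0$ in its closure, where $\mathcal Q$ vanishes. Either way, $\operatorname{Supp}\mathcal Q\subseteq\{w_0\}$.

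So $\mathcal Q_{w_0}$ is a finitely generated module supported at a closed point and therefore has finite length. Consequently $Q=\widehat{\mathcal Q}_{w_0}$ has finite length, so $\dim_K Q<\infty$; here we use that $w_0$ is $K$-rational, with residue field $K$. The extended $\Glr$-tangent image of $B$ therefore has finite codimension.

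The main obstacle is the fiberwise step. We must check that Lemma~\ref{lem:rank}, stated over $K$, holds after base change to $\kappa(w)$, which is possibly inseparable over $K$. We must also check that transversality from Theorem~\ref{thm:stratified-jet} holds at all (not only $K$-rational) closed points, with the strata taken as base changes. Lemma~\ref{lem:rank} was proved for any field, using only the normal form, so it applies over $\kappa(w)$. For transversality, I would appeal to the construction in Theorem~\ref{thm:stratified-jet}: there smoothness of $\Gamma\cap(\A^s\times S_\alpha)$ of the expected dimension is imposed at every closed point. Via the graph description, that smoothness is equivalent to transversality of $F$ to $S_\alpha$ at the corresponding point.
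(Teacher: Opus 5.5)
Your proposal is correct and follows essentially the same route as the paper: apply Theorem~\ref{thm:stratified-jet} to the rank stratification, combine transversality with Lemma~\ref{lem:rank} to get surjectivity of $(U,V,\xi)\mapsto UF+FV+dF(\xi)$ away from the distinguished point, and conclude that the coherent cokernel is supported there and has finite length, which completion identifies with the quotient by the extended tangent image. Your extra care with non-closed points and with closed points whose residue fields are inseparable over $K$ fills in details the paper leaves implicit. There you reduce to geometric transversality, i.e.\ smoothness of the fiber products of the expected dimension, as imposed in the construction.
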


\begin{proof}
Apply Theorem~\ref{thm:stratified-jet} to the finite rank stratification
$Y=\coprod_r\Sigma_r$.  At every point away from the origin, transversality and
Lemma~\ref{lem:rank} therefore give that
$dB(T)+M_m B+B M_n=M_{m,n}$.
Equivalently, the coherent homomorphism
\[
M_m(\mathcal O_W)\oplus M_n(\mathcal O_W)\oplus T_W
\longrightarrow M_{m,n}(\mathcal O_W),
\qquad
(U,V,\xi)\longmapsto UB+BV+dB(\xi),
\]
is surjective away from the distinguished point.  Its cokernel is
therefore supported at that point and has finite length.  Completion
identifies its image with the extended tangent image of the formal germ
$B$.
\end{proof}

The ordinary tangent image differs from the extended tangent image only
by replacing the $R$-span of the finitely many derivative matrices by
their $\m$-span.  Their quotient is a quotient of a finite-dimensional
$K$-space generated by the classes of the $s$ derivative matrices.
Therefore finite codimension of the ordinary and extended tangent images
are equivalent.

\begin{corollary}
\label{cor:glr-dense}
For every $A\in\m M_{m,n}$ and every
$N$ there exists $B\equiv A\pmod{\m^N}$ that is finitely
$\Glr$-determined.
\end{corollary}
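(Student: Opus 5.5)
The plan is to combine Theorem~\ref{thm:jettrans} with the sufficiency direction of the tangent-image criterion, namely \cite[Theorem~3.1.12]{GLS}, which originates in \cite{GP18}. Fix $A\in\m M_{m,n}$ and $N\ge 2$; for $N=1$ the condition is implied by the case $N=2$. By Theorem~\ref{thm:jettrans} we obtain $B$ with the same $(N-1)$-jet as $A$, so $B\equiv A\pmod{\m^N}$, and with $\dim_K M_{m,n}/\Ttilde^{\,e}_B(\Glr B)<\infty$.

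Next we pass from the extended to the ordinary tangent image. By the remark following Theorem~\ref{thm:jettrans}, the ordinary tangent image $\Ttilde_B(\Glr B)$ has finite codimension as well. The reason is that the quotient $\Ttilde^{\,e}_B/\Ttilde_B$ is spanned over $K$ by the classes of the $s$ matrices $\partial B/\partial x_\nu$, so it is finite-dimensional.

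We then apply the Greuel--Pham sufficiency theorem, which holds over an arbitrary field. It states that if the $\Glr$-tangent image of $B$ has finite codimension, then $B$ is finitely $\Glr$-determined. This gives the corollary.

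The one point needing care is making sure the hypotheses match. The cited criterion is formulated for $B\in\m M_{m,n}$ with a specific version of the tangent image: either the ordinary one, or one containing $\m M_m B+B\,\m M_n+\m^2\langle\partial B\rangle$. If the exact variant in \cite[Theorem~3.1.12]{GLS} uses $\m$-multiples, the same finiteness argument still applies. The reason is that each such module differs from $\Ttilde^{\,e}_B$ by a quotient of $(M_mB+BM_n+\sum_\nu R\,\partial B/\partial x_\nu)/\m(\cdots)$, and this is a finite-dimensional $K$-space because the module is finitely generated over $R$. Since $N\ge2$, we also have $B\in\m M_{m,n}$, so $B$ lies in the domain where determinacy is defined.
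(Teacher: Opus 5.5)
Your proposal is correct and follows the paper's proof: Theorem~\ref{thm:jettrans} produces $B$ with the prescribed jet and finite tangent-image codimension, and the arbitrary-field sufficiency criterion \cite[Theorem~3.1.12]{GLS} turns this into finite $\Glr$-determinacy. Your extra bookkeeping is sound and only makes explicit what the paper leaves implicit; this covers the case $N=1$, the passage from extended to ordinary tangent images, and possible $\m$-power variants of the criterion.
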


\begin{proof}
Theorem~\ref{thm:jettrans} gives $B$ with finite tangent-image codimension.
The arbitrary-field sufficient criterion
\cite[Theorem~3.1.12]{GLS} then gives finite determinacy.
\end{proof}

\begin{theorem}
\label{thm:glr}
For every field $K$ and every
$A\in\m M_{m,n}(K[[x_1,\ldots,x_s]])$, finite $\Glr$-determinacy implies
finite codimension of the tangent image.  Together with the sufficient
criterion of Greuel--Pham, this gives the equivalence predicted by
Conjecture~3.1.16 for the full left--right group.
\end{theorem}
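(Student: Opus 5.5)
Given $A$, finitely $\Glr$-determined, I will use the transfer principle of Lemma~\ref{lem:dense-transfer}. I take the property $P$ to be ``the $\Glr$-tangent image has finite $K$-codimension in $M_{m,n}$'' and the filtered set $E=\m M_{m,n}$ with the $\m$-adic filtration. Theorem~\ref{thm:jettrans} provides the density hypothesis: for each $A$ and each $N\ge2$ there is $B\in A+\m^N M_{m,n}$ whose extended tangent image has finite codimension. The remark after Theorem~\ref{thm:jettrans} lets me pass from the extended to the ordinary tangent image. That remark is needed because the two differ only by the quotient $R\,\partial B/\partial x_\nu$ modulo $\m\,\partial B/\partial x_\nu$, which is spanned over $K$ by the $s$ classes $\partial B/\partial x_\nu$. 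Lemma~\ref{lem:dense-transfer} then yields $P(A)$, once $P$ is known to be $\Glr$-invariant.

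The main step is this $\Glr$-invariance. I would check it directly. Let $B'=U\cdot(B\circ\varphi)\cdot V$ with $U\in\GL_m(R)$, $V\in\GL_n(R)$ and $\varphi\in\R$. The module $M_m(R)B+BM_n(R)$ is unchanged under left multiplication by $U$ and right multiplication by $V$, since $U M_m(R)=M_m(R)$ and similarly for $V$. The derivative terms change by the Leibniz rule: $\partial(UBV)/\partial x_\nu$ equals $U(\partial B/\partial x_\nu)V$ plus terms lying in $M_m(R)(UBV)+(UBV)M_n(R)$, because $\partial U/\partial x_\nu\cdot BV=(\partial U/\partial x_\nu\, U^{-1})\,UBV$. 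So the extended tangent image of $UBV$ is $U\,\Ttilde^e_B(\Glr B)\,V$. This has the same codimension, since $X\mapsto UXV$ is a $K$-linear automorphism of $M_{m,n}$.

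For a source automorphism $\varphi$, the chain rule gives $\partial(B\circ\varphi)/\partial x_\nu=\sum_\mu (\partial B/\partial x_\mu\circ\varphi)\,\partial\varphi_\mu/\partial x_\nu$. The Jacobian matrix of $\varphi$ is invertible over $R$, so the $R$-span of the derivatives of $B\circ\varphi$ equals $\varphi^*$ applied to the $R$-span of the derivatives of $B$. The matrix terms transform the same way, so the extended tangent image of $B\circ\varphi$ is $\varphi^*$ of that of $B$. Since $\varphi^*$ is a $K$-linear automorphism preserving $M_{m,n}$, the codimension is preserved. This argument is characteristic-free, because it uses only Leibniz and chain rules on the extended module, not orbit maps. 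Working with the extended tangent image avoids any issue with $\m$ versus $R$ coefficients, and the finite-codimension equivalence noted above transfers the conclusion to the ordinary tangent image.

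Combining these steps: if $A$ is $k$-$\Glr$-determined, choose $N=k+1\ge2$ and $B\equiv A\pmod{\m^{N}}$ as in Theorem~\ref{thm:jettrans}. Then $A\sim_{\Glr}B$, the invariance shows that $A$ has finite tangent-image codimension, and \cite[Theorem~3.1.12]{GLS} supplies the converse, giving the equivalence.
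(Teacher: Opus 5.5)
Your proposal is correct and is essentially the paper's proof. You approximate $A$ past its determinacy order by the germ $B$ of Theorem~\ref{thm:jettrans}, use $A\sim_{\Glr}B$ together with $\Glr$-invariance of finite tangent-image codimension, and invoke \cite[Theorem~3.1.12]{GLS} for the converse; you verify the invariance explicitly via the Leibniz and chain rules, where the paper only asserts it, and your choice $N=k+1\ge2$ tacitly assumes $k\ge1$, which is harmless because a $k$-determined germ is also $(k+1)$-determined.
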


\begin{proof}
Suppose $A$ is $k$-$\Glr$-determined, and choose $N\ge k+1$.  By
Theorem~\ref{thm:jettrans}, choose $B$ with
$B-A\in\m^N M_{m,n}$ and with tangent image of finite codimension.  Finite
determinacy gives $A\sim_{\Glr}B$. Finite codimension of the tangent image is invariant under
$\Glr$-equivalence, by the chain rule and invertibility of the row,
column, and coordinate transformations.  Hence the tangent image of $A$
has finite codimension.  The converse is the arbitrary-characteristic
sufficient criterion \cite[Theorem 3.1.12]{GLS}.
\end{proof}

Combining the necessity proved above with the arbitrary-characteristic sufficient
criterion gives a field-independent equivalence for the full left--right action.  We
record it explicitly because it will be used in the one-sided and determinantal
arguments below:
\[
A\text{ finitely }\Glr\text{-determined}
\iff
\dim_K \left(\frac{M_{m,n}}{\Ttilde_A^{\,e}(\Glr A)}\right)<\infty.
\]
This extends the previously known one-column result and the $2\times2$ infinite-field
result; see \cite[Theorem 3.1.23]{GLS} and \cite{PMM20}.  For the corresponding tangent-image formalism for right, contact, and left--right
equivalence, see also \cite[\S3.1]{GLS}.\\

The argument for matrices uses only the existence of a finite stratification whose
tangent spaces are infinitesimal orbit directions.  We record this observation in an
abstract form before returning to the one-sided matrix actions.  Let $H$ be a smooth
affine algebraic group acting on a finite-dimensional $K$-vector space $V$.  For
$v\in V$, write $\mathfrak h\cdot v=\im\bigl(d_e(H\to H\cdot v)\bigr)\subseteq T_vV$.

\begin{definition}\label{def:infinitesimal-orbit-stratification}
A finite smooth stratification
$V=\coprod_{\alpha=1}^t S_\alpha$
is called an \emph{infinitesimal-orbit stratification} if, after every
field extension, at every point $v\in S_\alpha$ we have that
$\mathfrak h\cdot v=T_vS_\alpha$.
\end{definition}

The \emph{current group} associated with the action $H\curvearrowright V$ is
$\mathcal G_H=H(R)\rtimes\Aut_K(R)$.  Here an element of $H(R)$ acts pointwise on
the target value of a formal map $A:(K^s,0)\to V$, while
$\Aut_K(R)$ acts by change of source coordinates; the semidirect product records the
resulting change of the $R$-valued point of $H$.  Its extended infinitesimal
current-orbit module is
$\Ttilde_A^{\,e}=R\langle\mathfrak h\cdot A\rangle+\sum_{\nu=1}^sR\,\frac{\partial A}{\partial x_\nu}$.

\begin{theorem}
\label{thm:abstract-current}
Assume that $V$ admits a finite infinitesimal-orbit stratification.  Then,
by Theorem~\ref{thm:stratified-jet}, every finitely
$\mathcal G_H$-determined formal map $A$ satisfies
\[
\dim_K \left(\frac{V\otimes_KR}{\Ttilde_A^{\,e}}\right)<\infty.
\]
Whenever finite codimension of this infinitesimal module is known to be
sufficient for finite determinacy for the action under consideration, it
is therefore equivalent to finite determinacy.
\end{theorem}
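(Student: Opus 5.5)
The plan is to run the proof of Theorem~\ref{thm:glr} again in this abstract setting. Fix coordinates identifying $V$ with $\A_K^d$, and suppose $A$ is $k$-$\mathcal G_H$-determined. Choose $N\ge k+1$ and apply Theorem~\ref{thm:stratified-jet} to $A$ and the given stratification $V=\coprod_\alpha S_\alpha$. This yields $B\in A+\m^N(V\otimes_KR)$ that is the completion of an algebraic map $F:W\to V$ on an étale neighborhood of the origin, with $F$ transverse to every $S_\alpha$ away from the distinguished point.

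\textbf{Step 1 (finite codimension for $B$).} I would show that the cokernel of
\[
\mathfrak h\otimes_K\mathcal O_W\oplus T_W\longrightarrow V\otimes_K\mathcal O_W,\qquad (\xi,\eta)\longmapsto \xi\cdot F+dF(\eta),
\]
is supported at the distinguished point. Take any point $w\neq w_0$; it may be non-rational, so first pass to its residue field or a geometric point over it. Let $F(w)\in S_\alpha$. Transversality gives $dF_w(T_wW)+T_{F(w)}S_\alpha=T_{F(w)}V$. The defining property of an infinitesimal-orbit stratification, which holds after every field extension, gives $T_{F(w)}S_\alpha=\mathfrak h\cdot F(w)$. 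Together these show the map is surjective on fibres at $w$, so by Nakayama it is surjective on stalks there. The cokernel is then a coherent sheaf with finite support at $w_0$, hence of finite length. Completing at $w_0$ (which is flat) identifies the image with $\Ttilde_B^{\,e}$, and the completed cokernel with $(V\otimes R)/\Ttilde_B^{\,e}$, which therefore has finite $K$-dimension.

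\textbf{Step 2 (invariance and transport).} Finite determinacy gives $A\sim_{\mathcal G_H}B$. It remains to check that finite codimension of $\Ttilde^{\,e}$ is a $\mathcal G_H$-invariant.
\begin{itemize}
\item For a source automorphism $\phi$, the chain rule shows that the $R$-span of the $\partial(A\circ\phi)/\partial x_\nu$ equals $\phi^*$ of the $R$-span of the $\partial A/\partial x_\nu$, because the Jacobian of $\phi$ is invertible. Also $\mathfrak h\cdot(A\circ\phi)=\phi^*(\mathfrak h\cdot A)$.
\item For $g\in H(R)$, differentiating the pointwise action shows that the current module of $gA$ is the image of $\Ttilde_A^{\,e}$ under the $R$-linear automorphism $g$ of $V\otimes R$, up to correction terms. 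The correction $(\partial_\nu g)\,A$ lies in $R\langle\mathfrak h\cdot gA\rangle$, since $(\partial_\nu g) g^{-1}\in\mathfrak h\otimes R$. Conjugating by $g$ carries $\mathfrak h\otimes R$ to itself via $\mathrm{Ad}$.
\end{itemize}
Hence the codimension is preserved, which is the same mechanism as Lemma~\ref{lem:dense-transfer}. The final sentence of the theorem is then immediate: whenever the sufficiency direction is available, the two conditions are equivalent.

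\textbf{Main obstacle.} I expect the difficulty to lie in Step 1 at non-closed and non-rational points, and in Step 2 over a non-reduced or inseparable $H$. The hypothesis that the stratification condition holds after every field extension is there precisely to handle the first issue. For the second, I would use that $H$ is smooth, so that $\mathfrak h\otimes R$ is the full tangent space of $H(R)$ at the identity and the identity $(\partial_\nu g)g^{-1}\in\mathfrak h\otimes R$ holds formally.
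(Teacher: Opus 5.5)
Your proposal is correct and follows the paper's proof. You approximate $A$ by a transverse germ $B$ via Theorem~\ref{thm:stratified-jet}, combine transversality with the infinitesimal-orbit hypothesis to get surjectivity of the current-orbit map away from the origin, and hence finite length of its completed cokernel. You then transport this back to $A$ using $\mathcal G_H$-invariance of finite codimension and finite determinacy, as in Lemma~\ref{lem:dense-transfer}. Your explicit invariance check (the chain rule and the term $(\partial_\nu g)g^{-1}\in\mathfrak h\otimes R$) and your handling of non-rational points supply details the paper only asserts.
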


\begin{proof}
Theorem~\ref{thm:stratified-jet} shows that every $\m$-adic cylinder around
$A$ contains a germ $B$ transverse away from the origin to all strata.
At a point with $B(x)\in S_\alpha$, we have that
$dB_x(T_x)+T_{B(x)}S_\alpha=T_{B(x)}V$.
By the infinitesimal-orbit hypothesis, we therefore get that
$dB_x(T_x)+\mathfrak h\cdot B(x)=T_{B(x)}V$.
Therefore the coherent cokernel of the infinitesimal current-orbit map is
supported only at the origin, so its completion has finite length.
Finite codimension of the infinitesimal module is invariant under the
current action.  Lemma~\ref{lem:dense-transfer} now transports the property
from $B$ to every finitely determined $A$.
\end{proof}

\begin{remark}
For $V=M_{m,n}$ and $H=\GL_m\times\GL_n$, the rank strata form an
infinitesimal-orbit stratification by Lemma~\ref{lem:rank}.  Thus
Theorem~\ref{thm:glr} is the basic matrix instance of
Theorem~\ref{thm:abstract-current}.
\end{remark}

We next study the one-sided matrix actions.  Once rank transversality is known, the remaining directions are measured by the kernel and image maps to Grassmannians.\\

For $1\le t\le\min(m,n)$, let $I_t(A)\subseteq R$ denote the \emph{$t$-th
determinantal ideal}, generated by the $t\times t$ minors of $A$.  If $A$ is a formal
matrix germ, we say that it is \emph{transverse to the rank strata on the punctured
formal spectrum} if for every nonmaximal prime $\mathfrak p\subseteq R$, after base
change to the residue field $\kappa(\mathfrak p)$, the induced map satisfies the tangent-space
transversality condition above at the corresponding point.  Equivalently, the
left--right infinitesimal orbit map is surjective after localization at every
$\mathfrak p\ne\m$.\\

For $C$ in the rank-$r$ stratum $\Sigma_r\subseteq M_{m,n}$ define
$\kappa_r(C)=\ker C\in\Grass(n-r,n)$ and
$\iota_r(C)=\im C\in\Grass(r,m)$.\\

When a one-dimensional completed determinantal locus is reduced, by a \emph{formal
branch} we mean one of its irreducible components, equivalently one of its minimal
primes.  At the generic point $\mathfrak q$ of such a branch, the \emph{continuous
cotangent space} means the one-dimensional cotangent space obtained from continuous
$K$-differentials on the completed local algebra and then tensoring with
$\kappa(\mathfrak q)$.  In dimension one every punctured prime of a reduced local ring is
minimal, so checking a differential on these branch generic points checks it on the whole
punctured spectrum.

\begin{lemma}\label{lem:grass-quotient}
At $C\in\Sigma_r$, the kernel of $d\kappa_r$ is $M_mC$ and the kernel of
$d\iota_r$ is $CM_n$.  Hence the quotient of $T_C\Sigma_r$ by the left matrix orbit
is $T_{\kappa_r(C)}\Grass(n-r,n)$, while the corresponding quotient by the right
matrix orbit is $T_{\iota_r(C)}\Grass(r,m)$.
\end{lemma}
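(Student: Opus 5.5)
I will prove the lemma by a direct computation in normal form, after first using equivariance to reduce to a single point of each stratum. The kernel map $\kappa_r:\Sigma_r\to\Grass(n-r,n)$ is equivariant for $\GL_m(K)\times\GL_n(K)$. The factor $\GL_m$ acts trivially on the target, since $\ker(gC)=\ker C$. The factor $\GL_n$ acts on the target by $(h,W)\mapsto h^{-1}W$ when $C\mapsto Ch$.

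Similarly, $\iota_r$ is invariant under right multiplication and equivariant for left multiplication. Because $\Sigma_r$ is a single orbit, I may therefore take
\[
C=\begin{pmatrix} I_r&0\\0&0\end{pmatrix},
\]
after base change to an algebraic closure if desired. Over an algebraic closure the kernel of a linear map can be computed after extension of scalars, so the reduction is harmless.

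\textbf{Step 1 (tangent map to the Grassmannian).} By Lemma~\ref{lem:rank}, $T_C\Sigma_r$ is the space of matrices $X=\begin{pmatrix}X_{11}&X_{12}\\X_{21}&0\end{pmatrix}$ whose lower-right block vanishes. I compute $d\kappa_r$ using the standard chart of $\Grass(n-r,n)$ near $\ker C=\Span(e_{r+1},\dots,e_n)$. In this chart, subspaces are graphs $\{(\Phi v,v)\}$ with $\Phi\in\Mat(r,n-r)$.

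Differentiating the condition that $(C+\epsilon X)$ kills $(\epsilon\Phi' v,v)$ modulo $\epsilon^2$ gives
\[
\Phi'+X_{12}=0 .
\]
The resulting formula $d\kappa_r(X)=-X_{12}$ is a plain dual-number computation, so it holds in every characteristic. Hence $\ker d\kappa_r=\{X_{12}=0\}$.

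\textbf{Step 2 (compare with the orbit).} Writing $U=\begin{pmatrix}U_{11}&U_{12}\\U_{21}&U_{22}\end{pmatrix}$, one gets
\[
UC=\begin{pmatrix}U_{11}&0\\U_{21}&0\end{pmatrix}.
\]
So $M_mC$ is exactly the set of tangent vectors with $X_{12}=0$ and $X_{22}=0$. Inside $T_C\Sigma_r$ this is precisely $\ker d\kappa_r$.

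Symmetrically, $CV=\begin{pmatrix}V_{11}&V_{12}\\0&0\end{pmatrix}$. The chart for $\im C$ gives $d\iota_r(X)=X_{21}$, again in every characteristic, so $\ker d\iota_r=CM_n$.

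\textbf{Step 3 (quotients).} $d\kappa_r$ is surjective onto $\Mat(r,n-r)\cong T\Grass(n-r,n)$, since $X_{12}$ is arbitrary. Likewise $d\iota_r$ is surjective onto $\Mat(m-r,r)\cong T\Grass(r,m)$. The quotient statements then follow from the first isomorphism theorem. As a check, the dimensions agree: $r(m+n-r)-rm=r(n-r)$.

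\textbf{Main obstacle.} The only delicate point is doing the Grassmannian differentials scheme-theoretically, through dual numbers rather than curves, so that nothing depends on the characteristic. The rest is routine block bookkeeping.
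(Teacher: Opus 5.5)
Your proof is correct, and it takes a more explicit route than the paper. The paper's proof is two lines. It observes that the fibers of $\kappa_r$ (resp.\ $\iota_r$) are the $\GL_m$-orbits (resp.\ $\GL_n$-orbits) in $\Sigma_r$ and that both maps are smooth homogeneous fibrations. Differentiating then identifies $\ker d\kappa_r$ with the tangent space of the fiber, i.e.\ of the orbit.

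That argument is conceptual, but it leaves two points implicit:
\begin{enumerate}
\item smoothness of $\kappa_r$ and $\iota_r$;
\item the fact that the tangent space of a $\GL_m$-orbit equals the image $M_mC$ of the orbit differential, i.e.\ separability of the orbit map.
\end{enumerate}
The second point is exactly the kind that can fail in positive characteristic, and the paper is careful about it elsewhere.

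Your dual-number computation in normal form, $d\kappa_r(X)=-X_{12}$ and $d\iota_r(X)=X_{21}$, settles both points at once. It gives surjectivity of the differentials, hence smoothness at $C$. It also identifies the kernels with $M_mC$ and $CM_n$ by direct block comparison, with no appeal to orbit tangent spaces. The price is some coordinate bookkeeping plus an equivariance transport.

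For the transport, you should say explicitly that $\kappa_r(gC'h)=h^{-1}\kappa_r(C')$. Consequently $X\mapsto gXh$ carries $\ker d\kappa_r|_C$ onto $\ker d\kappa_r|_{gCh}$ and $M_mC$ onto $M_m(gCh)=M_mCh$, and similarly for $\iota_r$. Also, Gaussian elimination already puts a $K$-rational $C$ into normal form over $K$, so the passage to an algebraic closure is unnecessary.
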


\begin{proof}
The fibers of $\kappa_r$ are the $\GL_m$-orbits inside $\Sigma_r$, while the fibers
of $\iota_r$ are the $\GL_n$-orbits.  Both maps are smooth homogeneous fibrations.
Differentiating gives the statement.
\end{proof}

\begin{proposition}
\label{prop:one-sided-criterion}
Assume that $A$ is transverse to all rank strata on the punctured spectrum.  Then the
extended $\Gl$ tangent image has finite codimension exactly when every induced kernel
map $\kappa_r\circ A$ is submersive on $A^{-1}(\Sigma_r)$ away from the origin.
Dually, the extended $\Gr$ tangent image has finite codimension exactly when all
induced image maps $\iota_r\circ A$ are submersive there.
\end{proposition}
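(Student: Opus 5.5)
The plan is to localize the finite-codimension question to the punctured spectrum, and there compare the $\Gl$ tangent image with the $\Glr$ one through Lemma~\ref{lem:grass-quotient}. The extended $\Gl$ tangent image is $M_m(R)A+\sum_\nu R\,\partial A/\partial x_\nu$. Its cokernel $Q$ is a finitely generated $R$-module. Hence $\dim_K Q<\infty$ exactly when $Q_{\mathfrak p}=0$ for every nonmaximal prime $\mathfrak p$. By Nakayama, this is equivalent to surjectivity of the infinitesimal $\Gl$ map after tensoring with $\kappa(\mathfrak p)$. So everything reduces to a pointwise linear-algebra statement at each punctured point $x$, with $A(x)=C\in\Sigma_r$. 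For formal germs the ``point'' is the image of $\mathfrak p$, and we work over $\kappa(\mathfrak p)$, where Lemma~\ref{lem:rank} and Lemma~\ref{lem:grass-quotient} hold after field extension.

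At such a point, the standing hypothesis of transversality to $\Sigma_r$ gives
\[ dA_x(T_x)+T_C\Sigma_r=M_{m,n}(\kappa). \]
I would show that the $\Gl$ surjectivity condition $dA_x(T_x)+M_mC=M_{m,n}$ is equivalent to surjectivity of $d(\kappa_r\circ A)_x$ restricted to $T_x(A^{-1}\Sigma_r)$.

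Consider the quotient map $M_{m,n}\to M_{m,n}/M_mC$. By transversality, $M_{m,n}/T_C\Sigma_r$ is already covered by $dA_x(T_x)$. So $dA_x(T_x)+M_mC=M_{m,n}$ holds iff the composite $dA_x(T_x)\cap\ldots$ covers $T_C\Sigma_r/M_mC$ in the appropriate sense. Concretely, the condition becomes
\[ \bigl(dA_x(T_x)\cap T_C\Sigma_r\bigr)+M_mC=T_C\Sigma_r. \]
To justify this, take $w\in T_C\Sigma_r$ and write $w=dA_x(\xi)+uC$. Then $dA_x(\xi)\in T_C\Sigma_r$, since $uC\in T_C\Sigma_r$ by Lemma~\ref{lem:rank}. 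Transversality also identifies $dA_x^{-1}(T_C\Sigma_r)$ with $T_x(A^{-1}\Sigma_r)$, because the preimage of the smooth stratum is smooth of the expected codimension with this tangent space. Applying Lemma~\ref{lem:grass-quotient}, $T_C\Sigma_r/M_mC\cong T_{\kappa_r(C)}\Grass(n-r,n)$ via $d\kappa_r$. The displayed condition therefore says exactly that $d(\kappa_r\circ A|_{A^{-1}\Sigma_r})_x$ is surjective, i.e.\ that the kernel map is submersive at $x$.

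Running this over all $r$ and all punctured points gives the $\Gl$ statement. The $\Gr$ statement is the same argument with $CM_n$ and $\iota_r$, or alternatively follows by transposition.

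The main obstacle is the scheme-theoretic bookkeeping on the formal punctured spectrum. One needs that $A^{-1}(\Sigma_r)$ localized at $\mathfrak p$ is regular with tangent space $dA^{-1}(T\Sigma_r)$, and that ``submersive'' is tested on continuous differentials over $\kappa(\mathfrak p)$, which may be imperfect. I would handle this by using transversality as the hypothesis defining smoothness of the pulled-back stratum. Concretely, the pullback of local equations of $\Sigma_r$ has independent differentials by transversality, so the Jacobian criterion for continuous differentials applies. The identification $T_C\Sigma_r/M_mC\cong T\Grass$ is then used only as a linear statement over $\kappa(\mathfrak p)$, which Lemma~\ref{lem:grass-quotient} provides after base change.
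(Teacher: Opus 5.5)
Your proposal is correct and follows the same route as the paper. You localize the cokernel at nonmaximal primes, let rank transversality supply the directions normal to $\Sigma_r$, and use Lemma~\ref{lem:grass-quotient} to identify the remaining directions modulo $M_mC$ with the tangent space of the kernel Grassmannian. The only difference is that you spell out the linear-algebra equivalence $dA_x(T_x)+M_mC=M_{m,n}\iff\bigl(dA_x(T_x)\cap T_C\Sigma_r\bigr)+M_mC=T_C\Sigma_r$, which the paper leaves implicit; one direction comes from $M_mC\subseteq T_C\Sigma_r$ and the other from transversality.
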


\begin{proof}
Rank transversality supplies all normal directions to $\Sigma_r$.  By
Lemma~\ref{lem:grass-quotient}, the directions missing from the left matrix orbit are
exactly the tangent directions of the kernel Grassmannian, and they are supplied by the
differential of $\kappa_r\circ A$.  Thus localized surjectivity of the extended left
tangent image is exactly submersivity of the kernel map.  The right case is dual.
Finite codimension is equivalent to localized surjectivity at every nonmaximal prime.
\end{proof}

\begin{corollary}
\label{cor:one-sided-reduction}
If $A$ is finitely $\Gl$-determined, then the only possible obstruction at this stage to finite
codimension of its tangent image is a punctured critical locus of the kernel maps.
Dually, for a finitely $\Gr$-determined matrix the only possible obstruction at this stage is a
punctured critical locus of the image maps.
\end{corollary}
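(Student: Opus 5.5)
The plan is to combine the density-transfer principle of Lemma~\ref{lem:dense-transfer} with the rank-transversality approximation of Theorem~\ref{thm:jettrans} (more precisely Theorem~\ref{thm:stratified-jet} applied to the rank stratification). After that, Proposition~\ref{prop:one-sided-criterion} isolates what is left. We treat $\Gl$; the $\Gr$ case is obtained by transposition, exchanging kernel and image maps.

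Suppose $A$ is $k$-$\Gl$-determined and fix $N\ge k+1$. By Theorem~\ref{thm:stratified-jet} there is $B\in A+\m^NM_{m,n}$ whose algebraic representative $F:W\to Y$ is transverse to every rank stratum $\Sigma_r$ away from the distinguished point. Finite determinacy gives $A\sim_{\Gl}B$.

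Next we check that transversality to rank strata on the punctured spectrum is preserved under $\Gl$-equivalence. The reason is that the localized surjectivity of the left--right infinitesimal orbit map is unchanged by the chain rule under left multiplication and source automorphisms. It follows that $A$ itself is rank-transverse on the punctured spectrum.

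Proposition~\ref{prop:one-sided-criterion} now applies to $A$. It says that finite codimension of the extended $\Gl$ tangent image of $A$ is equivalent to submersivity of each kernel map $\kappa_r\circ A$ on $A^{-1}(\Sigma_r)$ away from the origin. Hence the only remaining obstruction is a punctured critical locus of these kernel maps, which is exactly what the corollary asserts. The dual argument with $\iota_r$ gives the $\Gr$ statement.

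The step needing the most care is the invariance of rank transversality on the punctured formal spectrum under the one-sided group, since the property has to be phrased for formal germs through the residue fields $\kappa(\mathfrak p)$. We will handle it by noting two facts. First, the left--right tangent module of $gA\circ\phi$ equals $g\cdot\phi^*$ of that of $A$. Second, localization at nonmaximal primes commutes with these operations.

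A possible subtlety is that $\Gl\subset\Glr$. The transversality property we need is a $\Glr$-invariant, and therefore also a $\Gl$-invariant, so Lemma~\ref{lem:dense-transfer} applies with $G=\Gl$.
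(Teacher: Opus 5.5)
Your proposal is correct and is essentially the paper's argument. The paper notes that finite $\Gl$-determinacy implies finite $\Glr$-determinacy and quotes Theorem~\ref{thm:glr} together with Lemma~\ref{lem:rank} to obtain punctured rank transversality, whereas you unwind the proof of Theorem~\ref{thm:glr} (approximation via Theorem~\ref{thm:jettrans} plus invariance of tangent-image codimension) directly for $G=\Gl$ before applying Proposition~\ref{prop:one-sided-criterion}; the one step you leave implicit, passing from algebraic transversality of $B$ on $W$ to transversality on the punctured formal spectrum, is exactly the finite-length cokernel argument already contained in the proof of Theorem~\ref{thm:jettrans}.
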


\begin{proof}
Finite one-sided determinacy implies finite $\Glr$-determinacy.  Theorem~\ref{thm:glr}
and Lemma~\ref{lem:rank} therefore give rank-stratum transversality away from the
origin.  Apply Proposition~\ref{prop:one-sided-criterion}.
\end{proof}

\begin{proposition}
\label{prop:one-sided-full-rank}
If $n\le m$ and $I_n(A)$ is $\m$-primary, then $A$ is finitely
$\Gl$-determined and its extended $\Gl$ tangent image has finite codimension.
Dually, if $m\le n$ and $I_m(A)$ is $\m$-primary, then $A$ is finitely
$\Gr$-determined.
\end{proposition}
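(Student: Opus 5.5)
The plan is to show directly that the extended $\Gl$ tangent image
$\Ttilde_A^{\,e}(\Gl A)=M_m(R)A+\sum_\nu R\,\partial A/\partial x_\nu$
has finite codimension, and then deduce finite $\Gl$-determinacy from the arbitrary-field sufficient criterion \cite[Theorem~3.1.12]{GLS}. The ordinary and extended tangent images differ by a finite-dimensional space, as remarked after Theorem~\ref{thm:jettrans}, so it is enough to treat the extended one. The $\Gr$ statement for $m\le n$ follows by transposition: it interchanges the roles of $\Gl$ and $\Gr$, and of $m$ and $n$.

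The key step is the containment
\[
I_n(A)\cdot M_{m,n}\subseteq M_m(R)A .
\]
We prove it by an adjugate construction. Fix an $n$-element subset $\rho$ of the rows and let $A_\rho$ be the corresponding $n\times n$ submatrix. Let $E_\rho\in M_{n,m}(R)$ be the matrix that places the columns of $\operatorname{adj}(A_\rho)$ in the positions indexed by $\rho$ and zeros elsewhere. Then $E_\rho A=\operatorname{adj}(A_\rho)A_\rho=\det(A_\rho)I_n$. Now take any $C\in M_{m,n}$ and put $U=CE_\rho\in M_m(R)$. This gives $UA=\det(A_\rho)\,C$.

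Summing over $\rho$ shows that every element of $I_n(A)M_{m,n}$ lies in $M_m(R)A$. Since $I_n(A)$ is $\m$-primary, $\m^k\subseteq I_n(A)$ for some $k$. Hence $\m^kM_{m,n}\subseteq\Ttilde_A^{\,e}(\Gl A)$, and the quotient is a quotient of $M_{m,n}/\m^kM_{m,n}$, which is finite-dimensional over $K$.

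Finite codimension then gives finite $\Gl$-determinacy by \cite[Theorem~3.1.12]{GLS}, with no hypothesis on $K$. Note that the derivative terms are not needed at all: already the left matrix orbit has finite codimension.

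The only point needing care is that Theorem~3.1.12 of \cite{GLS} is phrased for the ordinary tangent image, which carries $\m$ in front of the derivative terms; the finite-dimensional difference handles this. One could also argue geometrically: on the punctured spectrum $A$ has full column rank $n$, so it lands in the open stratum $\Sigma_n$. There the kernel Grassmannian $\Grass(0,n)$ is a point, so Proposition~\ref{prop:one-sided-criterion} applies trivially. However, the adjugate computation avoids any transversality hypothesis, so I would use it as the main argument.
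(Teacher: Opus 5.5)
Your proof is correct. It rests on the same mechanism as the paper's proof: when $A$ has full column rank, the left matrix directions $M_m(R)A$ alone fill the target, so the derivative terms play no role.

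Where you differ is in how this is made rigorous. The paper argues pointwise. At each nonmaximal prime the matrix is injective over the residue field, so left multiplication is surjective there (implicitly via Nakayama over $R_{\mathfrak p}$). Hence the cokernel of the tangent image is supported at the closed point and has finite length.

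Your identity $E_\rho A=\det(A_\rho)I_n$ is the global, Cramer's-rule version of that pointwise inversion. It gives the explicit containment $I_n(A)M_{m,n}\subseteq M_m(R)A$ with no localization or support argument. It is also effective: if $\m^k\subseteq I_n(A)$, then $\m^kM_{m,n}$ already lies in the ordinary tangent image. So you do not need the remark comparing ordinary and extended images, and $k$ feeds directly into the quantitative Greuel--Pham criterion to give an explicit determinacy order.

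The paper's pointwise phrasing, for its part, matches the rank-stratification language of the surrounding one-sided results. Your side remark via Proposition~\ref{prop:one-sided-criterion} is also sound. On the punctured spectrum $A$ only meets the open stratum $\Sigma_n$, so rank transversality holds automatically there.
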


\begin{proof}
At a punctured point, full column rank means that $C:K^n\to K^m$ is injective.  Every
$B:K^n\to K^m$ can then be written $B=UC$ for some $U\in M_m(K)$, so the left
matrix directions already fill the target.  Therefore the tangent cokernel has finite length,
and the sufficient tangent-image criterion \cite[Theorem~3.1.12]{GLS} gives finite
determinacy.  The right
case is the transpose.
\end{proof}

\begin{theorem}
\label{thm:one-sided-ranges}
If $n\le m$ and $s\le m-n+1$, then finite $\Gl$-determinacy, $\m$-primaryness of
$I_n(A)$, and finite codimension of the extended $\Gl$ tangent image are equivalent.
Dually, if $m\le n$ and $s\le n-m+1$, the analogous three conditions for $\Gr$ and
$I_m(A)$ are equivalent.
\end{theorem}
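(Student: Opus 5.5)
We prove the $\Gl$ case; the $\Gr$ case follows by transposition, which exchanges $\Gl$ with $\Gr$ and $(m,n)$ with $(n,m)$. We establish the cycle of implications
\[
I_n(A)\ \m\text{-primary}\ \Longrightarrow\ \text{finite codimension}\ \Longrightarrow\ \text{finite }\Gl\text{-determinacy}\ \Longrightarrow\ I_n(A)\ \m\text{-primary}.
\]

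\emph{First two implications.} If $I_n(A)$ is $\m$-primary, Proposition~\ref{prop:one-sided-full-rank} gives finite codimension of the extended $\Gl$ tangent image, and also finite determinacy. Finite codimension of the extended image is equivalent to finite codimension of the ordinary image, by the remark after Theorem~\ref{thm:jettrans}. The sufficient criterion \cite[Theorem~3.1.12]{GLS} then gives finite $\Gl$-determinacy.

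\emph{Third implication: the geometric input.} Assume $A$ is finitely $\Gl$-determined. Then $A$ is finitely $\Glr$-determined, so by Theorem~\ref{thm:glr} and Lemma~\ref{lem:rank}, $A$ is transverse to every rank stratum $\Sigma_r$ on the punctured spectrum. The strata of non-maximal rank, $\Sigma_r$ with $r<n$, make up the locus $V(I_n)\subseteq M_{m,n}(K)$. Its codimension is $m-n+1$, the value of $c_t$ at $t=n$, and it is attained by $\Sigma_{n-1}$; the strata with $r<n-1$ have larger codimension.

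\emph{The case $s<m-n+1$.} Transversality forces the preimage of a stratum of codimension $c>s$ to be empty away from the origin. Indeed, at a punctured prime $\mathfrak p$, the condition $dB(T)+T\Sigma_r=T$ over $\kappa(\mathfrak p)$ requires the normal space, of dimension $c$, to be spanned by the images of the $s$ derivations. Thus $V(I_n(A))$ is contained in $\{\m\}$, i.e. $I_n(A)$ is $\m$-primary.

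\emph{The boundary case $s=m-n+1$ (main obstacle).} Here $\Sigma_{n-1}$ has codimension exactly $s$, so transversality allows a one-dimensional punctured preimage (the formal spectrum has dimension $s$, and the preimage has dimension $s-c=0$ off the origin only in the algebraic model). More precisely, punctured primes $\mathfrak p$ with $\dim R/\mathfrak p\ge1$ mapping into $\Sigma_{n-1}$ would require $\dim(R/\mathfrak p)+c\le s$, so $\dim R/\mathfrak p\le 0$. Hence every punctured prime of $R/I_n(A)$ is again excluded.

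To make this rigorous, I would argue with heights rather than points. Let $\mathfrak p\supseteq I_n(A)$ be a nonmaximal prime. Localized transversality at $\mathfrak p$ makes the map $R_{\mathfrak p}\to$ (local ring of the generic determinantal variety at the image point) behave like a regular-sequence pullback. Hence $\height I_n(A)R_{\mathfrak p}=m-n+1=s$, the height of the generic ideal. This contradicts $\height\mathfrak p\le s-1$. Therefore $V(I_n(A))=\{\m\}$.

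The hardest step is justifying this height transfer in positive characteristic over a possibly finite field. I would handle it by using Theorem~\ref{thm:stratified-jet} to replace $A$ by an algebraic transverse representative $B$ with $A\sim_{\Gl}B$ (Lemma~\ref{lem:dense-transfer}). On the étale neighborhood $W$, transversality makes $F^{-1}(\Sigma_r)$ smooth of codimension $\codim\Sigma_r$ away from the origin. It is therefore empty there when the codimension exceeds $s-1$, since the punctured scheme has dimension $s$ but its components through the origin need dimension at least $1$. Here one must check that a smooth codimension-$s$ locus on $W\setminus\{0\}$ that closes up at $0$ cannot exist: such a locus would be a finite set of closed points, and we shrink $W$ to avoid them. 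Since $\m$-primaryness of $I_n$ is $\Gl$-invariant, the conclusion for $B$ transfers to $A$.
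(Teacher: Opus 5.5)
Your proposal is correct and follows the paper's route. The paper closes the cycle by citing the height formula $\height I_n(A)=\min\{s,m-n+1\}=s$ of Theorem~\ref{thm:det-geometry-new}, which rests on the same punctured rank transversality you extract from Theorem~\ref{thm:glr}; your codimension count, or equivalently your shrinking argument on the algebraic approximant $B$, proves that citation directly for $t=n$. The step you flag as hardest is elementary in every characteristic: after localizing at a punctured $\mathfrak p$ of rank $r<n$ and reducing $A$ to $\mathrm{diag}(I_r,A')$, transversality says the $(m-r)(n-r)$ entries of $A'\in M_{m-r,n-r}(\mathfrak pR_{\mathfrak p})$ have $\kappa(\mathfrak p)$-independent differentials, hence are independent in $\mathfrak pR_{\mathfrak p}/\mathfrak p^2R_{\mathfrak p}$, which gives the impossible bound $\height\mathfrak p\ge(m-r)(n-r)\ge m-n+1\ge s$.
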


\begin{proof}
Only necessity of the determinantal condition remains.  Finite one-sided determinacy
implies finite $\Glr$-determinacy.  Theorem~\ref{thm:det-geometry-new} gives
$\height I_n(A)=\min\{s,m-n+1\}=s$ in the left case, hence $I_n(A)$ is
$\m$-primary.  The right case is dual.  The remaining implications are
Proposition~\ref{prop:one-sided-full-rank} and the sufficient criterion
\cite[Theorem~3.1.12]{GLS}.
\end{proof}

\begin{lemma}\label{lem:deep-gauge-rigidity}
\begin{enumerate}[label=\textup{(\roman*)},leftmargin=*]
\item If $A$ is $k$-$\Gl$-determined and $V-I_n\in\m^kM_n(R)$, then $AV\sim_{\Gl}A$, and on every rank-$r$ locus the kernel maps differ by a source automorphism.
\item Dually, if $A$ is $k$-$\Gr$-determined and $U-I_m\in\m^kM_m(R)$, then $UA\sim_{\Gr}A$, and on every rank-$r$ locus the image maps differ by a source automorphism.
\end{enumerate}
\end{lemma}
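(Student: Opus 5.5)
For part (i), the plan is to show that $AV$ is a deep-order perturbation of $A$ and then invoke the determinacy hypothesis. Write $V=I_n+W$ with $W\in\m^kM_n(R)$. Then
\[
AV-A=AW\in \m\cdot\m^k M_{m,n}=\m^{k+1}M_{m,n},
\]
because $A\in\m M_{m,n}$. So $AV$ lies in $\m M_{m,n}$ and has the same $k$-jet as $A$. Since $A$ is $k$-$\Gl$-determined, this already gives $AV\sim_{\Gl}A$. Concretely, there are $U\in\GL_m(R)$ and $\varphi\in\R$ with
\[
AV=U\cdot\varphi(A).
\]
Here $\varphi(A)$ means the matrix obtained by substituting the coordinate change $\varphi$ into the entries of $A$.

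Next comes the comparison of kernel maps. I will work pointwise on the punctured spectrum, or on an algebraic representative after passing to a common étale neighbourhood. Left multiplication by the invertible matrix $U(x)$ preserves both rank and kernel. So at every point, rank and kernel of $AV$ agree with those of $A\circ\varphi$. In particular the rank-$r$ locus of $AV$ is $\varphi^{-1}$ of the rank-$r$ locus of $A$, and on it
\[
\kappa_r(AV)=\kappa_r(A\circ\varphi)=\kappa_r(A)\circ\varphi.
\]
Thus the kernel map of $AV$ is the kernel map of $A$ composed with the source automorphism $\varphi$. This is exactly the asserted statement, with $\varphi$ the automorphism supplied by determinacy.

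One point to keep straight: the kernel of $AV$ is $V^{-1}(\ker A)$, so it differs from $\ker A$ by the gauge $V^{-1}$. The lemma does not claim that $\kappa_r(AV)$ equals $\kappa_r(A)$. It claims that $\kappa_r(AV)$ is a reparametrization of $\kappa_r(A)$, and this reparametrization comes from determinacy rather than from $V$ itself. The intended consequence is that submersivity and critical loci of kernel maps are invariant under deep gauges, up to source automorphism. That follows since $\varphi$ is an automorphism, so composing with it carries critical loci isomorphically.

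Part (ii) is the transpose of part (i). Here $UA-A=(U-I_m)A\in\m^{k+1}M_{m,n}$, so $k$-$\Gr$-determinacy gives $UA=\varphi(A)\cdot V'$ with $V'\in\GL_n(R)$. Right multiplication by an invertible matrix preserves the image, so $\iota_r(UA)=\iota_r(A)\circ\varphi$ on the rank-$r$ loci.

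I do not expect a real obstacle. The only delicate point is making "kernel map" precise on formal germs rather than on points. I would handle it by viewing $\kappa_r$ as a morphism from the locally closed rank-$r$ locus of $\Spec R$ to the Grassmannian, defined via the kernel bundle. The identity of kernel subbundles under left multiplication by an invertible matrix is a module-level statement, so it holds scheme-theoretically and not just at points.
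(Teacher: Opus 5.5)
Your proof is correct and follows the paper's argument essentially verbatim. You show $AV-A\in\m^{k+1}M_{m,n}$, use determinacy to write $AV=U\varphi(A)$, and then use that left (resp.\ right) multiplication by an invertible matrix preserves rank and kernel (resp.\ image). The paper additionally records that $I_t(AV)=I_t(A)=\varphi(I_t(A))$, so $\varphi$ preserves each rank locus of $A$. Combined with your own observation $\ker(AV)=V^{-1}\ker A$, this gives the form $V^{-1}\cdot\kappa_r(A)=\kappa_r(A)\circ\varphi$ that is actually used later in Theorem~\ref{thm:one-sided-complete}; it is implicit in your write-up but worth stating.
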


\begin{proof}
Since $A\in\m M_{m,n}$, the condition $V-I_n\in\m^kM_n(R)$ gives
$AV-A\in\m^{k+1}M_{m,n}$.  Finite $\Gl$-determinacy therefore gives
$AV=U\varphi(A)$ for some $U\in\GL_m(R)$ and $\varphi\in\Aut_K(R)$.  Right
multiplication by $V$ and left multiplication by $U$ do not change matrix rank, and
$I_t(AV)=I_t(A)$ while $I_t(U\varphi(A))=\varphi(I_t(A))$.  Hence $\varphi$ preserves
every determinantal rank locus.  On the rank-$r$ locus, left multiplication does not
change the kernel and right multiplication sends the kernel by $V^{-1}$.  Thus the two
kernel maps differ only by the source automorphism induced by $\varphi$.  The proof for
$\Gr$ is obtained by transposing the argument: left gauges act on image subspaces,
whereas right multiplication does not change them.
\end{proof}

In positive characteristic, Frobenius gives sufficiently deep gauges whose
differentials vanish.  We first record the only form of this observation that
will be needed below.  For $N\ge1$, call
$V\in\GL_n(R)$ \emph{$N$-deep} if $V-I_n\in\m^NM_n(R)$. 
For a polynomial local gauge over $S_0$, we use the same terminology with
$\m_0S_0$ in place of $\m$.

\begin{lemma}\label{lem:frobenius-gauge-flexibility}
Assume that $\Char K=p>0$ and let $N\ge1$.  Let
$\mathfrak q\ne\m$ be a prime of $R$, and put $L=\Frac\left(\frac{R}{\mathfrak q}\right)$.  Then the values at $\mathfrak q$ of $N$-deep polynomial local gauges
$V\in\GL_n(S_0)$ satisfying $dV=0$ are Zariski dense in $\GL_n(L)$.
\end{lemma}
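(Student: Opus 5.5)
The plan is to build the gauges directly out of $p$-th powers. Every polynomial in $S_0^{p}$ has vanishing differential. So any matrix $V$ whose entries lie in the subring $S_0^{(p)}=\{g^p: g\in S_0\}$ (which equals $K^p[x_1^p,\ldots,x_s^p]$ localized) satisfies $dV=0$. We take
\[
V=I_n+W^{p},\qquad W=(w_{ij}),\ w_{ij}\in \m_0^{N}S_0,\qquad W^{p}:=(w_{ij}^{p}),
\]
with the power taken entrywise. Then $V-I_n\in\m_0^{pN}\subseteq\m_0^N$. Moreover $\det V\equiv1\pmod{\m_0}$, so $V\in\GL_n(S_0)$, and $dV=0$ since $d(w^p)=pw^{p-1}dw=0$. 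The set of such $V$ forms a family parametrized by the $K$-vector space $\m_0^NS_0$ (or its polynomial part $\m_0^N P$) in each entry. What remains is to show that the values at $\mathfrak q$ are Zariski dense in $\GL_n(L)$.

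\emph{Step 1: reduce to density of $p$-th powers.} Write $\bar g$ for the image of $g\in P$ in $L=\Frac(R/\mathfrak q)$. The value of $V$ at $\mathfrak q$ is $I_n+(\bar w_{ij}^{\,p})$. Since $\GL_n(L)$ is open in $M_n(L)=L^{n^2}$, and the translate $I_n+(\cdot)$ is an isomorphism, it suffices to show that the set
\[
\Lambda=\{\bar w^{\,p}: w\in\m_0^NP\}\subseteq L
\]
is Zariski dense in $\A^1_L$, i.e.\ infinite. Indeed, a product of infinite subsets $\Lambda^{n^2}\subseteq L^{n^2}$ is Zariski dense, by the standard induction on the number of coordinates: a nonzero polynomial in $n^2$ variables cannot vanish on a product of infinite sets. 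Intersecting this dense set with the open set $\GL_n(L)$ stays dense.

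\emph{Step 2: infiniteness.} Since $\mathfrak q\neq\m$ and $R$ is local, the ideal $\mathfrak q$ does not contain $\m$. Pick a variable $x_j\notin\mathfrak q$; one exists because otherwise $\m\subseteq\mathfrak q$. Then $\bar x_j\neq0$ in the domain $R/\mathfrak q$. Take $w=x_j^{N+a}$ for $a\ge0$, giving $\bar w^{\,p}=\bar x_j^{\,p(N+a)}$. These are pairwise distinct for different $a$ unless $\bar x_j$ is a root of unity in $L$. That cannot happen: $\bar x_j$ lies in the maximal ideal of the local domain $R/\mathfrak q$, so $\bar x_j^{\,e}-1$ is a unit and in particular nonzero for every $e\ge1$. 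Hence $\Lambda$ is infinite. If $K$ is infinite one could alternatively use the scalar multiples $c\,x_j^N$, but the power argument works uniformly, including over finite $K$, which is the case this lemma is needed for.

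\emph{Main obstacle.} The delicate point is confirming that the product-of-infinite-sets density argument really happens in $\GL_n(L)$ as an $L$-variety, rather than over $K$. The set $\Lambda$ lies in $L$ but not in any field of constants, so we must check that the non-vanishing criterion needs only infinitude within $L$. It does: the one-variable case uses only that a nonzero polynomial over the field $L$ has finitely many roots in $L$.

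A second point is that "local gauge over $S_0$" requires invertibility in $S_0$, not merely in $R$. This holds because $\det V\in 1+\m_0S_0$, which consists of units of $S_0$.
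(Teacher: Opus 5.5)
Your proposal is correct and takes essentially the same route as the paper. Both proofs add to $I_n$ an entrywise $p$-th power of a matrix with entries in $\m_0^N$; the paper uses $a^pB^{[p]}$ with a fixed $a\in\m_0^N\setminus(\mathfrak q\cap S_0)$, which is a subfamily of yours. Both then deduce density by the product-of-infinite-sets argument over $L$. The only difference is how you show that the relevant set of $p$-th powers in $L$ is infinite: you use the pairwise distinct powers of a variable $x_j\notin\mathfrak q$, a nonzero nonunit in a local domain, whereas the paper uses that $S_0/(\mathfrak q\cap S_0)$ is an infinite domain on which Frobenius is injective.
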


\begin{proof}
Put $\eta=\mathfrak q\cap S_0$. 
Since $\mathfrak q\ne\m$, we have that $\eta\ne\m_0$.  Therefore we may choose $a\in\m_0^N\setminus\eta$.
For $B=[b_{ij}]\in M_n(S_0)$, write $B^{[p]}=(b_{ij}^p)$.
and set $V_B=I_n+a^pB^{[p]}$. 
Since $a\in\m_0$, we get that $\det(V_B)\equiv1\pmod{\m_0}$, so $V_B\in\GL_n(S_0)$.  Moreover,
\[
V_B-I_n\in\m_0^{Np}M_n(S_0)\subseteq\m_0^NM_n(S_0),
\]
and, since all entries added to the identity are $p$-th powers, we have that $dV_B=0$.  The domain $S_0/\eta$ is infinite.  Indeed, this is immediate when $K$ is
infinite, while when $K$ is finite it follows from
$\eta\ne\m_0$, because then $S_0/\eta$ has positive dimension.  Frobenius is
injective on this domain, and therefore its set of $p$-th powers is infinite.
Consequently its image in $L$ is Zariski dense in $\A_L^1$. It follows entrywise that the values $I_n+\overline a^{\,p}B(\mathfrak q)^{[p]}$ are Zariski dense in $M_n(L)$.  Intersecting with the nonempty open subset
$\GL_n(L)$ therefore proves the assertion.
\end{proof}

For a morphism $F:X\to Y$ between smooth schemes, we write
$\Crit(F)$ for the degeneracy subscheme on which $dF$ is not surjective.
Thus, when $\dim X=d$ and $\dim Y=t\le d$, the critical scheme is defined by
the maximal minors of a local matrix for $dF:T_X\longrightarrow F^*T_Y$.\\

The following general-position lemma replaces the generic-smoothness step.
Its point is that we control the dimension of the critical locus in the
\emph{source}; no assertion concerning an open subset of the target is
required.

\begin{lemma}
\label{lem:deep-gauge-general-position}
Let $K$ be infinite, let $U\subseteq\mathbb A_K^s$ be an
open neighborhood of the origin, and let
$X\subseteq U\setminus\{0\}$ be a smooth locally closed
$K$-subscheme of pure dimension $d$.
Let $F:X\longrightarrow\Grass(a,n)$ where $t=a(n-a)$ and assume $1\le t\le d$. For every $N\ge1$, there exists a polynomial matrix $V=I_n+H$ with $H\in M_n\bigl(\langle x_1,\ldots,x_s\rangle^N K[x_1,\ldots,x_s]\bigr)$ 
such that on $X_V=X\cap D(\det V)$ we have that $\dim\Crit(V^{-1}\!\cdot F)\le t-1$.  In particular, $V$ defines an $N$-deep element of
$\GL_n(S_0)$. If $t=0$, the critical locus is empty.
\end{lemma}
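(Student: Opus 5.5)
We will prove this with a parameter-count (incidence-dimension) argument over the affine space of deep gauges. The argument mirrors the infinite-field half of the proof of Theorem~\ref{thm:stratified-jet}.

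\textbf{Setup.} Fix a large degree bound $D$. Let $\mathcal H_D$ be the finite-dimensional $K$-vector space of matrices $H$ whose entries are polynomials of degree $\le D$ lying in $\langle x_1,\ldots,x_s\rangle^N$. For $H\in\mathcal H_D$ put $V_H=I_n+H$. For a point $x\in X$ with $\det V_H(x)\ne0$, put $G(H,x)=V_H(x)^{-1}\cdot F(x)\in\Grass(a,n)$. Consider the incidence scheme
\[
\mathcal I=\bigl\{(H,x)\in\mathcal H_D\times X:\ \det V_H(x)\ne0,\ d_xG(H,\cdot)\ \text{not surjective}\bigr\}.
\]
It suffices to show that for general $H$ the fibre $\mathcal I_H=\Crit(V_H^{-1}\cdot F)$ has dimension $\le t-1$. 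Since $K$ is infinite, a nonempty open subset of $\mathcal H_D$ has a $K$-point, and the chosen $V_H$ is then $N$-deep: its determinant is $\equiv1$ modulo $\m_0$, so it is a unit of $S_0$. If $t=0$, the Grassmannian is a point, the target tangent space is zero, and the critical locus is empty, so we assume $t\ge1$.

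\textbf{Key step: the jet map at a point is surjective.} Fix $x\in X$, possibly over $\bar K$; we have $x\ne0$. Consider the map sending $H\in\mathcal H_D$ to the pair
\[
\bigl(V_H(x)^{-1}F(x),\ d_x(V_H^{-1}F)\bigr),
\]
a value in $\Grass(a,n)$ together with a linear map $T_xX\to T\Grass(a,n)$. We claim that for $D\gg0$ this map is submersive onto the space of values and $1$-jets. As in the proof of Theorem~\ref{thm:stratified-jet}, choose a linear form $\ell$ with $\ell(0)=0$ and $\ell(x)\ne0$. Matrices of the form $\ell^N\cdot(\text{affine linear})$ then realise arbitrary prescribed values $H(x)$ and first derivatives $dH_x$.

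The value map $H(x)\mapsto (I+H(x))^{-1}F(x)$ is a $\GL_n$-orbit map, hence submersive onto the homogeneous space $\Grass(a,n)$. For the derivative, write $g=V_H(x)$ and keep $H(x)$ fixed. The Leibniz rule gives
\[
d_x(V_H^{-1}F)=g^{-1}\cdot dF_x-(\text{infinitesimal action of } g^{-1}dH_x)\cdot F(x).
\]
Since the infinitesimal $\mathfrak{gl}_n$-action at a point of $\Grass(a,n)$ is onto the tangent space, varying $dH_x$ freely in $\Hom(T_xX,\mathfrak{gl}_n)$ realises every linear map $T_xX\to T\Grass(a,n)$. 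Hence the fibre of $\mathcal I$ over $x$ is the preimage, under a surjective affine-linear map, of the non-surjective linear maps $T_xX\to K^t$. That determinantal locus has codimension $d-t+1$ in $\Hom(K^d,K^t)$. So the fibre over $x$ has codimension $\ge d-t+1$ in the open set of $\mathcal H_D$ where $\det V_H(x)\ne0$.

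\textbf{Conclusion.} From the fibre bound, $\dim\mathcal I\le\dim\mathcal H_D+d-(d-t+1)=\dim\mathcal H_D+t-1$. Applying the fibre-dimension theorem to the projection $\mathcal I\to\mathcal H_D$, there is a dense open set of $H$ with $\dim\mathcal I_H\le t-1$. We either lie in a proper closed image, where the fibre is empty, or get generic fibre dimension $\le t-1$. Hence a $K$-rational $H$ in this open set exists, and it gives the required $V$.

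\textbf{Main obstacle.} The hardest step is checking uniform surjectivity of the jet map over all points of $X$, including $\bar K$-points, at once for a single $D$. We must also ensure that non-surjectivity is measured on the correct determinantal stratum of $\Hom(T_xX,T\Grass)$. We will handle both points by working over an affine chart of the Grassmannian, so that $\mathcal I$ is a constructible family with linear fibres. Uniformity in $D$ follows since $\ell$ can be chosen among finitely many coordinate forms covering $\mathbb A^s\setminus\{0\}$.
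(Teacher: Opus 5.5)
Your proposal is correct and follows the paper's proof essentially step for step. Both arguments run an incidence-dimension count over the affine space of $N$-deep polynomial gauges of bounded degree. Both use $x_\nu^N$ times affine-linear polynomials to realize arbitrary first jets at each geometric point $x\ne0$, and the surjectivity of the infinitesimal $\GL_n$-action on $\Grass(a,n)$ to make the differential vary freely. The rest is the same in both: the codimension $d-t+1$ of non-surjective maps, then the generic fibre-dimension theorem together with $K$ infinite to pick a rational gauge. One cosmetic slip: in your Leibniz formula, the infinitesimal action should be evaluated at the point $V_H(x)^{-1}\cdot F(x)$ rather than at $F(x)$. This does not affect the argument.
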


\begin{proof}
Fix $D\ge N+1$, and let $ E_D=\langle x_1,\ldots,x_s\rangle^N \cap K[x_1,\ldots,x_s]_{\le D}$ and $\mathcal{P}_D=M_n(E_D)$. For every geometric point $x\ne0$, evaluation of first jets
gives a surjection $E_D\otimes_K\kappa(x)
   \longrightarrow
   \mathcal O_{\mathbb A^s_{\kappa(x)},x}/\mathfrak m_x^2$. Indeed, choose $x_\nu(x)\ne0$. Multiplication by $x_\nu^N$
is invertible on the first-jet algebra, and affine-linear
polynomials supply all first jets.\\

Consider the open subset $\Omega= \{(x,H)\in X\times\mathcal P_D \colon \det(I_n+H(x))\ne0\}$.
On $\Omega$, the expression $(I_n+H)^{-1}\!\cdot F$ is
defined. Fix a geometric point $x$ and an invertible value
$C=I_n+H(x)$. Keeping this value fixed, the first derivatives
of $H$ can vary arbitrarily. Since the orbit map $\GL_n\longrightarrow\Grass(a,n)$ is smooth, these variations induce arbitrary variations of
the differential in $\Hom_{\kappa(x)} \bigl(T_xX,T_{C^{-1}\cdot F(x)}\Grass(a,n)\bigr)$. The locus of maps from a $d$-dimensional space to a
$t$-dimensional space that are not surjective has codimension
$d-t+1$.\\

Consequently, the critical incidence locus
$\mathcal Z\subseteq\Omega$ satisfies
\[
   \dim\mathcal Z
   \le\dim\mathcal P_D+d-(d-t+1)
   =\dim\mathcal P_D+t-1.
\]
The generic fiber-dimension theorem, applied to the finitely
many irreducible components of $\mathcal Z$, gives a
nonempty open subset $\mathcal U\subseteq\mathcal P_D$
over which all fibers have dimension at most $t-1$.
Since $K$ is infinite, choose $H\in\mathcal U(K)$.
Finally, $V(0)=I_n$, so $\det V$ is a unit in $S_0$.
\end{proof}

We can now prove the one-sided criterion over infinite fields.  The argument
uses two different kinds of deep gauges.  Lemma~\ref{lem:deep-gauge-general-position}
produces a gauge for which the critical locus has dimension strictly smaller
than the Grassmannian.  Lemma~\ref{lem:frobenius-gauge-flexibility} then shows
that a nonempty critical locus would nevertheless have to move densely in the
Grassmannian.  These two conclusions are incompatible.

\begin{theorem}\label{thm:one-sided-complete}
Let $A\in\m M_{m,n}$. If $K$ is infinite, then:
\begin{enumerate}[label=\textup{(\roman*)},leftmargin=*]
\item $A$ is finitely $\Gl$-determined if and only if $\dim_K\left(
\frac{M_{m,n}}
{\Ttilde_A^{\,e}(\Gl A)}
\right)<\infty$.
\item $A$ is finitely $\Gr$-determined if and only if $\dim_K\left(
\frac{M_{m,n}}
{\Ttilde_A^{\,e}(\Gr A)}
\right)<\infty$. 
\end{enumerate}
The same equivalences hold for the ordinary tangent images. Over an arbitrary field, including a finite field, the corresponding
equivalences hold in the dimension ranges of
Theorem~\ref{thm:one-sided-ranges}.
\end{theorem}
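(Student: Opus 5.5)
Sufficiency in both parts is the arbitrary-field criterion \cite[Theorem~3.1.12]{GLS}. The ordinary and extended tangent images differ by a finite-dimensional quotient, as noted after Theorem~\ref{thm:jettrans}. The arbitrary-field statement in the ranges is Theorem~\ref{thm:one-sided-ranges}. So only necessity over an infinite field remains. I treat $\Gl$; the $\Gr$ case is the transpose, with image maps $\iota_r$ in place of kernel maps $\kappa_r$ and left gauges $U$ in place of $V$.

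Suppose $A$ is $k$-$\Gl$-determined. By Corollary~\ref{cor:one-sided-reduction}, $A$ is transverse to the rank strata on the punctured spectrum. By Proposition~\ref{prop:one-sided-criterion}, it then suffices to show that each kernel map $\kappa_r\circ A$ is submersive on $A^{-1}(\Sigma_r)$ away from the origin. Since $\Gl$-equivalence preserves finite codimension, I may first replace $A$ by a germ with the same high jet, using Theorem~\ref{thm:stratified-jet}. This lets me assume $A$ is the completion of an algebraic map on an étale neighbourhood $W$. Then each punctured locus $X_r=A^{-1}(\Sigma_r)\setminus\{0\}$ is smooth of pure dimension $d_r$, and $F_r=\kappa_r\circ A:X_r\to\Grass(n-r,n)$ is algebraic with target dimension $t_r=r(n-r)$.

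\textbf{Dimension step.} If $t_r>d_r$, then $F_r$ can be submersive nowhere, so I must show that $X_r$ is empty in that case. For this I would use the dimension count of Proposition~\ref{prop:dimension-bounds}, localized to the stratum, or Theorem~\ref{thm:det-geometry-new}. Alternatively, the argument below can be run with the incidence estimate, which gives dimension at most $d_r-1<d_r$ and hence emptiness directly.

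\textbf{Gauge step.} Assume $1\le t_r\le d_r$. Lemma~\ref{lem:deep-gauge-general-position} gives an $N$-deep polynomial gauge $V$, with $N\ge k$, such that $\Crit(V^{-1}\cdot F_r)$ has dimension at most $t_r-1$. By Lemma~\ref{lem:deep-gauge-rigidity}, $AV\sim_{\Gl}A$, and the kernel maps of $AV$ and of $A$ differ by a source automorphism. So the critical locus of $F_r$ is isomorphic to one of dimension at most $t_r-1$. In particular, the image of each component of $\Crit(F_r)$ in the Grassmannian is not dense.

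\textbf{Frobenius step.} First consider characteristic $p>0$. Take a generic point $\mathfrak q$ of a component of $\Crit(F_r)$, and suppose for contradiction that it exists. Compose with the gauges $V$ of Lemma~\ref{lem:frobenius-gauge-flexibility}, which are $N$-deep with $dV=0$. Because $dV=0$, multiplying by $V^{-1}$ leaves the differential unchanged up to the translation action of $V(x)^{-1}$ on $\Grass(n-r,n)$. Hence the critical scheme of $V^{-1}\cdot F_r$ coincides with that of $F_r$. By rigidity, however, the critical locus of $V^{-1}\cdot F_r$ is carried by source automorphisms $\varphi_V$ onto that of $F_r$, compatibly with the kernel maps. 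The values $V(\mathfrak q)^{-1}\cdot F_r(\mathfrak q)$ are Zariski dense as $V$ varies, since the $V(\mathfrak q)$ are dense in $\GL_n(L)$ and $\GL_n$ acts transitively. Therefore the image $F_r(\Crit F_r)$, which is carried into itself by these moves, must be dense in $\Grass(n-r,n)$. This contradicts the bound of dimension at most $t_r-1$. Hence $\Crit(F_r)$ is empty on the punctured spectrum. In characteristic zero, Frobenius is unavailable, but the orbit maps are separable, so the result follows from \cite[Theorem~3.1.21]{GLS}; alternatively, generic smoothness plus the same gauge-density argument gives it.

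\textbf{Main obstacle.} I expect the hardest part to be making this contradiction rigorous. One must check that the source automorphisms $\varphi_V$ coming from rigidity really identify the critical scheme of $F_r$ with that of $V^{-1}\cdot F_r$, compatibly with the Grassmannian targets. One must also pass from the formal punctured spectrum to the algebraic model carefully, and handle generic points over the residue fields $\kappa(\mathfrak q)$. I would first try to formalize it by comparing images in the Grassmannian over $L=\Frac(R/\mathfrak q)$ and using that the image of a scheme of dimension at most $t_r-1$ has closure of dimension at most $t_r-1$.
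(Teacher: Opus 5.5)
Your architecture matches the paper's. You reduce to submersivity of the kernel maps via Corollary~\ref{cor:one-sided-reduction} and Proposition~\ref{prop:one-sided-criterion}. You bound $\dim\Crit(F_r)\le t_r-1$ with Lemma~\ref{lem:deep-gauge-general-position} plus rigidity. You then kill the critical locus with the $dV=0$ gauges of Lemma~\ref{lem:frobenius-gauge-flexibility}, and you quote separability in characteristic zero.

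\textbf{The gap is the dimension step.} This step is genuinely needed, because Lemma~\ref{lem:deep-gauge-general-position} assumes $t_r\le d_r$, and none of your three justifications gives it. Since $t_r+(m-r)(n-r)=m(n-r)$, the claim is that a nonempty punctured rank-$r$ locus forces $m(n-r)\le s$.
\begin{itemize}
\item Theorem~\ref{thm:det-geometry-new} only gives the codimension $(m-r)(n-r)$ of the rank-$\le r$ locus and says nothing about $r(n-r)$.
\item Proposition~\ref{prop:dimension-bounds} is a global orbit count that only sees the generic-rank stratum, and you give no way to localize it.
\item The incidence estimate is vacuous when $t_r>d_r$. Every linear map $K^{d_r}\to K^{t_r}$ is then non-surjective, so the bad locus has codimension $0$, not $d_r-t_r+1$, and no bound $d_r-1$ comes out.
\end{itemize}
For a concrete case, take a finitely $\Gl$-determined germ with $m=2$, $n=3$, $s=3$. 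Theorem~\ref{thm:det-geometry-new} itself says the punctured rank-one locus is a nonempty curve, so $d_1=1<t_1=2$. Proposition~\ref{prop:dimension-bounds}\textup{(ii)} just reads $6\le 7$. Neither result sees a contradiction, yet the dimension step shows that no such germ is finitely $\Gl$-determined at all.

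\textbf{The repair uses your own Frobenius step, and is what the paper does.} If $t_r>d_r$, every point of $X_r$ is critical. Hence $\Crit(F_r)=X_r$ already has dimension $d_r\le t_r-1$, with no general-position input. Now run the gauge-density argument: rigidity gives $V^{-1}\cdot F_r=F_r\circ\varphi_V$, with $\varphi_V$ permuting the formal components of the rank-$r$ locus, and the values $V(\mathfrak q)$ are dense in $\GL_n(L)$. This forces the image closure of some component of $X_r$ to be all of $\Grass(n-r,n)$, which is impossible; hence $X_r=\varnothing$. The paper phrases this as showing, before invoking Lemma~\ref{lem:deep-gauge-general-position}, that some $\overline{F(X_{r,j})}$ is the whole Grassmannian, whence $t_r\le d_r$.

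A minor point concerns your replacement of $A$ via Theorem~\ref{thm:stratified-jet}. Lemma~\ref{lem:deep-gauge-general-position} is stated for subschemes of an open subset of $\A^s_K$, not of an étale neighbourhood $W$. You can either check that polynomial gauges still separate first jets on $W$ (they do, since $W\to\A^s_K$ is étale). Alternatively, follow the paper: take a polynomial truncation in the $\Gl$-orbit and obtain smoothness and pure dimension of the punctured rank-$r$ locus from Theorem~\ref{thm:det-geometry-new}.
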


\begin{proof}
Finite codimension of the tangent image is sufficient for finite determinacy
over an arbitrary field by the Greuel--Pham finite-determinacy criterion
\cite[Theorem~3.1.12]{GLS}.  Over a finite field, the assertion in the
dimension ranges stated above is precisely
Theorem~\ref{thm:one-sided-ranges}, together with the equivalence between
ordinary and extended tangent-image codimension.  It therefore remains to
prove necessity when $K$ is infinite. In characteristic zero, necessity follows from separability of the finite-jet
orbit maps; see \cite[Theorem~3.1.21]{GLS} and
\cite[Theorem~4.3]{GP18}. We may therefore assume that $\Char K=p>0$. We prove the assertion for $\Gl$.  The $\Gr$ case follows by transposition.\\

Assume that $A$ is finitely $\Gl$-determined, and let $k$ be a determinacy
order.  Replacing $A$ by a polynomial truncation in its $\Gl$-orbit, we may
suppose that the entries of $A$ belong to $S_0$.  We use the same symbol $A$
for this algebraic representative and for its completion. Finite $\Gl$-determinacy implies finite $\Glr$-determinacy.  Therefore
Theorem~\ref{thm:glr}, together with
Corollary~\ref{cor:rank-trans}, shows that $A$ is transverse to every
constant-rank stratum on the punctured formal spectrum.\\ 

Fix a rank $r$.  If the rank-$r$ locus has no punctured point, there is
nothing to prove.  We therefore assume that it does.  Put $c=(m-r)(n-r)$ and $d=s-c$.  By Theorem~\ref{thm:det-geometry-new}, the rank-at-most-$r$ locus has the
expected codimension $c$, and its punctured rank-$r$ stratum is smooth of
pure dimension $d$. Let $F=\kappa_r\circ A$ be the kernel map on the rank-$r$ stratum, and put $t=\dim\Grass(n-r,n)=r(n-r)$. If $t=0$, the Grassmannian is a point, and therefore there is no kernel-map
obstruction.  Thus assume that $t>0$.\\

We carry out all incidence and image-dimension arguments on
algebraic representatives in an open neighborhood
$U\subseteq\mathbb A_K^s$ of the origin, before localizing.
After shrinking $U$, the punctured rank-$r$ locus is smooth
of pure dimension $d=s-(m-r)(n-r)$.  When passing to the local ring and its completion, we use
the reduced closures of the relevant locally closed loci.
Their completed germs are identified by the formal source
automorphisms arising from finite determinacy. Completion
preserves the dimension of these closed local germs.
Components of an algebraic closure that do not contain the
origin are removed by shrinking $U$.
Thus the resulting dimension bounds also bound the
algebraic image closures used below.\\

We first show that $t\le d$. 
Let $X_{r,1},\ldots,X_{r,a}$ be the finitely many algebraic irreducible components of the punctured
rank-$r$ locus, and let $Y_j=\overline{F(X_{r,j})}
\subseteq\Grass(n-r,n)$ 
be their image closures.  Since $\dim X_{r,j}=d$, 
we have that $\dim Y_j\le d$ for every $j$. Choose a generic prime $\mathfrak q$ of a formal irreducible component of
the completed rank-$r$ locus, and put $L=\Frac\left(\frac{R}{\mathfrak q}\right)$. \\

Let $V$ be a sufficiently deep polynomial gauge with $dV=0$.
Lemma~\ref{lem:deep-gauge-rigidity} gives a formal source automorphism
$\varphi_V$ preserving every determinantal rank locus such that, on the
completed rank-$r$ locus we have that $V^{-1}\!\cdot F=F\circ\varphi_V$. 
The automorphism $\varphi_V$ permutes the finitely many formal irreducible
components.  Every such formal component lies over one of the algebraic
components $X_{r,j}$.  Therefore $V(\mathfrak q)^{-1}\!\cdot F(\mathfrak q)
\in
\bigcup_{j=1}^a (Y_j)_L$.  By Lemma~\ref{lem:frobenius-gauge-flexibility}, the possible values
$V(\mathfrak q)$ are Zariski dense in $\GL_n(L)$.  Since $\GL_n$ acts
transitively on $\Grass(n-r,n)$, the corresponding points $V(\mathfrak q)^{-1}\!\cdot F(\mathfrak q)$
are Zariski dense in the Grassmannian after base change to $L$.
Consequently
\[
\Grass(n-r,n)_L
\subseteq
\bigcup_{j=1}^a(Y_j)_L.
\]
The Grassmannian is irreducible.  Hence one of the $Y_j$ is the whole
Grassmannian.  Therefore $t\le d$, 
as claimed.\\

We may now apply
Lemma~\ref{lem:deep-gauge-general-position} to the punctured algebraic
rank-$r$ locus.  Taking the depth at least $k$, we obtain some $V\in\GL_n(S_0)$ such that $
V-I_n\in\m_0^kM_n(S_0)$ and $\dim\Crit(V^{-1}\!\cdot F)\le t-1$. Since $A\in\m M_{m,n}$, we have that $AV-A\in\m^{k+1}M_{m,n}$. 
Finite $\Gl$-$k$-determinacy therefore gives $AV\sim_{\Gl}A$.  By Lemma~\ref{lem:deep-gauge-rigidity}, the corresponding completed kernel
maps differ by a formal source automorphism.  The chain rule therefore
identifies their completed critical schemes.  Completion preserves the
dimension of the corresponding local schemes, and hence the critical locus
of the original completed kernel map also satisfies $\dim\Crit(F)\le t-1$. \\

We claim that this critical locus is in fact empty on the punctured spectrum.
Suppose otherwise.  Let $Z_1,\ldots,Z_b$
be the algebraic irreducible components of the critical locus of $F$ on the
punctured rank-$r$ stratum.  The preceding bound gives $\dim Z_i\le t-1$ for every $i$.  Put $W_i=\overline{F(Z_i)}
\subseteq\Grass(n-r,n)$.
Thus $\dim W_i\le t-1$. \\

Choose a generic prime $\mathfrak q$ of a formal irreducible component of
the completed critical locus.  Let $V$ now range over sufficiently deep
polynomial gauges satisfying $dV=0$. 
Therefore, the differential of $V^{-1}\!\cdot F$ is obtained from $dF$
by composition with an invertible linear map on the target tangent space.
Therefore the critical scheme of $V^{-1}\!\cdot F$ is exactly the critical
scheme of $F$.

On the other hand, Lemma~\ref{lem:deep-gauge-rigidity} gives $V^{-1}\!\cdot F=F\circ\varphi_V$. 
Thus $\varphi_V$ preserves the completed critical locus and permutes its
finitely many formal irreducible components.  Therefore, for $L=\Frac\left(\frac{R}{\mathfrak q}\right)$,  we get that 
\[
V(\mathfrak q)^{-1}\!\cdot F(\mathfrak q)
\in
\bigcup_{i=1}^b(W_i)_L.
\]

Lemma~\ref{lem:frobenius-gauge-flexibility} again shows that the values
$V(\mathfrak q)$ are Zariski dense in $\GL_n(L)$.  Hence the left-hand side
is Zariski dense in $\Grass(n-r,n)_L$.  We would therefore obtain $\Grass(n-r,n)_L
\subseteq
\bigcup_{i=1}^b(W_i)_L$.
This is impossible since the left-hand side is irreducible of dimension
$t$, whereas every $W_i$ has dimension at most $t-1$.  Thus we can conclude that $\Crit(F)=\varnothing$
on the punctured rank-$r$ locus. Therefore, we can conclude that every kernel map of the form  $\kappa_r\circ A$ is submersive on the corresponding punctured constant-rank stratum.
Proposition~\ref{prop:one-sided-criterion} therefore implies that the
extended $\Gl$ tangent image has finite $K$-codimension. Finally, the ordinary tangent image differs from the extended tangent image
only by replacing the $R$-span of the finitely many derivative matrices by
their $\m$-span.  The quotient between these two derivative contributions is
finite-dimensional over $K$.  Thus finite codimension of the ordinary and
extended tangent images are equivalent.
\end{proof}


\begin{remark}
Over an infinite field,
Theorem~\ref{thm:glr}, Theorem~\ref{thm:pure-right}, and
Theorem~\ref{thm:one-sided-complete} give necessity and sufficiency of the
tangent-image criterion for all four actions, namely $\R, \Gl, \Gr,\Glr$. 
Over a finite field, the full left--right and pure right criteria remain
valid over arbitrary fields, while the one-sided equivalence is established
in the dimension ranges of Theorem~\ref{thm:one-sided-ranges}.  The
unrestricted finite-field one-sided case would require an additional
finite-field general-position theorem replacing
Lemma~\ref{lem:deep-gauge-general-position}.
\end{remark}

\begin{remark}
Finite codimension of the tangent image can be checked by the
standard-basis methods developed by Greuel--Pham
\cite{GreuelPham2020}.  Consequently, over an infinite field the preceding
theorems give a decision criterion for finite determinacy for all four
matrix actions whenever the ground field is given in an effective form.
Over finite fields, the same conclusion holds for the full left--right and
pure right actions and, for the one-sided actions, in the ranges of
Theorem~\ref{thm:one-sided-ranges}.
\end{remark}

We next consider pure right equivalence for vector-valued germs.  When the target has
dimension greater than one, finite right determinacy is much more rigid.  For a
vector-valued germ $F\in\m R^q$, \emph{pure right equivalence} means equivalence
under source coordinate changes only: $F$ and $G$ are right equivalent if
$G=F\circ\varphi$ for some $\varphi\in\Aut_K(R)$; no target coordinate change is
allowed.  Write $F=(F_1,\ldots,F_q)$ and let
$\JacMat(F)=\left(\frac{\partial F_i}{\partial x_j}\right)_{i,j}$ be its $q\times s$
Jacobian matrix.

\begin{lemma}\label{lem:deep-shear-new}
Let $T=K[[y_1,\ldots,y_q]]$ with maximal ideal $\mathfrak m_T$.  Suppose a proper ideal
$D\subseteq T$ is invariant under every sufficiently deep triangular automorphism and
contains a nonzero polynomial.  Then $\sqrt D=\mathfrak m_T$.
\end{lemma}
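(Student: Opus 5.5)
The plan is to show, by a polynomial-identity argument, that $D$ contains a nonzero power of each coordinate $y_i$. This suffices: then $\langle y_1^{a_1},\ldots,y_q^{a_q}\rangle\subseteq D\subseteq\mathfrak m_T$, so $\sqrt D=\mathfrak m_T$. I read ``sufficiently deep triangular automorphism'' as a shear
\[
y_i\longmapsto y_i+g, \qquad y_j\longmapsto y_j\ (j\ne i),
\]
where $g\in\mathfrak m_T^N$ does not involve $y_i$ and $N$ is large. I will use shears in which $g$ is a polynomial in one other variable only. This assumes both triangular orientations are available; see the last paragraph.

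\textbf{Step 1: eliminate one variable by a minimal-degree argument.} Fix an index $i$ and consider the nonzero polynomials in $D$, which exist by hypothesis. Among those, choose $h$ of minimal $y_i$-degree $e$. Fix some $j\ne i$, and for $M\ge N$ let $\varphi_M$ be the shear $y_i\mapsto y_i+y_j^M$. Invariance gives $h\circ\varphi_M\in D$, so $h\circ\varphi_M-h\in D$. This difference is a polynomial of $y_i$-degree at most $e-1$, since the top coefficient in $y_i$ cancels. By minimality of $e$ it must be zero. Hence the polynomial
\[
\Delta(Z)=h(y_1,\ldots,y_i+Z,\ldots,y_q)-h(y_1,\ldots,y_q)\in K[y][Z]
\]
vanishes at the infinitely many distinct elements $Z=y_j^M$, $M\ge N$, of the domain $K[y]$. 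Therefore $\Delta=0$ identically, and $h$ does not involve $y_i$.

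This argument uses no infinitude of $K$, which is exactly why the deep shears are chosen to be powers of another variable rather than scalar multiples. It also avoids extracting Hasse derivatives, which would be delicate over a finite field of characteristic $p$.

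\textbf{Step 2: iterate.} Apply Step 1 successively to all indices except a chosen $i_0$. At each stage keep the set of nonzero polynomials in $D$ that are independent of the variables already eliminated, and minimize the degree in the next variable within this set. Each shear used involves only $y_{i_0}$ or variables not yet eliminated, so the class of polynomials free of the eliminated variables is preserved under the shears. This produces a nonzero polynomial $h_{i_0}(y_{i_0})\in D$. It lies in $\mathfrak m_T$, since $D$ is proper, so $h_{i_0}=y_{i_0}^{a}\cdot u$ with $u(0)\ne0$ a unit of $T$. Hence $y_{i_0}^{a}\in D$. Doing this for every $i_0$ gives the required powers of all coordinates.

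\textbf{Main obstacle.} The delicate point is the direction of the triangular shears. If only shears $y_i\mapsto y_i+g(y_1,\ldots,y_{i-1})$ are available, then $y_1$ can never be shifted, and Step 1 eliminates only $y_2,\ldots,y_q$. That yields $y_1^a\in D$ but no power of the later variables directly. I would first check how ``triangular'' is used in the application to pure right determinacy. There the conjugates by coordinate permutations are again deep automorphisms, so invariance under all orderings is available and the argument above applies verbatim. If the definition really fixes one order, the fallback is to work in $T/\langle y_1^a\rangle$ and argue by induction on $q$. Shears of the remaining variables by powers of $y_2$ preserve $\langle y_1^a\rangle$, and the image of $D$ in $T/\langle y_1^a\rangle$ still contains the image of a nonzero polynomial. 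One then has to check that this image does not become zero; I expect this to be the step that needs care.
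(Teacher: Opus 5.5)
Your proof is correct, but it works by a different mechanism than the paper's.

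\textbf{The paper's route.} The paper fixes one nonzero polynomial $P\in D$. For each $j$ it applies a single automorphism $\sigma_j$ that fixes $y_j$ and sends $y_i\mapsto y_i+y_j^{M_i}$ for all $i\ne j$ at once. The $M_i$ are chosen so that the weight $\alpha\mapsto\alpha_j+\sum_{i\ne j}M_i\alpha_i$ is injective on the support of $P$. This makes a positive pure power of $y_j$ the unique top-degree term of $\sigma_j(P)$. Hence, for a degree-compatible order, the initial ideal of $J=D\cap K[y_1,\ldots,y_q]$ contains a power of every variable. So $K[y]/J$ is zero-dimensional, and localizing at $\langle y\rangle$ and extending to $T$ gives $\mathfrak m_T^b\subseteq D$.

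\textbf{Your route.} You use the whole infinite family of elementary shears $y_i\mapsto y_i+y_j^M$, $M\ge N$, together with a minimal-degree descent. The difference $h\circ\varphi_M-h$ lies in $D$ and has smaller $y_i$-degree, so it is zero. A polynomial in $Z$ over the domain $K[y]$ with the infinitely many roots $y_j^M$ then vanishes identically. You organize the elimination so that every shear is by a power of $y_{i_0}$, which keeps eliminated variables out. You end with a univariate element of $D$, hence $y_{i_0}^a\in D$.

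\textbf{What each buys.} Your argument needs no monomial orders, no Gr\"obner degeneration, and none of the bookkeeping that prevents cancellation among top-degree terms. It also puts pure powers of the coordinates into $D$ itself. The paper's argument is more explicit: it uses only one automorphism per coordinate, applied to one polynomial, and exhibits a concrete element of $D$ whose leading monomial is a power of $y_j$.

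\textbf{Your main obstacle.} Both proofs use the same kind of automorphism, for every $j$: a shear that fixes $y_j$ and shifts the other variables by powers of $y_j$. Such a shear is triangular with respect to an ordering beginning with $y_j$. This is exactly what is available where the lemma is applied. In the proof of Theorem~\ref{thm:pure-right}, the ideal $\Delta_F$ is invariant under all sufficiently deep target automorphisms. So your main argument applies as written, and the fallback is unnecessary.

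The fallback could not succeed in any case. Suppose only shears $y_i\mapsto y_i+g(y_1,\ldots,y_{i-1})$ were allowed. Then for $q\ge2$ the ideal $D=\langle y_1\rangle$ is proper, contains a nonzero polynomial, and is invariant, yet $\sqrt D\ne\mathfrak m_T$. Its image in $T/\langle y_1\rangle$ is zero, which is precisely the step you flagged as needing care. So invariance under shears in every orientation is genuinely required for the statement.
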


\begin{proof}
Choose $0\ne P\in D\cap K[y_1,\ldots,y_q]$.  Fix $j$.  For $i\ne j$ choose
integers $M_i$ larger than the depth from which triangular automorphisms preserve $D$
and, successively, so large that the map
$\alpha\longmapsto
  \alpha_j+\sum_{i\ne j}M_i\alpha_i.$
is injective on the finite support of $P$.  Apply the triangular automorphism
$\sigma_j$ given by $y_i\mapsto y_i+y_j^{M_i}$ for $i\ne j$ and
$y_j\mapsto y_j$.  In $\sigma_j(P)$, select from each substituted factor the
pure $y_j$-term.  Injectivity of the displayed weight on the support implies that
one original monomial contributes a unique largest pure power of $y_j$, with
nonzero coefficient.  Fix once and for all a degree-compatible monomial order on
$K[y_1,\ldots,y_q]$.  Because every $M_i>1$, within the expansion of any fixed
monomial of $P$ the term obtained by choosing the pure $y_j^{M_i}$ contribution from
each substituted factor has strictly largest total degree.  Choosing the $M_i$
successively and sufficiently large makes these maximal total degrees distinct for the
finitely many monomials in the support of $P$.  Thus $\sigma_j(P)$ has a unique
highest-total-degree term, and that term is a positive power of $y_j$; in particular it
is the leading monomial for the fixed degree-compatible order.\\

Put $J=D\cap K[y_1,\ldots,y_q]$. The initial ideal of $J$
contains a power of every variable, so $K[y]/J$ is
zero-dimensional. Since $J\subseteq\langle y\rangle$,
its localization at $\langle y\rangle$ is a proper
zero-dimensional local ring. Therefore, for some $b$, we have that $\langle y\rangle^bK[y]_{\langle y\rangle}
  \subseteq JK[y]_{\langle y\rangle}$. Extending to $K[[y]]$ gives
$\mathfrak m_T^b\subseteq D$, and therefore
$\sqrt D=\mathfrak m_T$.
\end{proof}

\begin{lemma}\label{lem:critical-height-new}
Assume $2\le q\le s$ and let $F\in\m K[x]^q$ satisfy
$\rank\bigl(\JacMat(F)(0)\bigr)<q$.  For every $N$ there is $H\in\m^N K[x]^q$ such
that, for $G=F+H$, the ideal $I_q(\JacMat(G))$ has height $s-q+1$ after localization
and completion at the origin.  Moreover the \emph{critical-value ideal}
\[
 \Delta_G=\ker\!\left(
 K[[y_1,\ldots,y_q]]\longrightarrow \frac{R}{I_q(\JacMat(G))},
 \quad y_i\longmapsto G_i
 \right)
\]
contains a nonzero polynomial.
\end{lemma}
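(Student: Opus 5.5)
Fix $2\le q\le s$ and $N$. I would first obtain the height statement and then derive the critical-value statement from it.

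\textbf{Step 1: reduce to a punctured general-position statement.} The ideal $I_q(\JacMat(G))$ is the ideal of maximal minors of a $q\times s$ matrix. Its expected (Eagon--Northcott) height is $s-q+1$, and this is the maximum possible height. Because $\rank\JacMat(F)(0)<q$, and $H\in\m^N$ with $N\ge2$ does not change the linear part, every minor of $\JacMat(G)$ vanishes at the origin. So $I_q(\JacMat(G))$ is a proper ideal, and the task is to make its height equal to $s-q+1$. It suffices to arrange that the algebraic critical locus $\Crit(G)\setminus\{0\}$ of the polynomial map $G:\A^s_K\to\A^q_K$ has codimension $\ge s-q+1$ near the origin. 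If this locus is empty, the ideal is $\m_0$-primary. Then $s\ge s-q+1$ forces the height to be $s$, which is wrong unless $q=1$. So I must allow a nonempty critical locus of exactly the expected codimension. Heights are preserved under localization and completion, since completion is faithfully flat and the minimal primes of the completion lie over those of the localization with equal height.

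\textbf{Step 2: choose the perturbation by a jet-incidence argument.} Over an infinite field, I would use the incidence argument of Theorem~\ref{thm:stratified-jet}. Let $L_D=(\m_0^N\cap K[x]_{\le D})^q$. As in Lemma~\ref{lem:deep-gauge-general-position}, for $D\gg0$ this space surjects onto first jets, and also second jets, at every geometric point $x\neq0$. Consider the incidence set $\{(x,H)\colon x\ne0,\ \rank \JacMat(F+H)(x)<q\}$. For fixed $x$, the first jets of $H$ at $x$ vary freely, so $\JacMat(G)(x)$ ranges over all $q\times s$ matrices. The non-surjective ones have codimension $s-q+1$. The incidence therefore has dimension $\le\dim L_D+s-(s-q+1)$, and a generic fiber has dimension $\le q-1$, which is codimension $\ge s-q+1$. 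Over a finite field I would instead apply Poonen's sieve, Lemma~\ref{lem:poonen-local}, to the $2\times2$ first-jet conditions, or else to the transversality of the map $x\mapsto\JacMat(G)(x)$ to the rank strata. Here second-order Taylor freedom is needed, which Theorem~\ref{thm:stratified-jet} supplies if applied to the $1$-jet extension of $G$. I expect this finite-field case to be the main obstacle. The cleanest route is to realize the Jacobian rank condition as transversality of $j^1G$ to $\A^s\times\A^q\times\Sigma_r$, as in Theorem~\ref{thm:stratified-jet}. Transversality gives each punctured stratum $\{\rank=r\}$ the codimension $(q-r)(s-r)\ge s-q+1$.

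\textbf{Step 3: the critical-value ideal.} Restrict $G$ to $C=V(I_q(\JacMat G))$, which has dimension $\le q-1$. By Chevalley, the closure of the algebraic image $G(C)$ in $\A^q_K$ has dimension $\le q-1<q$. It is therefore contained in a hypersurface $V(P)$ with $0\ne P\in K[y]$. Then $P(G)$ vanishes on $C$, so some power $P^e(G)$ lies in $I_q(\JacMat G)$ in the localized ring, and hence in $R$. Thus $P^e\in\Delta_G$. Compatibility of the algebraic and formal kernels is a faithful-flatness check: the composite $K[y]\to R/I_q$ factors through the local algebraic ring.
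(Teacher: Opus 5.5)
Your Step 1 reduction is correct, your Step 3 is essentially the paper's argument, and your incidence count in Step 2 does prove the height statement when $K$ is infinite. The gap is the finite-field case. The lemma needs it, because it feeds Theorem~\ref{thm:pure-right}, which is stated for every field. You flag this case as the main obstacle but leave it open, and neither route you suggest works with the tools available.

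\textbf{Poonen's sieve does not apply.} Lemma~\ref{lem:poonen-local} is a sieve for a single polynomial whose bad local conditions are avoided at every closed point off $Z$. Here the rank-drop condition on $\JacMat(G)(P)$ has codimension only $s-q+1<s$ in the first-jet data, and it necessarily holds on a $(q-1)$-dimensional locus. So ``dimension at most $q-1$'' is not a condition at closed points. Heuristically, the bad probabilities $|\kappa(P)|^{-(s-q+1)}$ sum to infinity over the closed points of $\A^s_K$ when $q\ge2$.

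\textbf{Theorem~\ref{thm:stratified-jet} on the $1$-jet extension does not apply either.} That theorem perturbs all target coordinates independently. Its output is some transverse map, not the prolongation $x\mapsto(G(x),\JacMat(G)(x))$ of any $G$, since integrability is lost. It is also only a formal germ on an étale neighbourhood, not a polynomial $H\in\m^NK[x]^q$. What you would actually need is a second-order (jet-transversality) form of Poonen's theorem, which neither the paper nor its cited inputs provide.

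\textbf{The paper's explicit perturbation.} The paper avoids genericity altogether. Take pairwise distinct $a_1,\ldots,a_q\ge N$, not divisible by $\Char K$ and larger than the degrees in $F$, and set $H_i=\sum_{j=1}^s x_j^{a_i}$.
\begin{itemize}
\item For each $q$-subset $J=\{j_1,\ldots,j_q\}$ of source variables, the top-degree form of the corresponding maximal minor of $\JacMat(G)$ is the generalized alternant $\det[a_ix_{j_\ell}^{a_i-1}]$.
\item This alternant is nonzero: distinct permutations give distinct monomials, each with coefficient $\pm\prod_i a_i\ne0$ and each supported exactly on $J$.
\item With a degree-compatible order, these leading monomials lie in $\init(I_q(\JacMat(G)))$. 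Hence its radical contains the squarefree ideal generated by the products $\prod_{j\in J}x_j$, which has height $s-q+1$.
\item Gr\"obner degeneration preserves dimension, so the height is at least $s-q+1$, and the determinantal bound gives equality.
\end{itemize}
This argument is uniform over every field, finite or infinite, and it is the idea your proposal is missing.
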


\begin{proof}
First replace $N$ by a larger integer if necessary.  Choose pairwise distinct integers
$a_1,\ldots,a_q\ge N$, none divisible by $\Char K$, and larger than the
degrees occurring in $F$, and set
$H_i=\sum_{j=1}^s x_j^{a_i}$.
For a $q$-element subset $J=\{j_1,\ldots,j_q\}$ of the source variables, the part of
highest total degree in the corresponding $q\times q$ Jacobian minor is
$\det\bigl[a_i x_{j_\ell}^{a_i-1}\bigr]_{1\le i,\ell\le q}$.
This generalized alternant is nonzero: after expansion, distinct permutations give
distinct exponent vectors because the $a_i$ are distinct, and every coefficient
$\prod_i a_i$ is nonzero in $K$. Choose a degree-compatible monomial order. For every
$q$-element subset $J$, the leading monomial of the
corresponding Jacobian minor is therefore a monomial of
the highest-total-degree alternant. Its support is exactly
$J$, and each variable indexed by $J$ has positive exponent.  Its leading monomial has a strictly positive exponent on every
variable indexed by $J$ and no other variables.  Therefore a positive power of
$\prod_{j\in J}x_j$ belongs to the initial ideal.  Hence the radical of the initial ideal
of $I_q(\JacMat(G))$ contains the squarefree monomial ideal generated by
$\prod_{j\in J}x_j$ over all $q$-element subsets $J$.  That squarefree ideal has height
$s-q+1$.  A Gr\"obner degeneration preserves Krull dimension \cite[Chapter~15]{Eisenbud1995}, so
$\height I_q(\JacMat(G))=\height\init(I_q(\JacMat(G)))\ge s-q+1$; the standard determinantal height bound \cite[Chapter~2]{BrunsVetter} gives the reverse inequality.  Thus equality holds in
$K[x_1,\ldots,x_s]$.  Since $\rank\bigl(\JacMat(G)(0)\bigr)<q$, every maximal minor vanishes at the
origin, so $I_q(\JacMat(G))\subseteq \langle x_1,\ldots,x_s\rangle$.  Every prime contained in this maximal
ideal and minimal over $I_q(\JacMat(G))$ has height at least $s-q+1$, while the determinantal
height bound remains valid after localization.  Hence the localized ideal has height
exactly $s-q+1$.  Completion preserves the dimension of the quotient
\cite[Lemma~15.44.1, Tag~07NV]{stacks-project}, and therefore preserves the height in
this regular local ring.  This proves the stated local and completed assertion.\\

Every minimal component of the critical locus consequently has dimension at most
$q-1$.  The restrictions of the $q$ functions $G_1,\ldots,G_q$ to such a component
are algebraically dependent, so for each minimal component there is a nonzero
polynomial $P(y_1,\ldots,y_q)$ vanishing on its image.  Multiplying these relations over
the finitely many minimal components gives a nonzero polynomial in the radical of the
kernel of
$K[y_1,\ldots,y_q]\longrightarrow \frac{K[x_1,\ldots,x_s]}{I_q(\JacMat(G))}, \qquad y_i\longmapsto G_i$.
Taking a suitable power produces a nonzero polynomial in the kernel itself.  This is the
asserted critical-value relation.
\end{proof}

\begin{theorem}\label{thm:pure-right}
For every field $K$ and every $F\in\m R^q$,
\[
 F\text{ is finitely }\R\text{-determined}
 \iff \dim_K \left(\frac{R^q}{\im\bigl(\JacMat(F)\bigr)}\right)<\infty.
\]
If $q>1$, these conditions are equivalent to $\rank\bigl(\JacMat(F)(0)\bigr)=q$.  In that case
$q\le s$, the germ $F$ is right equivalent to the coordinate projection
$(x_1,\ldots,x_q)$, and $F$ is already $1$-$\R$-determined.
\end{theorem}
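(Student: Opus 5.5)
We split according to $q$. Sufficiency holds over an arbitrary field by the Greuel--Pham criterion \cite[Theorem~3.1.12]{GLS}, so in both cases only necessity needs proof. For $q=1$ the tangent image is the Jacobian ideal. Here we invoke Boubakri--Greuel--Markwig \cite{BoubakriGreuelMarkwig2012}: finite right determinacy is equivalent to $\mu(F)<\infty$, i.e.\ to finite codimension of $\langle\partial F/\partial x_\nu\rangle$. Passing between the extended and ordinary tangent images costs only a finite-dimensional quotient, as observed after Theorem~\ref{thm:jettrans}.

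For $q>1$ we prove that finite $\R$-determinacy forces $\rank\JacMat(F)(0)=q$. The easy implications then close the cycle.
\begin{itemize}
\item If the rank is $q$, then $q\le s$. The inverse function theorem for formal power series, valid over any field, completes $F$ to a coordinate system, so $F\sim_\R(x_1,\ldots,x_q)$. Any $G$ with $j^1G=j^1F$ has the same invertible linear part and is therefore also a coordinate projection, which gives $1$-determinacy.
\item If the rank is $q$, then $\im\JacMat(F)=R^q$, so the codimension is $0$.
\item Conversely, finite codimension gives finite determinacy by sufficiency.
\end{itemize}

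The core step is to suppose $F$ is $k$-$\R$-determined with $\rank\JacMat(F)(0)<q$ and derive a contradiction. First, $q\le s$. Otherwise a suitably generic deep perturbation makes $G_1,\ldots,G_q$ algebraically dependent only through relations incompatible with determinacy. Alternatively we can argue dimensionally, as in Proposition~\ref{prop:dimension-bounds}(i), which gives $q=q\cdot 1\le s$.

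Now replace $F$ by a polynomial truncation, which is right equivalent to $F$. Apply Lemma~\ref{lem:critical-height-new} with $N\ge k+1$ to get $G=F+H$, with $G\sim_\R F$, whose critical-value ideal $\Delta_G\subseteq T=K[[y]]$ contains a nonzero polynomial. The rank hypothesis is preserved, since $H\in\m^N$ with $N\ge2$. Right equivalence $G'=G\circ\varphi$ carries $I_q(\JacMat(G))$ to $I_q(\JacMat(G'))$ by the chain rule, because $\det\JacMat(\varphi)$ is a unit. Hence $\Delta$ is an invariant of the right class: $\Delta_{G'}=\Delta_G$.

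Let $\sigma$ be a triangular automorphism of $T$ of depth $\ge k+1$, for instance $y_i\mapsto y_i+y_j^{M_i}$. Since $G\in\m R^q$, the germ $\sigma(G)$, meaning $G_i+G_j^{M_i}$, agrees with $G$ modulo $\m^{k+1}$, so $\sigma(G)\sim_\R G$. The chain rule in the target gives $\JacMat(\sigma\circ G)=\JacMat(\sigma)(G)\cdot\JacMat(G)$, with $\JacMat(\sigma)$ invertible, so $I_q$ is unchanged. Consequently $\Delta_{\sigma(G)}=\sigma^{-1}(\Delta_G)$, and combined with invariance of the class we get $\sigma(\Delta_G)=\Delta_G$.

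Lemma~\ref{lem:deep-shear-new} then yields $\sqrt{\Delta_G}=\mathfrak m_T$. Geometrically, the critical values are concentrated at the origin. Algebraically, $T/\Delta_G\to R/I_q(\JacMat(G))$ is injective with Artinian source, so $G_1,\ldots,G_q$ are nilpotent modulo $I_q(\JacMat(G))$. By Lemma~\ref{lem:critical-height-new}, however, $R/I_q(\JacMat(G))$ has dimension $q-1\ge1$.

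The main obstacle is extracting a contradiction from this. We would argue that the map $R/I_q\leftarrow T/\Delta$ is finite, by a Weierstrass-type finiteness statement on the critical locus. If it were finite, the image $\Delta$ would have $\dim T/\Delta=\dim R/I_q=q-1>0$, contradicting $\sqrt\Delta=\mathfrak m_T$.

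To secure finiteness we would choose the perturbation in Lemma~\ref{lem:critical-height-new} more carefully, with the $H_i$ generic of distinct large degrees, so that $G$ restricted to each critical component is quasi-finite at the origin. If that fails, we instead use $\sqrt{\Delta}=\mathfrak m_T$ directly: each $G_i^b\in I_q(\JacMat(G))$. Then a further deep perturbation $G_1\mapsto G_1+x_1^{M}$ changes $I_q$ only in high order, yet destroys this membership on a critical branch. Verifying this is the step we expect to require the most care.
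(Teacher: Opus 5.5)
Your argument is sound, and it coincides with the paper's, up to the conclusion $\sqrt{\Delta_G}=\mathfrak m_T$. That part consists of the bound $q\le s$, the perturbation of Lemma~\ref{lem:critical-height-new} beyond the determinacy order, invariance of $\Delta$ under deep triangular target automorphisms, and Lemma~\ref{lem:deep-shear-new}.

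\textbf{The gap.} The final contradiction is missing, as you acknowledge. Nilpotency of every $G_i$ modulo $I_q(\JacMat(G))$, together with $\dim R/I_q(\JacMat(G))=q-1$, is not contradictory on its own. It only says that $G$ contracts its critical locus to the origin. For example, $(x_1,x_1x_2)$ with $s=q=2$ has $I_2=\langle x_1\rangle$, of height $s-q+1$, and both components lie in it.

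The finiteness of $R/I_q(\JacMat(G))$ over $T$ that you want is, by the finiteness criterion for complete local rings, equivalent to $\m$-primariness of $\langle G_1,\ldots,G_q\rangle+I_q(\JacMat(G))$. That is exactly the Jacobian half of the ICIS condition. Your proposed ways of obtaining it do not work:
\begin{itemize}
\item The sum-of-powers perturbation of Lemma~\ref{lem:critical-height-new} does not give it.
\item Choosing the $H_i$ \emph{generic} is unavailable over a finite field, where the theorem is also claimed. Over infinite fields it is a prescribed-jet Bertini statement that you would still have to prove.
\item The fallback is not an argument. $G_1+x_1^M$ is right equivalent to $G$, so it has the same nilpotency property relative to its own minor ideal. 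A high-order change of $I_q$ gives no control over its radical.
\end{itemize}

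\textbf{The missing idea.} Use determinacy a second time, through contact equivalence. Right $k$-determinacy of $F$ gives finite contact determinacy of the column $F$. To apply the ideal-theoretic statement you first need that $F_1,\ldots,F_q$ minimally generate their ideal. Suppose $F_q\in I_0=\langle F_1,\ldots,F_{q-1}\rangle$. Perturb $F_q$ by some $h\in\m^N\setminus I_0$, which exists because $I_0$ has at most $s-1$ generators. Applying the resulting source automorphism to the relation gives $h\in I_0$, a contradiction.

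Then Corollary~\ref{cor:icis-all-fields} applies to $\langle G_1,\ldots,G_q\rangle$, since $G$ is also finitely right determined. Over finite fields this is where Poonen's Bertini theorem with Taylor conditions enters, via Theorem~\ref{thm:icis}. There are two cases:
\begin{itemize}
\item If $q<s$, the ICIS property makes $\langle G\rangle+I_q(\JacMat(G))$ $\m$-primary.
\item If $q=s$, the corollary forces $\langle G\rangle$ itself to be $\m$-primary. Otherwise its $s$ minimal generators would form a regular sequence, giving a zero-dimensional quotient.
\end{itemize}
In either case, $G_i\in\sqrt{I_q(\JacMat(G))}$ makes $I_q(\JacMat(G))$ $\m$-primary. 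This contradicts $\height I_q(\JacMat(G))\le s-q+1<s$ for $q>1$.

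In your terms, this supplies exactly the finiteness you were after. It is transported to $G$ by determinacy from a suitably generic perturbation (Poonen's sieve over finite fields), not built into a special choice of $H$.
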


\begin{proof}
For $q=1$, the assertion is the arbitrary-characteristic finite-Milnor-number
criterion \cite[Theorem~2.1]{BoubakriGreuelMarkwig2012}.  Assume $q>1$.  Proposition~\ref{prop:dimension-bounds} gives $q\le s$.
After replacing $F$ by a polynomial truncation in its right orbit, define its
\emph{relation ideal} and \emph{critical-value ideal} by
$E_F=\ker(K[[y_1,\ldots,y_q]]\to R,\ y_i\mapsto F_i)$ and
$\Delta_F=\ker(K[[y_1,\ldots,y_q]]\to \frac{R}{I_q(\JacMat(F))},\ y_i\mapsto F_i)$,
respectively.  Let $k$ be a right-determinacy order for $F$.  If a target automorphism $\psi$ satisfies
$\psi(y)-y\in\mathfrak m_T^{k+1}T^q$, then $F_i\in\m$ gives
$\psi\circ F-F\in\m^{k+1}R^q$.  Hence $\psi\circ F$ is right equivalent to $F$, so
there is a source automorphism $\varphi$ with $\psi\circ F=F\circ\varphi$.  For such
$\psi$, the chain rule gives
$\JacMat(\psi\circ F)=D\psi(F)\JacMat(F)$ with $D\psi(F)$ invertible.  Thus the maximal-minor
ideal is unchanged up to the induced source automorphism, and both $E_F$ and
$\Delta_F$ are invariant under these deep target automorphisms.\\

We first record the following consequence of right determinacy, which we use in the
proof: the
components of $F$ minimally generate the ideal $\langle F_1,\ldots,F_q\rangle$.  Indeed, suppose
after reordering that $F_q\in I_0=\langle F_1,\ldots,F_{q-1}\rangle$.  Since $q\le s$, the ideal
$I_0$ has at most $s-1$ generators and therefore is not $\m$-primary.  Thus, for every
$N$, one can choose $h\in\m^N\setminus I_0$.  Take $N$ larger than a right-determinacy
order and set $G=(F_1,\ldots,F_{q-1},F_q+h)$.  Then $G=\varphi(F)$ for some source
automorphism $\varphi$.  For $i<q$ this identity gives $\varphi(F_i)=F_i$, whereas
applying $\varphi$ to a relation $F_q=\sum_{i<q}a_iF_i$ gives
$F_q+h=\sum_{i<q}\varphi(a_i)F_i$, so $h\in I_0$, a contradiction.\\

Suppose $\rank\bigl(\JacMat(F)(0)\bigr)<q$.  Apply Lemma~\ref{lem:critical-height-new} beyond the
determinacy order.  The resulting right-equivalent $G$ has a critical-value ideal
containing a nonzero polynomial, so Lemma~\ref{lem:deep-shear-new} gives
$\sqrt{\Delta_G}=\mathfrak m_T$.  Thus every component $G_i$ belongs to the radical of
$I_q(\JacMat(G))$.\\

Right determinacy implies finite contact determinacy of the generator column.  If
$q<s$, Corollary~\ref{cor:icis-all-fields} shows that
$\langle G_1,\ldots,G_q\rangle$ defines an ICIS.  Therefore
$\langle G_1,\ldots,G_q\rangle+I_q(\JacMat(G))$ is $\m$-primary.  Since every $G_i$ lies in
$\sqrt{I_q(\JacMat(G))}$, these two ideals have the same radical, and $I_q(\JacMat(G))$ would itself
be $\m$-primary.  If $q=s$ and $\langle G_1,\ldots,G_q\rangle$ were not $\m$-primary, then it would
have positive-dimensional quotient; Corollary~\ref{cor:icis-all-fields} would make its
$q=s$ minimal generators a regular sequence, forcing a zero-dimensional quotient, a
contradiction.  Thus $\langle G_1,\ldots,G_q\rangle$ is $\m$-primary.  The same radical
containment therefore forces
$I_q(\JacMat(G))$ to be $\m$-primary.  Both alternatives are impossible because a proper
maximal-minor ideal of a $q\times s$ matrix has height at most $s-q+1<s$.  Hence
$\rank\bigl(\JacMat(F)(0)\bigr)=q$.\\

A full-rank Jacobian at the origin makes $F$ a formal submersion, so a source coordinate
change sends it to $(x_1,\ldots,x_q)$.  Any germ with the same $1$-jet as $F$ again has
Jacobian rank $q$ at the origin and is therefore right equivalent to the same coordinate
projection.  Hence $F$ is $1$-$\R$-determined.  Conversely, if
$\frac{R^q}{\im\bigl(\JacMat(F)\bigr)}$ has finite length, then the Jacobian map is surjective after
localization at every nonmaximal prime.  Thus no nonmaximal prime contains the ideal
$I_q(\JacMat(F))$ of maximal minors, and hence $I_q(\JacMat(F))$ is either $\m$-primary or the unit
ideal.  Tensoring with $\Frac(R)$ also shows that $\JacMat(F)$ has row rank $q$, so necessarily
$q\le s$.  The determinantal height bound \cite[Chapter~2]{BrunsVetter}
$\height I_q(\JacMat(F))\le s-q+1<s$ excludes the proper $\m$-primary case when $q>1$,
so $I_q(\JacMat(F))=R$.
\end{proof}

\begin{remark}
The rigidity in Theorem~\ref{thm:pure-right} is specific to pure right equivalence.  When
$q>1$, a finitely right-determined map germ cannot have a singular differential at the origin:
it is already a formal submersion and hence a coordinate projection.  For example,
\[
 (x_1^2,x_2,\ldots,x_q):(K^s,0)\longrightarrow(K^q,0)
\]
is never finitely pure-right determined for $q>1$, in any characteristic.  This should be
contrasted with left--right and contact equivalence, where non-submersive finitely determined
germs are the principal objects of singularity theory.
\end{remark}

\begin{corollary}\label{cor:smooth-target-right}
Let $(Y,y)$ be smooth with $y\in Y(K)$ and $\dim_yY=q>1$, and let
$F:(\Spf R,\m)\to(\widehat Y_y,y)$ be a formal map germ.  Then $F$ is
finitely right determined if and only if $dF_0:T_0\A_K^s\to T_yY$ is surjective.  In
this case $F$ is $1$-right-determined.  These statements do not depend on the choice of
formal parameters at $y$.
\end{corollary}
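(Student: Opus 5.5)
The plan is to reduce Corollary~\ref{cor:smooth-target-right} to Theorem~\ref{thm:pure-right} by choosing formal coordinates at $y$. Since $Y$ is smooth at the $K$-rational point $y$ with $\dim_yY=q$, the completed local ring $\widehat{\mathcal O}_{Y,y}$ is a formal power series ring $K[[y_1,\ldots,y_q]]$. Here $y_1,\ldots,y_q$ are regular parameters at $y$, and they induce a basis of $T_yY$ dual to their differentials. In these coordinates the germ $F$ becomes a vector $(F_1,\ldots,F_q)\in\m R^q$ with $F_i=F^*(y_i)$. Right equivalence of germs into $(\widehat Y_y,y)$ is then composition with source automorphisms $\varphi\in\Aut_K(R)$, which is exactly pure right equivalence of the vector $(F_1,\ldots,F_q)$. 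The jet conditions $j^kG=j^kF$ also coincide: $G^*(y_i)-F^*(y_i)\in\m^{k+1}$ for all $i$ is the same as agreement modulo $\m^{k+1}$ of the pullback of every element of the maximal ideal. So finite right determinacy of $F$ is finite $\R$-determinacy of the vector germ in the sense of Theorem~\ref{thm:pure-right}.

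Next, $dF_0\colon T_0\A^s_K\to T_yY$ is represented, in the chosen parameters, by the matrix $\JacMat(F)(0)$. Surjectivity of $dF_0$ is therefore the condition $\rank(\JacMat(F)(0))=q$. Since $q>1$, Theorem~\ref{thm:pure-right} says that finite $\R$-determinacy is equivalent to this rank condition, and that in that case $F$ is $1$-$\R$-determined. Transporting back through the identification gives both assertions.

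For independence of the choice of parameters, let $y'_1,\ldots,y'_q$ be a second system of regular parameters. They are related to the first by a target automorphism $\psi$ of $K[[y_1,\ldots,y_q]]$, so the new vector germ is $\psi\circ F$. Surjectivity of $dF_0$ is intrinsic, and $\JacMat(\psi\circ F)(0)=D\psi(0)\,\JacMat(F)(0)$ with $D\psi(0)$ invertible, so the rank is unchanged. Finite right determinacy is also preserved. If $G$ agrees with $\psi\circ F$ modulo $\m^{k+1}$, then $\psi^{-1}\circ G$ agrees with $F$ modulo $\m^{k+1}$, because $\psi^{-1}$ has polynomial truncations and all components lie in $\m$. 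Hence $\psi^{-1}\circ G=F\circ\varphi$, so $G=(\psi\circ F)\circ\varphi$, and the determinacy order is preserved.

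The only step needing care is this last jet compatibility. I expect it to be routine: $\psi(y)-\psi(y')$ lies in the ideal generated by the differences $y_i-y'_i$, so agreement modulo $\m^{k+1}$ is preserved under composition with $\psi$ or $\psi^{-1}$.
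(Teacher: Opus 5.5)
Your proposal is correct and follows the same route as the paper. You identify $\widehat{\mathcal O}_{Y,y}$ with $K[[y_1,\ldots,y_q]]$, apply Theorem~\ref{thm:pure-right}, and note that a change of regular parameters composes the representative with a target formal automorphism, which preserves both the rank of $\JacMat(F)(0)$ and the determinacy property. Your explicit check that composing with $\psi^{\pm1}$ preserves agreement modulo $\m^{k+1}$ just fills in a detail the paper leaves implicit.
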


\begin{proof}
Since $Y$ is smooth at the $K$-rational point $y$, its completed local ring is
(noncanonically) isomorphic to $K[[y_1,\ldots,y_q]]$. Under such an
isomorphism the assertion is exactly Theorem~\ref{thm:pure-right}. Replacing
one regular system of parameters by another composes the coordinate
representative with a target formal automorphism; this preserves
surjectivity of $dF_0$ and the property that all germs with the same
$1$-jet lie in one source-coordinate orbit.
\end{proof}

\begin{corollary}\label{cor:pure-right-matrix}
Let $A\in\m M_{m,n}$ over an arbitrary field.  Then $A$ is finitely
$\R$-determined if and only if its ordinary, equivalently its extended,
$\R$-tangent image has finite $K$-codimension.  If $mn>1$, these
conditions are equivalent to surjectivity at the origin of the
differential of the entry map $A:(K^s,0)\longrightarrow(K^{mn},0)$, 
and in this case $A$ is already $1$-$\R$-determined. Consequently, Theorems~\ref{thm:glr} and~\ref{thm:pure-right} settle the
full left--right and pure right cases of GLS Conjecture~3.1.16 over every
field.  Together with Theorem~\ref{thm:one-sided-complete}, they settle
all four actions over every infinite field.  Over an arbitrary field,
the one-sided cases are additionally settled in the dimension ranges of
Theorem~\ref{thm:one-sided-ranges}.
\end{corollary}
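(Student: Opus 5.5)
The plan is to view a matrix $A\in\m M_{m,n}$ under the pure right action $\R$ as the vector-valued germ $F_A=(a_{ij})\in\m R^{q}$ with $q=mn$. Pure right equivalence of matrices is exactly pure right equivalence of these entry maps, since $\R$ acts only by source coordinate changes and no matrix multiplication enters. The $\R$-tangent image of $A$ is $\sum_\nu R\,\partial A/\partial x_\nu$ in the extended case, and the $\m$-span of the same matrices in the ordinary case. Under the identification $M_{m,n}\cong R^{mn}$, the extended module is the image of $\JacMat(F_A):R^s\to R^{mn}$. So the first step is to record this dictionary carefully, including that $k$-jets correspond and that determinacy orders match.

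With the dictionary in place, apply Theorem~\ref{thm:pure-right} to $F_A$. This gives finite $\R$-determinacy if and only if $\dim_K R^{mn}/\im\JacMat(F_A)<\infty$, which is finite codimension of the extended tangent image. The equivalence with the ordinary tangent image follows from the observation made after Theorem~\ref{thm:jettrans}. The quotient of the extended image by the ordinary one is spanned over $K$ by the classes of the $s$ matrices $\partial A/\partial x_\nu$, so the two codimensions differ by at most $s$.

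For $mn>1$, the second part of Theorem~\ref{thm:pure-right}, with $q=mn>1$, gives equivalence with $\rank\JacMat(F_A)(0)=mn$. This is exactly surjectivity of $dA_0:K^s\to K^{mn}$. The same theorem also yields $1$-$\R$-determinacy, since $F_A$ is right equivalent to a coordinate projection.

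The closing claims are bookkeeping. Theorem~\ref{thm:glr} handles $\Glr$ over every field and the present statement handles $\R$, so these two settle Conjecture~3.1.16 for those actions. Theorem~\ref{thm:one-sided-complete} adds $\Gl$ and $\Gr$ over infinite fields, and Theorem~\ref{thm:one-sided-ranges} adds them in the stated ranges over arbitrary fields.

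The only real obstacle is making the dictionary precise. One must check that the GLS definition of the $\R$-tangent image for matrices, \cite[Definition~3.1.11]{GLS}, literally coincides with the Jacobian image of the entry map. One must also check that $m\times n$ matrix germs carry no extra structure under $\R$ beyond the entry vector.
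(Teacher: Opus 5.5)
Your proposal is correct and takes the same route as the paper's proof. You vectorize the entries, apply Theorem~\ref{thm:pure-right} with $q=mn$ (which also covers $mn=1$ through the scalar criterion), compare the ordinary and extended tangent images via the $s$ derivative matrices, and read off the rank-$mn$ criterion and $1$-$\R$-determinacy when $mn>1$, with the closing claims being bookkeeping from Theorems~\ref{thm:glr}, \ref{thm:one-sided-complete}, and~\ref{thm:one-sided-ranges}.
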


\begin{proof}
Vectorize the entries of $A$ and apply
Theorem~\ref{thm:pure-right} with $q=mn$.  The extended $\R$-tangent
image is the $R$-span of the derivative matrices, whereas the ordinary
$\R$-tangent image is their $\m$-span.  The quotient of the former
derivative contribution by the latter is generated over $K$ by the
finitely many derivative matrices.  Therefore finite codimension of the
ordinary tangent image is equivalent to finite codimension of the
extended tangent image.

If $mn>1$, Theorem~\ref{thm:pure-right} shows that finite right
determinacy is equivalent to having $\rank\bigl(\JacMat(A)(0)\bigr)=mn$.  This is exactly surjectivity at the origin of the differential of the
entry map $A:(K^s,0)\longrightarrow(K^{mn},0)$. 
The same theorem then shows that $A$ is $1$-$\R$-determined.  For
$mn=1$, the assertion is the scalar finite-right-determinacy criterion.
\end{proof}

Finally, we translate punctured rank transversality into statements about determinantal
ideals.  The homological study of determinantal ideals begins with the Eagon--Northcott
complex \cite{EagonNorthcott1962}; the expected-height perfection and
Cohen--Macaulayness results were subsequently developed in the generic-perfection work
of Hochster--Eagon \cite{HochsterEagon1971}.  We use the modern treatment in
\cite{BrunsVetter}.  For $1\le t\le\min(m,n)$ put
$c_t=(m-t+1)(n-t+1)$.

\begin{corollary}\label{cor:rank-trans}
A matrix germ is finitely $\Glr$-determined exactly when, on the punctured formal
spectrum, it is transverse to every constant-rank stratum.
\end{corollary}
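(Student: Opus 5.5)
The plan is to identify both sides of Corollary~\ref{cor:rank-trans} with a single intermediate condition: finite $K$-codimension of the extended tangent image $\Ttilde_A^{\,e}(\Glr A)$. Theorem~\ref{thm:glr}, together with the sufficient criterion \cite[Theorem~3.1.12]{GLS}, already gives
\[
A\text{ finitely }\Glr\text{-determined}\iff\dim_K\left(\frac{M_{m,n}}{\Ttilde_A^{\,e}(\Glr A)}\right)<\infty .
\]
So the work reduces to showing that finite codimension of the extended tangent image is equivalent to transversality to all rank strata on the punctured formal spectrum. The ordinary and extended versions need not be distinguished, by the remark following Theorem~\ref{thm:jettrans}.

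The first step translates finite codimension into a localized surjectivity statement. The cokernel $Q=M_{m,n}/\Ttilde_A^{\,e}(\Glr A)$ is a finitely generated $R$-module. It has finite length exactly when its support is contained in $\{\m\}$, that is, when $Q_{\mathfrak p}=0$ for every nonmaximal prime $\mathfrak p$. By Nakayama's lemma, $Q_{\mathfrak p}=0$ holds exactly when $Q\otimes\kappa(\mathfrak p)=0$. This last condition says that the map
\[
(U,V,\xi)\longmapsto UA+AV+dA(\xi)
\]
becomes surjective after base change to $\kappa(\mathfrak p)$.

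The second step identifies this fibrewise surjectivity with transversality, following the definition given before Lemma~\ref{lem:grass-quotient}. Let $C=A(\mathfrak p)\in M_{m,n}(\kappa(\mathfrak p))$, and let $r$ be its rank. The image of $M_m(R)A+AM_n(R)$ in the fibre is $M_mC+CM_n$. By Lemma~\ref{lem:rank}, which holds over any field and in particular over $\kappa(\mathfrak p)$, this is $T_C\Sigma_r$. The derivative terms contribute $dA(T)$, computed with continuous derivations $\partial/\partial x_\nu$. Fibrewise surjectivity therefore reads $dA(T)+T_C\Sigma_r=T_CM_{m,n}$, which is precisely transversality to $\Sigma_r$ at $\mathfrak p$. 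Since every point lies in exactly one stratum, surjectivity at all punctured primes is the same as transversality to every constant-rank stratum on the punctured spectrum. This establishes both implications, and they are exactly the equivalence the paper announced after the definition (``Equivalently, the left--right infinitesimal orbit map is surjective after localization at every $\mathfrak p\ne\m$'').

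The main obstacle is the second step. We must check that the tangent space $T$ at a nonmaximal prime of the formal spectrum is correctly represented by the $R$-span of the derivations $\partial/\partial x_\nu$ tensored with $\kappa(\mathfrak p)$. If $\kappa(\mathfrak p)$ is imperfect, genuine Kähler tangent spaces could be larger. The cleanest way to handle this is to take the transversality definition to be the one stated through the module map above, so that the identification is tautological. Lemma~\ref{lem:rank} then supplies the stratum tangent spaces over the residue field $\kappa(\mathfrak p)$.
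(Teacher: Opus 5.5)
Your proposal is correct and follows the paper's proof essentially step for step. Lemma~\ref{lem:rank}, applied over the residue fields, turns punctured transversality into surjectivity of the extended left--right infinitesimal orbit map after localization at each nonmaximal prime. Finite generation of the cokernel turns this into finite length, and Theorem~\ref{thm:glr} together with the Greuel--Pham sufficiency criterion identifies finite length with finite $\Glr$-determinacy. Your Nakayama step, and your observation that the tangent directions at $\mathfrak p$ are by the paper's own definition the $\kappa(\mathfrak p)$-span of the $\partial A/\partial x_\nu$, only make explicit what the paper's three-line proof leaves implicit.
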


\begin{proof}
By Lemma~\ref{lem:rank}, punctured transversality is equivalent to localized
surjectivity of the extended left--right tangent image.  For a finitely generated
cokernel over $R$, this is equivalent to finite length.  Apply Theorem~\ref{thm:glr}.
\end{proof}

\begin{theorem}
\label{thm:det-geometry-new}
Let $G\in\{\R,\Gl,\Gr,\Glr\}$ and let $A$ be finitely $G$-determined.  Then
\[
 \height I_t(A)=\min\{s,c_t\}\qquad(1\le t\le\min(m,n)).
\]
Moreover $\frac{R}{I_t(A)}$ is Cohen--Macaulay; if $c_t<s$ it is reduced; and if
$c_t\le s-2$ it is a normal domain.
\end{theorem}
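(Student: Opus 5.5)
First I reduce to the full left--right action. For $G\in\{\R,\Gl,\Gr\}$, each $G$-orbit is contained in the corresponding $\Glr$-orbit, so finite $G$-determinacy implies finite $\Glr$-determinacy. Hence it suffices to treat $G=\Glr$. By Corollary~\ref{cor:rank-trans}, $A$ is then transverse to every constant-rank stratum $\Sigma_r$ on the punctured formal spectrum. The rank-$\le t-1$ locus $\overline{\Sigma_{t-1}}=\coprod_{r<t}\Sigma_r$ has codimension $c_t$ in $M_{m,n}$, and it is defined by the generic $t\times t$ minors.

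The height formula comes from transversality on the punctured spectrum. At a nonmaximal prime $\mathfrak p\supseteq I_t(A)$, the point $A(\mathfrak p)$ lies in some $\Sigma_r$ with $r<t$. Transversality gives that $A^{-1}(\Sigma_r)$ is regular of codimension $(m-r)(n-r)\ge c_t$ near $\mathfrak p$. Concretely, I would work with the algebraic representative $F:W\to V$ of Theorem~\ref{thm:stratified-jet}, after replacing $A$ by such a $B$ via finite determinacy. There, pullback of a stratified set under a map transverse to all strata preserves codimension and singularity type. This yields the following.
\begin{enumerate}[label=(\alph*)]
\item Every minimal prime of $I_t(A)$ other than $\m$ has height $c_t$. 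If $c_t<s$, the locus meets the punctured spectrum: by Proposition~\ref{prop:generic-rank} it is nonempty and has no $\m$-primary minimal component, since every minimal prime of a determinantal ideal has height at most $c_t<s$. Hence $\height I_t(A)=c_t$.
\item If $c_t\ge s$, transversality forces $A^{-1}(\overline{\Sigma_{t-1}})$ to be empty off the origin, so $I_t(A)$ is $\m$-primary or the unit ideal. It is proper because $A\in\m M_{m,n}$. Hence its height is $s$.
\end{enumerate}

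Cohen--Macaulayness follows from the standard generic-perfection results. Because $I_t(A)$ has the expected height $\min\{s,c_t\}$, Eagon--Hochster perfection \cite{BrunsVetter} (perfection of the generic determinantal ideal transfers under specialization to a Cohen--Macaulay ring when the height is attained) gives that $R/I_t(A)$ is perfect. It is therefore Cohen--Macaulay. In the case $c_t\ge s$ the quotient is Artinian, so the claim is trivial there.

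For reducedness when $c_t<s$, I use Serre's criterion $R_0+S_1$. $S_1$ comes from Cohen--Macaulayness. For $R_0$, note that the minimal primes are nonmaximal by (a). At such a prime $\mathfrak p$, the germ of $\operatorname{Spec}R/I_t(A)$ is the transverse pullback of the generic determinantal variety $\overline{\Sigma_{t-1}}$. Up to a smooth factor it is therefore formally isomorphic to the generic determinantal ring at $A(\mathfrak p)$. Generic determinantal rings are normal domains in every characteristic \cite{BrunsVetter}, so the localization is regular at the generic points. Here I will need to justify carefully that transversality yields a smooth morphism from a neighbourhood of $\mathfrak p$ to $V$ near $A(\mathfrak p)$, so that the normal and regular properties pull back. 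The map is a submersion onto a normal slice; I would check this via the Jacobian criterion on the étale representative $W$, noting that smoothness of $W\to V$ at $\mathfrak p$ is equivalent to transversality to the stratum through $A(\mathfrak p)$, because $V$ is an affine space and the stratification is only used for the dimension count. I expect this to be the main obstacle.

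For normality when $c_t\le s-2$, I use $R_1+S_2$. Singular points of the generic locus are the points of rank $<t-1$, which have codimension $c_{t-1}\ge c_t+2$ in $V$ beyond $\overline{\Sigma_{t-1}}$'s codimension. Under transverse pullback this singular set has codimension at least $2$ in the punctured quotient. The closed point also has codimension $s-c_t\ge2$ in $R/I_t(A)$. Together with Cohen--Macaulayness this gives normality. Finally, a normal local ring is a domain, so $R/I_t(A)$ is a normal domain.
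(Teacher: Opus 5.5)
Your outline matches the paper's proof step for step. You reduce to $\Glr$ and get punctured rank transversality from Corollary~\ref{cor:rank-trans}. You combine the transverse codimension count with the determinantal height bound, and use determinantal perfection (or the Artinian case) for Cohen--Macaulayness. You finish with Serre's criterion, using $s-c_t\ge2$ and $c_{t-1}-c_t\ge2$.

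The one step that fails as written is the justification you single out as the main obstacle. You propose to show that transversality makes $F:W\to V$ smooth near $\mathfrak p$, asserting that smoothness of $W\to V$ at $\mathfrak p$ is equivalent to transversality to the stratum through $A(\mathfrak p)$. That equivalence is false except for the point stratum $\Sigma_0$. Smoothness requires $dF$ to be onto all of $TV$, which is strictly stronger than $dF(T_xW)+T\Sigma_r=TV$. When $s<mn$ no map $W\to V$ is smooth anywhere, yet Theorem~\ref{thm:stratified-jet} produces transverse maps for every $s$ (e.g.\ $m=n=2$, $t=2$, $s=3$).

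What you actually need is the normal-slice statement you mention in passing, and only at rank $t-1$. Near a point where a fixed $(t-1)\times(t-1)$ block $P$ is invertible, write the matrix as $\begin{pmatrix}P&B_{12}\\ B_{21}&B_{22}\end{pmatrix}$. On the open set $\det P\ne0$:
\begin{itemize}
\item $I_t=I_1(B_{22}-B_{21}P^{-1}B_{12})$;
\item the Schur-complement map to $\Mat_{m-t+1,n-t+1}(K)$ is a submersion;
\item $\Sigma_{t-1}$ is its zero fibre.
\end{itemize}
Hence transversality to $\Sigma_{t-1}$ at a point says exactly that the $c_t$ pulled-back Schur-complement entries have independent differentials there. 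They are therefore part of a regular system of parameters, and $(R/I_t(A))_{\mathfrak p}$ is regular whenever $A(\mathfrak p)$ has rank exactly $t-1$.

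Your height count shows that every minimal prime of the quotient has rank exactly $t-1$. The same holds, for $c_t\le s-2$, for every height-one prime: a nonmaximal prime containing $I_{t-1}(A)$ has height at least $c_{t-1}\ge c_t+3$. So this gives $R_0$ and $R_1$ directly, without normality of generic determinantal rings or slices at lower ranks. If you carry this out on the algebraic representative, note that passing to the completion preserves reducedness and normality, because the local ring of $W$ at the origin is excellent.
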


\begin{proof}
Finite $G$-determinacy implies finite $\Glr$-determinacy.  At every nonmaximal point
lying over $I_t(A)$, Corollary~\ref{cor:rank-trans} gives the expected inverse-image
codimension.  Combined with the standard determinantal height bound \cite[Chapter~2]{BrunsVetter}, this proves the
height formula.  If $c_t\le s$, the equality $\height I_t(A)=c_t$ is the
expected-height case, so determinantal perfection gives Cohen--Macaulayness
\cite[Chapter~2]{BrunsVetter}.  If $c_t>s$, then $\height I_t(A)=s$ and
$\frac{R}{I_t(A)}$ is zero-dimensional, hence Cohen--Macaulay.\\

If $c_t<s$, every minimal prime lies in the punctured spectrum and has generic matrix
rank $t-1$.  Transversality makes the quotient smooth at every generic point.  It is therefore generically reduced.  Since a Cohen--Macaulay ring has no embedded associated primes \cite[Lemma~10.103.7, Tag~0BUS]{stacks-project}, it is reduced.\\

Assume $c_t\le s-2$.  Then
\[
 \dim\left(\frac{R}{I_t(A)}\right)=s-c_t\ge2,
\]
so the closed point has codimension at least two in the determinantal quotient and is irrelevant
for Serre condition $R_1$.  On the punctured spectrum, transversality identifies the singular
locus with the inverse image of the matrices of rank at most $t-2$.  If $t=1$ this locus is empty.
If $t\ge2$, its codimension in $\frac{R}{I_t(A)}$ is at least $ \min\{s,c_{t-1}\}-c_t\ge2$. Thus every codimension-one point is regular, so $R_1$ holds.  Cohen--Macaulayness gives $S_2$,
and the Serre criterion \cite[Lemma~10.157.4, Tag~031S]{stacks-project} gives normality.  Since
the quotient is local, a normal ring is a domain.
\end{proof}

\begin{corollary}
\label{cor:det-approx-new}
Every prescribed finite matrix jet over any field admits a higher-order completion for
which all determinantal ideals simultaneously have the heights and regularity properties
of Theorem~\ref{thm:det-geometry-new}.
\end{corollary}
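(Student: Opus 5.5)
The plan is to combine Corollary~\ref{cor:glr-dense} with Theorem~\ref{thm:det-geometry-new}, taking $G=\Glr$. Fix an arbitrary field $K$, a matrix $A\in\m M_{m,n}$, and an order $N$. We interpret ``higher-order completion'' of the prescribed jet $j^{N-1}A$ as a germ $B$ with $B-A\in\m^N M_{m,n}$. Corollary~\ref{cor:glr-dense} produces such a $B$ that is finitely $\Glr$-determined. That corollary rests on Theorem~\ref{thm:jettrans}, hence on Theorem~\ref{thm:stratified-jet}, which uses Poonen's sieve over finite fields and an incidence-dimension count over infinite fields. So no restriction on $K$ enters.

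Next, apply Theorem~\ref{thm:det-geometry-new} to $B$ with $G=\Glr$. This gives, for every $1\le t\le\min(m,n)$ at once, the following properties:
\begin{itemize}
\item $\height I_t(B)=\min\{s,c_t\}$;
\item Cohen--Macaulayness of $R/I_t(B)$;
\item reducedness when $c_t<s$;
\item normality, together with being a domain, when $c_t\le s-2$.
\end{itemize}
Because one germ $B$ serves all $t$, the simultaneity required in the statement is automatic: it comes from punctured transversality to the whole rank stratification, which Theorem~\ref{thm:stratified-jet} imposes on all strata together.

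If one wants $B$ to be polynomial, or algebraic on an étale neighborhood, the germ from Theorem~\ref{thm:stratified-jet} already has this form. Alternatively, finite determinacy lets us replace $B$ by a polynomial truncation beyond its determinacy order. The properties in question are invariant under $\Glr$-equivalence, since $I_t(UB\varphi V)=\varphi(I_t(B))$, so they are preserved by this replacement.

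I expect no genuine obstacle. The only point to check is bookkeeping: the congruence convention for ``same prescribed jet'' should match that of Corollary~\ref{cor:glr-dense}, namely congruence modulo $\m^N$ for arbitrary $N$, and the $\Glr$ case of Theorem~\ref{thm:det-geometry-new} should be invoked directly rather than through a one-sided action.
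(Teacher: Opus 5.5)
Your proposal is correct and matches the paper's proof: the paper applies Theorem~\ref{thm:jettrans} to obtain a completion with finite $\Glr$ tangent-image codimension, deduces finite $\Glr$-determinacy from the sufficient criterion (together these are exactly Corollary~\ref{cor:glr-dense}), and then invokes Theorem~\ref{thm:det-geometry-new}. Your additional remarks on polynomial or étale-algebraic representatives and on invariance under $\Glr$-equivalence are harmless, but the statement does not need them.
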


\begin{proof}
Apply Theorem~\ref{thm:jettrans}.  The resulting matrix
has finite left--right tangent-image codimension and is therefore finitely
$\Glr$-determined.  Apply Theorem~\ref{thm:det-geometry-new}.
\end{proof}

\section{Smooth stabilizers and Frobenius modality}\label{sec:frobenius}

In this section, we study the right action on isolated hypersurface singularities.  We
first prove that sufficiently high jet stabilizers are smooth.  We then define the
Frobenius algebra and show how its length and Loewy structure control modality,
determinacy, separability, and the Milnor number.\\

Throughout this section, $k$ is algebraically closed of characteristic $p>0$.  Put
$\underline{x}=(x_1,\ldots,x_n)$, $R=k[[\underline{x}]]$, and
$\m=\langle x_1,\ldots,x_n\rangle$, and let $f\in\m^2$.  We write
$\partial_i=\frac{\partial}{\partial x_i}$ and
$\Jac(f)=\langle \partial_1f,\ldots,\partial_nf\rangle$ for the Jacobian ideal.  Throughout,
$\Jac(-)$ denotes the Jacobian ideal of a scalar germ, whereas $\JacMat(-)$ denotes the
Jacobian matrix of a vector-valued germ, as defined in Section~\ref{sec:matrix}.  We set
$\mu(f)=\dim_k\left(\frac{R}{\Jac(f)}\right)$ and $\Tj(f)=\dim_k\left(\frac{R}{\langle f\rangle+\Jac(f)}\right)$.  Thus
$\mu(f)$ and $\Tj(f)$ are the Milnor and Tjurina\footnote{The Tjurina number of $f$ is usually denoted by $\tau(f)$ (see, e.g~\cite{GLS}). Since we reserve the notation $\tau(-)$ for the test ideal, we use here the notation $\Tj(f)$.}  numbers, respectively. We call $f$ isolated if
$\mu(f)<\infty$, equivalently if $\Jac(f)$ is $\m$-primary.  We reserve $\tau(-)$ exclusively for generalized test ideals in Section~\ref{sec:F-applications}.  The right group is
$\R=\Aut_k(R)$.\\

For $N\ge1$ let $\mathcal J_N=\frac{\m}{\m^{N+1}}$ and let $\R_N$ be the algebraic group of
$N$-jets of local coordinate changes.  Its tangent space at the identity is
$\left(\frac{\m}{\m^{N+1}}\right)^n$.  For the orbit map
$o_{f,N}:\R_N\to \mathcal J_N$, $\varphi\mapsto\varphi(j^Nf)$, the differential at the identity
and the corresponding tangent image are
\begin{align}
 d_eo_{f,N}(v_1,\ldots,v_n)
 &=\sum_i v_i\partial_i f\pmod{\m^{N+1}},\label{eq:orbit-diff}\\
 \Ttilde_{j^Nf}(\R_N\cdot j^Nf)
 &=\frac{\m\Jac(f)+\m^{N+1}}{\m^{N+1}}.\label{eq:tangent-image}
\end{align}

\begin{definition}\label{def:rosenlicht-modality}
Let an algebraic group $G$ act on a variety $X$.  A \emph{Rosenlicht
stratification} is a finite disjoint decomposition $X=\coprod_aX_a$ into
$G$-invariant locally closed subvarieties for which geometric quotient morphisms
$\pi_a:X_a\to Q_a$ exist.  Here a \emph{geometric quotient} means a surjective
$G$-invariant morphism whose geometric fibers are precisely the $G$-orbits and whose
regular functions are the invariant regular functions; the symbol $Q_a$ denotes the
corresponding quotient variety.  Such stratifications exist by
Rosenlicht \cite{Rosenlicht1956,Rosenlicht1963}.  For an open neighborhood $U$ of $x\in X$, set
$G\text{-mod}(U)=\max_a\dim\pi_a(U\cap X_a)$ and define $G\text{-mod}(x)$ as the
minimum of this number over all open neighborhoods $U$ of $x$.  The result is
independent of the chosen Rosenlicht stratification
\cite[Definition~2.1 and Corollary~A.3]{GreuelNguyen2016}.  Thus
$G\text{-mod}(x)$ measures the local dimension of the orbit space near $x$---the
number of essential parameters left after quotienting by $G$---rather than the
dimension of the orbit itself.\\

For an isolated hypersurface germ $f$, an integer $N$ is \emph{sufficiently large for
right equivalence} if some neighborhood of $j^Nf$ in $\mathcal J_N$ consists entirely of right
$N$-determined germs.  The \emph{right modality} is
$\rmod(f)=\R_N\text{-mod}(j^Nf)$ for any such $N$; it is independent of the choice
of sufficiently large $N$ \cite[Definitions~2.1, 2.3 and Proposition~2.4]{GreuelNguyen2016}.
\end{definition}

\begin{definition}\label{def:proper-modality}
Let $(T,t_0)$ be a pointed algebraic variety.  An \emph{unfolding with trivial section}
of $f$ over $(T,t_0)$ is a family $F(x,t)\in\mathcal O(T)[[x]]$ such that
$F(x,t_0)=f$ and $F(0,t)=0$ for every $t$.  It is \emph{right-complete} if every
unfolding of $f$ becomes, after passage to an étale neighborhood of its base point,
fiberwise right equivalent to a pullback of $F$; see
\cite[Definition~2.9]{GreuelNguyen2016}.  A semiuniversal unfolding with trivial
section has the form
$F(x,t)=f(x)+\sum_{j=1}^{b_f}t_jg_j(x)$, where the classes of the $g_j$ form a
$k$-basis of $\frac{\m}{\m\Jac(f)}$ and $b_f=\mu(f)+n-1$; it is right-complete
\cite[Proposition~2.14]{GreuelNguyen2016}.\\

For any unfolding $F$, the notation $\R\text{-mod}_F(f)$ denotes the local
number of parameters that remain after quotienting the image of the parameter-to-jet map
$t\mapsto j^NF_t$ by the $\R_N$-action, computed on a Rosenlicht stratification; more
precisely it is the relative Rosenlicht modality at $t_0$ for a sufficiently large
$N$.  It is independent of that choice \cite[Definition~2.5]{GreuelNguyen2016}.  For a semiuniversal unfolding
with trivial section, let
$\mathcal C_{\mu(f)}(F)=\{t\mid\mu(F_t)=\mu(f)\}$ be its Milnor-constant stratum.
Following Nguyen, the \emph{proper modality} is
$\propermod(f)=\dim_{t_0}\mathcal C_{\mu(f)}(F)$ \cite[\S2.2]{Nguyen2019}.
\end{definition}

\begin{remark}
The notion of modality was introduced into singularity theory in the classification of
simple, unimodal, and bimodal singularities by Arnold and collaborators; see \cite{AGV}.
The algebraic formulation used here replaces a local analytic orbit space by Rosenlicht
quotients and was developed in arbitrary characteristic by Greuel--Nguyen
\cite{GreuelNguyen2016}.  In the complex analytic setting, Gabrielov proved that modality
agrees with proper modality \cite{Gabrielov1974}.  Nguyen established the corresponding
positive-characteristic equality first for univariate germs and later for all right
unimodal and bimodal germs \cite{Nguyen2014,Nguyen2019}.
\end{remark}

For a dominant morphism of integral varieties, we use \emph{separable} in the usual
function-field sense.  Thus an orbit map is separable if the induced extension of
function fields is separable; for the finite jet actions below this is equivalent to
surjectivity of the differential onto the tangent space of the orbit
\cite[Corollary~4.2]{GreuelPham2020}.\\

The external inputs used repeatedly in the hypersurface part are collected in one lemma.
This also fixes the jet-order conventions used below.

\begin{lemma}[Standard inputs for right equivalence]\label{lem:standard-inputs}
Let $f\in\m^2\subseteq R=k[[x_1,\ldots,x_n]]$ be isolated.
\begin{enumerate}[label=\textup{(\roman*)},leftmargin=*]
\item\label{std:suff} Every $N\ge 2\mu(f)$ is sufficiently large for right equivalence in the
sense of Definition~\ref{def:rosenlicht-modality}.
\item\label{std:complete} For every unfolding $F$ of $f$ one has
$\R\text{-mod}_F(f)\le\rmod(f)$; equality holds for a right-complete unfolding.  In
particular, a semiuniversal unfolding with trivial section is right-complete.
\item\label{std:sep} For an algebraic finite-jet action, the orbit map is separable if and only
if its differential at the identity is surjective onto the tangent space of the orbit; equivalently,
the tangent image equals the orbit tangent space.
\item\label{std:group} Over a perfect field, the reduction of a finite-type group scheme is a
closed subgroup scheme, and every reduced finite-type group scheme is smooth.
\item\label{std:det}  If
$\m^{N+2}\subseteq\m^2\Jac(f)$, then $f$ is right
$(2N-\ord(f)+2)$-determined.
\item\label{std:MY} Put
\[
 M_N(f)=\frac{R}{\m^N\Jac(f)},
\]
and regard it as a $k[[t]]$-algebra via $t\mapsto f$.  If $s=\ord(f)$ and
$d(f)=\ord(\Jac(f))$, then for every $N$ satisfying
\[
 \m^{\left\lfloor\frac{N+s+d(f)+1}{2}\right\rfloor}\subseteq\m^2\Jac(f)
\]
one has
\[
 f\overset{\R}{\sim}g
 \quad\Longleftrightarrow\quad
 M_N(f)\cong M_N(g)\quad\text{as $k[[t]]$-algebras}.
\]
\item\label{std:split}
There is a right equivalence $f\overset{\mathcal R}{\sim}
g(x_1,\ldots,x_c)+Q(x_{c+1},\ldots,x_{n})$, 
where $Q$ is a nonsingular quadratic form in the split variables, with the zero-variable case allowed, and $\Hess(g)(0)=0$. If $p\ne2$, then $g\in\langle x_1,\ldots,x_c\rangle^3$; if $p=2$, the quadratic part of $g$ is the square of a linear form. The residual germ is unique up to right equivalence. In particular, $c$ is invariant under right equivalence, and right modality is invariant under nonsingular quadratic suspension.
\end{enumerate}
\end{lemma}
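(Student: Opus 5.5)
This lemma collects external inputs, so the plan is to attach each item to its source in the literature and check that the jet-order conventions agree. For \ref{std:suff}, I would use the positive-characteristic finite determinacy theorem of Boubakri--Greuel--Markwig: an isolated $f$ is right $(2\mu(f)-\ord(f)+2)$-determined. Since $\mu$ is upper semicontinuous on jet space, every $g$ whose $N$-jet is near $j^Nf$ has $\mu(g)\le\mu(f)$. Taking $N\ge2\mu(f)$ then gives a neighborhood of $j^Nf$ consisting of right $N$-determined germs, which is exactly Definition~\ref{def:rosenlicht-modality}.

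Item \ref{std:complete} is Greuel--Nguyen \cite[Proposition~2.14 and the comparison of $\R\text{-mod}_F$ with $\R$-mod]{GreuelNguyen2016}. For \ref{std:sep} I would cite \cite[Corollary~4.2]{GreuelPham2020}. The underlying argument is short: a dominant morphism of smooth varieties is separable iff its differential is generically surjective, and homogeneity lets one transport this from a generic point to the identity. Item \ref{std:group} is standard group-scheme theory, for which I would cite the Stacks Project. Over a perfect field, $G_{\mathrm{red}}\times G_{\mathrm{red}}$ is reduced, so multiplication restricts to it and $G_{\mathrm{red}}$ is a closed subgroup. A reduced finite-type scheme is generically smooth, and homogeneity then makes it smooth everywhere.

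For \ref{std:det}, I would quote the Boubakri--Greuel--Markwig bound in its $\m^2\Jac$ form. The proof is the usual one: from $\m^{N+2}\subseteq\m^2\Jac(f)$ one solves $f+h=f\circ\varphi$ $\m$-adically for $h\in\m^{2N-\ord f+3}$ by a Newton-type iteration. This iteration avoids integration along paths, and so works in characteristic $p$. For \ref{std:MY}, I would cite the Mather--Yau type theorem in positive characteristic (Greuel--Pham, and \cite{SvorayDVR2025}), keeping the stated exponent $\lfloor (N+s+d+1)/2\rfloor$. The forward direction is clear. For the converse, an isomorphism of $k[[t]]$-algebras lifts to an automorphism $\varphi$ with $\varphi(f)-ug\in\m^N\Jac$ for a unit $u$. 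The determinacy statement \ref{std:det} then removes the error, and the unit is absorbed by the same argument applied to the contact-to-right comparison.

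The main obstacle is \ref{std:split}, the splitting lemma in characteristic $p$, in particular for $p=2$. I would diagonalize the quadratic part: when $p\ne2$, write it as $\sum_{i>c}x_i^2$ plus zero. When $p=2$, use the classification of quadratic forms in characteristic $2$ into hyperbolic planes $x_ix_j$ and at most one square $\ell^2$, which becomes the degenerate part of $g$ since it lies in the radical of the bilinear form. Next I would eliminate the mixed terms by successive coordinate changes, using the nondegeneracy of the bilinear form on the $Q$-variables; this converges $\m$-adically. For uniqueness of the residual germ up to right equivalence, I would show that $c$ equals $n-\rank\Hess(f)(0)$ (suitably interpreted for $p=2$). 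I would then compare two splittings through their Milnor algebras and apply \ref{std:MY} with a Witt-type cancellation of $Q$, citing Greuel--Nguyen for the characteristic-$2$ case. Invariance of modality under suspension then follows because the semiuniversal unfoldings correspond.
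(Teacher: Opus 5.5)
\textbf{Items (i)--(vi).} These follow the paper, which proves them purely by citation: Boubakri--Greuel--Markwig, Greuel--Nguyen, Greuel--Pham, and the Stacks Project. Your argument for (i) is the paper's.

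There is one slip in your sketch of (vi). A $k[[t]]$-algebra isomorphism respects $t\mapsto f$ and $t\mapsto g$, so no unit appears. A lift $\varphi$ satisfies
$g-\varphi(f)\in\m^N\Jac(g)=\m^N\Jac(\varphi(f))\subseteq\m^{N+d(f)}$.
Put $a+2=\lfloor (N+s+d(f)+1)/2\rfloor$. Then $2a-s+3\le N+d(f)$, so (v) absorbs this error directly. Your final sentence about absorbing a unit should be dropped. The source for (vi) is Greuel--Pham's Mather--Yau paper, not the mixed-characteristic paper.

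\textbf{The gap: uniqueness in (vii).} Your existence sketch and the characteristic-$2$ normal form $\sum z_{2i-1}z_{2i}+\ell^2$ are fine. The uniqueness of the residual germ is where the argument fails: comparing Milnor algebras and invoking (vi) does not go through.
\begin{itemize}
\item The hypothesis of (vi) can never hold at $N=0$. It would force $\ord f\ge\ord\Jac(f)+3$, which is impossible since $\ord\Jac(f)\ge\ord f-1$.
\item For $N\ge1$, the algebra $M_N(g+Q)=k[[x,z]]/\m^N(\Jac(g)+\langle z\rangle)$ no longer kills the split variables. So an isomorphism $M_N(g+Q)\cong M_N(h+Q)$ gives no visible isomorphism $M_N(g)\cong M_N(h)$.
\item Witt cancellation concerns quadratic forms over $k$ and says nothing about the higher-order mixed terms.
\end{itemize}

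What is needed is a germ-level cancellation $g+Q\sim h+Q\Rightarrow g\sim h$. Here is one route.
\begin{enumerate}
\item Suppose $\varphi(g+Q)=h+Q$. The chain rule gives $\varphi(z_j)\in\Jac(h)+\langle z\rangle$.
\item Since $\Jac(h)\subseteq\m^2$ for a residual germ, $\varphi$ preserves $\langle z\rangle$ modulo $\m^2$. Hence $\psi\colon x_i\mapsto\varphi(x_i)|_{z=0}$ is an automorphism of $k[[x]]$.
\item It satisfies $\psi(g)=h-Q(a)$ with $a_j\in\Jac(h)$.
\item For $p\ne2$, an $\m$-adic iteration absorbs an error in $\Jac(h)^2$, because $h\in\m^3$.
\item For $p=2$, the residual may have quadratic part $\ell^2$. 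Its second Hasse derivative is then a unit and the iteration stalls. This is exactly the delicate case.
\end{enumerate}

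The paper does not prove any of this. It cites the existence-and-uniqueness splitting lemma of Greuel--Pfister, including $p=2$, and Greuel--Nguyen \S3 for suspension invariance of modality. You should do the same; for $p=2$ the paper relies on Greuel--Pfister, not Greuel--Nguyen. Your claim that modality invariance ``follows because the semiuniversal unfoldings correspond'' also rests on this cancellation.

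By contrast, invariance of $c$ needs no uniqueness. One has $c=n-\rank\Hess(f)(0)$ for the ordinary matrix of second partials; in characteristic $2$ this is the polar matrix, which does not see $\ell^2$. This rank is right invariant by the chain rule at a critical point.
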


\begin{proof}
For Assertion~\ref{std:suff}, choose a jet order large enough that the Jacobian colength is
computed in the corresponding truncated power-series ring.  On that finite-dimensional jet
space, $\mu$ is the dimension of the cokernel of a matrix whose entries depend algebraically
on the jet coefficients, and is therefore upper semicontinuous.  Hence, after shrinking around
$j^Nf$, every nearby germ $g$ has $\mu(g)\le\mu(f)$.  Boubakri--Greuel--Markwig
\cite[Theorem~2.1(a) and Corollary~2.4(a)]{BoubakriGreuelMarkwig2012} show that an
isolated $g$ is right determined of order at most
$2\mu(g)-\ord(g)+2\le2\mu(g)$.  Thus every $N\ge2\mu(f)$ is sufficiently large in
the sense of Definition~\ref{def:rosenlicht-modality}.  The independence of right modality
from the choice of a sufficiently large jet is \cite[Proposition~2.4]{GreuelNguyen2016}.
Assertion~\ref{std:complete} is \cite[Proposition~2.12(ii) and
Proposition~2.14]{GreuelNguyen2016}.  Assertion~\ref{std:sep} is the standard separability
criterion for algebraic group orbits; see \cite[Corollary~4.2]{GreuelPham2020} and
\cite[Proposition~3.1.17]{GLS}.  Assertion~\ref{std:group} is
\cite[Lemmas~39.7.6 and~39.8.4, Tags~047R and~047P]{stacks-project}.
Assertion~\ref{std:det} is \cite[Theorem~2.1(a)]{BoubakriGreuelMarkwig2012}, and
Assertion~\ref{std:MY} is \cite[Theorem~2.4]{GreuelPhamMY2017}.  Finally,
Assertion~\ref{std:split} is the existence-and-uniqueness form of the splitting
lemma of Greuel--Pfister \cite{GreuelPfister2026}; the invariance of right modality
under nonsingular quadratic suspension follows from
\cite[\S3]{GreuelNguyen2016}.
\end{proof}

\begin{remark}
Over the complex numbers, the Mather--Yau theorem determines an isolated hypersurface
singularity from its moduli algebra \cite{MatherYau1982}.  The literal analogue fails in
positive characteristic; the higher Milnor-algebra form used in
Lemma~\ref{lem:standard-inputs}\ref{std:MY} is the replacement proved by
Greuel--Pham \cite{GreuelPhamMY2017}.  The splitting lemma is classical in
characteristic different from two; the form used here, including characteristic two,
is due to Greuel--Pfister \cite{GreuelPfister2026}.
\end{remark}

We first study the stabilizer of a sufficiently high right jet.  The argument has three
steps.  First, the Jacobian partials form a regular sequence.  Second, exact
infinitesimal stabilizers can be integrated without dividing by factorials.  Third,
sufficiently deep approximate stabilizers already fix the chosen jet.  These three
facts give smoothness of the high-jet stabilizer.

\begin{lemma}\label{lem:regular}
If $f$ is isolated, then $\partial_1f,\ldots,\partial_nf$ is a regular sequence.  Every
syzygy $z=(z_1,\ldots,z_n)\in R^n$ satisfying $\sum_i z_i\partial_i f=0$ is generated
by Koszul syzygies; in particular every $z_i$ belongs to $\Jac(f)$.
\end{lemma}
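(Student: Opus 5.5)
The plan is to use only that $R=k[[x_1,\ldots,x_n]]$ is a regular local ring, hence Cohen--Macaulay, together with the isolatedness hypothesis. Since $f$ is isolated, $\Jac(f)$ is $\m$-primary, so $\height\Jac(f)=n$. The ideal $\Jac(f)$ is generated by the $n$ elements $\partial_1f,\ldots,\partial_nf$, all of which lie in $\m$ because $f\in\m^2$. In a Cohen--Macaulay local ring, $n$ elements of $\m$ generating an ideal of height $n$ form a regular sequence. This is the same criterion already invoked in the proof of Theorem~\ref{thm:icis} \cite[Lemmas~10.106.3 and~10.104.2, Tags~00NQ and~02JN]{stacks-project}. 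That gives the first assertion.

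For the syzygy statement, consider the Koszul complex $K_\bullet(\partial_1f,\ldots,\partial_nf;R)$. Since the sequence is regular, this complex is acyclic in positive degrees. In particular $H_1=0$, so the kernel of
\[
R^n\longrightarrow R,\qquad (z_1,\ldots,z_n)\longmapsto \sum_i z_i\partial_if,
\]
equals the image of $\bigwedge^2R^n\to R^n$. That image is spanned by the Koszul syzygies $\partial_jf\,e_i-\partial_if\,e_j$. Every coordinate of such a generator lies in $\Jac(f)$, so every $z_i$ belongs to $\Jac(f)$.

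One could instead argue by induction on $n$ using only regularity. Alternatively, the membership $z_i\in\Jac(f)$ can be seen directly: for fixed $i$, the relation gives $z_i\partial_if\in\langle\partial_jf: j\ne i\rangle$. Regularity persists after permuting the sequence, because $R$ is local and the elements lie in $\m$. Hence $\partial_if$ is a nonzerodivisor modulo the other partials, and so $z_i\in\langle\partial_jf : j\ne i\rangle\subseteq\Jac(f)$.

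The only step needing care is the permutation invariance of regular sequences in a local ring. This is standard for Noetherian local rings, and it is also implicit in the height characterization, which does not depend on the order of the generators. I do not expect a real obstacle; the lemma is a routine consequence of Cohen--Macaulayness.
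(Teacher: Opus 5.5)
Your proposal is correct and follows the paper's proof essentially verbatim. The $\m$-primary ideal $\Jac(f)$ is generated by $n$ elements in the $n$-dimensional Cohen--Macaulay ring $R$, so the partials form a system of parameters and hence a regular sequence; acyclicity of the Koszul complex ($H_1=0$) then gives the syzygy statement. Your additional direct argument for $z_i\in\Jac(f)$, via permuting the regular sequence, is valid but not needed.
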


\begin{proof}
The $\m$-primary ideal $\Jac(f)$ has height $n$ and is generated by $n$ elements.
The regular local ring $R$ is Cohen--Macaulay of dimension $n$
\cite[Lemma~10.106.3, Tag~00NQ]{stacks-project}.  Hence these generators form a system
of parameters and therefore a regular sequence
\cite[Lemma~10.104.2, Tag~02JN]{stacks-project}.  Exactness of the Koszul complex
\cite[Lemma~15.31.2, Tag~062F]{stacks-project} gives the syzygy assertion, and every
coordinate of a Koszul syzygy lies in $\Jac(f)$.
\end{proof}

\begin{lemma}
\label{lem:taylor}
For $f\in R$ and $Y=(Y_1,\ldots,Y_n)$ one has
\begin{equation}\label{eq:taylor}
 f(x+Y)-f(x)-\sum_{i=1}^n(\partial_i f)(x)Y_i
 \in\langle Y_1,\ldots,Y_n\rangle^2R[[Y_1,\ldots,Y_n]].
\end{equation}
This holds over an arbitrary field.
\end{lemma}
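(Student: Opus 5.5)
The plan is to argue coefficientwise in the formal power series ring, with no division by factorials, so that the argument is valid over an arbitrary field. First reduce to the case of a single monomial. Both sides of \eqref{eq:taylor} are $k$-linear in $f$. The ideal $\langle Y_1,\ldots,Y_n\rangle^2R[[Y]]$ is closed in the $(\m,Y)$-adic topology. For $f=\sum_\alpha c_\alpha x^\alpha$, the substitution $f(x+Y)=\sum_\alpha c_\alpha(x+Y)^\alpha$ converges, since each $(x+Y)^\alpha$ is homogeneous of degree $|\alpha|$ in $(x,Y)$. Hence it suffices to treat $f=x^\alpha$ and then pass to the limit. That the limit stays in the ideal needs a short check: an element of $R[[Y]]$ lies in $\langle Y\rangle^2$ exactly when its $Y$-degree $0$ and $1$ components vanish, and this condition is preserved under convergent sums taken componentwise in $Y$-degree.

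For $f=x^\alpha$, expand each factor by the binomial theorem over $\mathbb Z$:
\[
(x+Y)^\alpha=\sum_{\beta\le\alpha}\binom{\alpha}{\beta}x^{\alpha-\beta}Y^\beta .
\]
Here the integer coefficients $\binom{\alpha}{\beta}$ are mapped into $k$, which is legitimate in any characteristic. Now sort the terms by $|\beta|$:
\begin{itemize}[leftmargin=*]
\item the term $\beta=0$ is $x^\alpha$;
\item the terms with $|\beta|=1$, that is $\beta=e_i$, are $\alpha_i x^{\alpha-e_i}Y_i$;
\item all remaining terms lie in $\langle Y\rangle^2$.
\end{itemize}
The middle terms are exactly $(\partial_i x^\alpha)Y_i$, by the definition of the formal derivative. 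This includes the case where $\alpha_i$ vanishes in $k$, in which both sides are zero.

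The only mild obstacle is the convergence and closedness bookkeeping in the reduction step. It can be avoided entirely by stating the identity componentwise in $Y$-degree. Let $f(x+Y)_{(j)}$ denote the part of $Y$-degree $j$. The monomial computation shows $f(x+Y)_{(0)}=f$ and $f(x+Y)_{(1)}=\sum_i(\partial_if)Y_i$ for every $f$, by linearity and the fact that each $Y$-degree component is a convergent sum in $R$. Every other component lies in $\langle Y\rangle^2$, which gives \eqref{eq:taylor}.
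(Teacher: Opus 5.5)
Your proposal is correct and follows the paper's proof: reduce to a monomial, expand $(x+Y)^\alpha$ by the multivariable binomial formula with integer coefficients $\binom{\alpha}{\beta}$, identify the $|\beta|=1$ terms with $(\partial_i x^\alpha)Y_i$, and pass to general $f$ termwise in the $\langle x,Y\rangle$-adic topology. Your extra bookkeeping, via closedness of the ideal or the componentwise-in-$Y$-degree formulation, only spells out the convergence step that the paper states in one line.
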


\begin{proof}
It suffices to consider a monomial $x^\alpha$.  The multivariable binomial
formula gives
\[
 (x+Y)^\alpha
 =x^\alpha+\sum_i\alpha_i x^{\alpha-e_i}Y_i
 +\sum_{\substack{\beta\le\alpha\\|\beta|\ge2}}
 \binom{\alpha}{\beta}x^{\alpha-\beta}Y^\beta.
\]
The middle sum is
$\sum_i\left(\frac{\partial x^\alpha}{\partial x_i}\right)Y_i$, while every term in the last
sum lies in $\langle Y_1,\ldots,Y_n\rangle^2$.  Applying this termwise to
$f=\sum_\alpha c_\alpha x^\alpha$ is legitimate in the $\langle x_1,\ldots,x_n,Y_1,\ldots,Y_n\rangle$-adic
topology.  No division by factorials occurs.
\end{proof}

\begin{lemma}
\label{lem:integrate}
Assume that $f\in\m^2$ is isolated and suppose $z\in\m R^n$ satisfies $\sum_i z_i\partial_i f=0$.  Then there is an automorphism $\Phi_t\in\Aut_{k[[t]]}k[[t,x_1,\ldots,x_n]]$ with $\Phi_0=\mathrm{id}$, tangent vector $z$ at $t=0$, and
$\Phi_t(f)=f$.
\end{lemma}

\begin{proof}
By Lemma~\ref{lem:regular}, $z_i\in \Jac(f)\subseteq\m$.  Seek
$\Phi_t(x_i)=x_i+U_i(t,x),\qquad U_i=tz_i+\sum_{r\ge2}t^ru_i^{(r)},\qquad u_i^{(r)}\in \Jac(f)$.
Assume the coefficients below order $r$ have been chosen so that
$f(x+U)-f(x)\equiv0\pmod{t^r}$, and let $E_r$ be the coefficient of $t^r$
before $u^{(r)}$ is chosen.  In \eqref{eq:taylor}, every nonlinear term has
at least two $U$-factors, hence lies in $\Jac(f)^2$.  The already chosen linear
part has no new $t^r$ term.  Thus
$E_r\in \Jac(f)^2=\sum_i(\partial_i f)\Jac(f)$, so choose $u_i^{(r)}\in \Jac(f)$ with
$\sum_i(\partial_i f)u_i^{(r)}=-E_r$.  The $t$-adic recursion produces
$U\in t\,\Jac(f)[[t]]^n$ with $f(x+U)=f(x)$.  Its linear coefficient matrix in the
$x$-variables is congruent to the identity modulo $t$, so the formal inverse-function theorem gives the required automorphism.
\end{proof}

To control approximate stabilizers, we measure how deeply the tangent image contains
powers of the maximal ideal.  Since $f$ is isolated, $\Jac(f)$ is $\m$-primary, so
such a containment exists.  The least exponent will enter both the determinacy and the
separability estimates below.
\begin{definition}\label{def:eta}
For isolated $f$, set
$\eta(f)=\min\{e\ge1\mid\m^e\subseteq\m\Jac(f)\}$.
If $d=\ord(f)$, then $\eta(f)\ge d$, because $\m\Jac(f)\subseteq\m^d$.
\end{definition}

\begin{lemma}\label{lem:order-taylor}
Let $f\in\m^d$ and $w=(w_1,\ldots,w_n)\in\m^qR^n$, where $q\ge1$.  Then
\[
 f(x+tw)-f(x)-t\sum_iw_i\partial_i f
 \in\m^{d+2(q-1)}k[[t,x]].
\]
Consequently, if
$\sum_iw_i\partial_i f\in\m^{N+1}$ and $d+2(q-1)\ge N+1$, then
$x\mapsto x+tw$ fixes $j^Nf$.
\end{lemma}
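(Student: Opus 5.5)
The plan is to expand with Lemma~\ref{lem:taylor}, applied with $Y=tw$, and bound the degree of each term. Reduce first to a monomial $x^\alpha$ with $|\alpha|\ge d$. The $\langle x,t\rangle$-adic convergence used in the proof of Lemma~\ref{lem:taylor} lets us pass back to $f$ termwise, since $\m^{d+2(q-1)}k[[t,x]]$ is closed. The binomial expansion gives
\[
(x+tw)^\alpha-x^\alpha-t\sum_i\alpha_i x^{\alpha-e_i}w_i
=\sum_{\substack{\beta\le\alpha\\|\beta|\ge2}}\binom{\alpha}{\beta}x^{\alpha-\beta}t^{|\beta|}w^\beta ,
\]
and no factorials are involved.

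For a term in the last sum, $x^{\alpha-\beta}$ has $x$-degree $|\alpha|-|\beta|$, and $w^\beta$ lies in $\m^{q|\beta|}$. The term therefore lies in $\m^{|\alpha|-|\beta|+q|\beta|}=\m^{|\alpha|+(q-1)|\beta|}$. Since $q\ge1$ and $|\beta|\ge2$, this is contained in $\m^{d+2(q-1)}$, which proves the first assertion. The case $q=1$ is consistent: it only gives $\m^d$, which is correct since $f\in\m^d$ and every term carries at least $d$ factors from $x$ or $w$.

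For the consequence, write
\[
f(x+tw)=f(x)+t\sum_iw_i\partial_if+E,\qquad E\in\m^{d+2(q-1)}k[[t,x]]\subseteq\m^{N+1}k[[t,x]].
\]
By hypothesis $t\sum_iw_i\partial_if\in t\,\m^{N+1}$, so $f(x+tw)\equiv f(x)\pmod{\m^{N+1}k[[t,x]]}$. The substitution $x\mapsto x+tw$ is an automorphism over $k[[t]]$, because its linear part is the identity modulo $t$ and $w\in\m R^n$. It fixes $j^Nf$ as a family: for each value of $t$ the induced $N$-jet coordinate change sends $j^Nf$ to itself.

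One point needs care. Strictly, the statement ``fixes $j^Nf$'' should be read for the automorphism acting on $f$. Since the map preserves $\m^{N+1}$, the image of $f-j^Nf$ stays in $\m^{N+1}$, so fixing the jet of $f$ is the same as fixing $j^Nf$. The argument is otherwise routine degree counting.
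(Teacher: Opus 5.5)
Your proof is correct and follows essentially the same route as the paper: reduce to a monomial, expand binomially without factorials, count $(|\alpha|-|\beta|)+q|\beta|\ge d+2(q-1)$ for $|\beta|\ge2$, and get invertibility over $k[[t]]$ because the linear part of the substitution is $I+tA$. The only cosmetic difference is that the paper reads ``fixes $j^Nf$'' as saying that $x\mapsto x+tw$ is a $k[[t]]$-valued point of the stabilizer, which is how it is used in Theorem~\ref{thm:smooth}, rather than pointwise in $t$; your congruence modulo $\m^{N+1}k[[t,x]]$ already gives exactly that.
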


\begin{proof}
It suffices to consider a monomial $x^\alpha$ of total degree
$s=|\alpha|\ge d$.  Expanding gives
$(x+tw)^\alpha =\sum_{\beta\le\alpha} \binom{\alpha}{\beta}x^{\alpha-\beta}t^{|\beta|}w^\beta$.
The terms with $|\beta|=0$ and $|\beta|=1$ are respectively
$x^\alpha$ and $t\sum_iw_i\partial_i(x^\alpha)$.  If
$r=|\beta|\ge2$, then
$x^{\alpha-\beta}\in\m^{s-r}$ and $w^\beta\in\m^{rq}$, so the
corresponding term has $\m$-order at least
$(s-r)+rq=s+r(q-1)\ge d+2(q-1)$.
Summing over the monomials of $f$ proves the first assertion.\\

Under the additional hypotheses, both the linear term and the remainder lie
in $\m^{N+1}k[[t,x]]$, hence
$f(x+tw)\equiv f(x)\pmod{\m^{N+1}}$.  Finally $q\ge1$, so the linear part
in $x$ of $x\mapsto x+tw$ has matrix $I+tA$, whose determinant has constant
term $1$.  Thus the substitution is an automorphism over $k[[t]]$.
\end{proof}

\begin{theorem}\label{thm:smooth}
Let $f$ be isolated, $d=\ord(f)$, and $\eta=\eta(f)$.  For every $N\ge2\eta-d-1$, the scheme-theoretic stabilizer $H_N\subseteq\R_N$, that is, the closed subgroup scheme
representing $N$-jet coordinate changes that fix $j^Nf$, is smooth.
Consequently the orbit map is separable and
\[
 T_{j^Nf}(\R_N\cdot j^Nf)
 =\frac{\m\Jac(f)+\m^{N+1}}{\m^{N+1}}.
\]
\end{theorem}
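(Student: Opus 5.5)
The plan is to show that the stabilizer $H_N$ has enough $k$-points in every tangent direction. Concretely, I would show that every element of the Lie algebra $\operatorname{Lie}(H_N)=T_eH_N$ is the tangent vector of a curve of $k[[t]]$-points of $H_N$. Then the reduced subgroup $(H_N)_{\mathrm{red}}$ has the same tangent space as $H_N$. By Lemma~\ref{lem:standard-inputs}\ref{std:group}, $(H_N)_{\mathrm{red}}$ is a smooth closed subgroup scheme. Equality of tangent spaces at the identity then gives $\dim T_eH_N=\dim (H_N)_{\mathrm{red}}$, so $H_N$ is smooth.

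The first step is to identify $T_eH_N$. By \eqref{eq:orbit-diff}, $T_eH_N$ is the kernel of $d_eo_{f,N}$. This is the set of $v\in(\m/\m^{N+1})^n$ with $\sum_i v_i\partial_if\in\m^{N+1}$. Fix lifts $v\in\m R^n$ with $g:=\sum_i v_i\partial_i f\in\m^{N+1}$. I would split $v$ into an exact syzygy plus a deep correction. Since $N+1\ge 2\eta-d$ and $\m^{\eta}\subseteq\m\Jac(f)$, I expect to write $\m^{N+1}\subseteq \m^{N+1-\eta}\m\Jac(f)$. This would let me choose $w\in\m^{q}R^n$ with $\sum_i w_i\partial_if=g$ and $q=N+2-\eta$.

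Then $z=v-w$ satisfies $\sum_i z_i\partial_if=0$. Moreover $z\in\m R^n$, because $v\in\m R^n$ and $q\ge1$; the latter follows from $N\ge 2\eta-d-1\ge\eta-1$, using $\eta\ge d$. Lemma~\ref{lem:integrate} integrates $z$ to an automorphism $\Phi_t$ fixing $f$ exactly. Lemma~\ref{lem:order-taylor} handles $w$. It requires $d+2(q-1)\ge N+1$, i.e.\ $d+2(N+1-\eta)\ge N+1$, i.e.\ $N\ge 2\eta-d-1$, which is precisely the hypothesis. So $x\mapsto x+tw$ fixes $j^Nf$.

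The composite curve $\Psi_t=\Phi_t\circ(x\mapsto x+tw)$, truncated to $N$-jets, is a $k[[t]]$-point of $H_N$. Its tangent vector is $z+w=v$. This needs care: composing the $\Phi_t$ stabilizer of $f$ with a map fixing $j^Nf$ still fixes $j^Nf$. The reason is that both act on $\mathcal J_N$ and the jet action factors through $\R_N$. A $k[[t]]$-point with arbitrary tangent $v$ shows that $v$ lies in the tangent space of $(H_N)_{\mathrm{red}}$. Indeed, the $k[[t]]$-point factors through the reduction because $k[[t]]$ is reduced. Hence $T_e(H_N)_{\mathrm{red}}=T_eH_N$, and smoothness follows.

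For the consequences, once $H_N$ is smooth, the orbit $\R_N\cdot j^Nf\cong\R_N/H_N$ has dimension $\dim\R_N-\dim T_eH_N$. This equals the rank of $d_eo_{f,N}$. So the differential surjects onto the orbit tangent space, and the orbit map is separable by Lemma~\ref{lem:standard-inputs}\ref{std:sep}. The displayed equality then follows from \eqref{eq:tangent-image}.

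The main obstacle I anticipate is the containment $\m^{N+1}\subseteq\m^{q}\Jac(f)$ with $q=N+2-\eta$. I would derive it from $\m^{N+1}=\m^{N+1-\eta}\m^\eta\subseteq\m^{N+1-\eta}\cdot\m\Jac(f)$ when $N+1\ge\eta$. I would also check that the exponents match those in Lemma~\ref{lem:order-taylor} exactly.
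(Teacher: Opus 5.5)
Your proposal is correct and follows the paper's proof essentially step for step. It uses the same splitting $v=z+w$ with $w\in\m^{N+2-\eta}R^n$, obtained from $\m^{N+1}\subseteq\m^{N+2-\eta}\Jac(f)$, and the same pair of arcs from Lemma~\ref{lem:integrate} and Lemma~\ref{lem:order-taylor} composed into a $k[[t]]$-point of $H_N$ with tangent $v$; it then uses the same reducedness argument giving $T_eH_N=T_e(H_N)_{\mathrm{red}}$ and the same orbit-dimension count for separability.
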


\begin{proof}
Let $v\in\m R^n$ represent a tangent vector to $H_N$, so
$\delta(v)=\sum_i v_i\partial_i f\in\m^{N+1}$.  The numerical hypothesis and
$\eta\ge d$ imply $N\ge\eta-1$.  Hence
$\m^{N+1} =\m^{N+1-\eta}\m^\eta \subseteq\m^{N+1-\eta}\m\Jac(f) =\m^{N+2-\eta}\Jac(f)$.
Choose $w\in\m^qR^n$, where $q=N+2-\eta\ge1$, such that
$\delta(w)=\delta(v)$, and put $z=v-w$.  Then $\delta(z)=0$.
Lemma~\ref{lem:integrate} gives a stabilizer arc $\alpha_z(t)$ with tangent
vector $z$.  Lemma~\ref{lem:order-taylor} gives a stabilizer arc
$\alpha_w(t):x\mapsto x+tw$, because
$d+2(q-1)=d+2(N+1-\eta)\ge N+1$.
Their composition $\alpha_z(t)\circ\alpha_w(t)$ is a $k[[t]]$-valued point of the
scheme-theoretic stabilizer $H_N$ and has tangent vector $z+w=v$ modulo
$\m^{N+1}$.  Thus every tangent vector of $H_N$ is represented by a formal arc in $H_N$.\\

Since $\Spec k[[t]]$ is reduced, every morphism
$\Spec k[[t]]\to H_N$ factors uniquely through the reduction
$(H_N)_{\mathrm{red}}$.  Consequently every tangent vector of $H_N$ lies in
$T_e(H_N)_{\mathrm{red}}$; the reverse inclusion is tautological, and hence
\[
 T_eH_N=T_e(H_N)_{\mathrm{red}}.
\]
By Lemma~\ref{lem:standard-inputs}\ref{std:group}, $(H_N)_{\mathrm{red}}$ is smooth.  The
reduction has the same underlying topological space as $H_N$, so
\[
 \dim_kT_eH_N
 =\dim_kT_e(H_N)_{\mathrm{red}}
 =\dim_e(H_N)_{\mathrm{red}}
 =\dim_eH_N.
\]
Thus the local ring of $H_N$ at the identity has embedding dimension equal to its Krull
dimension and is regular.  Since $k$ is perfect, regularity of this finite-type group scheme at
the identity implies smoothness there; translations by group-valued points then give smoothness
everywhere.  Finally $\R_N$ is smooth, and the orbit dimension formula yields
\[
 \rank(d_eo_{f,N})
 =\dim\R_N-\dim H_N
 =\dim(\R_N\cdot j^Nf).
\]
The differential is therefore onto the tangent space of the orbit, and
Lemma~\ref{lem:standard-inputs}\ref{std:sep} gives separability.
\end{proof}

\begin{corollary}\label{cor:mu-sep}
If $\mu(f)=\mu$, then $\m^\mu\subseteq \Jac(f)$, hence
$\m^{\mu+1}\subseteq\m\Jac(f)$ and $\eta(f)\le\mu+1$.  Consequently every
$N\ge2\mu-1$ is a separable level; in particular every $N\ge2\mu$ is
simultaneously sufficiently large for right determinacy and separability.
\end{corollary}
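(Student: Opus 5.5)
The plan is to show $\m^\mu\subseteq\Jac(f)$ by a Nakayama-type filtration argument in the Artinian ring $\frac{R}{\Jac(f)}$, and then read off the remaining assertions from Definition~\ref{def:eta}, Theorem~\ref{thm:smooth} and Lemma~\ref{lem:standard-inputs}\ref{std:suff}.

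Put $A=\frac{R}{\Jac(f)}$, a local ring with maximal ideal $\bar\m$ and $\dim_kA=\mu$. Consider the descending chain
\[
A=\bar\m^0\supseteq\bar\m^1\supseteq\bar\m^2\supseteq\cdots .
\]
If $\bar\m^j=\bar\m^{j+1}$ for some $j$, then Nakayama's lemma gives $\bar\m^j=0$, since $\bar\m^j$ is finitely generated and $\bar\m$ is nilpotent in the Artinian ring $A$. Hence the chain strictly decreases until it reaches $0$. Each strict step lowers the $k$-dimension by at least one, and $\dim_k\bar\m^0=\mu$, so $\bar\m^\mu=0$. This is exactly $\m^\mu\subseteq\Jac(f)$. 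In the degenerate case $\mu=0$ we have $\Jac(f)=R$, which cannot happen for $f\in\m^2$; so $\mu\ge1$, and the argument uses nothing beyond this.

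Multiplying $\m^\mu\subseteq\Jac(f)$ by $\m$ gives $\m^{\mu+1}\subseteq\m\Jac(f)$. By Definition~\ref{def:eta} this means $\eta(f)\le\mu+1$.

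For separability, Theorem~\ref{thm:smooth} shows that every $N\ge2\eta-d-1$ is a separable level, where $d=\ord(f)\ge2$. Using $\eta\le\mu+1$ and $d\ge2$ we get
\[
2\eta-d-1\le 2(\mu+1)-2-1=2\mu-1.
\]
So every $N\ge2\mu-1$ satisfies the hypothesis of Theorem~\ref{thm:smooth}, and these are all separable levels. Moreover, Lemma~\ref{lem:standard-inputs}\ref{std:suff} says that every $N\ge2\mu$ is sufficiently large for right equivalence, so every $N\ge2\mu$ serves both purposes simultaneously.

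No step here is a genuine obstacle. The one place needing care is the length bound $\bar\m^\mu=0$, which relies on the strict-decrease (Nakayama) argument above.
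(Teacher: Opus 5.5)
Your proposal is correct and follows the paper's proof. The paper also gets $\m^\mu\subseteq\Jac(f)$ from the fact that the length-$\mu$ Artin algebra $\frac{R}{\Jac(f)}$ has maximal ideal with vanishing $\mu$-th power, which your strict-decrease Nakayama argument spells out; it then applies Theorem~\ref{thm:smooth} with $d\ge2$, while your explicit appeal to Lemma~\ref{lem:standard-inputs}\ref{std:suff} for the determinacy half of the last clause supplies the input the paper leaves implicit.
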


\begin{proof}
The Artin local algebra $\frac{R}{\Jac(f)}$ has length $\mu$, so its maximal ideal has
$\mu$-th power zero.  The rest follows from Theorem~\ref{thm:smooth} and
$d\ge2$.
\end{proof}

\begin{remark}
Greuel--Pham observed that contact orbit maps may be inseparable in positive
characteristic, but reported that no nonseparable example was known for the right group
when the hypersurface singularity is isolated \cite[Section~5]{GreuelPham2020}.
Theorem~\ref{thm:smooth} shows that such an example cannot occur in sufficiently high
jet order: every isolated hypersurface singularity has separable right orbit maps from
the explicit level $N\ge2\eta(f)-\ord(f)-1$ onward.
\end{remark}

\begin{corollary}\label{cor:smooth-stabilizers-arbitrary-field}
Let $K$ be any field of positive characteristic, and let
$f\in\m^2\subseteq K[[x_1,\ldots,x_n]]$ have finite
Milnor number. Put $d=\ord(f)$ and
$\eta=\min\{a\ge1:\m^a\subseteq\m\Jac(f)\}$. Then the scheme-theoretic stabilizer of $j^Nf$ in the
right jet group is smooth over $K$ whenever $N\ge\max\{1,2\eta-d-1\}$.
\end{corollary}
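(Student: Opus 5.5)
The plan is to rerun the proof of Theorem~\ref{thm:smooth} over $K$ and observe that algebraic closedness was never used, except in the final step converting regularity at the identity into smoothness. That step will be replaced by base change to an algebraic closure $\overline K$. Since smoothness of a finite-type $K$-scheme can be checked after base change to $\overline K$, it suffices to show that $(H_N)_{\overline K}$ is smooth. Formation of the stabilizer commutes with base change, so $(H_N)_{\overline K}$ is the stabilizer of $j^Nf$ in the $\overline K$-jet group.

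The first task is to check that the numerical invariants do not change under this base change. Put $\overline R=\overline K[[x]]$. The containment $\m^a\subseteq\m\Jac(f)$ is a statement about a finite-dimensional quotient $\frac{R}{\m\Jac(f)+\m^{a+1}}$. Since $\Jac(f)$ is generated by the same partials, the quotient of $\overline R$ by $\overline\m\Jac(f)+\overline\m^{\,a+1}$ is obtained from it by the flat extension $-\otimes_K\overline K$. Hence $\eta$ is unchanged, and likewise $\ord(f)$ and $\mu(f)<\infty$. Theorem~\ref{thm:smooth} applied over $\overline K$ then gives smoothness of $(H_N)_{\overline K}$ whenever $N\ge 2\eta-d-1$.

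The case $N=1$ needs a remark, because the bound $\max\{1,2\eta-d-1\}$ allows $N=1$ when $2\eta-d-1\le1$. Since $f\in\m^2$, the $1$-jet of $f$ is $0$, so $H_1=\R_1$ is smooth trivially.

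If one prefers to avoid base change, the alternative is to rerun the argument directly over $K$. Lemmas~\ref{lem:regular}, \ref{lem:taylor}, \ref{lem:integrate} and \ref{lem:order-taylor} are field-independent, so every tangent vector at $e$ lifts to a $K[[t]]$-arc. However, Lemma~\ref{lem:standard-inputs}\ref{std:group} needs a perfect field to make $(H_N)_{\mathrm{red}}$ a smooth subgroup. This is the step I expect to be the main obstacle, and the reason to prefer the geometric base change. With that route the only point to verify carefully is that the arc-lifting argument is applied over $\overline K$, where all hypotheses of Theorem~\ref{thm:smooth} hold verbatim.
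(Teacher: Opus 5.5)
Your proposal is correct and follows the paper's proof: base change to $\overline K$, check that $\ord(f)$ is preserved and $\eta$ does not increase (you show it is unchanged, which is slightly more than needed), apply Theorem~\ref{thm:smooth}, and descend smoothness along the faithfully flat extension $K\subseteq\overline K$, using that stabilizers commute with base change. Your separate treatment of $N=1$ is harmless but unnecessary: every $N\ge\max\{1,2\eta-d-1\}$ already satisfies $N\ge 2\eta-d-1$, and in fact $\eta\ge d\ge2$ forces $2\eta-d-1\ge1$, so Theorem~\ref{thm:smooth} covers all admissible $N$ directly.
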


\begin{proof}
Extend scalars to an algebraic closure $\overline K$.
The containment $\m^\eta\subseteq\m\Jac(f)$ persists
in $\overline K[[x]]$, so the extended germ remains
isolated and its corresponding exponent is at most
$\eta$. Its order remains $d$.
Theorem~\ref{thm:smooth} therefore makes the
base-changed stabilizer smooth over $\overline K$.
Stabilizers for the finite jet action commute with
field extension, and smoothness descends under the
faithfully flat extension $K\subseteq\overline K$.
\end{proof}

We now define the Frobenius invariant used throughout the rest of the paper.  The
structural role of Frobenius in regular local rings goes back to the theorem of Kunz
characterizing regularity by flatness of Frobenius \cite{Kunz1969}.  Here we use the explicit
power-series structure: the invariant records deformation directions coming from the
common kernel of the derivations in characteristic $p$.\\
Let $S=R^p=k[[x_1^p,\ldots,x_n^p]]$ be the image of the Frobenius
endomorphism $F_R:R\to R$, $h\mapsto h^p$, and let $\nF=\langle x_1^p,\ldots,x_n^p\rangle S$.
Since $k$ is perfect, $S$ is exactly the common kernel of the derivations
$\partial_1,\ldots,\partial_n$.

\begin{definition}\label{def:Fmod}
For an isolated germ $f$, put
\[
 \begin{aligned}
 Q_F(f)&=S\cap\m\Jac(f),\\
 A_F(f)&=\frac{S}{Q_F(f)},\\
 \Fmod(f)&=\dim_k\left(\frac{\nF}{Q_F(f)}\right)
 =\length_k\left(A_F(f)\right)-1.
 \end{aligned}
\]
The \emph{Frobenius type} is the $\Aut_k(S)$-orbit of $Q_F(f)$.
\end{definition}

\begin{proposition}\label{prop:Fbasic}
The algebra $A_F(f)$, its Frobenius type, and $\Fmod(f)$ are invariant under
right equivalence.  Moreover $\Fmod(f)<\infty$ if and only if $f$ is
isolated.
\end{proposition}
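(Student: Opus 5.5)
Two things have to be shown: invariance under right equivalence, and the finiteness criterion.

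\textbf{Invariance.} Let $\varphi\in\Aut_k(R)$ and $g=\varphi(f)$. The plan is to show that $\varphi$ carries $\m\Jac(f)$ onto $\m\Jac(g)$ and $S$ onto itself; then it restricts to an automorphism of $S$ sending $Q_F(f)$ to $Q_F(g)$.

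For the first point, apply the chain rule to $g(x)=f(\varphi(x))$:
\[
\partial_j g=\sum_i \varphi(\partial_i f)\,\partial_j\varphi_i .
\]
Hence $\Jac(g)\subseteq\varphi(\Jac(f))$. The Jacobian matrix $(\partial_j\varphi_i)$ is invertible over $R$, since its value at the origin is the invertible linear part of $\varphi$. So the inclusion is an equality, and $\varphi(\m\Jac(f))=\m\Jac(g)$ because $\varphi(\m)=\m$.

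For the second point, $\varphi$ is a ring endomorphism, so $\varphi(h^p)=\varphi(h)^p$. Thus $\varphi(R^p)\subseteq R^p$, and applying the same to $\varphi^{-1}$ gives equality. The restriction $\varphi|_S$ is therefore an automorphism of $S$ with $\varphi(Q_F(f))=Q_F(g)$.

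It follows that $A_F(f)\cong A_F(g)$ as $k$-algebras and that the two Frobenius types coincide. Since $\varphi(\nF)=\nF$ (as $\nF=\m^{[p]}\cap S$ is the maximal ideal of $S$), the dimension $\Fmod$ is also preserved.

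\textbf{Finiteness criterion.} Suppose first that $f$ is isolated, so $\m^e\subseteq\m\Jac(f)$ for $e=\eta(f)$. Then $\nF^{e}\subseteq\m^e\cap S\subseteq Q_F(f)$. Now $S\cong k[[y_1,\ldots,y_n]]$ via $y_i\mapsto x_i^p$, using that $k$ is perfect, and under this isomorphism $\nF$ is the maximal ideal. Hence $S/\nF^{e}$ is finite-dimensional, and so is $\nF/Q_F(f)$.

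For the converse, suppose $\Fmod(f)<\infty$. Then $\nF^{a}\subseteq Q_F(f)$ for some $a$, so $x_i^{pa}\in\m\Jac(f)\subseteq\Jac(f)$ for every $i$. The ideal $\Jac(f)$ therefore contains a power of each variable, hence a power of $\m$. So $\Jac(f)$ is $\m$-primary and $\mu(f)<\infty$.

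One caveat: the definition of $Q_F$ presupposes that $f$ is isolated. For this direction we interpret $\Fmod(f)$ by the same formula for arbitrary $f\in\m^2$, which is what the argument uses.

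\textbf{Main obstacle.} No step is hard. The only point needing care is that $\varphi$ preserves $S$, which holds because any ring automorphism commutes with Frobenius. A smaller point is that $\nF$ is the maximal ideal of $S$.
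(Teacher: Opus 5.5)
Your proof is correct and follows the paper's argument: the chain rule plus invertibility of the Jacobian matrix of $\varphi$ give $\Jac(g)=\varphi(\Jac(f))$, $\varphi$ preserves $S$ and $\nF$, and the finiteness criterion comes from the two containments between powers of $\nF$ and $\m\Jac(f)$. The only differences are cosmetic. You justify $\varphi(S)=S$ by Frobenius commuting with ring maps rather than by $S$ being the common kernel of the derivations. In the converse you use powers of the individual variables instead of the paper's explicit bound $\m^{p(r-1)+n(p-1)+1}\subseteq\langle x_1^p,\ldots,x_n^p\rangle^r$.
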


\begin{proof}
If $g=\varphi(f)$ with $\varphi\in\R$, the chain rule and invertibility of the
Jacobian matrix of $\varphi$ give $\Jac(g)=\varphi(\Jac(f))$.  Also $\varphi(S)=S$ and
$\varphi(\nF)=\nF$, since $S$ is the common kernel of the derivations.  Hence
$Q_F(g)=\varphi(Q_F(f))$.\\

If $f$ is isolated, choose $a$ with $\m^a\subseteq \Jac(f)$.  For $pr\ge a+1$,
$\nF^r\subseteq\m^{pr}\subseteq\m\Jac(f)$, so $A_F(f)$ has finite length.
Conversely, if $\nF^r\subseteq Q_F(f)$, then
$\langle x_1^p,\ldots,x_n^p\rangle^r\subseteq\m\Jac(f)$.  For a monomial
$x^\alpha$ write $\alpha_i=pq_i+r_i$ with $0\le r_i<p$.  If
$\sum_iq_i\le r-1$, then $|\alpha|\le p(r-1)+n(p-1)$.  Hence
$\m^{p(r-1)+n(p-1)+1}\subseteq\langle x_1^p,\ldots,x_n^p\rangle^r\subseteq\m\Jac(f)\subseteq \Jac(f)$,
so $\Jac(f)$ is $\m$-primary.  This argument does not presuppose isolatedness.
\end{proof}

For the modality comparison we use the finite-dimensional tangent algebra
$M_f=\frac{R}{\m\Jac(f)}$. The image of the nonconstant $p$-th
powers in $M_f$ records the Frobenius deformation
directions surviving modulo $\mathfrak m  \Jac( f )$. The next proposition compares those directions with proper
and right modality.

\begin{proposition}
\label{prop:Flower}
For every isolated $f$, one has $\Fmod(f)\le\propermod(f)$ and $\Fmod(f)\le\rmod(f)$.
\end{proposition}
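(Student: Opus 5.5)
We prove both inequalities at once by exhibiting an explicit family of $\Fmod(f)$ deformation directions, all of the form $f+\sum_j t_j h_j$ with $h_j\in\nF$ a $p$-th power, and showing that these directions are $\mu$-constant and are not absorbed by the right action. Set $b=\Fmod(f)$ and choose $h_1,\ldots,h_b\in\nF$ whose classes form a $k$-basis of $\nF/Q_F(f)$. Consider the unfolding $F(x,t)=f+\sum_{j=1}^b t_jh_j$ over $T=\A^b_k$ with $t_0=0$. Since each $h_j\in S$ lies in the common kernel of $\partial_1,\ldots,\partial_n$, we get $\Jac(F_t)=\Jac(f)$ for every $t$. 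Also $h_j(0)=0$, so $F$ has trivial section. Consequently $\mu(F_t)=\mu(f)$ for all $t$, so the whole family lies in the Milnor-constant stratum.

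For $\propermod$, we realize $F$ inside a semiuniversal unfolding with trivial section. The classes of the $h_j$ in $\m/\m\Jac(f)$ are linearly independent, because $S\cap\m\Jac(f)=Q_F(f)$. So we may take $g_j=h_j$ for $j\le b$ and complete them to a basis of $\m/\m\Jac(f)$ in Definition~\ref{def:proper-modality}. The coordinate subspace $\{t_{b+1}=\cdots=t_{b_f}=0\}$ is then a $b$-dimensional linear subspace through $t_0$ contained in $\mathcal C_{\mu(f)}(F)$. Hence $\dim_{t_0}\mathcal C_{\mu(f)}\ge b$, which gives $\Fmod(f)\le\propermod(f)$.

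For $\rmod$, we use Lemma~\ref{lem:standard-inputs}\ref{std:complete}: it suffices to show $\R\text{-mod}_F(f)\ge b$ for the unfolding above. Fix $N\ge 2\mu(f)$, which is both sufficiently large and separable by Corollary~\ref{cor:mu-sep}. Consider the map $t\mapsto j^NF_t$ into $\mathcal J_N$. By Theorem~\ref{thm:smooth}, the orbit tangent space at every $j^NF_t$ equals $(\m\Jac(F_t)+\m^{N+1})/\m^{N+1}=(\m\Jac(f)+\m^{N+1})/\m^{N+1}$, and this is independent of $t$. The image of $T$ is an affine $b$-plane whose direction space $\Span(h_j)$ meets this fixed orbit tangent space trivially modulo $\m^{N+1}$. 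This holds because $\m^{N+1}\subseteq\m\Jac(f)$ and the $h_j$ are independent modulo $\m\Jac(f)$. So the image plane $\Pi$ is everywhere transverse to the orbits, and $\dim(\Pi\cap\R_N\cdot j^NF_t)$ is $0$ at each point. Taking a Rosenlicht stratification and restricting to the stratum containing a dense open subset of $\Pi$ near $j^Nf$, the quotient map restricted to $\Pi$ has finite fibres. Its image therefore has dimension $b$ in every neighbourhood, so $\R\text{-mod}_F(f)\ge b$.

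The main obstacle is this last step: turning pointwise infinitesimal transversality into a lower bound for the Rosenlicht quotient dimension near $t_0$. This is exactly where separability (Theorem~\ref{thm:smooth}) is needed, so that the orbit tangent space is the tangent image and no inseparable orbit directions hide. It also requires checking that the orbit intersections with $\Pi$ are zero-dimensional at the generic point of each stratum meeting every neighbourhood. I would first try the fibre-dimension theorem applied to the action map $\R_N\times\Pi\to\mathcal J_N$. Its differential has rank $\dim(\text{orbit})+b$ everywhere, so the image has dimension $\dim(\text{orbit})+b$ near $j^Nf$. The orbit dimension is constant along $\Pi$ because $\Jac$ is constant, so the generic quotient dimension is at least $b$.
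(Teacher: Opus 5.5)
Your proof is correct and follows the paper's argument. You take the same Frobenius slice $f+\sum_j t_jh_j$, whose Jacobian ideal is constant, to get a $\mu$-constant $b$-plane inside a semiuniversal unfolding, and you use the separability of Theorem~\ref{thm:smooth} to make that plane meet every orbit tangent space trivially, so right-equivalence fibres on it are finite. The finite-fibre argument you already give, with the tangent space of $O\cap\Pi$ equal to $T_yO\cap T_y\Pi=0$, settles what you call the main obstacle, so the fibre-dimension detour is unnecessary.
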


\begin{proof}
Let $d=\Fmod(f)$ and choose polynomial representatives
$h_1,\ldots,h_d\in\nF$ whose classes form a basis of $\frac{\nF}{Q_F(f)}$.
Put $V=\Span_k\{h_1,\ldots,h_d\}$.  Then $V\cap\m\Jac(f)=0$.  For
$g_t=f+\sum_a t_ah_a$, all first derivatives of the $h_a$ vanish, so
$\Jac(g_t)=\Jac(f)$ and $\mu(g_t)=\mu(f)$.\\

Choose $N$ sufficiently large that $\m^{N+1}\subseteq\m\Jac(f)$, all $h_a$
survive in $\mathcal J_N$, right determinacy holds, and
Theorem~\ref{thm:smooth} applies uniformly to the family.  The affine
parameter space has tangent space $j^NV$, whereas the orbit tangent space at
every member is
$\frac{\m\Jac(f)+\m^{N+1}}{\m^{N+1}}$.  These spaces meet trivially.  Hence a
positive-dimensional right-equivalence fiber would have a smooth point with
a nonzero tangent vector in their intersection, a contradiction.  Thus the
general right-equivalence fiber is zero-dimensional, giving
$d\le\rmod(f)$.\\

The same directions occur in a semiuniversal unfolding with trivial section,
and the corresponding $d$-plane is $\mu$-constant.  Hence
$d\le\propermod(f)$.
\end{proof}

\begin{definition}
Set
\[
 \varepsilon_F(f)=
 \length_R\left(\frac{\m\Jac(f)}{Q_F(f)R}\right),
 \qquad
 \gamma_p(f)=
 \dim_k\left(\frac{R}{\m\Jac(f)+\langle x_1^p,\ldots,x_n^p\rangle}\right).
\]
By the Nakayama lemma, $\gamma_p(f)$ is the minimal number of generators of
$M_f=\frac{R}{\m\Jac(f)}$ as an $A_F(f)$-module; this is why we call it the generator number.
\end{definition}

\begin{theorem}
\label{thm:length}
If $d=\Fmod(f)$, then
\begin{equation}\label{eq:exact-length}
 p^n(d+1)=\mu(f)+n+\varepsilon_F(f).
\end{equation}
Moreover
\begin{equation}\label{eq:gamma-bound}
 \mu(f)+n\le\gamma_p(f)(d+1)
 \le\gamma_p(f)(\rmod(f)+1),
 \qquad\gamma_p(f)\le p^n.
\end{equation}
In particular
\begin{equation}\label{eq:main-bound}
 \rmod(f)\ge\Fmod(f)\ge
 \left\lceil\frac{\mu(f)+n}{p^n}\right\rceil-1.
\end{equation}
\end{theorem}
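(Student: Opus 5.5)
The plan is to compute $\dim_k$ of $\frac{R}{Q_F(f)R}$ in two ways, using that $R$ is a free $S$-module of finite rank. Since $k$ is perfect, the monomials $x^r$ with $0\le r_i<p$ form a basis of $R$ over $S=k[[x_1^p,\ldots,x_n^p]]$, so $R\cong S^{p^n}$.

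First I compute the extended quotient. Tensoring $0\to Q_F(f)\to S\to A_F(f)\to0$ with the flat $S$-module $R$ gives
\[
\frac{R}{Q_F(f)R}\cong A_F(f)\otimes_SR\cong A_F(f)^{\oplus p^n}.
\]
By Definition~\ref{def:Fmod} and Proposition~\ref{prop:Fbasic}, $\length_k A_F(f)=d+1$ is finite. Hence
\[
\dim_k\frac{R}{Q_F(f)R}=p^n(d+1).
\]

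Next I use the chain $Q_F(f)R\subseteq\m\Jac(f)\subseteq\Jac(f)\subseteq R$; the first inclusion holds because $Q_F(f)\subseteq\m\Jac(f)$ and the latter is an ideal of $R$. Lengths over $R$ equal $k$-dimensions, since the residue field is $k$. This gives
\[
p^n(d+1)=\mu(f)+\dim_k\frac{\Jac(f)}{\m\Jac(f)}+\varepsilon_F(f).
\]
It remains to see that $\dim_k\Jac(f)/\m\Jac(f)=n$, i.e.\ that the partials minimally generate $\Jac(f)$. By Lemma~\ref{lem:regular} they form a regular sequence. Hence $\Jac(f)/\Jac(f)^2$ is free of rank $n$ over $R/\Jac(f)$, and reducing modulo $\m$ yields dimension $n$. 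This proves \eqref{eq:exact-length}. I expect this minimal-generation point to be the only step needing care; the Koszul description in Lemma~\ref{lem:regular} also gives it directly, since all syzygy coefficients lie in $\Jac(f)\subseteq\m$.

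For \eqref{eq:gamma-bound}, I regard $M_f=\frac{R}{\m\Jac(f)}$ as a module over $S$. It is annihilated by $Q_F(f)$, hence is a module over the Artin local ring $A_F(f)$, whose maximal ideal is the image of $\nF$. By Nakayama's lemma its minimal number of generators is
\[
\dim_k M_f/\nF M_f=\dim_k\frac{R}{\m\Jac(f)+\langle x_1^p,\ldots,x_n^p\rangle}=\gamma_p(f).
\]
A surjection $A_F(f)^{\gamma_p(f)}\to M_f$ then gives
\[
\mu(f)+n=\dim_kM_f\le\gamma_p(f)(d+1),
\]
where the equality $\dim_kM_f=\mu(f)+n$ is the computation above. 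The bound $\gamma_p(f)\le\dim_k R/\langle x_1^p,\ldots,x_n^p\rangle=p^n$ is immediate. Finally, Proposition~\ref{prop:Flower} gives $d\le\rmod(f)$, completing \eqref{eq:gamma-bound}.

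For \eqref{eq:main-bound}, combine $\mu(f)+n\le p^n(d+1)$ with integrality of $d$ and with $\Fmod(f)\le\rmod(f)$ from Proposition~\ref{prop:Flower}.
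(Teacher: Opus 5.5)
Your proposal is correct and follows the paper's proof essentially step for step. The free basis of $R$ over $S$ gives $\dim_k R/Q_F(f)R=p^n(d+1)$, and the filtration $Q_F(f)R\subseteq\m\Jac(f)\subseteq\Jac(f)$, together with minimal generation of $\Jac(f)$ by the partials (Lemma~\ref{lem:regular}), gives \eqref{eq:exact-length}; Nakayama over the Artin local ring $A_F(f)$, the quotient $R/\langle x_1^p,\ldots,x_n^p\rangle$, and Proposition~\ref{prop:Flower} then give the remaining inequalities exactly as in the paper.
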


\begin{proof}
The ring $R$ is free of rank $p^n$ over $S$, with basis
$x_1^{a_1}\cdots x_n^{a_n}$, $0\le a_i<p$.  Hence
$\dim_k\left(\frac{R}{Q_F(f)R}\right)=p^n(d+1)$.
The exact sequence
$0\to\frac{\m\Jac(f)}{Q_F(f)R}\to\frac{R}{Q_F(f)R}\to\frac{R}{\m\Jac(f)}\to0$
and
$0\to\frac{\Jac(f)}{\m\Jac(f)}\to\frac{R}{\m\Jac(f)}\to\frac{R}{\Jac(f)}\to0$
give \eqref{eq:exact-length}, since
$\dim_k\left(\frac{\Jac(f)}{\m\Jac(f)}\right)=n$ by Lemma~\ref{lem:regular}.\\

Furthermore
$Q_F(f)=\Ann_S(M_f)$,
because an element of $S$ annihilates $M_f$ exactly when it kills $[1]$.
Thus $M_f$ is a finite $A_F(f)$-module, minimally generated by
$\gamma_p(f)$ elements.  Nakayama gives a surjection
$A_F(f)^{\gamma_p(f)}\twoheadrightarrow M_f$, whence
$\mu(f)+n\le\gamma_p(f)(d+1)$.
Finally $\frac{M_f}{\nF M_f}$ is a quotient of
$\frac{R}{\langle x_1^p,\ldots,x_n^p\rangle}$, so $\gamma_p(f)\le p^n$.
\end{proof}

\begin{corollary}\label{cor:equality}
Equality $\mu(f)+n=\gamma_p(f)(\Fmod(f)+1)$ holds exactly when $M_f$ is free over
$A_F(f)$ of rank $\gamma_p(f)$.  Equality
$\mu(f)+n=p^n(\Fmod(f)+1)$ holds exactly when $\m\Jac(f)=Q_F(f)R$.
\end{corollary}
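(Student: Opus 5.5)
The plan is to read both equality statements off the two inequalities in the proof of Theorem~\ref{thm:length}, and to locate exactly where each becomes an equality. Throughout, $d=\Fmod(f)$, $A=A_F(f)$, $\gamma=\gamma_p(f)$, and $M_f=\frac{R}{\m\Jac(f)}$. As noted in that proof, $\dim_k M_f=\mu(f)+n$ and $\length_k(A)=d+1$.

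\emph{First equality.} By Nakayama there is a surjection $\pi:A^{\gamma}\twoheadrightarrow M_f$ sending the basis vectors to a minimal generating set. Comparing $k$-dimensions gives
\[
\mu(f)+n=\gamma(d+1)-\dim_k\ker\pi .
\]
So equality holds if and only if $\pi$ is injective, that is, an isomorphism $A^\gamma\cong M_f$, which says $M_f$ is free of rank $\gamma$.

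For the converse, suppose $M_f\cong A^{r}$ for some $r$. Then $\frac{M_f}{\nF M_f}\cong k^{r}$, and $\gamma$ is the dimension of this space by its definition as a minimal generator number. Hence $r=\gamma$, and the dimension count gives equality. The identification of $\gamma$ with $\dim_k\frac{M_f}{\nF M_f}$ uses that $\nF$ is the maximal ideal of $S$ and that $\nF M_f$ equals the image of $\langle x_1^p,\ldots,x_n^p\rangle$ in $M_f$. This matches the stated definition of $\gamma_p(f)$.

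\emph{Second equality.} Equation~\eqref{eq:exact-length} gives $p^n(d+1)-(\mu(f)+n)=\varepsilon_F(f)$, so equality holds exactly when $\varepsilon_F(f)=0$. Since $\varepsilon_F(f)$ is the length of the finite-length module $\frac{\m\Jac(f)}{Q_F(f)R}$, it vanishes exactly when $\m\Jac(f)=Q_F(f)R$. The containment $Q_F(f)R\subseteq\m\Jac(f)$ needed here holds because $Q_F(f)\subseteq\m\Jac(f)$ and $\m\Jac(f)$ is an ideal.

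I do not expect a real obstacle. The only point needing care is the freeness direction of the first claim: freeness of some rank must force the rank to be exactly $\gamma$, and this is handled by reducing modulo $\nF$ as above.
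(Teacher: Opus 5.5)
Your proposal is correct and follows essentially the same argument as the paper. The first equality holds exactly when the Nakayama surjection $A_F(f)^{\gamma_p(f)}\twoheadrightarrow M_f$ has zero kernel, and the second exactly when $\varepsilon_F(f)=\length_R\bigl(\m\Jac(f)/Q_F(f)R\bigr)$ vanishes in \eqref{eq:exact-length}. Your additional check, reducing modulo $\nF$ to show that any free rank must equal $\gamma_p(f)$, is a harmless refinement.
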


\begin{proof}
The first assertion follows because the surjection
$A_F(f)^{\gamma_p(f)}\twoheadrightarrow M_f$ is an isomorphism exactly when the
finite lengths agree.  The second follows from the kernel
$\frac{\m\Jac(f)}{Q_F(f)R}$ of $\frac{R}{Q_F(f)R}\twoheadrightarrow\frac{R}{\m\Jac(f)}$.
\end{proof}

\begin{proposition}\label{prop:Fimage}
The Frobenius map on $M_f$ satisfies $\im F\cong A_F(f)$ and $\Fmod(f)+1=\dim_k\im F$.  If $b_e(f)=\dim_k\im F^e$, then $\rmod(f)\ge b_e(f)-1$, $b_1(f)\ge b_2(f)\ge\cdots\ge1$, and $b_e(f)=1$ for $e\gg0$.
\end{proposition}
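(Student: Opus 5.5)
The plan is to treat the Frobenius map on $M_f=\frac{R}{\m\Jac(f)}$ as the $p$-th power map $F:M_f\to M_f$, $[h]\mapsto[h^p]$. This is additive and $p$-semilinear over $k$. It is well defined because $(\m\Jac(f))^{[p]}\subseteq\m\Jac(f)$: every element of $\m\Jac(f)$ is $\sum a_i\partial_if$ with $a_i\in\m$, and its $p$-th power $\sum a_i^p(\partial_if)^p$ lies in $\m\Jac(f)$. Since $k$ is perfect, $\im F$ is a $k$-subspace.

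\emph{Step one: $\im F\cong A_F(f)$.} Consider the composite $S\hookrightarrow R\to M_f$. Every element of $S$ has the form $h^p$ with $h\in R$, because $S=R^p$. So the image of this composite is exactly $\{[h^p]\}=\im F$. Its kernel is $S\cap\m\Jac(f)=Q_F(f)$. Hence $\im F\cong S/Q_F(f)=A_F(f)$ as $S$-modules, and therefore as $k$-spaces. This gives $\dim_k\im F=\length_k A_F(f)=\Fmod(f)+1$ by Definition~\ref{def:Fmod}.

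\emph{Step two: the iterates.} By the same argument, $\im F^e$ is the image of $R^{p^e}=k[[x_1^{p^e},\ldots,x_n^{p^e}]]$ in $M_f$. The chain $R^{p^{e+1}}\subseteq R^{p^e}$ gives $\im F^{e+1}\subseteq\im F^e$, so $b_1\ge b_2\ge\cdots$. Each image contains $[1]\neq0$, since $\m\Jac(f)\subseteq\m$; thus $b_e\ge1$. For stabilization, choose $a$ with $\m^a\subseteq\m\Jac(f)$, which exists because $f$ is isolated. Once $p^e\ge a$, every nonconstant element of $R^{p^e}$ lies in $\m^{p^e}\subseteq\m\Jac(f)$. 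So $\im F^e=k\cdot[1]$ and $b_e=1$.

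\emph{Step three: $\rmod(f)\ge b_e(f)-1$.} Write $W_e=\nF^{(e)}\cap R^{p^e}$, the positive-order part. It is a subspace of $\nF$ mapping onto $\im F^e/k[1]$. Its image in $\frac{\nF}{Q_F(f)}$ is a subspace of dimension $b_e-1$. Therefore $b_e-1\le b_1-1=\Fmod(f)$, and Proposition~\ref{prop:Flower} gives $b_e-1\le\Fmod(f)\le\rmod(f)$. Alternatively, one could rerun the tangent-space argument of Proposition~\ref{prop:Flower} with $p^e$-th power directions.

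I expect the main obstacle to be step one, specifically the identification of the image with the quotient of $S$. This relies on $S=R^p$ exactly, not merely on $S$ being the kernel of the derivations, and that uses perfectness of $k$ to take $p$-th roots of coefficients. Once that identification is in place, the remaining steps are bookkeeping.
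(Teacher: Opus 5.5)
Your proof is correct and follows the paper's argument. You use the image of $S=R^p$ (resp.\ $S_e$) in $M_f$, with kernel $Q_F(f)$ (resp.\ $Q_e$), the nesting of these images, and the fact that a high Frobenius power kills the maximal ideal of the Artin algebra $M_f$. The one difference is the bound $\rmod(f)\ge b_e(f)-1$: you combine $b_e\le b_1=\Fmod(f)+1$ with Proposition~\ref{prop:Flower}, whereas the paper reruns the Frobenius-slice argument of that proposition with $p^e$ in place of $p$; your route is slightly more economical and equally valid.
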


\begin{proof}
The image of Frobenius on $R$ is $S$, and the kernel of $S\to M_f$ is
$Q_F(f)$.  Replacing $p$ by $p^e$ in the argument of
Proposition~\ref{prop:Flower} gives $\rmod(f)\ge b_e(f)-1$.  The images
are nested, and a sufficiently high Frobenius power kills the maximal ideal
of the Artin algebra $M_f$.
\end{proof}

Iterating Frobenius gives a nested sequence of finite-dimensional quotients and corresponding numerical bounds.  For $e\ge1$, set
$S_e=k[[x_1^{p^e},\ldots,x_n^{p^e}]]$, $\mathfrak n_e=\langle x_1^{p^e},\ldots,x_n^{p^e}\rangle S_e$,
$Q_e=S_e\cap\m\Jac(f)$, $A_e=\frac{S_e}{Q_e}$, and $b_e=\dim_kA_e$.  Also set
$\gamma_e(f)=\dim_k\left(\frac{M_f}{\mathfrak n_eM_f}\right)$ and
$\varepsilon_e(f)=\length_R\left(\frac{\m\Jac(f)}{Q_eR}\right)$.

\begin{theorem}\label{thm:Ftower}
For every $e\ge1$:
\begin{enumerate}[label=\textup{(\roman*)},leftmargin=*]
\item $A_e\cong\im F^e$ and $p^{en}b_e=\mu(f)+n+\varepsilon_e(f)$.
\item $\mu(f)+n\le\gamma_e(f)b_e$ and $\gamma_e(f)\le p^{en}$.
\item $\rmod(f)\ge b_e-1$ and $\propermod(f)\ge b_e-1$.
\item $b_1\ge b_2\ge\cdots\ge1$, and $b_e=1$ for $e\gg0$.
\end{enumerate}
\end{theorem}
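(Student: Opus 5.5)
The plan is to repeat the level-one arguments of Theorem~\ref{thm:length}, Proposition~\ref{prop:Flower} and Proposition~\ref{prop:Fimage} with $p$ replaced by $p^e$ throughout. The only structural facts used at level one are these: $S$ is the image of Frobenius; $R$ is free over $S$ with the monomial basis; $S$ is killed by all derivations; and $Q_F(f)=\Ann_S(M_f)$. Each has an exact analogue for $S_e=F_R^e(R)$. Specifically, $R$ is free of rank $p^{en}$ over $S_e$ with basis $x^a$, $0\le a_i<p^e$. Every element of $S_e\subseteq S$ has vanishing partials, since $k$ is perfect. Finally, $\Aut_k(R)$ preserves $S_e$ and $\mathfrak n_e$, because $\varphi(h^{p^e})=\varphi(h)^{p^e}$.

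For (i), the $e$-th Frobenius of $R$ has image $S_e$. The composite $S_e\to M_f$ has kernel $Q_e=S_e\cap\m\Jac(f)$, so $A_e\cong\im F^e$ on $M_f$. Freeness gives $\dim_k R/Q_eR=p^{en}b_e$. The two exact sequences from Theorem~\ref{thm:length}, together with $\dim_k\Jac(f)/\m\Jac(f)=n$ (Lemma~\ref{lem:regular}), then give $p^{en}b_e=\mu(f)+n+\varepsilon_e(f)$.

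For (ii), note that $Q_e=\Ann_{S_e}(M_f)$, so $M_f$ is a finite $A_e$-module. By Nakayama it is generated by $\gamma_e(f)$ elements, which gives the surjection $A_e^{\gamma_e}\twoheadrightarrow M_f$ and hence $\mu(f)+n\le\gamma_e b_e$. Moreover $M_f/\mathfrak n_eM_f$ is a quotient of $R/\langle x_1^{p^e},\dots,x_n^{p^e}\rangle$, so $\gamma_e\le p^{en}$.

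For (iii), I would copy Proposition~\ref{prop:Flower}. Choose $h_1,\dots,h_{b_e-1}\in\mathfrak n_e$ whose classes form a basis of $\mathfrak n_e/Q_e$; this quotient has dimension $b_e-1$ because $S_e=k\oplus\mathfrak n_e$ and $Q_e\subseteq\mathfrak n_e$. Then take the family $g_t=f+\sum t_ah_a$. It has $\Jac(g_t)=\Jac(f)$, so it is $\mu$-constant and each $M_{g_t}=M_f$. Fix $N$ with $N\ge2\mu(f)$ and $\m^{N+1}\subseteq\m\Jac(f)$. By Corollary~\ref{cor:mu-sep}, every member is then right $N$-determined and separable at level $N$, with orbit tangent space $(\m\Jac(f)+\m^{N+1})/\m^{N+1}$ (Theorem~\ref{thm:smooth}). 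This is the same for all $t$, and it meets the parameter directions $j^N\Span\{h_a\}$ trivially. The step needing the most care is the next one. A positive-dimensional right-equivalence fiber in the parameter space would be a constructible set whose image lies inside a single orbit. I would pick a smooth point of its closure and a nonzero tangent vector there; this vector is a parameter direction, but it is also tangent to the orbit, and separability makes it lie in the tangent image. That contradicts trivial intersection. So the fibers are finite, and the relative Rosenlicht modality of this family is at least $b_e-1$. Lemma~\ref{lem:standard-inputs}\ref{std:complete} then gives $\rmod(f)\ge b_e-1$. For $\propermod$, the semiuniversal unfolding with trivial section is right-complete, so this family is a pullback of it up to fiberwise right equivalence. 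Its image therefore lies in $\mathcal C_{\mu(f)}(F)$, and finiteness of the fibers forces that image to have dimension at least $b_e-1$ near $t_0$.

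For (iv), observe that $S_{e+1}\subseteq S_e$. Hence $\im F^{e+1}=F(\im F^e)\subseteq\im F^e$, and so $b_{e+1}\le b_e$. Each $\im F^e$ contains $1$, so $b_e\ge1$. Finally, $M_f$ is Artinian, so $\m^{r}M_f=0$ for some $r$. Once $p^e\ge r$, every element of $\mathfrak n_e$ kills $M_f$, and then $\im F^e=k$, i.e.\ $b_e=1$.
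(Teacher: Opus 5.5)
Your proposal is correct and is essentially the paper's proof, which simply reruns Theorem~\ref{thm:length} and the Frobenius-slice argument of Proposition~\ref{prop:Flower} with $S_e$, $\mathfrak n_e$, $p^{en}$ in place of $S$, $\nF$, $p^n$. The differences are cosmetic. For $\propermod$, the paper just takes the $h_a$ as part of the basis of a semiuniversal unfolding, so their span is already a $\mu$-constant $(b_e-1)$-plane, with no right-completeness or fiber count needed. In the tangent-vector step, take the smooth point inside the fiber itself rather than merely in its closure, so that its image lies in the orbit and not on its boundary.
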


\begin{proof}
The proof is identical to that of Theorem~\ref{thm:length}, replacing $S$
by $S_e$ and $p^n$ by $p^{en}$.  The nonconstant elements of $S_e$ have
zero first derivatives, so the same Frobenius-slice argument gives both
modality inequalities.
\end{proof}

\begin{definition}\label{def:Frobenius-Loewy-length}
The \emph{Loewy length} of a local Artin algebra $(A,\mathfrak q)$ is the least integer
$\ell\ge1$ such that $\mathfrak q^\ell=0$.  We denote the Loewy length of $A_F(f)$ by
$\ell_F(f)=\min\{\ell\ge1\mid\nF^\ell\subseteq Q_F(f)\}$ and put
$B_F(f)=p(\ell_F(f)-1)+n(p-1)$.
\end{definition}

\begin{lemma}\label{lem:loewy}
For isolated $f$, we have $\m^{B_F(f)+1}
 \subseteq\langle x_1^p,\ldots,x_n^p\rangle^{\ell_F(f)}
 \subseteq Q_F(f)R
 \subseteq\m\Jac(f)$. Consequently, we get that $\eta(f)\le B_F(f)+1$.
\end{lemma}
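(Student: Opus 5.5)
The chain has three inclusions plus a consequence, and I will verify them from left to right. Throughout, write $\ell=\ell_F(f)$ and $B=B_F(f)=p(\ell-1)+n(p-1)$.

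\emph{First inclusion.} I will redo the monomial count from the proof of Proposition~\ref{prop:Fbasic}. Take a monomial $x^\alpha$ with $|\alpha|\ge B+1$. Write $\alpha_i=pq_i+r_i$ with $0\le r_i<p$. If $\sum_iq_i\le\ell-1$, then
\[
|\alpha|\le p(\ell-1)+n(p-1)=B,
\]
which is a contradiction. Hence $\sum_i q_i\ge\ell$. Then $x^\alpha$ is divisible by $\prod_i(x_i^p)^{q_i}$, and this product lies in $\langle x_1^p,\ldots,x_n^p\rangle^{\ell}$. Since $\m^{B+1}$ is generated by such monomials, it is contained in that ideal of $R$.

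\emph{Second inclusion.} By Definition~\ref{def:Frobenius-Loewy-length}, $\nF^\ell\subseteq Q_F(f)$. Extending to $R$ gives $\nF^\ell R\subseteq Q_F(f)R$. It remains to note that $\nF^\ell R=\langle x_1^p,\ldots,x_n^p\rangle^\ell R$, because both ideals are generated by the same products of the elements $x_i^p$.

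\emph{Third inclusion and the consequence.} By definition $Q_F(f)=S\cap\m\Jac(f)\subseteq\m\Jac(f)$. Since $\m\Jac(f)$ is an ideal of $R$, it follows that $Q_F(f)R\subseteq\m\Jac(f)$. Chaining the three inclusions gives $\m^{B+1}\subseteq\m\Jac(f)$. Since $B+1\ge1$, Definition~\ref{def:eta} of $\eta(f)$ as a minimum then yields $\eta(f)\le B_F(f)+1$.

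The only step that needs any care is the pigeonhole count in the first inclusion. It is routine once the bound $\sum_i r_i\le n(p-1)$ is used. The finiteness of $\ell_F(f)$ is also needed; it follows from Proposition~\ref{prop:Fbasic}, since $A_F(f)$ is an Artin local algebra.
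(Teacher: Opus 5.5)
Your proof is correct and follows the same route as the paper. The first inclusion uses the identical base-$p$ monomial count, and the other two inclusions are read off from the definitions of $\ell_F(f)$ and $Q_F(f)$. Your remark that $\ell_F(f)$ is finite by Proposition~\ref{prop:Fbasic} makes explicit a point the paper leaves implicit.
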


\begin{proof}
The middle containments are immediate.  Write a monomial exponent as
$\alpha_i=pq_i+r_i$, $0\le r_i<p$.  If
$\sum_iq_i\le\ell_F(f)-1$, then
$|\alpha|\le p(\ell_F(f)-1)+n(p-1)=B_F(f)$.
Thus every monomial of degree at least $B_F(f)+1$ lies in
$\langle x_1^p,\ldots,x_n^p\rangle^{\ell_F(f)}$.
\end{proof}

The Loewy estimate can now be inserted simultaneously into the determinacy, separability, and Mather--Yau bounds.  By a \emph{right classification radius} we mean an integer that is simultaneously a right-determinacy level, a separability level for the right jet orbit, and a level at which the higher Milnor algebra is a complete right-equivalence invariant.

\begin{theorem}\label{thm:eta-radius}
Let $f\in\m^2$ be isolated and set  $N_\eta(f)=2\eta(f)-\ord(f)$. Then
\begin{enumerate}[label=\textup{(\roman*)},leftmargin=*]
\item $f$ is right $N_\eta(f)$-determined;
\item every right orbit map at level
      $N\ge N_\eta(f)-1$ is separable;
\item the $k[[t]]$-algebra
      \[
         M_{N_\eta(f)}(f)
         =\frac{R}{\m^{N_\eta(f)}\Jac(f)}
      \]
      is a complete right-equivalence invariant.
\end{enumerate}
Consequently,
\[
   2B_F(f)-\ord(f)+2
\]
is also a right classification radius.
\end{theorem}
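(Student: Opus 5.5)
We deduce all three assertions from Lemma~\ref{lem:standard-inputs} and Theorem~\ref{thm:smooth} by rewriting the defining containment of $\eta=\eta(f)$, namely $\m^\eta\subseteq\m\Jac(f)$, in the forms those results require. Put $d=\ord(f)\ge2$. Recall from Definition~\ref{def:eta} that $\eta\ge d$. Multiplying the containment by $\m$ gives $\m^{\eta+1}\subseteq\m^2\Jac(f)$.

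For (i), apply Lemma~\ref{lem:standard-inputs}\ref{std:det} with $N+2=\eta+1$, i.e.\ $N=\eta-1$. This yields right determinacy of order
\[
2(\eta-1)-d+2=2\eta-d=N_\eta(f).
\]
For (ii), Theorem~\ref{thm:smooth} gives separability for every $N\ge2\eta-d-1=N_\eta(f)-1$.

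For (iii), set $N=N_\eta(f)$. We need the hypothesis of Lemma~\ref{lem:standard-inputs}\ref{std:MY},
\[
\m^{\lfloor (N+d+d(f)+1)/2\rfloor}\subseteq\m^2\Jac(f).
\]
Since $d(f)=\ord(\Jac(f))\ge d-1$, we have
\[
N+d+d(f)+1\ge 2\eta-d+d+d-1+1=2\eta+d.
\]
The floor is therefore at least $\eta+\lfloor d/2\rfloor\ge\eta+1$, using $d\ge2$. The containment then follows from $\m^{\eta+1}\subseteq\m^2\Jac(f)$. The main thing to check here is that the indexing convention in \ref{std:MY} (exponent on $\m$ versus jet level) matches. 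If it is off by one, increasing $N$ only helps, because every condition in \ref{std:MY} is monotone in $N$ and Theorem~\ref{thm:smooth} and \ref{std:det} persist for larger levels.

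For the final assertion, Lemma~\ref{lem:loewy} gives $\eta\le B_F(f)+1$, so
\[
N_\eta(f)=2\eta-d\le 2B_F(f)-d+2=:N'.
\]
We check that $N'$ satisfies each of the three properties.
\begin{itemize}
\item Determinacy at level $N_\eta$ implies determinacy at every higher level.
\item Separability holds for every $N\ge N_\eta-1$ by (ii).
\item For the Mather--Yau property, rerun the computation in (iii) with $N'$ in place of $N$. The floor is now at least $\eta+1$ a fortiori, so \ref{std:MY} applies directly at level $N'$.
\end{itemize}
Hence $2B_F(f)-\ord(f)+2$ is a right classification radius. The only real obstacle is the index bookkeeping in \ref{std:MY}; everything else is direct substitution.
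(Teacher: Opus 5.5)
Your proposal is correct and follows the paper's proof essentially step for step. You feed $\m^{\eta+1}\subseteq\m^2\Jac(f)$ into Lemma~\ref{lem:standard-inputs}\ref{std:det}, cite Theorem~\ref{thm:smooth} for (ii), use the same floor estimate for Lemma~\ref{lem:standard-inputs}\ref{std:MY}, and finish with $\eta\le B_F(f)+1$; the only difference is that the paper uses $\ord(\Jac(f))\ge1$ to get $\eta+\lfloor(\ord(\Jac(f))+1)/2\rfloor\ge\eta+1$, where you use $\ord(\Jac(f))\ge\ord(f)-1$. Your off-by-one hedge is unnecessary, because the algebra $M_N(f)=R/\m^N\Jac(f)$ in \ref{std:MY} is literally the one in (iii), so your direct check at $N=N_\eta(f)$ already suffices.
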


\begin{proof}
Since
$\m^{\eta(f)}\subseteq\m\Jac(f)$, we have that $\m^{\eta(f)+1}\subseteq\m^2\Jac(f)$, and so from Lemma~\ref{lem:standard-inputs}\ref{std:det} we have that $f$ is right $(2\eta-d)$-determined. In addition, Assertion \textup{(ii)} is Theorem~\ref{thm:smooth}. Finally, for $d_J=\ord(\Jac(f))\ge1$ we have that 
\[
   \left\lfloor\frac{N_\eta(f)+\ord(f)+d_J+1}{2}\right\rfloor
   =
   \eta(f)+\left\lfloor\frac{d_J+1}{2}\right\rfloor
   \ge\eta(f)+1.
\]
Thus the hypothesis of
Lemma~\ref{lem:standard-inputs}\ref{std:MY} holds,
proving \textup{(iii)}.
Finally $\eta\le B_F(f)+1$ gives the stated
Frobenius-Loewy radius.
\end{proof}

\begin{corollary}\label{cor:lowmodideal}
If $r=\rmod(f)$, then
\[
 \langle x_1^p,\ldots,x_n^p\rangle^{r+1}\subseteq\m\Jac(f),\qquad
 \eta(f)\le pr+n(p-1)+1.
\]
\end{corollary}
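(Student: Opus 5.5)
The plan is to bound the Loewy length $\ell_F(f)$ by $\rmod(f)+1$ and then feed this into Lemma~\ref{lem:loewy}.

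First, $A_F(f)=S/Q_F(f)$ is a local Artin algebra whose maximal ideal is the image of $\nF$. Its length is $\Fmod(f)+1$ by Definition~\ref{def:Fmod}. The powers $\nF^j A_F(f)$ form a descending chain, and by Nakayama each inclusion is strict until the chain reaches zero. Hence the Loewy length is at most the length:
\[
\ell_F(f)\le\length_k\bigl(A_F(f)\bigr)=\Fmod(f)+1\le\rmod(f)+1=r+1,
\]
where the last inequality is Proposition~\ref{prop:Flower}.

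This gives $\nF^{r+1}\subseteq\nF^{\ell_F(f)}\subseteq Q_F(f)$. Extending to $R$ yields
\[
\langle x_1^p,\ldots,x_n^p\rangle^{r+1}\subseteq Q_F(f)R\subseteq\m\Jac(f),
\]
which is the first containment.

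For the second claim, repeat the monomial count from Lemma~\ref{lem:loewy} with $\ell_F(f)$ replaced by $r+1$. Write each exponent as $\alpha_i=pq_i+r_i$ with $0\le r_i<p$. If $\sum_i q_i\le r$, then $|\alpha|\le pr+n(p-1)$. So every monomial of degree at least $pr+n(p-1)+1$ lies in $\langle x_1^p,\ldots,x_n^p\rangle^{r+1}$, and therefore in $\m\Jac(f)$. Hence $\eta(f)\le pr+n(p-1)+1$.

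Equivalently, $B_F(f)\le pr+n(p-1)$, so the bound also follows directly from $\eta(f)\le B_F(f)+1$ in Lemma~\ref{lem:loewy}.

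The only substantive input is the elementary bound of Loewy length by length, together with Proposition~\ref{prop:Flower}. I expect no real obstacle; the one point to check is that $\nF$ maps onto the maximal ideal of $A_F(f)$, which holds because $S$ is local with maximal ideal $\nF$.
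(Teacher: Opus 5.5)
Your proposal is correct and follows the paper's proof. The paper likewise uses $\Fmod(f)\le r$ (Proposition~\ref{prop:Flower}) to conclude that the maximal ideal of $A_F(f)$ has vanishing $(r+1)$-st power, i.e.\ $\ell_F(f)\le r+1$, and then applies Lemma~\ref{lem:loewy}; you have simply written out the Loewy-length-versus-length step and the monomial count that the paper leaves implicit.
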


\begin{proof}
Since $\Fmod(f)\le r$, the maximal ideal of $A_F(f)$ has $(r+1)$-st power
zero.  Apply Lemma~\ref{lem:loewy}.
\end{proof}

We first record the Frobenius algebras of the first
few lengths.  This gives a concrete picture of the invariant through Frobenius modality
three.\\

Write $y_i=x_i^p$, so $S=k[[y_1,\ldots,y_n]]$.  Every automorphism of $S$
lifts to one of $R$: take coefficientwise $p$-th roots of the images of the
$y_i$.  Thus Frobenius type is a right-coordinate invariant.\\

If $d=\Fmod(f)$, then $Q_F(f)$ has colength $d+1$ and contains
$\nF^{d+1}$.  Hence fixed-$d$ Frobenius-type classification is the finite dimensional
$\Aut_k\!\left(\frac{S}{\nF^{d+1}}\right)$-orbit problem for the colength-$(d+1)$ ideals realized
as $Q_F(f)$.

\begin{proposition}\label{prop:lowtypes}
Up to $\Aut_k(S)$, every colength-$(d+1)$ ideal $Q\subseteq S$ for
$d\le3$ has one of the following forms, with undisplayed variables killed
linearly:
\[
\begin{array}{c|l}
d&Q\\ \hline
0&\langle y_1,\ldots,y_n\rangle,\\[1mm]
1&\langle y_2,\ldots,y_n,y_1^2\rangle,\\[1mm]
2&\langle y_2,\ldots,y_n,y_1^3\rangle,\\
 &\langle y_3,\ldots,y_n\rangle+\langle y_1,y_2\rangle^2,\\[1mm]
3&\langle y_2,\ldots,y_n,y_1^4\rangle,\\
 &\langle y_3,\ldots,y_n,y_1y_2,y_2^2,y_1^3\rangle,\\
 &\langle y_3,\ldots,y_n,y_1^2,y_2^2\rangle,\\
 &\langle y_4,\ldots,y_n\rangle+\langle y_1,y_2,y_3\rangle^2.
\end{array}
\]
In characteristic $2$ there is in addition the nonalternating
$(1,2,1)$ type
\[
 \langle y_3,\ldots,y_n,y_1y_2,y_1^2+y_2^2\rangle.
\]
Only types realized as $S\cap\m\Jac(f)$ occur for hypersurface germs.
\end{proposition}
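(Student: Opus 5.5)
The plan is to classify, up to $\Aut_k(S)$, the $\mathfrak n_F$-primary ideals $Q\subseteq S=k[[y_1,\ldots,y_n]]$ of colength $d+1\le4$. Equivalently, we classify local Artin quotients $A=S/Q$ of length $d+1$ together with a surjection from $S$. Since $Q\supseteq\nF^{d+1}$, everything happens in the finite-dimensional algebra $S/\nF^{d+1}$. Two quotient maps with isomorphic targets differ by an automorphism of $S$, because any isomorphism of quotients lifts to $S$ (lift images of generators, then complete to a coordinate system). So the problem reduces to classifying local Artin $k$-algebras of length at most $4$ with embedding dimension $e\le n$. The linear part of $Q$, namely $Q+\nF^2$ modulo $\nF^2$, has codimension $e=\dim\mathfrak q/\mathfrak q^2$. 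After a linear change of coordinates it is spanned by $y_{e+1},\ldots,y_n$ modulo $\nF^2$. A further nonlinear change makes $y_{e+1},\ldots,y_n\in Q$ exactly, since $Q$ contains elements $y_j-h_j$ with $h_j\in\nF^2$ and we may substitute $y_j\mapsto y_j-h_j$. This accounts for the phrase ``undisplayed variables killed linearly''.

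Next I would sort by the Hilbert function $(1,h_1,h_2,\ldots)$ of $A$, which sums to $d+1$, and list the cases in turn.
\begin{itemize}[leftmargin=*]
\item \textbf{$e\le1$.} The algebra is curvilinear, $A\cong k[y_1]/(y_1^{d+1})$, which gives the first row of each $d$.
\item \textbf{$e=d$.} We have $\mathfrak q^2=0$ and $Q=\langle y_{d+1},\ldots\rangle+\langle y_1,\ldots,y_d\rangle^2$.
\item \textbf{$d=3$, $e=2$.} The Hilbert function is $(1,2,1)$.
\end{itemize}
In the last case $\mathfrak q^2$ is one-dimensional, so $\mathfrak q^3=0$. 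Then $A$ is determined by a nonzero quadratic form $q$ on $\mathfrak q/\mathfrak q^2\cong k^2$, taken up to $\GL_2$ and scaling, via the multiplication map $\mathrm{Sym}^2(\mathfrak q/\mathfrak q^2)\to\mathfrak q^2$. The ideal $Q$ corresponds to the $2$-dimensional kernel of this map on the $3$-dimensional space of quadratic monomials in $y_1,y_2$. One also needs $\mathfrak q^3=0$, which holds automatically: the kernel together with $\mathfrak q\cdot\mathfrak q^2$ contains all cubics. The quadratic form could in principle fail to be homogeneous, with the relations only holding modulo cubics; but since cubics already lie in $Q$, the relations are homogeneous quadrics.

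The cases of this binary classification are as follows.
\begin{itemize}[leftmargin=*]
\item \textbf{Rank $1$.} Then $q=\ell^2$, giving the kernel $\langle y_1y_2,y_2^2\rangle$ and hence $\langle y_1y_2,y_2^2,y_1^3\rangle$.
\item \textbf{Rank $2$, $p\ne2$.} Diagonalize to $y_1^2+y_2^2$ and change to $u=y_1+iy_2$, $v=y_1-iy_2$. The form becomes $uv$, the kernel is $\langle u^2,v^2\rangle$, and we get $\langle y_1^2,y_2^2\rangle$.
\item \textbf{Rank $2$, $p=2$.} Binary forms split into the split form $y_1y_2$ and the nonalternating form $y_1^2+y_1y_2+\lambda y_2^2$ types. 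Over algebraically closed $k$, every nondegenerate $q$ has a factorization $q=\ell_1\ell_2$, which returns us to the kernel $\langle \ell_1^2,\ell_2^2\rangle$. The remaining possibility is that $q=\ell^2$ is a square, i.e.\ degenerate as a form but nonzero, and this gives the rank $1$ case again.
\end{itemize}

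The obstacle is the characteristic-$2$ extra type $\langle y_1y_2,y_1^2+y_2^2\rangle$. Here the dual description is better: the socle functional $\phi$ on quadratics is $\phi(y_1^2)=\phi(y_2^2)=1$, $\phi(y_1y_2)=0$. Its ``polar'' symmetric bilinear form $(a,b)\mapsto\phi(ab)$ on $\mathfrak q/\mathfrak q^2$ is the identity matrix, which is nonalternating. In characteristic $2$, the classes of nondegenerate symmetric bilinear forms are \emph{alternating} (hyperbolic, giving $\langle y_1^2,y_2^2\rangle$ with $\phi(y_1y_2)=1$) and \emph{nonalternating} (diagonalizable to $I$). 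These are distinct under $\GL_2$, since alternation is invariant. The degenerate case gives the $\langle y_1y_2,y_2^2,y_1^3\rangle$ type. So I would redo the $(1,2,1)$ case with the bilinear form $\phi(ab)$ rather than a quadratic form, and classify symmetric bilinear forms of rank $1,2$ on $k^2$ in every characteristic. This yields exactly one extra class when $p=2$. The final sentence, that only types realized as $S\cap\m\Jac(f)$ occur, is definitional and needs no proof.
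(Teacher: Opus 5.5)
Your final plan is essentially the paper's argument. You sort by Hilbert function, and in the $(1,2,1)$ case you classify the nonzero symmetric bilinear form $(a,b)\mapsto\phi(ab)$ on $\mathfrak q/\mathfrak q^2$ up to congruence and scaling; the alternating/nonalternating split then gives the extra characteristic-$2$ type. You should, however, delete the earlier quadratic-form bullets outright: in characteristic $2$ the squaring form $a\mapsto a^2$ forgets $\phi(y_1y_2)$ (it vanishes identically for the type $\langle y_1^2,y_2^2\rangle$ and is a square for both the rank-one and nonalternating types), so only the bilinear-form version is valid.
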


\begin{proof}
The possible Hilbert functions of a local Artin algebra of length at most
four are $(1)$, $(1,1)$, $(1,1,1)$, $(1,2)$, $(1,1,1,1)$, $(1,2,1)$, and
$(1,3)$.  All cases except $(1,2,1)$ follow by choosing a generator in each nonzero graded
piece and lifting a basis of the embedding space.  In the remaining case multiplication
induces, after choosing a generator of the one-dimensional vector space $\mathfrak q^2$, a
nonzero symmetric bilinear form on the two-dimensional vector space
$\frac{\mathfrak q}{\mathfrak q^2}$.  Changing generators acts by congruence and rescaling the
socle rescales the form.  Over an algebraically closed field of characteristic different
from $2$, the form has rank $1$ or $2$.  In characteristic $2$, the nondegenerate case
has two congruence types, alternating and nonalternating.  Writing normal forms for these
bilinear forms gives exactly the displayed ideals.
\end{proof}

\begin{proposition}
\label{prop:Fmod-zero-criterion}
For an isolated $f\in\m^2$, the following are equivalent:
\begin{enumerate}[label=(\roman*),leftmargin=*]
\item $\Fmod(f)=0$;
\item $Q_F(f)=\nF$;
\item $x_i^p\in\m\Jac(f)$ for every $i$;
\item Frobenius on $M_f=\frac{R}{\m\Jac(f)}$ kills its maximal ideal.
\end{enumerate}
If these conditions hold, then
\[
 \m^{n(p-1)+1}\subseteq\m\Jac(f),\qquad
 \mu(f)+n\le p^n,
\]
and $f$ is right $2n(p-1)$-determined.
\end{proposition}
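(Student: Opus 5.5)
The equivalence of (i)--(iv) follows by unwinding Definition~\ref{def:Fmod} and the identification $\im F\cong A_F(f)$ of Proposition~\ref{prop:Fimage}. By definition, $\Fmod(f)=\dim_k(\nF/Q_F(f))$, so (i)$\iff$(ii) is immediate, since $Q_F(f)\subseteq\nF$: indeed $Q_F(f)\subseteq\m\cap S=\nF$.

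For (ii)$\iff$(iii), $\nF$ is generated as an $S$-ideal by $x_1^p,\ldots,x_n^p$. Moreover $Q_F(f)=S\cap\m\Jac(f)$ is an $S$-ideal, because $\m\Jac(f)$ is an $R$-ideal and $S\subseteq R$. Hence $\nF\subseteq Q_F(f)$ holds exactly when each $x_i^p\in\m\Jac(f)$.

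For (iii)$\iff$(iv), Frobenius on $M_f$ sends the class of $h\in\m$ to the class of $h^p$, and $h^p\in\nF$. Its image on the maximal ideal is therefore the image of $\nF$ in $M_f$. This image vanishes precisely when $\nF\subseteq\m\Jac(f)$, i.e. when (ii) holds. Conversely, (iv) applied to $h=x_i$ gives (iii) directly.

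For the consequences, assume $\Fmod(f)=0$. Then $A_F(f)=k$, so $\ell_F(f)=1$ and $B_F(f)=n(p-1)$. Lemma~\ref{lem:loewy} then gives $\m^{n(p-1)+1}\subseteq\m\Jac(f)$; one can also see this directly, since any monomial of degree above $n(p-1)$ has some exponent $\ge p$ and so lies in $\langle x_1^p,\ldots,x_n^p\rangle R\subseteq\m\Jac(f)$. The Milnor bound is \eqref{eq:exact-length} with $d=0$: we get $p^n=\mu(f)+n+\varepsilon_F(f)\ge\mu(f)+n$. Alternatively, use \eqref{eq:gamma-bound} with $\gamma_p(f)\le p^n$.

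For determinacy, Definition~\ref{def:eta} now gives $\eta(f)\le n(p-1)+1$. Theorem~\ref{thm:eta-radius}(i) then shows that $f$ is right $N_\eta(f)$-determined, where
\[
N_\eta(f)=2\eta(f)-\ord(f)\le 2n(p-1)+2-\ord(f)\le 2n(p-1),
\]
using $\ord(f)\ge2$ because $f\in\m^2$. Determinacy at a lower order implies it at every higher order, so $f$ is right $2n(p-1)$-determined.

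The only delicate point is this final bookkeeping, namely that the $+2$ from $2\eta$ is absorbed by $\ord(f)\ge2$. The rest is definitional.
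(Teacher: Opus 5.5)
Your proof is correct and follows the paper's argument: the equivalences come from the same unwinding of the definitions of $\Fmod$, $Q_F(f)$ and $\nF$, and the consequences come from Lemma~\ref{lem:loewy} with $\ell_F(f)=1$, from \eqref{eq:exact-length} with $d=0$, and from Theorem~\ref{thm:eta-radius}(i) combined with $\ord(f)\ge2$. Your treatment of (iv)$\Rightarrow$(iii), by testing on $h=x_i$, also shows that the paper's appeal to perfectness of $k$ at that step is not actually needed.
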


\begin{proof}
The local algebra $A_F(f)=\frac{S}{Q_F(f)}$ has length one exactly when $Q_F(f)=\nF$.
Since $\nF=\langle x_1^p,\ldots,x_n^p\rangle S$, this is equivalent to (iii).
Because $k$ is perfect, (iii) is equivalent to (iv).
The remaining assertions follow from Lemma~\ref{lem:loewy},
Theorem~\ref{thm:eta-radius}, and Theorem~\ref{thm:length}.
\end{proof}

Put $B=n(p-1)$, $N=2B$, and
$\mathcal J_N^{(2)}=\frac{\m^2}{\m^{N+1}}$.  Let
$\mathcal Z_{n,p}\subseteq\mathcal J_N^{(2)}$ be the set of jets $u$ satisfying
$x_i^p\in\m\Jac(\widetilde u)+\m^{N+1}$ for every $i$, where $\widetilde u$ is any
lift of $u$.  The definition is independent of
the lift, since changing a lift by $\m^{N+1}$ changes its partial derivatives
by $\m^N$ and hence $\m\Jac(\widetilde u)$ by terms in $\m^{N+1}$.

\begin{theorem}
\label{thm:Fmod-zero-classification}
The jet map induces a bijection
\[
 \frac{\{f\in\m^2\mid\Fmod(f)=0\}}{\!\overset{\R}{\sim}}
 \ \longrightarrow
 \frac{\mathcal Z_{n,p}}{\R_N},
 \qquad N=2n(p-1).
\]
Equivalently, every such germ is right equivalent to a polynomial of degree
at most $N$, and two such germs are right equivalent exactly when their
$N$-jets lie in the same $\R_N$-orbit.
\end{theorem}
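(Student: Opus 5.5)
The plan is to combine Proposition~\ref{prop:Fmod-zero-criterion} with a check that the defining condition of $\mathcal Z_{n,p}$ sees only $N$-jets. For $f$ with $\Fmod(f)=0$, Proposition~\ref{prop:Fmod-zero-criterion} gives $\m^{B+1}\subseteq\m\Jac(f)$ with $B=n(p-1)$, and it says $f$ is right $N$-determined for $N=2B$. The plan has three steps: show the jet map lands in $\mathcal Z_{n,p}$ and is well defined on classes; show surjectivity by proving that every jet in $\mathcal Z_{n,p}$ lifts to a germ of Frobenius modality zero; and show injectivity from $N$-determinacy.

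\textbf{Well-definedness.} If $\Fmod(f)=0$, then $x_i^p\in\m\Jac(f)$ by condition (iii) of Proposition~\ref{prop:Fmod-zero-criterion}. Hence $x_i^p\in\m\Jac(f)+\m^{N+1}$, so $j^Nf\in\mathcal Z_{n,p}$, using the lift-independence recorded before the theorem. A right equivalence $g=\varphi(f)$ induces $j^Ng=j^N\varphi\cdot j^Nf$, so right classes map to $\R_N$-orbits. The set $\mathcal Z_{n,p}$ is $\R_N$-stable, since $\varphi(\m\Jac(\tilde u))=\m\Jac(\varphi(\tilde u))$ and $\varphi$ permutes the $x_i^p$ modulo $\m\cdot$(linear span of the $x_j^p$). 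More precisely, $\varphi(x_i^p)=\varphi(x_i)^p$ lies in the ideal generated by $\langle x_1^p,\ldots,x_n^p\rangle$, and this ideal is contained in $\m\Jac+\m^{N+1}$ once each $x_j^p$ is.

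\textbf{Surjectivity (main step).} Let $u\in\mathcal Z_{n,p}$ and take the polynomial lift $\tilde u$ of degree at most $N$. We know $x_i^p\in\m\Jac(\tilde u)+\m^{N+1}$, and we want to remove the $\m^{N+1}$ term. Put $I=\m\Jac(\tilde u)$. The first aim is to show $\m^{B+1}\subseteq I+\m^{N+1}$. The monomial count of Lemma~\ref{lem:loewy}, applied with $\ell=1$, gives $\m^{B+1}\subseteq\langle x_1^p,\ldots,x_n^p\rangle R$. Since each $x_i^p$ lies in $I+\m^{N+1}$, the ideal $\langle x_1^p,\ldots,x_n^p\rangle$ lies in $I+\m^{N+1}$, and so $\m^{B+1}\subseteq I+\m^{N+1}$. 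Because $N+1>B+1$, we have $\m^{N+1}\subseteq\m^{B+1}\cdot\m^{N-B}$, and this gives
\[
\m^{B+1}\subseteq I+\m^{N-B}\m^{B+1}.
\]
Nakayama's lemma in the local ring $R$ then yields $\m^{B+1}\subseteq I$. In particular $\Jac(\tilde u)$ is $\m$-primary, so $\tilde u$ is isolated. We also get $x_i^p\in\m^{p}\subseteq\m^{B+1}+I$; this is valid when $p\ge B+1$, and in general we instead argue from $x_i^p\in I+\m^{N+1}\subseteq I+\m^{B+1}=I$. Therefore $\Fmod(\tilde u)=0$ and $j^N\tilde u=u$. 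I expect this Nakayama absorption, which uses $N\ge B$ so that the error ideal $\m^{N+1}$ sits inside $\m\cdot\m^{B+1}$, to be the crux. Note that $\tilde u\in\m^2$ because $u\in\mathcal J^{(2)}_N$.

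\textbf{Injectivity and the polynomial statement.} Suppose $f,g$ have $\Fmod=0$ and $j^Ng=\psi\cdot j^Nf$ for some $\psi\in\R_N$. Lift $\psi$ to a polynomial automorphism $\Psi$. Then $\Psi(f)$ and $g$ have the same $N$-jet. By the $N$-determinacy in Proposition~\ref{prop:Fmod-zero-criterion} (here $N=2n(p-1)$), $g\sim\Psi(f)\sim f$. The same determinacy shows $f\sim j^Nf$, a polynomial of degree at most $N$, which gives the equivalent reformulation.
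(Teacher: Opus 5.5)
Your proposal is correct and follows the paper's proof essentially step for step. The crux in both is the Nakayama absorption $\m^{2B+1}=\m^{B}\m^{B+1}$, which upgrades $\m^{B+1}\subseteq\m\Jac(f)+\m^{2B+1}$ to $\m^{B+1}\subseteq\m\Jac(f)$; after that the congruences defining $\mathcal Z_{n,p}$ become exact, and $N$-determinacy (via $\eta(f)\le B+1$ and Theorem~\ref{thm:eta-radius}) gives injectivity. Two side remarks are unnecessary but harmless: your separate check that $\mathcal Z_{n,p}$ is $\R_N$-stable also follows from surjectivity together with Proposition~\ref{prop:Fbasic}, and your aside about $p\ge B+1$ can be dropped, since the fallback $x_i^p\in I+\m^{N+1}\subseteq I$ already works in general.
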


\begin{proof}
The forward implication follows from
Proposition~\ref{prop:Fmod-zero-criterion}.  Conversely let
$u\in\mathcal Z_{n,p}$ and let $f$ be a lift.  Since
$\m^{B+1}\subseteq\langle x_1^p,\ldots,x_n^p\rangle$,
the defining condition gives
$\m^{B+1}\subseteq\m\Jac(f)+\m^{2B+1}$.
Set
$L=\frac{\m^{B+1}+\m\Jac(f)}{\m\Jac(f)}$.
Because $\m^{2B+1}=\m^B\m^{B+1}$, the preceding inclusion implies
$L\subseteq\m^B L\subseteq\m L$.  Since also $\m L\subseteq L$, we obtain
$L=\m L$, and the Nakayama lemma gives $L=0$.  Hence
$\m^{B+1}\subseteq\m\Jac(f)$ and therefore
$\m^{2B+1}\subseteq\m\Jac(f)$.  The congruences defining
$\mathcal Z_{n,p}$ consequently become exact:
$x_i^p\in\m\Jac(f)$.  Thus $\Fmod(f)=0$. Theorem~\ref{thm:eta-radius} makes every such germ right $N$-determined, so
right equivalence is exactly the $\R_N$-orbit relation on $N$-jets.
\end{proof}

\begin{corollary}\label{cor:Fmod-zero-explicit}
\begin{enumerate}[label=(\roman*),leftmargin=*]
\item If $n=1$, the isolated germs $f$ with $\Fmod(f)=0$ are exactly
$x^d$, up to right equivalence, with $2\le d\le p-1$.
\item If $p=2$, an isolated germ $f$ has $\Fmod(f)=0$ if and only if $n$ is even
and
$f\overset{\R}{\sim} x_1x_2+x_3x_4+\cdots+x_{n-1}x_n$.
\end{enumerate}
\end{corollary}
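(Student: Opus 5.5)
For (i) I will compute directly with the criterion of Proposition~\ref{prop:Fmod-zero-criterion}(iii). For (ii) I will reduce to the residual germ of the splitting lemma, Lemma~\ref{lem:standard-inputs}\ref{std:split}.

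\emph{Case $n=1$.} Write $f=u\,x^d$ with $u$ a unit and $d\ge2$; $d$ is finite because $f$ is isolated. By Proposition~\ref{prop:Fmod-zero-criterion}, $\Fmod(f)=0$ exactly when $x^p\in\m\Jac(f)=\langle xf'\rangle$. Everything turns on the order of $f'$.

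If $p\mid d$, then $f'=u'x^d$, so $\ord f'\ge d\ge p$. Hence $\ord(xf')\ge p+1$, and $x^p\notin\langle xf'\rangle$; this case is excluded.

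If $p\nmid d$, a $d$-th root of $u$ exists because $k$ is algebraically closed and $p\nmid d$. The substitution $x\mapsto u^{1/d}x$ gives $f\overset{\R}{\sim}x^d$. Then $\m\Jac(f)=\langle x^d\rangle$, and $x^p$ lies in it iff $d\le p$. Since $d\ne p$, this is $d\le p-1$. The converse is the same computation: for $2\le d\le p-1$ we have $\m\Jac(x^d)=\langle x^d\rangle\ni x^p$.

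\emph{Case $p=2$.} Apply Lemma~\ref{lem:standard-inputs}\ref{std:split} to get $f\overset{\R}{\sim}g(x_1,\ldots,x_c)+Q(x_{c+1},\ldots,x_n)$. Here $Q$ is nonsingular, and the quadratic part of $g$ is $l^2$ for a linear form $l$. In characteristic $2$ a nonsingular quadratic form on $n-c$ variables over $k$ is, after linear change, $x_{c+1}x_{c+2}+\cdots+x_{n-1}x_n$. This normal form requires $n-c$ to be even; I will take that as part of the splitting-lemma normal form. Consequently $\Jac(f)=\Jac(g)R+\langle x_{c+1},\ldots,x_n\rangle$.

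Reducing modulo $\langle x_{c+1},\ldots,x_n\rangle$ sends $\m\Jac(f)$ onto $\m'\Jac(g)$, where $\m'$ is the maximal ideal of $k[[x_1,\ldots,x_c]]$. So if $c\ge1$, the condition $x_1^2\in\m\Jac(f)$ forces $x_1^2\in\m'\Jac(g)$.

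This is impossible. Since $\partial_i(l^2)=2l\,\partial_il=0$ in characteristic $2$, and the remaining terms of $g$ have degree at least $3$, we get $\Jac(g)\subseteq\m'^2$. Then $\m'\Jac(g)\subseteq\m'^3$, which cannot contain $x_1^2$. Hence $c=0$, so $n$ is even and $f\overset{\R}{\sim}x_1x_2+\cdots+x_{n-1}x_n$.

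Conversely, for this hyperbolic form $\Jac=\m$, hence $\m\Jac=\m^2\ni x_i^2$ for all $i$, and Proposition~\ref{prop:Fmod-zero-criterion} gives $\Fmod=0$.

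\emph{Main obstacle.} The delicate point is the characteristic-$2$ bookkeeping in the splitting lemma. I need that "nonsingular quadratic form" there means the hyperbolic normal form on an even number of variables. In odd dimension the form $x_1x_2+\cdots+x_{n-2}x_{n-1}+x_n^2$ is not of that kind: its $x_n^2$ summand is absorbed into the residual germ as $l^2$, which gives $c\ge1$. That residual case is exactly what the $\Jac(g)\subseteq\m'^2$ argument rules out, so the odd-$n$ case is covered once this convention is confirmed.
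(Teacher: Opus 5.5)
Your proposal is correct and follows the paper's proof. Part (ii) is the same splitting-lemma argument: $\Jac(g)\subseteq\m_g^2$ forces $c=0$, the remaining nonsingular form is hyperbolic, so $n$ is even. Your reading of the characteristic-$2$ convention agrees with the paper's. In part (i) the paper takes $d$ to be the least exponent with $a_d\neq0$ and $p\nmid d$ and computes $\Fmod(f)=\lfloor (d-1)/p\rfloor$ in general, whereas you case-split on whether $p\mid\ord(f)$. Both arguments are valid.
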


\begin{proof}
For $n=1$, write $f=\sum_{j\ge2}a_jx^j$.  Since $f$ is isolated, there is a
least $d$ such that $a_d\ne0$ and $p\nmid d$, and then $\Jac(f)=\langle x^{d-1}\rangle$ up to a unit,
so $\m\Jac(f)=\langle x^d\rangle$.  The image of $k[[x^p]]$ in $\frac{R}{\m\Jac(f)}$ has basis
$1,x^p,x^{2p},\ldots,x^{\left\lfloor\frac{d-1}{p}\right\rfloor p}$.  Hence
$\Fmod(f)=\left\lfloor\frac{d-1}{p}\right\rfloor$.  Thus $\Fmod(f)=0$ forces $d\le p$, and since
$p\nmid d$ we have $d\le p-1$.  There is then no positive multiple of $p$ below $d$,
so $d=\ord(f)$ and $f=x^du(x)$ with $u(0)\ne0$.  Because $p\nmid d$, the unit
$u$ has a $d$-th root in $k[[x]]$, giving $f\overset{\R}{\sim} x^d$.\\

For $p=2$, apply Lemma~\ref{lem:standard-inputs}\ref{std:split}:
$f\overset{\R}{\sim} g(x_1,\ldots,x_c)+q$.  Put
$\mathfrak m_g=\langle x_1,\ldots,x_c\rangle\subseteq k[[x_1,\ldots,x_c]]$.  Suspension invariance gives
$\Fmod(g)=0$.  If $c>0$, then $\Jac(g)\subseteq\mathfrak m_g^2$, so
$\mathfrak m_g\Jac(g)\subseteq\mathfrak m_g^3$, contradicting
$x_i^2\in\mathfrak m_g\Jac(g)$.
Thus $c=0$, and the remaining nonsingular quadratic form is hyperbolic over
the algebraically closed field $k$.
\end{proof}

\begin{remark}
For $p\ge5$, isolated homogeneous cubics $f$ in three variables have
$\Fmod(f)=0$ but may have positive right modality.  Indeed their three
quadratic partial derivatives form a regular sequence, so the Jacobian
algebra has socle degree $3$, hence $\m^4\subseteq \Jac(f)$ and
$\m^5\subseteq\m\Jac(f)$.  Since $p\ge5$, one has $x_i^p\in\m\Jac(f)$.
Thus $\Fmod(f)=0$ is not equivalent to right simplicity.
\end{remark}

Finally, we compare the Frobenius directions with more general deformation directions that preserve the Jacobian ideal.  This gives a decreasing persistence filtration between proper modality, right modality, and Frobenius modality.\\
For $r\ge0$ define
\[
 \mathcal P_r(f)=
 \{h\in\m\mid\partial_i h\in\m^r\Jac(f)\text{ for every }i\},
 \qquad
 d_r(f)=
 \dim_k\left(\frac{\mathcal P_r(f)+\m\Jac(f)}{\m\Jac(f)}\right).
\]
Thus $\mathcal P_r(f)$ consists of perturbation directions whose gradient is
$r$ steps deep in the Jacobian ideal, and $d_r(f)$ counts the directions that survive
modulo the right tangent image $\m\Jac(f)$; the decreasing sequence $d_r(f)$ is the
persistence profile used below.

\begin{proposition}\label{prop:persistence}
For every isolated $f$,
\[
 \rmod(f)\ge d_0(f)\ge d_1(f)\ge\cdots\ge\Fmod(f),
\qquad
 d_0(f)\le\propermod(f).
\]
If $d(f)=\ord(\Jac(f))$ and $B_F=B_F(f)$, then
\[
 d_r(f)=\Fmod(f)\qquad
 r\ge\max\{0,B_F-d(f)\}.
\]
\end{proposition}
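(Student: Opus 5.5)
The plan has three parts: the monotonicity chain, the two modality bounds at the level $d_0$, and the stabilization of $d_r$ for large $r$.

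\textbf{Monotonicity.} Since $\m^{r+1}\Jac(f)\subseteq\m^r\Jac(f)$, we have $\mathcal P_{r+1}(f)\subseteq\mathcal P_r(f)$, and therefore $d_{r+1}(f)\le d_r(f)$. Every element of $\nF$ has vanishing partials, so $\nF\subseteq\mathcal P_r(f)$ for all $r$. Moreover $\nF\cap\m\Jac(f)=Q_F(f)\cap\nF$, which gives $\dim_k\bigl((\nF+\m\Jac(f))/\m\Jac(f)\bigr)=\dim_k(\nF/Q_F(f))=\Fmod(f)$. Hence $d_r(f)\ge\Fmod(f)$ for every $r$.

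\textbf{The bounds $d_0(f)\le\rmod(f)$ and $d_0(f)\le\propermod(f)$.} I would imitate the proof of Proposition~\ref{prop:Flower}. Choose $h_1,\ldots,h_{d_0}\in\mathcal P_0(f)$ whose classes form a basis of $(\mathcal P_0+\m\Jac(f))/\m\Jac(f)$, let $V$ be their span, and consider the linear family $g_t=f+\sum t_ah_a$. Each $\partial_i h_a$ lies in $\Jac(f)$, so $\Jac(g_t)\subseteq\Jac(f)$, and $\mu$ is constant near $t=0$. To see this, write $\partial_i g_t=\sum_j(\delta_{ij}+tC_{ij})\partial_jf$. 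For $t$ near $0$ the matrix $\delta_{ij}+tC_{ij}$ has unit determinant, so in fact $\Jac(g_t)=\Jac(f)$ and $\m\Jac(g_t)=\m\Jac(f)$. The orbit tangent space at each $g_t$ is then the fixed space $(\m\Jac(f)+\m^{N+1})/\m^{N+1}$ by Theorem~\ref{thm:smooth}, applied at a level $N$ that works uniformly since $\eta(g_t)=\eta(f)$. This space meets $j^NV$ trivially, so right-equivalence fibers in the family are zero-dimensional, which gives $d_0\le\rmod(f)$. The same $d_0$-plane lies in the $\mu$-constant stratum of a semiuniversal unfolding with trivial section, because the $h_a$ are independent modulo $\m\Jac(f)$ and so can be taken among the $g_j$. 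This gives $d_0\le\propermod(f)$.

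\textbf{Stabilization.} Let $r\ge\max\{0,B_F-d(f)\}$ and $h\in\mathcal P_r(f)$. Then $\partial_ih\in\m^r\Jac(f)\subseteq\m^{r+d(f)}\subseteq\m^{B_F}$. Decompose $h$ in the free basis $x^a$, $0\le a_i<p$, of $R$ over $S$, writing $h=\sum_a s_a x^a$. The component $s_0$ (after subtracting its constant term, which is zero since $h\in\m$) lies in $\nF$. It remains to show that $h-s_0\in\m\Jac(f)$. The partial derivatives determine the non-$S$ part: $\partial_i h=\sum_a a_i s_a x^{a-e_i}$. Each $s_ax^a$ with $a\ne0$ has some $a_i\ne0$ in $\mathbb F_p^\times$, so $s_ax^{a-e_i}$ is, up to a unit, a basis component of $\partial_ih$. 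Since $\m^{B_F}$ is spanned by monomials and is therefore compatible with the basis decomposition, $s_ax^{a-e_i}\in\m^{B_F}$, and hence $s_ax^a\in\m^{B_F+1}$. Lemma~\ref{lem:loewy} gives $\m^{B_F+1}\subseteq\m\Jac(f)$. Thus $h\in\nF+\m\Jac(f)$, so $d_r(f)\le\Fmod(f)$, and combined with the monotonicity chain, $d_r(f)=\Fmod(f)$.

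\textbf{Expected obstacle.} The main obstacle is this last step. One has to verify that membership in $\m^{B_F}$ passes to the individual components of the $S$-basis decomposition, and then track degrees carefully, since multiplying by $x_i$ raises the degree by exactly one. Both points hold because $\m^{B_F}$ is a monomial ideal and the decomposition respects monomials.
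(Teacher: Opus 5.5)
Your proposal is correct and follows the paper's proof. The monotonicity comes from $\mathcal P_{r+1}\subseteq\mathcal P_r$ and $\nF\subseteq\mathcal P_r$; the modality bounds come from the Jacobian-preserving family $f+\sum t_ah_a$ together with the transversality argument of Proposition~\ref{prop:Flower}; and stabilization comes from the fact that a monomial with an exponent prime to $p$ survives $\partial_i$ without cancellation, combined with Lemma~\ref{lem:loewy}. The differences are cosmetic: you prove the modality bounds only for $d_0$, which suffices by monotonicity, and you split $h$ along the $S$-basis $x^a$ with $0\le a_i<p$ rather than truncating $h$ in degree $B_F$.
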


\begin{proof}
The inclusions
$\mathcal P_{r+1}\subseteq\mathcal P_r$ and
$S\cap\m\subseteq\mathcal P_r$ give monotonicity and the lower Frobenius
bound.  Choose independent classes represented by
$h_1,\ldots,h_e\in\mathcal P_r$.  Write the gradient of $h_a$ as
$A_a(\partial_1f,\ldots,\partial_nf)^{\mathsf T}$ with $A_a\in M_n(\m^r)$.  For
$g_t=f+\sum_at_ah_a$, its gradient is obtained from $(\partial_1f,\ldots,\partial_nf)^{\mathsf T}$ by
multiplication with $I+\sum_at_aA_a$.  This matrix is invertible on a neighborhood
of the origin in parameter space,
so $\Jac(g_t)=\Jac(f)$.  The same finite-jet transversality argument as in
Proposition~\ref{prop:Flower} gives $e\le\rmod(f)$, and for $r=0$ the
family is $\mu$-constant, giving $e\le\propermod(f)$.\\

If $r+d(f)\ge B_F$, then every partial derivative of
$h\in\mathcal P_r$ lies in $\m^{B_F}$.  Consider a monomial $c x^\alpha$ occurring in $h$ with
total degree at most $B_F$.  If some $\alpha_i$ is not divisible by $p$, then its image
under $\partial_i$ is a nonzero monomial of degree $<B_F$; distinct monomials of $h$
cannot cancel after applying the same $\partial_i$.  Thus every exponent of every such
monomial is divisible by $p$.  Since $k$ is perfect, the truncation of $h$ through degree
$B_F$ lies in $S$.  Therefore
$h=s+u$ with $s\in S\cap\m$ and $u\in\m^{B_F+1}$.  By
Lemma~\ref{lem:loewy}, $u\in\m\Jac(f)$.  Thus every surviving class is represented
by a Frobenius direction.
\end{proof}

\section{Splitting, boundedness, and proper modality}\label{sec:boundedness}

In this section, we combine the Frobenius bounds with
Lemma~\ref{lem:standard-inputs}\ref{std:split}.  We first bound
the essential corank, Milnor number, determinacy order, and multiplicity in terms of the
right modality.  We then study the Milnor-constant stratum of a semiuniversal unfolding
and prove that proper modality equals right modality.\\

Let $f\overset{\R}{\sim} g(x_1,\ldots,x_c)+Q(z_1,\ldots,z_{n-c})$ be the splitting-lemma
decomposition, where $Q$ is nonsingular and $g$ is the residual germ, and write
$\m_x=\langle x_1,\ldots,x_c\rangle\subseteq k[[x_1,\ldots,x_c]]$.  We call $c=c(f)$ the \emph{essential corank}; equivalently, it is the number of
variables that remain after all nonsingular quadratic summands have been split off.  Two isolated hypersurface germs are
\emph{stably right equivalent} if, after adjoining finitely many variables and adding
nonsingular quadratic forms in the new variables, the resulting germs are right
equivalent.  By Lemma~\ref{lem:standard-inputs}\ref{std:split}, the stable
right-equivalence class is represented by
the residual germ.

\begin{proposition}\label{prop:susp}
With this notation, $A_F(f)\cong A_F(g)$,
$\Fmod(f)=\Fmod(g)$, $\ell_F(f)=\ell_F(g)$, and $\mu(f)=\mu(g)$.
\end{proposition}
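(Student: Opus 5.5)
By Proposition~\ref{prop:Fbasic}, $A_F$ (hence $\Fmod$ and $\ell_F$) and $\mu$ are right-equivalence invariants. So I may assume $f=g(x)+Q(z)$ on the nose, with $R=R_x\widehat\otimes R_z$ where $R_x=k[[x_1,\ldots,x_c]]$ and $R_z=k[[z_1,\ldots,z_{n-c}]]$. For $\mu$ the argument is immediate. The variables are separated, so $\Jac(f)=\Jac(g)R+\Jac(Q)R$. Since $Q$ is nonsingular, $\Jac(Q)=\m_z$; in characteristic $2$ nonsingularity of the quadratic part likewise gives that the partials of $Q$ are independent linear forms spanning $\m_z$. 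Hence $R/\Jac(f)\cong R_x/\Jac(g)$, and $\mu(f)=\mu(g)$.

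For the Frobenius algebra I want to compute $Q_F(f)=S\cap\m\Jac(f)$, where $\m\Jac(f)=(\m_x+\m_z)(\Jac(g)+\m_z)$. Put $J=\m_x\Jac(g)+\m_z\Jac(g)+\m_x\m_z+\m_z^2$. Modulo $\m_z^2$, this ideal is $\m_x\Jac(g)+\m_z\bigl(\Jac(g)+\m_x\bigr)=\m_x\Jac(g)+\m_z\m_x$, using $\Jac(g)\subseteq\m_x$. Therefore
\[
R/\m\Jac(f)\;\cong\;R_x/\m_x\Jac(g)\;\oplus\;\bigoplus_j z_j\cdot k ,
\]
so $M_f\cong M_g\oplus k^{\,n-c}$, where the second summand is spanned by the classes of $z_j$ and is killed by the maximal ideal. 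Now consider an element $s\in S=k[[x^p,z^p]]$, written $s=s_0(x^p)+\sum_{|\beta|\ge1}s_\beta(x^p)z^{p\beta}$. Every term with $\beta\ne0$ lies in $\m_z^p\subseteq\m_z^2\subseteq\m\Jac(f)$, because $p\ge2$. So $s\in\m\Jac(f)$ if and only if $s_0\in\m\Jac(f)\cap R_x=\m_x\Jac(g)$; this intersection follows from the displayed decomposition, obtained by setting $z=0$. This gives $Q_F(f)=Q_F(g)S+\langle z_j^p\rangle S$, and hence
\[
A_F(f)=S/Q_F(f)\;\cong\;k[[x^p]]/Q_F(g)=A_F(g).
\]
The equalities $\Fmod(f)=\Fmod(g)$ (length minus one) and $\ell_F(f)=\ell_F(g)$ (Loewy length of isomorphic local Artin algebras) then follow at once.

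The step I expect to need the most care is the containment $z^{p\beta}\in\m\Jac(f)$, together with the identification $\m\Jac(f)\cap R_x=\m_x\Jac(g)$, in characteristic $2$. There $Q$ is, for instance, $z_1z_2+\cdots$, so $\Jac(Q)$ is spanned by the $z_j$ themselves rather than by $2z_j$. Nonsingularity still gives $\Jac(Q)=\m_z$, and $p=2$ gives $z^{2\beta}\in\m_z^2$, so the argument goes through unchanged. For the intersection I would use the retraction $R\to R_x$, $z\mapsto0$, which sends $\m\Jac(f)$ onto $\m_x\Jac(g)$ and is the identity on $R_x$.
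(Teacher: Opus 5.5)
Your proof is correct and follows the same route as the paper. Like the paper, you use that the partials of $Q$ generate $\m_z$, so that every $p$-th-power monomial involving a $z$-variable lies in $\m_z^2\subseteq\m\Jac(f)$, and that the retraction $z\mapsto 0$ identifies $\m\Jac(f)\cap R_x$ with $\m_x\Jac(g)$; you additionally spell out the resulting description $Q_F(f)=Q_F(g)S+\langle z_j^p\rangle S$ and the equality $\mu(f)=\mu(g)$ via $\Jac(f)=\Jac(g)R+\m_zR$, which the paper's brief proof leaves implicit.
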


\begin{proof}
The partial derivatives of the nonsingular quadratic block generate the quadratic
variables.  Every nonconstant $p$-th-power monomial involving one of those variables
therefore belongs to $\m\Jac(f)$, while setting the quadratic variables equal to zero
maps $\m\Jac(f)$ onto $\m\Jac(g)$.
\end{proof}

Applying Theorem~\ref{thm:length} to the residual germ and using the suspension
invariance in Proposition~\ref{prop:susp} gives a bound involving only the residual
number of variables.  In particular, the variables belonging to the nonsingular
quadratic summand do not occur in the exponent.  We obtain
\begin{equation}\label{eq:essential-bound}
 \mu(f)+c(f)\le p^{c(f)}\bigl(\Fmod(f)+1\bigr).
\end{equation}

\begin{lemma}\label{lem:char2-corank}
Assume $p=2$, and let $g$ be an isolated residual germ
in $c$ variables. Then we have that 
$\rmod(g)\ge c+\binom c3$. 
\end{lemma}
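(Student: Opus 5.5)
We will bound $\rmod(g)$ from below by the Frobenius modality, via Proposition~\ref{prop:Flower}: it suffices to show $\Fmod(g)\ge c+\binom c3$. Equivalently, by Definition~\ref{def:Fmod}, we must exhibit $c+\binom c3$ elements of $\nF=\langle x_1^2,\ldots,x_c^2\rangle S$, with $S=k[[x_1^2,\ldots,x_c^2]]$, that are linearly independent modulo $\m_x\Jac(g)$. The candidates are the squares $x_i^2$ for $1\le i\le c$ and the products $x_i^2x_j^2x_l^2$ for $i<j<l$, i.e.\ the degree-one and squarefree degree-three monomials in $y_i=x_i^2$.

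The key structural input is Lemma~\ref{lem:standard-inputs}\ref{std:split} in characteristic $2$. Since $g$ is residual, $\Hess(g)(0)=0$ and the quadratic part of $g$ is $\ell^2$ for a linear form $\ell$. Hence every partial derivative $\partial_ig$ lies in $\m_x^2$, because the derivative of $\ell^2$ vanishes in characteristic $2$. Therefore $\m_x\Jac(g)\subseteq\m_x^3$. This immediately gives independence of the $x_i^2$ modulo $\m_x\Jac(g)$: a nontrivial linear combination of them has order exactly $2$ and cannot lie in $\m_x^3$. It also shows that the $c$ classes $x_i^2$ are independent from any elements of degree $\ge 3$ modulo $\m_x\Jac(g)$.

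For the degree-six products, we pass to the associated graded. Modulo $\m_x^7$, we need that no nonzero combination $\sum a_{ijl}x_i^2x_j^2x_l^2$ lies in $\m_x\Jac(g)+\m_x^7$. Write $\partial_i g=q_i+(\text{higher})$ with $q_i$ of degree $\ge2$. An element of $\m_x\Jac(g)$ of order $6$ has leading form in the ideal generated by initial forms. The crucial parity observation is that a squarefree-in-$y$ monomial $x_i^2x_j^2x_l^2$ has all exponents even. Any element of $\m_x\Jac(g)$ is $\sum_i u_i\partial_ig$. Its image under a suitable parity-detecting functional should vanish; for instance one can use the $S$-linear projection $R\to S$ onto the basis element $1$ in the free basis $x^a$, $a\in\{0,1\}^c$ from the proof of Theorem~\ref{thm:length}. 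This lets us compare with $\partial_i(u_ix_i\cdot\,)$-type identities, because in characteristic $2$ the $S$-component of $u\,\partial_ig$ is controlled by the Cartier-type operator. Concretely, we plan to show that the $S$-component of $\m_x\Jac(g)$ lands in $\nF^2\cdot\m_x$-type pieces that avoid the squarefree cubic span in $y$, using that each $\partial_i g$ misses the variable $x_i$ in parity. Separately, we also plan to use the extra room from Theorem~\ref{thm:Ftower}/Proposition~\ref{prop:persistence} if the pure Frobenius count falls short: the directions $x_ix_jx_l$ (squarefree cubic monomials in the $x$'s) have derivatives $x_jx_l\in\m_x^2$. If these lie in $\Jac(g)$, they belong to $\mathcal P_0(g)$, and $d_0(g)\le\rmod(g)$. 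In that case the count $c+\binom c3$ would be realized by the $c$ squares plus the $\binom c3$ cubics $x_ix_jx_l$, which are independent modulo $\m_x\Jac(g)\subseteq\m_x^3$ only at the level of degree-$3$ leading forms.

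The main obstacle is this degree-three step. The cubic monomials $x_ix_jx_l$ can genuinely appear in leading forms of $\m_x\Jac(g)$ when $g$ has order-three terms, so the naive containment fails. We would handle it by a dimension argument on the jet space: semicontinuity of modality lets us specialize $g$ within its $\mu$-constant/right-complete family toward germs whose cubic part is absent. Alternatively, we can directly bound $\dim_k\bigl(\m_x^3/(\m_x\Jac(g)\cap\m_x^3+\m_x^4)\bigr)$ from below. Since $\m_x\Jac(g)$ has degree-$3$ part spanned by $x_m\,\mathrm{in}_2(\partial_ig)$, the quadratic leading forms of the partials are derivatives of a cubic in characteristic $2$, and these span a space whose $x_m$-multiples miss the squarefree cubics $x_ix_jx_l$. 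That failure is what we would verify first, by computing the span of the $x_m\partial_i(\text{cubic})$ in characteristic $2$.
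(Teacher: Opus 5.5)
\textbf{There is a genuine gap.} The reduction to Frobenius modality cannot succeed, because the inequality $\Fmod(g)\ge c+\binom c3$ is false once $c\ge3$.

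Take $c=3$ and $g=x_1^3+x_2^3+x_3^3$ in characteristic $2$. It is residual and isolated, with $\Jac(g)=\langle x_1^2,x_2^2,x_3^2\rangle$. Your degree-six candidate satisfies $x_1^2x_2^2x_3^2=(x_2^2x_3^2)\,\partial_1g\in\m_x\Jac(g)$. In fact $\m_x\Jac(g)$ is the monomial ideal spanned by the monomials of degree at least $3$ having some exponent at least $2$. Hence $Q_F(g)=\nF^2$ and $\Fmod(g)=3<4$, so the parity/Cartier step has nothing true to prove.

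Your first step is correct: the $x_i^2$ are independent modulo $\m_x\Jac(g)\subseteq\m_x^3$. Via Proposition~\ref{prop:Flower}, however, it only yields $\rmod(g)\ge c$, which settles only $c\le2$. The other routes you suggest also fail.
\begin{itemize}
\item The fallback through $d_0(g)$ breaks on the same example: $\partial_1(x_1x_2x_3)=x_2x_3\notin\Jac(g)$, and a monomial count gives $d_0(g)=3$.
\item Specializing $g$ goes in the wrong direction. Upper semicontinuity of modality bounds $\rmod(g)$ from above by the modality of a specialization, never from below.
\item A lower bound on a tangent quotient such as $\dim_k\m_x^3/(\cdots)$ counts unfolding directions, that is, orbit codimension, not moduli.
\end{itemize}
The underlying issue is that the extra $\binom c3$ moduli come from the cubic part modulo linear coordinate changes. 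They are not Jacobian-preserving deformation directions, so they are invisible to $\Fmod$ and to every $d_r$.

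\textbf{The missing idea is an orbit-dimension count on $3$-jets,} which is how the paper argues.
\begin{itemize}
\item Consider the family $g+h_2+h_3$ with $h_2\in\Span_k\{x_1^2,\ldots,x_c^2\}$ and $h_3$ an arbitrary cubic form. Its parameter-to-$3$-jet map is injective, with image an affine space of dimension $c+\binom{c+2}3$.
\item For a coordinate change $\varphi=Ax+u$ with $u\in\m_x^2$, characteristic $2$ gives $\ell(Ax+u)^2=\ell(Ax)^2+\ell(u)^2$ with $\ell(u)^2\in\m_x^4$. Moreover, $u$ changes a cubic form only in degrees $\ge4$.
\item Hence on these $3$-jets the right jet group acts through $\GL_c$, and right-equivalence fibers in the parameter space have dimension at most $c^2$.
\item The relative modality of this unfolding is therefore at least $c+\binom{c+2}3-c^2=c+\binom c3$. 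Lemma~\ref{lem:standard-inputs}\ref{std:complete} transfers this bound to $\rmod(g)$.
\end{itemize}
In the example above this gives $\rmod(g)\ge4$, while $\Fmod(g)=d_0(g)=3$. So the discrepancy is genuine and not an artifact of your choice of candidate elements.
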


\begin{proof}
The assertion is immediate for $c=0$. Assume $c>0$.
Let $V_2^{\mathrm{sq}}=\Span_k\{x_1^2,\ldots,x_c^2\}$ and $V_3=k[x_1,\ldots,x_c]_3$. Since $\Hess(g)(0)=0$, its quadratic part belongs to
$V_2^{\mathrm{sq}}$. Consider the unfolding $g+h_2+h_3$ for $(h_2,h_3)\in V_2^{\mathrm{sq}}\oplus V_3$. Its parameter-to-$3$-jet map is injective, and the
parameter space has dimension $c+\binom{c+2}{3}$.\\

Write a coordinate change as $\varphi(x)=Ax+u(x)$ with
$u\in\m^2k[[x]]^c$. In characteristic two, substituting
$\varphi$ into a square quadratic form produces no cubic
contribution from $u$. Substitution into a cubic form
affects its cubic part only through $A$.
Consequently the action on the $3$-jets in this family
factors through $\GL_c$.\\

Every right-equivalence fiber in the parameter space is
therefore contained in a $\GL_c$-orbit at the $3$-jet
level, and has dimension at most $c^2$.
After restricting to a neighborhood of the origin where
the fibers are isolated, the relative modality is at least $c+\binom{c+2}{3}-c^2=c+\binom c3$. The modality of an unfolding is at most the right
modality of its central germ, proving the assertion.
\end{proof}

For $D\ge0$ define
\[
 C_{p,D}=\begin{cases}
 \max\left\{c\in\mathbb N\mid\binom c3\le D\right\},&p\ne2,\\
 \max\left\{
   c\in\mathbb Z_{\ge0}:
   c+\binom c3\le D
\right\},&p=2.
 \end{cases}
\]
Put
\[
 B_{p,D}=pD+C_{p,D}(p-1),\qquad
 N_{p,D}=2B_{p,D},\qquad
 \widehat N_{p,D}=\max\{2,N_{p,D}\}.
\]

\begin{theorem}
\label{thm:uniform}
Fix $p>0$ and $D\ge0$, and let $f$ be isolated with $\rmod(f)\le D$.  After splitting
off a nonsingular quadratic block, the residual germ satisfies:
\begin{enumerate}[label=\textup{(\roman*)},leftmargin=*]
\item it has at most $C_{p,D}$ variables;
\item its Milnor number is at most $p^{C_{p,D}}(D+1)-C_{p,D}$;
\item its right classification radius is at most $N_{p,D}$;
\item if it is nonzero, its multiplicity is at most $p(D+1)$.
\end{enumerate}
\end{theorem}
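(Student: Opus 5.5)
The plan is to work throughout with the residual germ $g$ in $c=c(f)$ variables supplied by Lemma~\ref{lem:standard-inputs}\ref{std:split}. By that lemma $\rmod(g)=\rmod(f)\le D$, and by Proposition~\ref{prop:susp} the Frobenius invariants of $g$ and $f$ agree. Since $\Fmod(g)\le\rmod(g)\le D$ by Proposition~\ref{prop:Flower}, the Loewy length of $A_F(g)$ satisfies $\ell_F(g)\le\Fmod(g)+1\le D+1$. Once (i) bounds $c$, each of the assertions (ii)--(iv) is then a substitution into an earlier estimate.

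\emph{Step (i), bounding the corank.} For $p=2$ this is Lemma~\ref{lem:char2-corank}: $c+\binom c3\le\rmod(g)\le D$, so $c\le C_{2,D}$. For $p\ne2$ I would imitate that lemma. We have $g\in\m_x^3$, and I unfold $g$ by $g+h_3$ with $h_3\in k[x_1,\ldots,x_c]_3$. The parameter-to-$3$-jet map of this family is injective, and its parameter space has dimension $\binom{c+2}{3}$. Write a coordinate change as $\varphi=Ax+u$ with $u\in\m_x^2$. Since every member lies in $\m_x^3$, the map $u$ only affects terms of degree at least $4$. Hence the action on $3$-jets of the family factors through $\GL_c$, and every right-equivalence fiber has dimension at most $c^2$. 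The relative modality is therefore at least $\binom{c+2}{3}-c^2=\binom c3$, and Lemma~\ref{lem:standard-inputs}\ref{std:complete} gives $\binom c3\le\rmod(g)\le D$, i.e.\ $c\le C_{p,D}$. I expect this step to be the main obstacle. The delicate point is making rigorous that fibers of dimension at most $c^2$ in the $3$-jet family yield the stated lower bound for the Rosenlicht relative modality near the origin, and not merely generically. I would handle it exactly as in the proof of Lemma~\ref{lem:char2-corank}, by restricting to a neighborhood where the fiber dimension is semicontinuously bounded.

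\emph{Step (ii), the Milnor number.} Inequality \eqref{eq:essential-bound} gives $\mu(g)\le p^{c}(D+1)-c$. The function $c\mapsto p^c(D+1)-c$ is nondecreasing, since $p^{c+1}(D+1)-p^c(D+1)\ge1$. Combined with (i) this gives the bound $p^{C_{p,D}}(D+1)-C_{p,D}$.

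\emph{Step (iii), the classification radius.} By Definition~\ref{def:Frobenius-Loewy-length}, $B_F(g)=p(\ell_F(g)-1)+c(p-1)\le pD+C_{p,D}(p-1)=B_{p,D}$. Theorem~\ref{thm:eta-radius} then shows that $2B_F(g)-\ord(g)+2$ is a right classification radius. Since $\ord(g)\ge2$ whenever $g\neq0$, this radius is at most $2B_{p,D}=N_{p,D}$. If $g=0$ the residual germ is trivial, and the case is covered by $\widehat N_{p,D}$.

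\emph{Step (iv), the multiplicity.} Since $\ell_F(g)\le D+1$, we have $\nF^{D+1}\subseteq Q_F(g)$. In particular $x_1^{p(D+1)}\in\m_x\Jac(g)\subseteq\m_x^{\ord(g)}$, which forces $\ord(g)\le p(D+1)$ whenever $g\ne0$ (equivalently $c>0$).
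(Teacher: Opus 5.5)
Your proposal is correct and follows the paper's proof. The paper also gets (i) from $\binom c3\le\rmod(g)$, citing Greuel--Nguyen's bound $\binom{c+\ord(g)-1}{\ord(g)}-c^2\ge\binom c3$ for $p\ne2$ where you run the cubic parameter count of Lemma~\ref{lem:char2-corank} yourself, and then gets (ii) and (iii) from \eqref{eq:essential-bound} and Theorem~\ref{thm:eta-radius} as you do; the remaining differences are cosmetic: for (iv) the paper compares colengths via Corollary~\ref{cor:order} where you use $x_1^{p(D+1)}\in\nF^{D+1}\subseteq Q_F(g)\subseteq\m_x^{\ord(g)}$, and your check that $c\mapsto p^c(D+1)-c$ is nondecreasing makes explicit a step the paper leaves implicit.
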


\begin{proof}
For $p\ne2$, the residual germ has order at least three, and
\cite[Proposition~2.18]{GreuelNguyen2016} gives
$\rmod(g)\ge\binom{c+\ord(g)-1}{\ord(g)}-c^2\ge\binom c3$.
For $p=2$ use Lemma~\ref{lem:char2-corank}.  Thus $c\le C_{p,D}$ in all
characteristics.\\

The Milnor bound follows from \eqref{eq:essential-bound} and
$\Fmod(g)\le\rmod(g)\le D$.  Theorem~\ref{thm:eta-radius}, applied in $c$ variables, gives the radius $2pD+2c(p-1)\le N_{p,D}$.\\

Finally, Corollary~\ref{cor:order} applied to $g$ gives
\[
 D+1\ge
 \binom{c+\left\lceil\frac{\ord(g)}{p}\right\rceil-1}{c}.
\]
For $c>0$, the right-hand side is at least
$\left\lceil\frac{\ord(g)}{p}\right\rceil$, proving $\ord(g)\le p(D+1)$.
\end{proof}

The residual determinacy estimate also gives a determinacy order for the original germ,
not only for its stable class.  The point is that the partial derivatives of the
nonsingular quadratic block generate all split variables.  Hence every sufficiently
high-order monomial involving a split variable already lies in the full Jacobian
contribution, and the residual containment extends to the original power-series ring.

\begin{corollary}
\label{cor:uniform-full-determinacy}
Fix $p>0$ and $D\ge0$.  Every isolated hypersurface germ $f$ with
$\rmod(f)\le D$ is right $\widehat N_{p,D}$-determined, independently of the
number of ambient variables.
\end{corollary}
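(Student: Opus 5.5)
The plan is to reduce to the residual germ and then transport the residual Loewy containment back to the original ring, as the remark before the corollary indicates. By Lemma~\ref{lem:standard-inputs}\ref{std:split}, write $f\overset{\R}{\sim}g(x_1,\ldots,x_c)+Q(z_1,\ldots,z_{n-c})$ with $c=c(f)\le C_{p,D}$ by Theorem~\ref{thm:uniform}(i). Since right determinacy is invariant under right equivalence, we may assume $f$ equals this split form. If $c=0$, then $f$ is a nonsingular quadratic form, $\Jac(f)=\m$, and $f$ is $2$-determined (Morse lemma, or Lemma~\ref{lem:standard-inputs}\ref{std:det} with $N=0$, $\ord f=2$). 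This is why $\widehat N_{p,D}\ge2$ is needed.

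For $c>0$, I would first work in the residual ring. There $\Fmod(g)\le\rmod(g)=\rmod(f)\le D$, using suspension invariance from Lemma~\ref{lem:standard-inputs}\ref{std:split} and Proposition~\ref{prop:Flower}. Hence the Loewy length satisfies $\ell_F(g)\le D+1$, and Lemma~\ref{lem:loewy} gives
\[
\m_x^{B+1}\subseteq\m_x\Jac(g),\qquad B=pD+c(p-1)\le B_{p,D}.
\]

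Next I would lift this containment to $R$. The partial derivatives of $Q$ are linearly independent linear forms in the $z$-variables, since $Q$ is nonsingular; in characteristic $2$ this still holds for a nonsingular form in the sense of the splitting lemma. Hence $\langle z\rangle\subseteq\Jac(f)$. Also $\partial_{x_i}f=\partial_{x_i}g$, so $\Jac(g)R\subseteq\Jac(f)$. Take a monomial of degree $B+1$ in $R$. If it involves some $z_j$, it lies in $\m^{B}\langle z\rangle\subseteq\m\Jac(f)$, since $B\ge1$. Otherwise it lies in $\m_x^{B+1}R\subseteq\m_x\Jac(g)R\subseteq\m\Jac(f)$. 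Thus $\m^{B+1}\subseteq\m\Jac(f)$, so $\eta(f)\le B+1$.

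Finally, Theorem~\ref{thm:eta-radius}(i) shows that $f$ is right $(2\eta(f)-\ord f)$-determined. Since $\ord f=2$ when a quadratic block is present, and $\ord f=\ord g\ge2$ otherwise, this order is at most $2(B+1)-2=2B\le N_{p,D}$. Since determinacy at some order implies determinacy at every larger order, $f$ is right $\widehat N_{p,D}$-determined. The bound involves only $p$ and $D$, not $n$.

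The main delicate point I expect is the lifting step, specifically ensuring $\langle z\rangle\subseteq\Jac(f)$ in characteristic $2$. There a nonsingular quadratic form must be hyperbolic, $\sum z_{2i-1}z_{2i}$, or $n-c$ must be even as in Corollary~\ref{cor:Fmod-zero-explicit}. The derivatives of such a form are exactly the coordinates, so the inclusion holds. I would check this against the convention in Lemma~\ref{lem:standard-inputs}\ref{std:split}. Everything else is bookkeeping with Lemma~\ref{lem:loewy} and Theorem~\ref{thm:eta-radius}.
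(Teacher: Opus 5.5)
Your proposal is correct and follows essentially the paper's proof. You split off $Q$, bound the residual Loewy exponent by $pD+c(p-1)\le B_{p,D}$ via Lemma~\ref{lem:loewy}, lift the containment to the full ring using $\langle z\rangle\subseteq\Jac(f)$, and finish with Lemma~\ref{lem:standard-inputs}\ref{std:det}. You route that last step through $\eta(f)$ and Theorem~\ref{thm:eta-radius}(i), which is the same estimate the paper applies directly to $\m^{B+2}\subseteq\m^2\Jac(f)$.

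One small slip: in the case $c=0$ you must apply Lemma~\ref{lem:standard-inputs}\ref{std:det} with $N=1$, using $\m^3\subseteq\m^2\Jac(f)=\m^3$, not with $N=0$.
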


\begin{proof}
Write $f\overset{\R}{\sim} g(x_1,\ldots,x_c)+Q(z_1,\ldots,z_{n-c})$ as in
Lemma~\ref{lem:standard-inputs}\ref{std:split}.  If $c=0$, the germ is a nonsingular quadratic form and is right
$2$-determined.  Assume $c>0$ and put
$B=p(\ell_F(f)-1)+c(p-1)=B_F(g)$.  Since
$\ell_F(f)-1\le\Fmod(f)\le D$ and $c\le C_{p,D}$, one has
$B\le B_{p,D}$.  Lemma~\ref{lem:loewy}, applied to the residual germ, gives
$\m_x^{B+2}\subseteq\m_x^2\Jac(g)$.  On the other hand the partial derivatives of the
nonsingular quadratic form $Q$ generate the ideal
$\langle z_1,\ldots,z_{n-c}\rangle$.  Since $B\ge1$, every monomial of total
degree at least $B+2$ that involves a split variable belongs to
$\m^2\langle z_1,\ldots,z_{n-c}\rangle$.  Therefore, in the full power-series
ring,
\[
 \m^{B+2}\subseteq\m^2\Jac(f).
\]

Lemma~\ref{lem:standard-inputs}\ref{std:det} shows that $f$ is
right $(2B-\ord(f)+2)$-determined, hence right $2B$-determined
because $\ord(f)\ge2$. Since
$2B\le N_{p,D}\le\widehat N_{p,D}$, the assertion follows.
\end{proof}

\begin{corollary}
\label{cor:stable-milnor-growth}
Let $r=\rmod(f)$.  Then
\[
 \mu(f)\le p^{\,2+\sqrt[3]{6r}}(r+1).
\]
Consequently, for fixed $p$, any sequence $(f_j)$ of isolated hypersurface singularities with
$\mu(f_j)\to\infty$ has $\rmod(f_j)\to\infty$, even if the ambient dimension is allowed to
tend to infinity.
\end{corollary}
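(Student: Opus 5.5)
Combine the essential-corank Milnor bound \eqref{eq:essential-bound} with the corank bound of Theorem~\ref{thm:uniform}(i), with $D=r$. By Proposition~\ref{prop:susp} and Lemma~\ref{lem:standard-inputs}\ref{std:split}, $\mu(f)=\mu(g)$ for the residual germ $g$ in $c=c(f)$ variables. Then \eqref{eq:essential-bound} together with $\Fmod(f)\le\rmod(f)=r$ (Proposition~\ref{prop:Flower}) gives
\[
\mu(f)\le\mu(f)+c\le p^{c}(r+1).
\]
So it suffices to show $c\le 2+\sqrt[3]{6r}$.

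For the corank bound, Theorem~\ref{thm:uniform}(i) gives $c\le C_{p,r}$, so $\binom c3\le r$ in every characteristic. For $p=2$ this follows from $c+\binom c3\le r$ (Lemma~\ref{lem:char2-corank}); for $p\ne2$ it is the definition. If $c\le 2$ the bound $c\le 2+\sqrt[3]{6r}$ is trivial. If $c\ge3$, use the elementary estimate
\[
\binom c3=\frac{c(c-1)(c-2)}{6}\ge\frac{(c-2)^3}{6}.
\]
Then $(c-2)^3\le 6r$, hence $c\le 2+\sqrt[3]{6r}$. Since $p\ge2$, monotonicity of $p^{c}$ in $c$ gives the stated inequality.

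For the consequence, suppose $\rmod(f_j)$ does not tend to infinity. Then along a subsequence it is bounded by some $D$, and the inequality bounds $\mu(f_j)$ by $p^{2+\sqrt[3]{6D}}(D+1)$, contradicting $\mu(f_j)\to\infty$. No dependence on the ambient number of variables enters, because only $c$ appears in the exponent.

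The only delicate point is making sure the corank bound $\binom c3\le\rmod$ is genuinely available for the residual germ in every characteristic. This is the content of the first paragraph of the proof of Theorem~\ref{thm:uniform}, using \cite[Proposition~2.18]{GreuelNguyen2016} for $p\ne2$ and Lemma~\ref{lem:char2-corank} for $p=2$, combined with suspension invariance of $\rmod$. Granting that, the rest is the routine cubic estimate above.
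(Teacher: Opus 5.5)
Your proposal is correct and follows the paper's proof. The paper likewise obtains $\binom{c}{3}\le r$ from the definition of $C_{p,r}$ and Lemma~\ref{lem:char2-corank}, deduces $c\le 2+\sqrt[3]{6r}$, and concludes via \eqref{eq:essential-bound} with $\Fmod(f)\le r$; your estimate $c(c-1)(c-2)\ge(c-2)^3$ and your subsequence argument for the consequence simply make explicit steps the paper leaves implicit.
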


\begin{proof}
The definition of $C_{p,r}$ and Lemma~\ref{lem:char2-corank} imply
$\binom c3\le r$ for the residual essential corank in every characteristic.  Hence
$c(c-1)(c-2)\le6r$ and therefore $c\le2+\sqrt[3]{6r}$.  Apply
\eqref{eq:essential-bound} and $\Fmod(f)\le r$.
\end{proof}

\begin{corollary}
Fix $p>0$.  If $\{f_j\}_{j=1}^\infty$ is any sequence of isolated hypersurface singularities in
characteristic $p$ with $\mu(f_j)\to\infty$, then
$\rmod(f_j)\to\infty$.
\end{corollary}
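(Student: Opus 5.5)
The statement is the contrapositive of a uniform bound, so the plan is to derive it directly from Corollary~\ref{cor:stable-milnor-growth}. Fix $p$ and let $(f_j)$ be isolated hypersurface germs in characteristic $p$, with possibly varying numbers of ambient variables $n_j$, such that $\mu(f_j)\to\infty$.

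Suppose, for contradiction, that $\rmod(f_j)\not\to\infty$. Then there is a constant $D\ge0$ and an infinite subsequence $(f_{j_i})$ with $\rmod(f_{j_i})\le D$ for every $i$. For each member of this subsequence, Corollary~\ref{cor:stable-milnor-growth} with $r=\rmod(f_{j_i})\le D$ gives
\[
\mu(f_{j_i})\le p^{\,2+\sqrt[3]{6r}}(r+1)\le p^{\,2+\sqrt[3]{6D}}(D+1),
\]
using only that the right-hand side is monotone in $r$. The bound is independent of $i$ and of $n_{j_i}$, which contradicts $\mu(f_{j_i})\to\infty$. Hence $\rmod(f_j)\to\infty$.

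The one point to check is that the Milnor bound really does not depend on the ambient dimension. It comes from \eqref{eq:essential-bound}, which involves only the essential corank $c(f)$, and from the bound $\binom{c}{3}\le\rmod(f)$ on the essential corank. That corank bound comes from \cite[Proposition~2.18]{GreuelNguyen2016} for $p\ne2$ and from Lemma~\ref{lem:char2-corank} for $p=2$, and both are ambient-independent because $\mu$ and $\rmod$ are invariant under quadratic suspension (Proposition~\ref{prop:susp} and Lemma~\ref{lem:standard-inputs}\ref{std:split}). Alternatively, Theorem~\ref{thm:uniform}(ii) gives the bound $\mu\le p^{C_{p,D}}(D+1)$ directly. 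I expect no real obstacle here: the statement is a restatement of the second sentence of Corollary~\ref{cor:stable-milnor-growth}.
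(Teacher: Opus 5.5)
Your proposal is correct and takes the same approach as the paper, which proves the statement in one line as an immediate consequence of Corollary~\ref{cor:stable-milnor-growth}. Your subsequence-and-contradiction argument, using that the bound $p^{2+\sqrt[3]{6r}}(r+1)$ is monotone in $r$ and does not depend on the ambient dimension, just spells out that implication.
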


\begin{proof}
This is immediate from Corollary~\ref{cor:stable-milnor-growth}.
\end{proof}

\begin{remark}
Nguyen proved that $\mu(f)\le4p$ for singularities of right modality at most two
\cite[Corollary~3.7]{Nguyen2019}.  Corollary~\ref{cor:stable-milnor-growth} gives an analogous
bounded-Milnor-number statement in every finite modality and, moreover, makes the bound
independent of the ambient dimension.  This is compatible with the broader finiteness
phenomenon conjectured by Greuel--Nguyen for singularities with an arbitrary number of
moduli \cite{GreuelNguyen2016}.
\end{remark}

\begin{corollary}\label{cor:order}
If $r=\ord(f)$, then
\[
 \Fmod(f)\ge
 \binom{n+\left\lceil\frac{r}{p}\right\rceil-1}{n}-1.
\]
\end{corollary}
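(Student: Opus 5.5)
The plan is to bound the length of the Frobenius algebra $A_F(f)=S/Q_F(f)$ from below by the length of an explicit monomial quotient of $S$. Since $\Fmod(f)=\length_k(A_F(f))-1$ by Definition~\ref{def:Fmod}, it suffices to show
\[
\length_k\bigl(A_F(f)\bigr)\ge\binom{n+\left\lceil\frac{r}{p}\right\rceil-1}{n}.
\]

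The first step is a containment of $Q_F(f)$ in a monomial ideal of $S$. Since $f\in\m^r$, every partial derivative satisfies $\partial_if\in\m^{r-1}$. Hence $\Jac(f)\subseteq\m^{r-1}$ and $\m\Jac(f)\subseteq\m^r$, so
\[
Q_F(f)=S\cap\m\Jac(f)\subseteq S\cap\m^r .
\]
Consequently $A_F(f)$ surjects onto $S/(S\cap\m^r)$, and it is enough to compute the $k$-dimension of the latter.

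The second step identifies $S\cap\m^r$. Write $y_i=x_i^p$, so that $S=k[[y_1,\ldots,y_n]]$ and each $y^\beta=x^{p\beta}$ has total $x$-degree $p|\beta|$. Because $\m^r$ is a monomial ideal, a power series lies in $\m^r$ exactly when each of its monomials does. Applied to $h=\sum_\beta c_\beta y^\beta\in S$, this shows $h\in\m^r$ if and only if $c_\beta=0$ whenever $p|\beta|<r$. Therefore $S\cap\m^r$ is the ideal of $S$ spanned, in the $\nF$-adic sense, by the monomials $y^\beta$ with $|\beta|\ge\lceil r/p\rceil$; that is, $S\cap\m^r=\nF^{\lceil r/p\rceil}$. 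The classes of the $y^\beta$ with $|\beta|\le\lceil r/p\rceil-1$ then form a $k$-basis of $S/(S\cap\m^r)$. Their number is the number of monomials in $n$ variables of degree at most $\lceil r/p\rceil-1$, which is $\binom{n+\lceil r/p\rceil-1}{n}$.

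Combining the surjection of the first step with this count gives the displayed length bound, and subtracting one yields the corollary. No step is a real obstacle. The only point needing care is the monomial description of $S\cap\m^r$, namely that membership of a $p$-power series in a monomial ideal is checked termwise. The edge case $r\le p$ gives the trivial bound $\Fmod(f)\ge0$, which is consistent with the formula.
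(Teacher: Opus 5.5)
Your proof is correct and follows the paper's argument: $\m\Jac(f)\subseteq\m^r$ gives $Q_F(f)\subseteq S\cap\m^r=\nF^{\lceil r/p\rceil}$, and comparing colengths gives the bound. Your version just spells out the termwise monomial identification and the count of monomials of degree at most $\lceil r/p\rceil-1$, which the paper leaves implicit.
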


\begin{proof}
Since $\m\Jac(f)\subseteq\m^r$,
$Q_F(f)\subseteq S\cap\m^r =\nF^{\left\lceil\frac{r}{p}\right\rceil}$.
Compare colengths.
\end{proof}

\begin{proposition}\label{prop:BP}
Let $f=\sum_{i=1}^n x_i^{a_i}$ with $p\nmid a_i$, and put $b_i=a_i-1$.  Then
\[
 \Fmod(f)+1=
 \prod_{i=1}^n
 \left\lceil\frac{b_i}{p}\right\rceil
 +\#\{i\mid p\mid b_i\}.
\]
\end{proposition}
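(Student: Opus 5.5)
The plan is to reduce everything to a monomial count, since for a Brieskorn--Pham germ both $\m\Jac(f)$ and $Q_F(f)$ are monomial ideals. Since $p\nmid a_i$, we have $\partial_if=a_ix_i^{b_i}$ with $a_i$ a unit in $k$, so $\Jac(f)=\langle x_1^{b_1},\ldots,x_n^{b_n}\rangle$. Hence
\[
\m\Jac(f)=\langle x_jx_i^{b_i}\mid 1\le i,j\le n\rangle
\]
is a monomial ideal. Moreover $f\in\m^2$ forces $a_i\ge2$, so every $b_i\ge1$.

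The first step is to show that $Q_F(f)=S\cap\m\Jac(f)$ is spanned, as a closed subspace, by the monomials $x^{p\beta}$ lying in $\m\Jac(f)$. A series lies in a monomial ideal exactly when each of its monomials does. The elements of $S$ are precisely the series supported on exponents divisible by $p$, because $k$ is perfect. Consequently $A_F(f)=S/Q_F(f)$ has a $k$-basis consisting of the monomials $x^{p\beta}$ not in $\m\Jac(f)$. So $\Fmod(f)+1=\dim_kA_F(f)$ equals the number of such monomials.

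The second step characterizes the monomials outside $\m\Jac(f)$. A monomial $x^\alpha$ lies outside $\m\Jac(f)$ exactly in two cases:
\begin{enumerate}[label=\textup{(\alph*)},leftmargin=*]
\item $\alpha_i<b_i$ for every $i$;
\item $\alpha=b_ie_i$ for some $i$.
\end{enumerate}
For the proof, suppose some $\alpha_i\ge b_i$. If in addition either $\alpha_i>b_i$ or some other $\alpha_j>0$, then $x^\alpha$ is divisible by a generator $x_jx_i^{b_i}$. Conversely, $x_i^{b_i}$ itself is not divisible by any generator. Indeed, divisibility by $x_jx_l^{b_l}$ with $l\ne i$ would require $b_l=0$, and divisibility by $x_ix_i^{b_i}$ is impossible by degree. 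The monomials in case (a) clearly avoid all generators.

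The third step counts the exponents $\alpha=p\beta$ in each case. In case (a), we need $p\beta_i\le b_i-1$ for each $i$. This allows exactly $\lceil b_i/p\rceil$ values $\beta_i\in\{0,\ldots,\lceil b_i/p\rceil-1\}$, so case (a) contributes $\prod_i\lceil b_i/p\rceil$ monomials. In case (b), $b_ie_i$ is divisible by $p$ exactly when $p\mid b_i$, contributing $\#\{i\mid p\mid b_i\}$ monomials. The two cases are disjoint, since in case (b) the $i$-th coordinate equals $b_i$ and hence is not $<b_i$; the distinct pure monomials are also pairwise distinct because each $b_i\ge1$. Summing the two contributions gives the formula.

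The only point needing care is the first step: one must justify passing from the completed ideal to a monomial basis. This works because $\m\Jac(f)$ is $\m$-primary, so $A_F(f)$ is finite-dimensional and the relevant monomials are finitely many.
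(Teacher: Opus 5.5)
Your proposal is correct and follows the paper's route: identify $\Jac(f)=\langle x_1^{b_1},\ldots,x_n^{b_n}\rangle$, describe the monomials outside $\m\Jac(f)$ as the box $\{\alpha_i<b_i \text{ for all } i\}$ together with the pure monomials $x_i^{b_i}$, and count those on the $p$-lattice. You also supply details the paper leaves implicit, namely the monomial-ideal membership criterion in the power series ring and the fact that $b_i\ge 1$ because $f\in\m^2$ (without which the formula fails).
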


\begin{proof}
Up to units,
$\Jac(f)=\langle x_1^{b_1},\ldots,x_n^{b_n}\rangle$.  The monomials outside $\m\Jac(f)$ are those
with every exponent strictly below the corresponding $b_i$, together with the
exceptional pure monomials $x_i^{b_i}$.  Intersecting this set with the
$p$-lattice gives the stated count.
\end{proof}

We now study proper modality.  We realize the Milnor number as a local fiber length in a
finite flat family and compare the corresponding length strata with Rosenlicht strata.
Recall Definition~\ref{def:proper-modality}.  We use the standard semiuniversal
unfolding with trivial section
$F(x,t)=f(x)+\sum_{j=1}^{b_f}t_jg_j(x)$, where $b_f=\mu(f)+n-1$ and the classes of the
$g_j$ form a basis of $\frac{\m}{\m\Jac(f)}$.  We write
$\mathcal C_{\mu(f)}(F)=\{t\mid\mu(F_t)=\mu(f)\}$ for its Milnor-constant stratum near
the origin.

\begin{theorem}
\label{thm:pmodlower}
For every isolated $f$, one has $\propermod(f)\le\rmod(f)$.
\end{theorem}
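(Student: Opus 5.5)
The plan is to show that on the Milnor-constant stratum the right-equivalence fibers are too small to leave more than $\rmod(f)$ parameters, and to read this off the parameter-to-jet map of the semiuniversal unfolding $F$.

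First fix a jet level. By Lemma~\ref{lem:standard-inputs}\ref{std:suff} and Corollary~\ref{cor:mu-sep}, take $N\ge 2\mu(f)$. Upper semicontinuity of $\mu$ shows that every $F_t$ with $t$ near the origin has $\mu(F_t)\le\mu(f)$. Hence $N$ is sufficiently large for right equivalence, and it is a separable level, uniformly for all $t$ in a neighbourhood of $t_0=0$. Consider the algebraic parameter-to-jet map
\[
 j:\mathcal C_{\mu(f)}(F)\longrightarrow\mathcal J_N,\qquad t\mapsto j^NF_t .
\]
Let $\mathcal C$ be an irreducible component of $\mathcal C_{\mu(f)}(F)$ through $0$ of maximal dimension $\propermod(f)$. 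The aim is
\[
 \dim\mathcal C\le \R\text{-mod}_{F|_{\mathcal C}}(f)\le\rmod(f).
\]
The second inequality is Lemma~\ref{lem:standard-inputs}\ref{std:complete}, applied to the restricted unfolding $F|_{\mathcal C}$.

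The first inequality says that, for general $t\in\mathcal C$, the set of $t'\in\mathcal C$ with $F_{t'}\overset{\R}{\sim}F_t$ is zero-dimensional near $t$. I would prove this by a tangent computation, as in Proposition~\ref{prop:Flower}. By semiuniversality, the differential of $j$ at $t$ is injective modulo the orbit tangent space $\frac{\m\Jac(F_t)+\m^{N+1}}{\m^{N+1}}$. At $t=0$ this holds because the $g_j$ are a basis of $\frac{\m}{\m\Jac(f)}$, and it persists on an open neighbourhood by openness of the transversality condition. By Theorem~\ref{thm:smooth}, this orbit tangent space equals the tangent space of the smooth orbit, since the level $N$ is separable for $F_t$. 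Suppose a fiber through a general smooth point $t$ had positive dimension. Its tangent vector would map into $T(\R_N\cdot j^NF_t)$, contradicting injectivity. So the general fibers are finite, and on a Rosenlicht stratification the image of $\mathcal C$ in the quotient has dimension $\dim\mathcal C$.

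\textbf{Main obstacle.} The delicate point is the transversality claim at points $t\ne0$. There the directions $g_j$ need not remain independent modulo $\m\Jac(F_t)$, since $\dim_k\frac{\m}{\m\Jac(F_t)}=\mu(F_t)+n-1$ can drop off the $\mu$-constant stratum. On $\mathcal C$, however, $\mu(F_t)=\mu(f)$, so $\dim_k\frac{\m}{\m\Jac(F_t)}=b_f$ stays constant. Consequently the $b_f$ classes of the $g_j$ remain a basis at nearby $t$, by upper semicontinuity of the rank deficiency. I expect verifying this, and the compatibility of $\mathcal C$ (possibly nonreduced) with taking smooth points of its reduction, to be the heart of the proof. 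One then concludes $\propermod(f)=\dim\mathcal C\le\rmod(f)$.
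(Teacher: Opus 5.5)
Your proposal is correct and follows essentially the same route as the paper. You take a maximal-dimensional component of the $\mu$-constant stratum, use separability at a level $N\ge 2\mu(f)$ (Corollary~\ref{cor:mu-sep}) together with the constant orbit codimension $\mu(f)+n-1$ to propagate the complementarity of the semiuniversal directions away from the origin, exclude positive-dimensional right-equivalence fibers by a tangent-vector argument, and conclude with Lemma~\ref{lem:standard-inputs}\ref{std:complete}. Your justification of the persistence step is more explicit than the paper's one-line claim: spanning is an open condition, and the quotient dimension $b_f$ is constant on the stratum.
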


\begin{proof}
Put $\mu=\mu(f)$ and choose an irreducible component
$C\subseteq\mathcal C_\mu(F)$ through $0$ of dimension $\propermod(f)$.
Take $N\ge2\mu$ sufficiently large.  By
Corollary~\ref{cor:mu-sep}, every $\mu$-constant member has separable
$N$-jet orbit, and the orbit tangent space has constant codimension
$\mu+n-1$.  At the origin the semiuniversal parameter space is complementary
to this tangent space; hence transversality persists on a dense open subset
of $C$.\\

A positive-dimensional right-equivalence fiber would have a smooth point with
a nonzero tangent vector lying both in the parameter tangent space and in the
orbit tangent space.  This contradicts transversality.  Hence the general
right-equivalence fiber on $C$ is zero-dimensional, and right-completeness
gives
$\rmod(f)\ge\dim C=\propermod(f)$.
\end{proof}

Put $M=\mu(f)$.  By Lemma~\ref{lem:standard-inputs}\ref{std:det}, we may replace
$f$ by a right-equivalent polynomial
representative.  Choose polynomials
$h_1,\ldots,h_{M-1}\in\m^2$ so that
$x_1,\ldots,x_n,h_1,\ldots,h_{M-1}$
represent a basis of $\frac{\m}{\m\Jac(f)}$, and set
$H(x,t)=f(x)+\sum_{j=1}^{M-1}t_jh_j(x)$.
Then $F(x;a,t)=H(x,t)+\sum_i a_ix_i$ is the standard semiuniversal unfolding
with trivial section: the classes of $x_1,\ldots,x_n,h_1,\ldots,h_{M-1}$ form a
basis of $\frac{\m}{\m\Jac(f)}$, the tangent space of the deformation functor with section.  Eliminating the $a_i$ from the
critical equations gives
\[
 \Pi:
 \mathbb A^{n+M-1}_{u,t}
 \longrightarrow
 \mathbb A^{n+M-1}_{a,t},
 \qquad
 (u,t)\longmapsto
 \bigl(-\partial_1H(u,t),\ldots,-\partial_nH(u,t),t\bigr).
\]
For $c=(u,t)$ define
$G_c(y)= H(u+y,t)-H(u,t) -\sum_i\partial_iH(u,t)y_i$.
Then $G_c\in\langle y_1,\ldots,y_n\rangle^2$, and the completed local ring of the fiber of $\Pi$ at
$c$ is
$\frac{k[[y]]}{\Jac_y(G_c)}$, where
$\Jac_y(G_c)=\langle \frac{\partial G_c}{\partial y_1},\ldots,\frac{\partial G_c}{\partial y_n}\rangle$.
Thus the local scheme-theoretic fiber length is $\mu(G_c)$.\\

Let $c_0=(0,0)$ and $b_0=\Pi(c_0)$.  Since $f$ is isolated, $c_0$ is an isolated
point of the fiber over $b_0$.  By \cite[Lemma~37.41.1, Tag~02LK]{stacks-project}, after an
étale base change around $b_0$ there is an open neighborhood of $c_0$ that is finite over
the new base.  Shrinking this neighborhood and the base, we may moreover arrange that the
chosen neighborhood contains no other point of the special fiber.\\

Let $c_0=(0,0)$ and $b_0=\Pi(c_0)$. Since $f$ is isolated, $c_0$ is an isolated point of the fiber over $b_0$. By \cite[Lemma~37.41.1, Tag~02LK]{stacks-project}, after an étale base change around $b_0$ there is an open neighborhood of $c_0$ finite over the new base. Shrink so that this neighborhood contains no other point of the special fiber, and replace it by the connected component containing $c_0$. Because it is étale over the smooth $(u,t)$-space, this source is regular; a connected regular scheme is integral. Replace the target by the connected component containing the image of $c_0$ and shrink once more. The resulting source and target are smooth integral varieties of the same dimension, and the finite map between them is surjective. Since the source local rings are Cohen--Macaulay and the target local rings are regular, miracle flatness \cite[Lemma~10.128.1, Tag~00R4]{stacks-project} makes the map flat.\\

We have obtained a finite flat surjective map
\[
 \pi:X\longrightarrow Y.
\]
By construction, the special fiber of this chosen local factor is supported only at the
distinguished point, and its completed local algebra is
$\frac{k[[y]]}{\Jac(f)}$, of length $M$.  Hence the finite flat rank of $\pi$ is exactly $M$.
Write $x_0\in X$ for the distinguished point.  We call $\pi$ the \emph{critical
cover} associated with the chosen semiuniversal unfolding; its points record critical
points of nearby fibers together with their deformation parameters.  For a closed point $x\in X$, put $\lambda(x)= \dim_k\mathcal O_{\pi^{-1}(\pi(x)),x}$. Thus $\lambda(x)=\mu(G_x)$.
For an irreducible subvariety, its generic local length
means the common value of $\lambda$ on a dense open
subset of its closed points.\\

Let $\iota=[k(X):k(Y)]_{\mathrm{insep}}$ be the inseparable degree of the critical
cover.  For a closed point $x\in X$, let $e_\pi(x)$ denote the Gaffney--Lazarsfeld
local degree: after strict henselization at $y=\pi(x)$, if the generic fiber of the
local factor specializing to $x$ is $\prod_jL_j$ over the fraction field $F$, then
$e_\pi(x)=\sum_j[L_j:F]_{\mathrm{sep}}$; see
\cite[\S1, especially (1.2)--(1.4)]{Lazarsfeld1980Thesis}.

\begin{lemma}\label{lem:lambda-qe}
For every closed point $x\in X$, $\lambda(x)=\iota e_\pi(x)$.  Moreover, $\iota$ is a
power of $p$ dividing $M$, and every local Milnor number on the critical cover is
divisible by $\iota$.
\end{lemma}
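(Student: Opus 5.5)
We plan to reduce everything to the strictly henselian local factor of the critical cover at $y=\pi(x)$ and compare scheme-theoretic fiber length with separable degrees of the generic fiber. Put $A=\mathcal O_{Y,y}^{\mathrm{sh}}$, with fraction field $F$, and let $B$ be the local factor of $\mathcal O_X\otimes_{\mathcal O_Y}A$ corresponding to $x$. Since $\pi$ is finite flat, $B$ is finite free over $A$. Its rank equals the length of its special fiber. Because $k$ is algebraically closed, the residue field is unchanged under strict henselization, so this length is $\dim_k\mathcal O_{\pi^{-1}(y),x}=\lambda(x)$. On the other hand, $B\otimes_AF=\prod_jL_j$ with $\sum_j[L_j:F]=\operatorname{rank}_AB=\lambda(x)$.

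The key step is to show that each $L_j$ has inseparable degree exactly $\iota$ over $F$. Here $X$ is integral and $A$ is a filtered colimit of étale $\mathcal O_Y$-algebras, so each $L_j$ is a factor of $k(X)\otimes_{k(Y)}F$. The field $F$ is a separable algebraic extension of $k(Y)$, being a colimit of étale extensions; more precisely, it is the fraction field of a normal ring ind-étale over $\mathcal O_{Y,y}$. Write $k(Y)\subseteq k(Y)^{s}\subseteq k(X)$, where $k(Y)^{s}$ is the separable closure of $k(Y)$ in $k(X)$, and $k(X)/k(Y)^s$ is purely inseparable of degree $\iota$. Tensoring with the separable extension $F$, we get
\[
k(Y)^s\otimes_{k(Y)}F\cong\prod_i F_i,
\]
a product of fields separable over $F$. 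Then $k(X)\otimes_{k(Y)}F\cong\prod_i k(X)\otimes_{k(Y)^s}F_i$. Each factor $k(X)\otimes_{k(Y)^s}F_i$ is a local Artinian ring that is purely inseparable over $F_i$. Since $F_i/k(Y)^s$ is separable and linearly disjoint from the purely inseparable extension, this factor is in fact a field, of degree $\iota$ over $F_i$. Hence every $L_j$ satisfies $[L_j:F]=\iota\,[L_j:F]_{\mathrm{sep}}$. Summing over $j$ gives $\lambda(x)=\iota\,e_\pi(x)$. We expect this step to be the main obstacle. It requires care that the generic fiber of the local factor at $x$ is a union of factors of $k(X)\otimes F$, which holds because $B$ is torsion-free over the domain $A$ by flatness, and that no nilpotents arise. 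We plan to handle both points by the linear disjointness argument just given.

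The remaining assertions follow from this identity. The inseparable degree of a finite extension in characteristic $p$ is a power of $p$. Applying the identity at the distinguished point $x_0$ gives $M=\lambda(x_0)=\iota\,e_\pi(x_0)$, so $\iota\mid M$. At an arbitrary closed point, $\lambda(x)=\mu(G_x)$ by the construction of the critical cover. Therefore every local Milnor number on the critical cover is divisible by $\iota$.
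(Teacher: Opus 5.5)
Your proposal is correct and follows the paper's proof: you pass to the strict henselization at $\pi(x)$, identify the rank of the local factor with $\lambda(x)$, and use that separable base change along $k(Y)\subseteq F$ leaves each generic factor $L_j$ with inseparable degree $\iota$. You justify that last step more explicitly than the paper, via linear disjointness of $k(X)/k(Y)^{s}$ and $F_i/k(Y)^{s}$; the only cosmetic difference is that you get $\iota\mid M$ from the identity at $x_0$, whereas the paper reads it off from $M=[k(X):k(Y)]=\iota\,[k(X):k(Y)]_{\mathrm{sep}}$.
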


\begin{proof}
Let $y=\pi(x)$ and let
$A=\mathcal O_{Y,y}^{\mathrm{sh}}$
be the strict henselization, with fraction field $F$.  Since $\pi$ is finite
flat,
$X_A=X\times_Y\Spec A$
is finite flat over $A$.  Because $A$ is strictly henselian, $X_A$ decomposes
into the local factors corresponding to the points of the geometric fiber.
Let $B_x$ be the factor corresponding to $x$.  Its rank is the local special
fiber length:
$\rank_A B_x=\lambda(x)$.\\

Since $Y$ is normal, $A$ is a normal local domain, and
$\frac{F}{k(Y)}$ is separable algebraic.  Thus separable base change preserves the
inseparable degree of $\frac{k(X)}{k(Y)}$.  The generic algebra
$B_x\otimes_AF$
is a finite product of fields $L_1,\ldots,L_s$, and every $\frac{L_j}{F}$ has
inseparable degree $\iota$.  Hence
$\lambda(x) =\dim_F(B_x\otimes_AF) =\sum_j[L_j:F] =\iota\sum_j[L_j:F]_{\mathrm{sep}}$.
By the preceding étale-local definition of local degree, the last sum is
$e_\pi(x)$.
Therefore $\lambda(x)=\iota\,e_\pi(x)$.\\

Finally
$M=[k(X):k(Y)] =\iota[k(X):k(Y)]_{\mathrm{sep}}$,
so $\iota$ is a power of $p$ dividing $M$.
\end{proof}

\begin{remark}
The local degree used here is the algebro-geometric local number of sheets of a finite
map.  Gaffney--Lazarsfeld used this invariant to study higher ramification loci of finite
coverings \cite{GaffneyLazarsfeld1980}; the form and semicontinuity statements used
above are developed in the thesis of Lazarsfeld \cite{Lazarsfeld1980Thesis}.  The factor
$\iota=[k(X):k(Y)]_{\mathrm{insep}}$ is the extra positive-characteristic thickness that is
absent for a generically separable complex covering.
\end{remark}

For $m\ge1$ put $\Lambda_{\ge m}=\{x\in X\mid\lambda(x)\ge m\}$.  By
Lemma~\ref{lem:lambda-qe}, the local length is the constant inseparable factor
$\iota$ times the local degree $e_\pi$.  Since the local degree is upper
semicontinuous, each $\Lambda_{\ge m}$ is closed.  We will use these closed length
strata to compare proper and right modality.

\begin{proposition}
\label{prop:critical-length-pmod}
After shrinking $X$ around $x_0$, the following holds for every closed point $x\in X$. Put $h=G_x$ and $m=\mu(h)=\lambda(x)>0$. Then
\[
\codim_{X,x}\Lambda_{\ge m}=m-1-\propermod(h).
\]
Equivalently, $\dim_x\Lambda_{\ge m}=M+n-m+\propermod(h)$.
\end{proposition}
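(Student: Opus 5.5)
Our plan is to compare the length stratum $\Lambda_{\ge m}$ near $x$ with the Milnor-constant stratum of a semiuniversal unfolding of $h=G_x$, using openness of versality.

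\textbf{Step 1: the family near $x$ is an unfolding of $h$ with trivial section.} Near $x=(u,t)$ the critical cover records pairs consisting of a critical point and a parameter. We translate the critical point to the origin via the family $G_c$, $c$ near $x$; this is the recentred unfolding of $h$. The base $Y$ is the parameter space $(a,t)$ of $F$, which is semiuniversal at $f$. Versality is open, and we use right-completeness (Lemma~\ref{lem:standard-inputs}\ref{std:complete}) to transport it. After shrinking $X$ around $x_0$, we conclude that for every $x$ the germ $(X,x)$, with the family $c\mapsto G_c$, is a complete (versal) unfolding with trivial section of $h$. Its parameter dimension is $\dim X=M+n-1$.

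\textbf{Step 2: reduce to a trivial factor times a semiuniversal unfolding.} A versal unfolding of $h$ with trivial section is, up to étale change, a smooth trivial factor times the semiuniversal one. The semiuniversal unfolding has $b_h=m+n-1$ parameters, so the trivial factor has dimension $(M+n-1)-(m+n-1)=M-m$. Upper semicontinuity of $\lambda$ (closedness of $\Lambda_{\ge m}$ from Lemma~\ref{lem:lambda-qe}) gives $\lambda\le m$ near $x$. Hence near $x$ the stratum $\Lambda_{\ge m}$ is exactly the locus where $\lambda=m$, namely the $\mu$-constant stratum of this unfolding. Under the product decomposition it is the trivial factor times $\mathcal C_m$ of the semiuniversal unfolding of $h$. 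By Definition~\ref{def:proper-modality}, $\mathcal C_m$ has dimension $\propermod(h)$ at the base point. Therefore $\dim_x\Lambda_{\ge m}=M-m+\propermod(h)$ plus the contribution of the $n$ translation directions.

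\textbf{Step 3: settle the dimension count.} The count in Step~2 must be reconciled with the target formula $\dim_x\Lambda_{\ge m}=M+n-m+\propermod(h)$. The discrepancy is one dimension: in the unfolding with section, the constant term is excluded. We will recount carefully in which space the section parameters live. The $n$ linear parameters $a$ are traded for the critical-point coordinates $u$, so $X$ has coordinates $(u,t)$ with $\dim X=n+M-1$. The tangent space of the semiuniversal deformation of $h$ with section is $\m/\m\Jac(h)$, of dimension $m+n-1$. The constant-term direction is missing from $\Pi$ because $G_c$ subtracts $H(u,t)$. Consequently the fibre of the product decomposition over the trivial directions has dimension $(n+M-1)-(m-1)$, once the $n$ linear directions are counted as part of the trivial factor. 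This yields $\dim_x\Lambda_{\ge m}=M+n-m+\propermod(h)$. The codimension statement then follows by subtracting from $\dim X=M+n-1$.

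\textbf{Main obstacle.} The hard step is Step~1: establishing in characteristic $p$, with possibly inseparable $\pi$, that the germ of $X$ at every nearby point is versal for $h$. We would argue through the Kodaira--Spencer map. The family is versal at $x$ iff the classes $\partial_{t_j}G_c$ together with the linear terms span $\m/\m\Jac(h)$. This is an open condition on a finite-dimensional jet space, which Corollary~\ref{cor:mu-sep} makes uniform on $X$. It holds at $x_0$ by construction, so it holds on a neighbourhood, which is exactly what justifies the initial shrinking.
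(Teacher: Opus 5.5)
\textbf{There is a genuine gap: Step 1 is false, and the dimension count inherits the error.} You claim that $(X,x)$, with the family $z\mapsto G_z$, is a versal unfolding with trivial section of $h$. But every member $G_z$ lies in $\langle y_1,\ldots,y_n\rangle^2$ by construction. So every derivative of the family along $X$ also lies in $\m^2$, including the $u$-directions. Hence the Kodaira--Spencer image at $x$ is contained in $\m^2/\m\Jac(h)$, which has dimension $m-1$. Versality with trivial section requires all of $\m/\m\Jac(h)$, of dimension $m+n-1$. So for $n\ge1$ the family over $X$ is never right-complete. For example, $h+sy_1$ has smooth members for $s\neq0$, and these are not origin-preserving right equivalent to any germ in $\m^2$.

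Versality of $F$ at $f$ also cannot be ``transported'' to $G$ at $x$. It concerns the section at the origin, not the moving critical point $u$, so the $t$-derivatives of $G$ must be computed directly.

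Step~2 then gives $M-m+\propermod(h)$, which is off by $n$, not by one. The constant term plays no role, since it does not lie in $\m/\m\Jac(h)$. The switch in Step~3 from an $(m+n-1)$-dimensional base to an $(m-1)$-dimensional one is asserted, not derived. Likewise, the spanning criterion in your last paragraph refers to ``linear terms'' that the family over $X$ does not contain.

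\textbf{Two facts are missing.}

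First, the correct versality statement: the classes $\partial G/\partial t_j$ span $\m^2/\m\Jac(G_z)$ for all $z$ near $x_0$.
\begin{itemize}
\item At $x_0$ they are the $h_j$, so the statement holds there.
\item It propagates to a neighbourhood by Nakayama on a finite relative module.
\item Uniformity comes from $\mu(G_z)\le M$, the total rank, which gives $\langle y\rangle^{N+1}\subseteq\langle y\rangle\Jac(G_z)$ for all such $z$.
\end{itemize}

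Second, the link between a critical-point-preserving slice and $\propermod(h)$. Let $V$ be a complement to $\m\Jac(h)$ in $\m^2$ modulo $\m^{N+1}$.
\begin{itemize}
\item Then $h+\sum_i a_iy_i+v$ is semiuniversal with trivial section.
\item Because $m>0$, its $\mu=m$ locus lies in $\{a=0\}$.
\item Hence $\dim_0\{v\in V:\mu(h+v)=m\}=\propermod(h)$.
\end{itemize}
The paper combines these two facts through smoothness of $\R_N\times X\to E_N$ and $\R_N\times(j^Nh+V)\to E_N$, with $E_N=\m^2/\m^{N+1}$.

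\textbf{How to repair your route.} Adjoin linear parameters $b$ and work with the family $G_z+\sum_i b_iy_i$ over $X\times\A^n$.
\begin{itemize}
\item Its Kodaira--Spencer map is onto $\m/\m\Jac(h)$, so it is versal with trivial section.
\item Right-completeness gives a classifying map to the semiuniversal unfolding of $h$. This map is smooth of relative dimension $(M+2n-1)-(m+n-1)=M+n-m$.
\item The $\mu=m$ locus of the augmented family lies in $\{b=0\}$, so near $(x,0)$ it equals $\Lambda_{\ge m}\times\{0\}$.
\end{itemize}
This yields $\dim_x\Lambda_{\ge m}=M+n-m+\propermod(h)$ honestly. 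It is exactly your Step~3 number, but now with the $n$ linear directions added to the family rather than ``counted as part of the trivial factor.''
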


\begin{proof}
Choose $N\ge2M$ and put
$E_N=\frac{\m^2}{\m^{N+1}}$
in the $y$-variables.  For $v\in E_N$ choose a lift $\widetilde v$ and set
$\mu_N(v)= \dim_k \left(\frac{k[[y]]}{\Jac(\widetilde v)+\m^{M+1}}\right)$.
Changing the lift changes its partial derivatives only by elements of
$\m^N\subseteq\m^{M+1}$, so $\mu_N$ is well defined.\\

Suppose $\mu_N(v)=m\le M$.  The maximal ideal of the Artin algebra
$\frac{k[[y]]}{\Jac(\widetilde v)+\m^{M+1}}$
has $m$-th power zero, so
$\m^m\subseteq \Jac(\widetilde v)+\m^{M+1}$.
Put
\[
 L_v=\frac{\m^m+\Jac(\widetilde v)}{\Jac(\widetilde v)}.
\]
Modulo $\Jac(\widetilde v)$, the inclusion $\m^m\subseteq\Jac(\widetilde v)+\m^{M+1}$ gives $L_v\subseteq \m^{M+1-m}L_v$. Since $m\le M$, one has $M+1-m\ge1$, and therefore $L_v\subseteq\m L_v$. Nakayama's lemma gives $L_v=0$.
Thus $\m^m\subseteq \Jac(\widetilde v)$, and therefore
$\m^{M+1}\subseteq \Jac(\widetilde v)$.  Consequently
$\mu(\widetilde v)=\mu_N(v)=m$.
The converse is immediate.  Hence
$\mathcal M_m=\{v\in E_N\mid\mu_N(v)=m\}$
is exactly the Milnor-number-$m$ locus and is a locally closed determinantal
subset.\\

At $h$ one has
$T_h= \frac{\m\Jac(h)}{\m^{N+1}} \subseteq E_N, \qquad \dim_k\left(\frac{E_N}{T_h}\right) =\dim_k\left(\frac{\m^2}{\m\Jac(h)}\right) =m-1$.
Choose a linear complement $V$ to $T_h$.  A basis of $V$, together with
$y_1,\ldots,y_n$, gives a basis of $\frac{\m}{\m\Jac(h)}$, so
$h+\sum_i a_i y_i+v,\qquad v\in V$,
is semiuniversal with trivial section.  For $m>0$, its Milnor-number-$m$ locus is
contained in $a=0$.  Therefore
$\dim_0\{v\in V\mid\mu(h+v)=m\} =\propermod(h)$.\\

The map
$\Theta: \R_N\times(j^Nh+V)\to E_N, \qquad (\varphi,j^Nh+v)\mapsto\varphi(j^Nh+v)$
has surjective differential at $(e,j^Nh)$ because the group directions give
$T_h$ and $V$ gives a complement.  Thus $\Theta$ is smooth there.  Since
right equivalence preserves Milnor number,
$\codim_{E_N,j^Nh}\mathcal M_m =m-1-\propermod(h)$.\\

Now let
$\Phi:X\to E_N,\qquad z\mapsto j^NG_z$.
In the original critical-incidence coordinates,
$D_j= \frac{\partial G}{\partial t_j} = h_j(u+y)-h_j(u) -\sum_i\partial_i h_j(u)y_i$.
At $x_0$ these classes form a basis of
$\frac{\m^2}{\m\Jac(f)}$.  Consider the finite relative module
$\mathcal E_N=\frac{\langle y_1,\ldots,y_n\rangle^2}{\langle y_1,\ldots,y_n\rangle \Jac_y(G)+\langle y_1,\ldots,y_n\rangle^{N+1}}$.
The cokernel of the sections $D_1,\ldots,D_{M-1}$ vanishes at $x_0$ and
therefore vanishes after shrinking $X$.  Every local fiber length is at most
the total rank $M$, hence $\mu(G_z)\le M$ and
$\langle y_1,\ldots,y_n\rangle^{N+1}\subseteq\langle y_1,\ldots,y_n\rangle \Jac(G_z)$.  Thus the fiber of $\mathcal E_N$ at
$z$ is exactly
$\frac{\m^2}{\m\Jac(G_z)}$.
It follows that the critical-family directions surject onto
$\frac{E_N}{T_{G_z}}$.\\

Therefore
$\Psi: \R_N\times X\to E_N,\qquad (\varphi,z)\mapsto\varphi(\Phi(z))$
is smooth at $(e,x)$.  Smooth base change preserves local codimension, so
$\codim_{X,x} \{z\mid\mu(G_z)=m\} =m-1-\propermod(h)$.
Finally $\lambda$ is upper semicontinuous and $\lambda(x)=m$, so after
shrinking around $x$ there are no points with $\lambda\ge m+1$.  Hence the
exact-$m$ locus agrees locally with $\Lambda_{\ge m}$.
\end{proof}

The next step connects orbit parameters with the length strata just constructed.  We
start from a stratum realizing right modality and show that it gives a family of proper
moduli.  Let $Y$ carry an algebraic family of germs and let $\iota_N:Y\to \mathcal J_N$ be its
parameter-to-jet map for a sufficiently large $N$.  We call
$\{z\in Y\mid\iota_N(z)\in\R_N\cdot\iota_N(y)\}$ the \emph{right-equivalence fiber} of
$y$; for sufficiently large $N$ this set is independent of the chosen jet level.

\begin{lemma}\label{lem:rosenlicht-to-pmod}
Let $Y$ be irreducible and suppose the family has constant Milnor number $m>0$.  If the
general right-equivalence fiber has dimension $i$, then a general member $g$ satisfies
$\propermod(g)\ge\dim Y-i$.
\end{lemma}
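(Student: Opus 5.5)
Fix a large separable level $N\ge 2m$ (Corollary~\ref{cor:mu-sep}), so that all members of the family are right $N$-determined, their $N$-jet orbits are separable, and the orbit tangent spaces $T_g=\frac{\m\Jac(g)+\m^{N+1}}{\m^{N+1}}$ have constant codimension $m-1$ in $E_N=\frac{\m^2}{\m^{N+1}}$. After shrinking $Y$ to a dense open set, I will arrange three properties: $Y$ is smooth; the parameter-to-jet map $\iota_N$ has constant rank; and all right-equivalence fibers are equidimensional of dimension $i$. Then the composite $\Psi:\R_N\times Y\to E_N$, $(\varphi,z)\mapsto\varphi(\iota_N(z))$, has image contained in the Milnor-number-$m$ locus $\mathcal M_m$ of the proof of Proposition~\ref{prop:critical-length-pmod}.

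The first step is a dimension count for the image of $\Psi$. The fiber of $\Psi$ through $(e,z)$ is the set of pairs $(\varphi,z')$ with $z'$ in the right-equivalence fiber of $z$ and $\varphi$ in a coset of the stabilizer. It therefore has dimension $i+\dim H_N$, where $H_N$ is the stabilizer, which is smooth by Theorem~\ref{thm:smooth}. Hence
\[
\dim\overline{\im\Psi}=\dim Y+\dim\R_N-\dim H_N-i=\dim Y-i+\dim(\R_N\cdot\iota_N(g)).
\]
Since $\overline{\im\Psi}\subseteq\mathcal M_m$, the local dimension of $\mathcal M_m$ at the general jet $\iota_N(g)$ is at least $\dim Y-i+\dim E_N-(m-1)$. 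To justify that this lower bound holds at the point $\iota_N(g)$ itself, I will choose $g$ to be a point where the constructible image of $\Psi$ has the full local dimension; generic $g$ suffices.

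The second step compares with the formula in the proof of Proposition~\ref{prop:critical-length-pmod}. There it was shown that $\codim_{E_N,j^Nh}\mathcal M_m=m-1-\propermod(h)$ for any germ $h$ with $\mu(h)=m$, using smoothness of $\Theta$. This formula gives $\dim_{j^Ng}\mathcal M_m=\dim E_N-m+1+\propermod(g)$. Comparing the two expressions yields $\propermod(g)\ge\dim Y-i$.

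The main obstacle is making the orbit-dimension bookkeeping rigorous in positive characteristic. The fiber-dimension identity for $\Psi$ needs equality of the scheme and set-theoretic orbit dimensions, and separability (Theorem~\ref{thm:smooth}) supplies exactly this. The other point needing care is that the local codimension formula for $\mathcal M_m$ applies at a general point of the image rather than only at the original base point, and that Milnor-constancy is preserved on all of $\R_N\cdot\iota_N(Y)$; the latter holds because right equivalence preserves $\mu$.
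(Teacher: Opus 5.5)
Your argument is correct, but it is organized differently from the paper's.

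\textbf{The paper's route.} The paper works inside a transversal slice. It forms the fiber product $Z$ of $Y$ with $\R_N\times(j^Ng+U)$ over the action map, projects $Z$ to the slice $U$, and bounds the fibers of this projection by $i+\rho$. Identifying $\rho$ with the stabilizer dimension uses that the orbit codimension equals $\dim U$, which is separability. The image $\sigma(Z)$ is then directly a $\mu$-constant set through the base point of a semiuniversal unfolding.

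\textbf{Your route.} You push forward instead. You compute the dimension of the saturation $\overline{\Psi(\R_N\times Y)}\subseteq E_N$. You then convert local dimension in $E_N$ into proper modality through the formula $\codim_{E_N,j^Nh}\mathcal M_m=m-1-\propermod(h)$ from the proof of Proposition~\ref{prop:critical-length-pmod}. Its derivation only needs $\mu(h)\le M$ and $N\ge 2M$, so it applies to your $g$ with $M=m$.

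\textbf{What your route buys.} It reuses that formula and avoids the fiber-product bookkeeping. It also gives slightly more than you claim. The closure $\overline{\Psi(\R_N\times Y)}$ is irreducible, since $\R_N$ and $Y$ are. Near every $j^Ng$ with $g\in Y$ it lies in $\mathcal M_m$, because $\{\mu\ge m\}$ is closed and $\{\mu\le m\}$ is open on the jet space. Hence its full dimension is its local dimension at every such point. The bound therefore holds for every member of $Y$, and neither the generic choice of $g$ nor your three preliminary shrinkings of $Y$ are needed.

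\textbf{On separability.} You misplace where it enters. Orbit and stabilizer dimensions are topological, so the fiber count for $\Psi$ needs nothing scheme-theoretic. For the inequality you only need $\dim(\R_N\cdot\iota_N(g))\ge\dim E_N-(m-1)$. This holds without separability, because the tangent image $\m\Jac(g)/\m^{N+1}$ lies in the tangent space of the smooth orbit. Separability (Theorem~\ref{thm:smooth}) would only upgrade this to an equality.

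\textbf{What the paper's route buys.} It is self-contained and stays in the slice where proper modality is defined.
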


\begin{proof}
Choose a general member $g$ and a jet level $N\ge2m$ sufficiently high for
right determinacy and separability.  Let
$T_g= T_{j^Ng}(\R_N\cdot j^Ng)$
and choose a linear complement
$\mathcal J_N=T_g\oplus U$.
Then
$\dim U=m+n-1$.
The action map
$\Theta: \R_N\times(j^Ng+U)\to \mathcal J_N$
is smooth at $(e,j^Ng)$.\\

Form the fiber product, using $\iota_N:Y\to \mathcal J_N$ and the action map $\Theta$,
$Z=Y\times_{\mathcal J_N}\bigl(\R_N\times(j^Ng+U)\bigr)$
near the point lying over $g$.  The projection $Z\to Y$ is a base change of
$\Theta$, hence smooth of relative dimension
$\rho= \dim\R_N+\dim U-\dim \mathcal J_N$.
Since the orbit codimension is $m+n-1=\dim U$, one has
$\rho=\dim\Stab_{\R_N}(j^Ng)$, and this stabilizer dimension
is constant on the Milnor-number-$m$ locus.\\

Consider the projection
$\sigma:Z\to U$.
For fixed $u$, the image in $Y$ of a fiber of $\sigma$ lies in a single
right-equivalence fiber, whose general dimension is at most $i$.
For fixed $u$ and a fixed member of that right-equivalence fiber, the choices
of the group element form a stabilizer coset of dimension $\rho$.
Hence the general fiber of $\sigma$ has dimension at most $i+\rho$.
Therefore
$\dim\overline{\sigma(Z)} \ge \dim Z-(i+\rho) = \dim Y-i$.\\

Every $j^Ng+u$ occurring in $\sigma(Z)$ is right equivalent to a member of
$Y$, hence has Milnor number $m$.  Shrink the slice $g+U$ around $g$ so that
$\mu\le m$, which is possible by upper semicontinuity.  On the other hand the condition
$\mu\ge m$ is closed on the finite jet space (it is a determinantal rank condition), so
the closure of $\sigma(Z)$ in this neighborhood is still $m$-constant.  It contains
$0\in U$ in its closure.  Consequently
$\propermod(g)\ge\dim\overline{\sigma(Z)}\ge\dim Y-i$.
\end{proof}

\begin{lemma}[Higher purity for local degree]\label{lem:lazarsfeld-purity}
Let $\pi:X\to Y$ be a finite surjective morphism of smooth integral varieties and let
$x_0\in X$.  Write $d=e_\pi(x_0)$.  After replacing $X$ and $Y$ by sufficiently small
neighborhoods of $x_0$ and $\pi(x_0)$, the following holds.  If $T\subseteq X$ is an
irreducible subvariety through $x_0$ whose generic local degree $e_\pi(T)$ is strictly less
than $d$, then there is an irreducible codimension-one subvariety $T_1\subseteq T$ through
$x_0$ with
\[
 e_\pi(T_1)>e_\pi(T).
\]
Consequently, if $e_\pi(T)\ge e_0$, then there is an irreducible
$T^*\subseteq T$ through $x_0$ with generic local degree $d$ and
\[
 \codim_T(T^*)\le d-e_0.
\]
\end{lemma}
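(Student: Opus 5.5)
The first assertion is a purity statement for the ramification loci $\{e_\pi\ge e\}$, in the style of Gaffney--Lazarsfeld; I would prove it by reduction to the pointwise structure of local degree, and then get the second assertion by iterating the first.

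\emph{Local model.} Pass to the strict henselization $A=\mathcal O_{Y,\pi(x_0)}^{\mathrm{sh}}$ and let $B$ be the local factor of $X_A$ corresponding to $x_0$. Since $X$ and $Y$ are smooth of the same dimension, miracle flatness makes $B$ finite flat over $A$. Its rank is $\iota d$, where $\iota$ is the inseparable degree, by the same computation as in Lemma~\ref{lem:lambda-qe}. For a point $x\in X$ near $x_0$, the integer $e_\pi(x)$ counts the separable sheets of the generic fiber of the local factor specializing to $x$. Upper semicontinuity of $e_\pi$ then shows that each $\Lambda'_{\ge e}=\{e_\pi\ge e\}$ is closed. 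After shrinking, every component of every $\Lambda'_{\ge e}$ with $e\le d$ passes through $x_0$, since there are only finitely many such loci.

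\emph{Codimension-one step.} Let $T$ be irreducible through $x_0$ with generic local degree $e<d$. I would realize $\Lambda'_{\ge e+1}$ locally as the degeneracy locus of a discriminant-type section. Over the normalization of $T$, or after étale base change along $T$, the sheets of $\pi|_{\pi^{-1}(\pi(T))}$ near $T$ split into $e$ generically distinct separable branches. The locus where two of them collide is cut out by one equation, namely a partial discriminant of the separable part of $B\otimes_A\mathcal O_{\pi(T)}$, relative to a factorization into a degree-$e$ factor and a complement. This equation vanishes at $x_0$, because $e_\pi(x_0)=d>e$. It is not identically zero on $T$, because the generic local degree of $T$ is exactly $e$. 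Krull's principal ideal theorem then gives a component $T_1$ through $x_0$ of codimension one in $T$, on which some pair of branches merges, so $e_\pi(T_1)>e$.

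\emph{Main obstacle and iteration.} The hard part is making the single-equation description legitimate in positive characteristic, where inseparability could make a naive discriminant vanish identically. To handle this I would first divide out the constant inseparable factor $\iota$ using Lemma~\ref{lem:lambda-qe}, working with the separable closure of $k(Y)$ in $k(X)$, and only then form the discriminant of that separable subcover. I would invoke Lazarsfeld's thesis \cite[\S1]{Lazarsfeld1980Thesis} for the fact that $e_\pi$ jumps along a Cartier divisor of $T$, a statement proved there for normal bases. Granting the step, the consequence follows by induction: starting from $T$ with $e_\pi(T)\ge e_0$, each application of the first assertion raises the generic local degree by at least one and drops the dimension by exactly one. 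The process stops when the generic degree reaches $d$, which happens after at most $d-e_0$ steps, and this yields $T^*$ with $\codim_T T^*\le d-e_0$.
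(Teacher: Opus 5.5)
Your iteration for the second assertion is exactly the paper's. Each application of the first assertion lowers the dimension by one and raises the generic local degree by at least one, and semicontinuity caps that degree at $d=e_\pi(x_0)$, so at most $d-e_0$ steps occur.

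For the first assertion, the paper does nothing but invoke Lazarsfeld's higher-purity theorem \cite[Theorem~2.2 and (2.4)]{Lazarsfeld1980Thesis}; the codimension-one increase is its local form. Your proposal is therefore complete only through its final appeal to Lazarsfeld. The discriminant mechanism you place before that appeal does not prove the step. Moreover, the main obstruction is not the inseparability you single out.

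First, the $e$ sheets through a general point of $T$ are glued along $T$. So neither over $\pi(T)$ nor after base change along $T$ are they distinct branches, and a discriminant among them vanishes identically on $T$. What one can compare is the tautological point of the base-changed fibre, of length $\iota e$, with the remaining fibre points; the jump locus is where these merge.

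The real gap is the claim that this merging locus is cut out by one equation. That claim is true when $\pi$ is locally monogenic, $B=A[z]/(P)$. Then $z$ separates the points of every fibre, and the jump locus in $T$ is the zero set of the $(\iota e)$-th Hasse derivative of $P$ evaluated along the tautological root. Krull then applies, and divisibility of $\lambda$ by $\iota$ turns $\lambda>\iota e$ into $e_\pi\ge e+1$.

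In general this fails, for two reasons:
\begin{itemize}
\item No single function separates nearby fibre points. A resultant built from one function therefore acquires spurious components, and the merging locus is only an intersection of such hypersurfaces, so Krull gives nothing.
\item The \emph{separable part} of $B\otimes_A\mathcal O_{\pi(T)}$ need not be flat over the possibly singular $\pi(T)$, $T$, or their normalizations. So no determinant supplies a Cartier equation; over normal non-regular bases branch loci can have codimension two, e.g.\ $\mathbb A^2\to\mathbb A^2/\{\pm1\}$ for $p\ne2$.
\end{itemize}
This obstruction is present already in characteristic zero, so dividing out $\iota$ does not address it. It also matters here: the critical cover to which the paper applies the lemma is in general not locally monogenic, since its special fibre is the Milnor algebra.

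To fix the proof, drop the sketch and cite higher purity itself for the codimension-one increase. Do not cite the Cartier-divisor statement you attribute to \S1, which is stronger than what is needed. With that citation your argument is the paper's.
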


\begin{proof}
The codimension-one increase is the local form of the higher-purity theorem of Lazarsfeld
\cite[Theorem~2.2 and statement~(2.4)]{Lazarsfeld1980Thesis}.  Iterating it, the generic
local degree increases by at least one at each step and can never exceed $d$.  Starting from
a generic degree at least $e_0$, at most $d-e_0$ steps are required.
\end{proof}

\begin{theorem}\label{thm:pmod-rmod-general}
For every isolated hypersurface singularity over an algebraically closed field
of characteristic $p>0$,
\[
 \propermod(f)=\rmod(f).
\]
\end{theorem}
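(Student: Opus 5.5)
Theorem~\ref{thm:pmodlower} already gives $\propermod(f)\le\rmod(f)$, so the plan is to prove the reverse inequality $\rmod(f)\le\propermod(f)$. We work on the critical cover $\pi:X\to Y$ of rank $M=\mu(f)$, with distinguished point $x_0$ satisfying $\lambda(x_0)=M$.

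\textbf{Step 1: a stratum realizing $\rmod(f)$.} By Lemma~\ref{lem:standard-inputs}\ref{std:complete}, the semiuniversal unfolding is right-complete, so its relative modality equals $\rmod(f)$. Transport this to $X$ using the map $\Phi:X\to E_N$ of Proposition~\ref{prop:critical-length-pmod}; it is smooth after composition with the group action, so Rosenlicht strata pull back. This gives an irreducible locally closed $Y_0\subseteq X$, with $x_0$ in its closure, on which the Milnor number is constant, say $m$, and whose general right-equivalence fiber has dimension $i$ with $\dim Y_0-i=\rmod(f)$. Take $Y_0$ inside the closure $T$ of a component of the exact-$m$ locus through $x_0$.

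\textbf{Step 2: push the modality up to $x_0$.} Lemma~\ref{lem:rosenlicht-to-pmod} gives, for a general $g\in Y_0$, the bound $\propermod(g)\ge\dim Y_0-i=\rmod(f)$. Proposition~\ref{prop:critical-length-pmod} at such a point gives $\dim_g\Lambda_{\ge m}=M+n-m+\propermod(g)$. Now apply Lemma~\ref{lem:lazarsfeld-purity} to the component $T$ of $\Lambda_{\ge m}$ through $g$ and $x_0$. We have $e_\pi=\lambda/\iota$ by Lemma~\ref{lem:lambda-qe}, so $e_\pi(T)=m/\iota$ and $e_\pi(x_0)=M/\iota$. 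The lemma then yields $T^*\subseteq\Lambda_{\ge M}$ through $x_0$ with
\[
\dim T^*\ge\dim T-\frac{M-m}{\iota}\ge M+n-m+\rmod(f)-(M-m)=n+\rmod(f).
\]
Comparing with Proposition~\ref{prop:critical-length-pmod} at $x_0$, namely $\dim_{x_0}\Lambda_{\ge M}=n+\propermod(f)$, gives $\propermod(f)\ge\rmod(f)$. Note that the factor $\iota\ge1$ only helps here.

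\textbf{Main obstacle.} The delicate point is Step 1 together with the applicability of Step 2. We must ensure three things:
\begin{itemize}
\item the modality-realizing stratum can be chosen Milnor-constant and to contain $x_0$ in its closure;
\item the general point $g$ lies on a component of $\Lambda_{\ge m}$ that actually passes through $x_0$;
\item Lazarsfeld purity applies uniformly after the shrinking used in Proposition~\ref{prop:critical-length-pmod}.
\end{itemize}
For the first, I would stratify the parameter space by Milnor number, which gives finitely many locally closed sets. I would then pick the stratum attaining the maximum in the Rosenlicht modality near $0$; these strata all accumulate at $0$ after shrinking. Irreducible components not through $x_0$ are discarded by shrinking $X$.
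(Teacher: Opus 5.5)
Your proposal follows the paper's proof step for step: a Milnor-constant stratum realizing $\rmod(f)$, Lemma~\ref{lem:rosenlicht-to-pmod} at a general point, Proposition~\ref{prop:critical-length-pmod} at that point and at $x_0$, and Lazarsfeld purity with $e_\pi=\lambda/\iota$. The paper keeps the sharper bound $r+(M-m)(1-1/\iota)$, which you discard harmlessly. The remaining differences are minor. The paper builds $Y_0$ in the semiuniversal parameter space and lifts it to $X$ along $u=0$ using $a=0$, whereas you pull Rosenlicht strata back to $X$. The paper also guarantees $\dim T\ge M+n-m+r$ by observing that the irreducible $Y_0$ lies in the union of the components of $\Lambda_{\ge m}$ of at least that dimension, hence in one of them; you should state this explicitly rather than choosing $T$ in advance.
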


\begin{proof}
Theorem~\ref{thm:pmodlower} gives
$\propermod(f)\le\rmod(f)$.  Put $r=\rmod(f)$ and assume $r>0$.\\

Use the semiuniversal unfolding
$F(x;a,t)=H(x,t)+\sum_i a_ix_i$
introduced above.  By the Rosenlicht parameter-count description of modality,
there is an irreducible locally closed subset $Y_0$ of the parameter space,
with $0\in\overline{Y_0}$, such that the general right-equivalence fiber on
$Y_0$ has dimension $i$ and
$\dim Y_0-i=r$.
Replacing $Y_0$ by a dense open subset, we may suppose that its Milnor number
is constant, say $m$.  Since $r>0$, one must have $m>0$. This is true since  Since $\dim Y_0-i=r>0$, the general member of $Y_0$ cannot be smooth, because all smooth germs form a single right-equivalence class. Thus $m>0$. In the chosen semiuniversal coordinates, $\frac{\partial F}{\partial x_i}(0;a,t)=a_i$. Hence every parameter value with $a\ne0$ gives a smooth germ. The dense Milnor-number-$m$ part of $Y_0$ is therefore contained in the linear subspace $a=0$; after replacing $Y_0$ by this dense open subset, we may assume $a=0$ identically on $Y_0$.\\

By Lemma~\ref{lem:rosenlicht-to-pmod}, for a general member $g$ of $Y_0$,
$\propermod(g)\ge r$.\\

Because $a=0$, the point $u=0$ is a critical point of every member of
$Y_0$.  Thus the dense Milnor-number-$m$ part of $Y_0$ has a canonical lift to
the critical incidence space.  Pull this lifted set through the étale
localization defining $X$, and choose an irreducible component $Z$ whose
closure contains $x_0$.  For general $z\in Z$,
$\lambda(z)=m,\qquad \propermod(G_z)\ge r$.
Put
$D=M+n-m+r$.
Proposition~\ref{prop:critical-length-pmod} gives
$\dim_z\Lambda_{\ge m}\ge D$
for general $z\in Z$.\\

After shrinking around $x_0$, write
$\Lambda_{\ge m}=S_1\cup\cdots\cup S_N$
as its irreducible decomposition.  The union of those $S_j$ of dimension at
least $D$ contains a dense open subset of $Z$, hence contains $Z$.  Since
$Z$ is irreducible, it lies in one such component, say $S$.  Thus
$x_0\in S,\qquad \dim S\ge M+n-m+r$.\\

Let
$\iota=[k(X):k(Y)]_{\mathrm{insep}}$.
By Lemma~\ref{lem:lambda-qe},
$e_\pi(x_0)=\frac{M}{\iota},\qquad e_\pi(S)\ge\frac{m}{\iota}$.
Since $e_\pi(S)\ge\frac{m}{\iota}$ and
$e_\pi(x_0)=\frac{M}{\iota}$, Lemma~\ref{lem:lazarsfeld-purity} produces an irreducible
subvariety $S^*\subseteq S$ through $x_0$ with generic local degree
$\frac{M}{\iota}$ and
\[
 \codim_S(S^*)\le\frac{M-m}{\iota}.
\]
By Lemma~\ref{lem:lambda-qe}, the generic local length on $S^*$ is therefore $M$.
Since $\Lambda_{\ge M}$ is closed, $S^*\subseteq\Lambda_{\ge M}$, and
\[
 \dim S^*\ge M+n-m+r-\frac{M-m}{\iota}.\\
\]

Applying Proposition~\ref{prop:critical-length-pmod} at $x_0$ gives
$\dim_{x_0}\Lambda_{\ge M} =n+\propermod(f)$.
Consequently
$\propermod(f) \ge r+(M-m)\left(1-\frac{1}{\iota}\right) \ge r =\rmod(f)$.
Together with the reverse inequality already proved, this gives $\propermod(f)=\rmod(f)$.
\end{proof}

\begin{remark}
Gabrielov proved equality of modality and proper modality for isolated complex analytic
hypersurface singularities \cite{Gabrielov1974}.  In positive characteristic, Nguyen
proved the equality for univariate germs \cite{Nguyen2014} and for all germs of right
modality at most two \cite{Nguyen2019}; in the latter paper the general equality was
formulated as Conjecture~2.2.  Theorem~\ref{thm:pmod-rmod-general} proves that
conjecture for arbitrary isolated hypersurface singularities.
\end{remark}

\begin{corollary}
\label{cor:insep-forces-pmod}
With $\iota$ as above,
\[
 \propermod(f)=\rmod(f)
 \ge
 \mu(f)\left(1-\frac{1}{\iota}\right).
\]
If $p\nmid\mu(f)$, then $\iota=1$.  If $p=2$ and $n$ is odd, then
$\iota\ge2$ and
\[
 \propermod(f)=\rmod(f)\ge\frac{\mu(f)}{2}.
\]
\end{corollary}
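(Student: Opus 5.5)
The first inequality comes from running the length-stratum comparison of the proof of Theorem~\ref{thm:pmod-rmod-general} starting from the whole critical cover $X$ instead of from a Rosenlicht stratum $Y_0$. Keep the notation $\pi:X\to Y$, $x_0$, $M=\mu(f)$, $\iota$, $\lambda$ and $e_\pi$ from that construction. Recall that $X$ is a smooth integral variety of dimension $n+M-1$. Every closed point $x\in X$ lies in its own fiber, so $\lambda(x)\ge1$. By Lemma~\ref{lem:lambda-qe} we have $\lambda=\iota e_\pi$, so the generic local degree of $X$ is at least $e_0=1$. At the distinguished point, $e_\pi(x_0)=M/\iota$.

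Apply Lemma~\ref{lem:lazarsfeld-purity} with $T=X$ and $e_0=1$. It gives an irreducible $T^*\subseteq X$ through $x_0$ with generic local degree $M/\iota$ and
\[
\codim_X T^*\le \frac{M}{\iota}-1 .
\]
By Lemma~\ref{lem:lambda-qe} the generic local length on $T^*$ is $M$. Since $\Lambda_{\ge M}$ is closed, $T^*\subseteq\Lambda_{\ge M}$, and hence
\[
\dim_{x_0}\Lambda_{\ge M}\ge n+M-1-\Bigl(\frac{M}{\iota}-1\Bigr)=n+M-\frac{M}{\iota}.
\]
Proposition~\ref{prop:critical-length-pmod} at $x_0$ gives $\dim_{x_0}\Lambda_{\ge M}=n+\propermod(f)$. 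Therefore $\propermod(f)\ge M(1-1/\iota)$. Combined with Theorem~\ref{thm:pmod-rmod-general}, this is the displayed inequality.

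If $p\nmid\mu(f)$, then Lemma~\ref{lem:lambda-qe} says $\iota$ is a power of $p$ dividing $M=\mu(f)$, so $\iota=1$.

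The remaining case $p=2$ with $n$ odd is the main step. Here we must show that $\pi$ is inseparable, i.e.\ that it is not generically étale. The plan is to compute the Jacobian of $\Pi(u,t)=(-\partial_1H(u,t),\ldots,-\partial_nH(u,t),t)$. It is block triangular, so its determinant equals $\pm\det\bigl(\partial_i\partial_jH\bigr)$ up to sign, the Hessian in the $u$-variables. In characteristic $2$, $\partial_i^2x^a=a_i(a_i-1)x^{a-2e_i}=0$ because $a_i(a_i-1)$ is even. So the Hessian is symmetric with zero diagonal, hence alternating. An alternating matrix of odd size has determinant $0$, so $d\Pi$ is nowhere invertible. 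The critical cover is obtained from $\Pi$ by étale localizations, so $d\pi$ is also degenerate everywhere. A finite dominant map of smooth varieties that is generically separable is generically étale. Hence $k(X)/k(Y)$ is inseparable, and $\iota\ge2$, i.e.\ $1-1/\iota\ge1/2$. This yields $\propermod(f)=\rmod(f)\ge\mu(f)/2$.
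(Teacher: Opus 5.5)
Your proof is correct and is essentially the paper's argument. The paper gets $\codim_{X,x_0}\Lambda_{\ge M}\le \frac{M}{\iota}-1$ by identifying $\Lambda_{\ge M}$ with Lazarsfeld's locus $R_{M/\iota-1}(\pi)$ and citing his Theorem~2.2 directly, which is exactly the bound your application of Lemma~\ref{lem:lazarsfeld-purity} with $T=X$ and $e_0=1$ produces, followed by the same comparison with Proposition~\ref{prop:critical-length-pmod}. The divisibility argument for $p\nmid\mu(f)$ and the alternating-Hessian argument for $p=2$, $n$ odd also coincide with the paper's. You spell out three steps the paper leaves implicit: why $\partial_i^2$ vanishes in characteristic $2$, how the degenerate differential passes through the \'etale localization, and why generic separability would force generic \'etaleness.
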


\begin{proof}
Since $\lambda=\iota e_\pi$ and $M=\mu(f)$,
$\Lambda_{\ge M} = R_{\frac{M}{\iota}-1}(\pi)$
in Lazarsfeld notation.  His Theorem~2.2 gives
$\codim_{X,x_0} \Lambda_{\ge M} \le\frac{M}{\iota}-1$.
Proposition~\ref{prop:critical-length-pmod} gives
$\codim_{X,x_0} \Lambda_{\ge M} =M-1-\propermod(f)$,
which proves the inequality. Since $\iota$ is a power of $p$ dividing $M$, the condition $p\nmid M$ forces
$\iota=1$.  In characteristic $2$, the Hessian in the $x$-variables has zero diagonal and is
alternating.  The Jacobian matrix of the critical map $\Pi$ is block triangular, with
this Hessian (up to sign) as its $u$-block and an identity matrix in the $t$-block.
If $n$ is odd, the Hessian determinant vanishes identically, so $d\Pi$ is not generically
of full rank.  Since $\Pi$ is finite dominant, it is generically inseparable and
$\iota\ge2$.
\end{proof}

\begin{corollary}
\label{cor:char2-odd-corank}
Assume $p=2$ and let $c=c(f)$ be odd.  Then
\[
 \rmod(f)\ge\frac{\mu(f)}{2}\ge2^{c-1}.
\]
\end{corollary}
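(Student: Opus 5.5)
The plan is to reduce to the residual germ and then combine Corollary~\ref{cor:insep-forces-pmod} with a lower bound on the Milnor number of a residual germ in characteristic two. By Lemma~\ref{lem:standard-inputs}\ref{std:split}, write $f\overset{\R}{\sim} g(x_1,\ldots,x_c)+Q$. Right modality is invariant under nonsingular quadratic suspension, and Proposition~\ref{prop:susp} gives $\mu(f)=\mu(g)$. So it suffices to prove both inequalities for the residual germ $g$ in $c$ variables, with $c$ odd.

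\textbf{First inequality.} Apply Corollary~\ref{cor:insep-forces-pmod} to $g$, viewed in its own ambient dimension $c$. Since $p=2$ and $c$ is odd, it gives $\rmod(g)\ge\mu(g)/2$. The one point to check is that the corollary's proof uses only that the number of source variables is odd. It does: the Hessian of $H$ in the $u$-variables is alternating in characteristic two, so for odd size its determinant vanishes identically, and hence $\iota\ge2$. Transferring back through suspension invariance yields $\rmod(f)\ge\mu(f)/2$.

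\textbf{Second inequality.} It remains to show $\mu(g)\ge 2^c$. Because $\Hess(g)(0)=0$, every partial derivative $\partial_i g$ has no linear term. Hence $\Jac(g)\subseteq\m_x^2$, and the surjection
\[
\frac{k[[x]]}{\Jac(g)}\twoheadrightarrow\frac{k[[x]]}{\m_x^2}
\]
already gives $\mu(g)\ge c+1$. This is too weak, so the plan is to use the regular sequence structure. By Lemma~\ref{lem:regular}, the partials $\partial_1g,\ldots,\partial_cg$ form a regular sequence of elements of order at least $2$. Pass to the associated graded ring, or equivalently a Gr\"obner/initial-form degeneration, which can only decrease colength in the limit. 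A complete intersection of $c$ forms of degrees $d_i\ge2$ has colength at least $\prod d_i\ge 2^c$. The inequality $\mu(g)\ge\ord(\partial_1g)\cdots\ord(\partial_cg)$ for an $\m$-primary ideal generated by a regular sequence is standard: the Hilbert--Samuel multiplicity satisfies $e(\Jac(g))\ge\prod\ord(\partial_ig)$, and it equals the colength for a parameter ideal in a Cohen--Macaulay ring. Applied here, this gives $\mu(g)\ge 2^c$ and therefore $\mu(f)/2\ge 2^{c-1}$.

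\textbf{Main obstacle.} The hard step is justifying $\mu(g)=e(\Jac(g))\ge\prod_i\ord(\partial_ig)$ without characteristic-zero tools. I would prove it via Lech's inequality or directly: the colength of a parameter ideal $\langle h_1,\ldots,h_c\rangle$ equals its multiplicity, and multiplicity is at least $\prod\ord(h_i)$ by the intersection-multiplicity bound (multiplicity of a complete intersection is at least the product of the multiplicities of the hypersurfaces). Both facts are characteristic-free.
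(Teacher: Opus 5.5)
Your proposal is correct and follows the paper's proof. You reduce to the residual germ $g$, apply Corollary~\ref{cor:insep-forces-pmod} in $c$ variables, and identify $\mu(g)$ with the Hilbert--Samuel multiplicity of the parameter ideal $\Jac(g)\subseteq\m_x^2$. The only difference is the final bound. The paper uses monotonicity of multiplicity under inclusion, $e(\Jac(g))\ge e(\m_x^2)=2^c$, so your ``main obstacle'' (the product-of-orders inequality) is unnecessary. You should also drop the associated-graded heuristic, since the initial forms of the $\partial_ig$ need not form a regular sequence.
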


\begin{proof}
Split off the nonsingular quadratic part and let $g$ be the residual germ in $c$
variables.  Suspension invariance gives $\mu(g)=\mu(f)$ and
$\rmod(g)=\rmod(f)$.  Since $c$ is odd, Corollary~\ref{cor:insep-forces-pmod}
applied to $g$ gives $\rmod(g)\ge\frac{\mu(g)}{2}$.  For a residual germ in
characteristic two one has $\Jac(g)\subseteq\m_x^2$.  The Jacobian ideal is an
$\m_x$-primary parameter ideal in the regular local ring $k[[x_1,\ldots,x_c]]$;
hence its colength equals its Hilbert--Samuel multiplicity.  Monotonicity of
multiplicity under inclusion gives
$\mu(g)=e(\Jac(g))\ge e(\m_x^2)=2^c$.  Combining the two inequalities proves the
claim.
\end{proof}

Combining the Frobenius lower bound, the comparison with proper modality, and the
equality of proper and right modality summarizes the two lower-bound mechanisms used
above.  The first comes from deformation directions surviving modulo the right tangent
image, while the second comes from the Frobenius length estimate:
\[
 \Fmod(f)\le d_0(f)\le\propermod(f)=\rmod(f),
 \qquad
 \propermod(f)\ge
 \left\lceil\frac{\mu(f)+n}{p^n}\right\rceil-1.
\]

\begin{proposition}\label{prop:rmod-upper}
Every isolated hypersurface germ satisfies $\propermod(f)=\rmod(f)\le\mu(f)-1$.
\end{proposition}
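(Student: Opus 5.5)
Since Theorem~\ref{thm:pmod-rmod-general} already gives $\propermod(f)=\rmod(f)$, it suffices to bound $\propermod(f)$ by $\mu(f)-1$. Put $M=\mu(f)$; we may assume $M>0$, since in the smooth case there is a single right-equivalence class. We then use the critical cover $\pi:X\to Y$ and the length strata $\Lambda_{\ge m}$ constructed above.

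Apply Proposition~\ref{prop:critical-length-pmod} at the distinguished point $x_0$, with $h=G_{x_0}=f$ and $m=M$. This gives
\[
\codim_{X,x_0}\Lambda_{\ge M}=M-1-\propermod(f).
\]
Codimension is nonnegative, so this already forces $\propermod(f)\le M-1$.

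As a cross-check, we can bound the same quantity directly. The stratum $\Lambda_{\ge M}$ is a closed subset of $X$, so $\dim_{x_0}\Lambda_{\ge M}\le\dim X=n+M-1$. Combined with the equivalent formulation $\dim_{x_0}\Lambda_{\ge M}=n+\propermod(f)$ from the same proposition, this again yields $\propermod(f)\le M-1$.

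An alternative argument avoids the critical cover altogether. In the semiuniversal unfolding with trivial section, the parameter space has dimension $b_f=\mu(f)+n-1$. For $\mu>0$, the $\mu$-constant stratum lies in the linear subspace $a=0$, because $\partial_iF(0;a,t)=a_i$ makes every member with $a\ne0$ smooth. That subspace has dimension $M-1$, which gives the same bound on $\dim_0\mathcal C_{\mu}(F)$.

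No serious obstacle is expected. The only point needing care is the degenerate case $M=0$: there the germ is smooth, so its modality is $0$, and the stated bound $\le -1$ is not literally true. We will therefore state that the claim is meant for singular germs, or equivalently that $M\ge1$.
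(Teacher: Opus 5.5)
Your proof is correct. Your alternative argument is essentially the paper's own proof, while your primary route is different and heavier.

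The paper, like your alternative, uses only one observation. In the semiuniversal unfolding one has $\partial_iF(0;a,t)=a_i$, so any nonzero linear parameter gives a smooth germ, and all smooth germs form one orbit. Hence everything of positive modality lies in the $(\mu(f)-1)$-dimensional subspace $a=0$. The paper phrases this as a bound on the quotients of the unfolding's parameter space, which bounds $\rmod(f)$ via right-completeness. You phrase it as $\mathcal C_{\mu(f)}(F)\subseteq\{a=0\}$, which bounds $\propermod(f)$ directly from its definition. In both versions Theorem~\ref{thm:pmod-rmod-general} then transfers the bound to the other invariant.

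Your primary route reads the inequality off the nonnegativity of
$\codim_{X,x_0}\Lambda_{\ge M}=M-1-\propermod(f)$ in Proposition~\ref{prop:critical-length-pmod}. This is valid and not circular: that proposition precedes and does not use the present statement, and $G_{x_0}=f$ because $f\in\m^2$. However, it brings in the whole critical-cover construction (étale localization, miracle flatness) to prove an elementary inequality. Moreover, the proof of Proposition~\ref{prop:critical-length-pmod} already uses the containment of the Milnor-number-$m$ locus in $a=0$, so it ultimately rests on the same key fact as the short argument.

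Your caveat about $M=0$ is unnecessary. The paper's standing convention in Sections~\ref{sec:frobenius} and~\ref{sec:boundedness} is $f\in\m^2$, so $\Jac(f)\subseteq\m$ and $\mu(f)\ge1$ for every germ considered. The statement therefore holds exactly as written, with no restriction needed.
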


\begin{proof}
Choose a basis
$x_1,\ldots,x_n,h_1,\ldots,h_{\mu-1}$
of $\frac{\m}{\m\Jac(f)}$ with $h_j\in\m^2$.  A nonzero linear parameter in the
corresponding semiuniversal unfolding makes the germ smooth, and all smooth
germs form one right orbit.  Hence every positive-dimensional quotient is
contained in the $(\mu-1)$-dimensional subspace on which the linear
parameters vanish.
\end{proof}

\begin{remark}
    If $n=1$, then we have that $\Fmod(f)=\propermod(f)=\rmod(f)
 =\left\lfloor\frac{\mu(f)}{p}\right\rfloor$. This is true since the right-modality formula is the univariate classification of Nguyen
\cite{Nguyen2014} and the Frobenius formula follows from the fact that
$\m\Jac(f)=\langle x^{\mu+1}\rangle$, and
Theorem~\ref{thm:pmod-rmod-general} identifies proper and right modality.
\end{remark}

\section{Finite classification at bounded modality}\label{sec:classification}

The goal of this section is to show that, after nonsingular quadratic suspension, bounded
right modality gives a finite algebraic parametrization.  More precisely, we
construct finitely many modular families whose dimensions are bounded by the modality
and whose numbers of variables and degrees are bounded explicitly in terms of $p$ and
$D$.\\

Retain the notation $C_{p,D}$, $B_{p,D}$, $N_{p,D}$, and $\widehat N_{p,D}$ from Section~\ref{sec:boundedness}.  For $c\ge1$ write
$\m_c=\langle x_1,\ldots,x_c\rangle\subseteq k[[x_1,\ldots,x_c]]$; the analogous notation $\m_d$ is used when a second variable count occurs.  For brevity put
$C=C_{p,D}$ and
$M_{p,D}=p^C(D+1)-C$.
For $c,L\ge1$ write
$\mathcal H_{c,L}=\left\{j^Lh\in\frac{\m_c^2}{\m_c^{L+1}}\mid\Hess(h)(0)=0\right\}$.  This is a closed, right-invariant locus of jets with no nonsingular
quadratic summand left to split off.  By Lemma~\ref{lem:standard-inputs}\ref{std:split},
every stable class has
a residual representative in some $\mathcal H_{c,L}$; in characteristic $2$, a square
quadratic term has zero polar Hessian and is allowed in the residual part.

\begin{definition}
Let an algebraic group $G$ act on a variety $X$.  A locally closed subvariety
$T\subseteq X$ is \emph{orbit-finite} if every $G$-orbit meets $T$ in finitely many
points.  Thus the parameters are genuine moduli up to a finite ambiguity; this is the
finite-fiber notion usually called a \emph{modular family} in singularity classification.
A finite collection of such subvarieties is a \emph{finite modular atlas} for an
invariant constructible subset if every orbit meets at least one of them.
\end{definition}

\begin{lemma}\label{lem:finite-modular-principle}
Let an algebraic group $G$ act on a variety $X$, and let $W\subseteq X$ be invariant and
constructible with $G$-modality at most $D$.  Then $W$ admits a finite modular atlas
$T_1,\ldots,T_s$ with $\dim T_i\le D$.
\end{lemma}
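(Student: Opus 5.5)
The plan is to combine the Rosenlicht stratification of Definition~\ref{def:rosenlicht-modality} with a multisection construction on each quotient, and to use noetherian induction on $W$. We first choose a Rosenlicht stratification of the invariant constructible set $W$. Such a stratification exists because $W$ is a finite union of invariant locally closed pieces: replace $W$ by the union of the $G$-translates of the irreducible components of its constructible decomposition, and apply Rosenlicht on each invariant locally closed piece. This gives finitely many $G$-invariant locally closed $W_a\subseteq W$ with geometric quotients $\pi_a\colon W_a\to Q_a$. The modality bound, together with the definition of $G$-modality as a minimum over neighborhoods, gives $\dim\pi_a(U\cap W_a)\le D$ near every point. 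Covering $W$ by finitely many such neighborhoods (quasicompactness), and refining the stratification to these opens if necessary, we may assume that $\dim Q_a\le D$ for every $a$.

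Next, for each $a$ and each irreducible component of $Q_a$, we construct a multisection. The aim is an irreducible locally closed $T\subseteq W_a$ such that $\pi_a|_T\colon T\to Q_a$ is quasi-finite and dominant onto a dense open $Q_a^\circ$. To construct $T$, take the generic fiber $\pi_a^{-1}(\eta)$ over the generic point $\eta$ of the component. It is a variety over $k(\eta)$, so it has a closed point, defined over some finite extension of $k(\eta)$. The closure of this point in $W_a$ is an irreducible $T'$ with $T'\to Q_a$ generically finite. Shrinking to an open subset gives $T$ with finite fibers over an open $Q_a^\circ$. Then $\dim T=\dim Q_a^\circ\le D$. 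Each $G$-orbit in $\pi_a^{-1}(Q_a^\circ)$ is a geometric fiber of $\pi_a$, so it meets $T$ in the finite set $T\cap\pi_a^{-1}(q)$, and it meets $T$ at all, since $T\to Q_a^\circ$ is surjective after shrinking $Q_a^\circ$ (a dominant quasi-finite map has image containing a dense open). Hence $T$ is orbit-finite, and it meets every orbit lying over $Q_a^\circ$.

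Finally, we run noetherian induction. The complement $\pi_a^{-1}(Q_a\setminus Q_a^\circ)$ is invariant, constructible, and of strictly smaller dimension in the quotient direction. Its $G$-modality is still at most $D$, since modality is monotone on invariant subsets: it is computed from the images of the same strata. We then repeat the construction on it. Because each step lowers $\dim Q_a$ or the number of components, the process terminates after finitely many steps. The resulting finite collection of $T$'s is a finite modular atlas with $\dim T_i\le D$.

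The main obstacle is the first step. There we must convert the local, neighborhood-minimum definition of modality into a global bound $\dim Q_a\le D$ on a single stratification. We must also ensure that passing to invariant sub-constructible sets does not increase modality; this is where the independence of the chosen Rosenlicht stratification \cite[Corollary~A.3]{GreuelNguyen2016} would be invoked, applied to a common refinement of the stratifications of $W$ and of the subsets.
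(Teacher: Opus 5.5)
Your proposal is correct and follows the paper's proof: pass to the pieces of a Rosenlicht stratification, take the closure of a closed point of the generic fiber as a generically finite multisection, shrink the base until the multisection is quasi-finite and surjective, and finish by noetherian induction on the quotient. Your first step makes explicit what the paper compresses into ``the definition of modality gives $\dim Y\le D$''; replacing each neighborhood $U$ by the invariant open $G\cdot U$, which has the same images in the quotients, makes your refinement legitimate. Your appeal to monotonicity of modality in the induction is unnecessary, since $Q_a\setminus Q_a^\circ$ is a closed subset of $Q_a$ and inherits the bound $\dim\le D$ directly.
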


\begin{proof}
Choose a Rosenlicht stratification and reduce to a geometric quotient
$q:V\to Y$ with $V\subseteq W$ and $V,Y$ irreducible.  The definition of modality gives
$\dim Y\le D$.\\

The generic fiber $V_{k(Y)}$ has a closed point whose residue field is finite over
$k(Y)$.  The closure $T$ of that point in $V$ is dominant and generically finite over
$Y$.  After shrinking $Y$ it is quasi-finite and surjective, so it meets every orbit
over that open set in finitely many points.  Apply the same construction over the
complement.  Noetherian induction terminates, and repeating this on the finitely many
Rosenlicht pieces proves the claim.
\end{proof}

\begin{lemma}[Cancellation of nonsingular quadratic suspensions]
\label{lem:stable-cancellation}
Let $g\in\m_c^2$ and $h\in\m_d^2$ be residual germs, so that neither has a
nonsingular quadratic summand.  If $g$ and $h$ are stably right equivalent, then
$c=d$ and $g\overset{\R}{\sim}h$.
\end{lemma}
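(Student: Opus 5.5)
The plan is to reduce to the uniqueness part of the splitting lemma, Lemma~\ref{lem:standard-inputs}\ref{std:split}. By definition of stable right equivalence there are nonsingular quadratic forms $Q_1$ in $a$ new variables and $Q_2$ in $b$ new variables with $c+a=d+b=:n$ and
\[
 F_1=g(x_1,\ldots,x_c)+Q_1(z_1,\ldots,z_a)\overset{\R}{\sim} F_2=h(x_1,\ldots,x_d)+Q_2(w_1,\ldots,w_b)
\]
in $k[[x,z]]\cong k[[x,w]]$, both rings having $n$ variables. Each of $F_1$ and $F_2$ is therefore a splitting-lemma decomposition of one germ (up to right equivalence) into a residual part plus a nonsingular quadratic form in complementary variables. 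The residual parts $g$ and $h$ qualify as residual germs in the sense of the splitting lemma, since by hypothesis they have no nonsingular quadratic summand; I interpret this as $\Hess(g)(0)=0$ and $\Hess(h)(0)=0$, with the characteristic-$2$ allowance of a square quadratic part.

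First, I would check that the number of residual variables is an invariant. The most robust route is to identify $c$ intrinsically: it equals $n$ minus the rank of the polar Hessian form of the $2$-jet. In characteristic $\ne2$ this is the rank of the quadratic part. In characteristic $2$ it is the rank of the alternating polar bilinear form $B(u,v)=Q(u+v)-Q(u)-Q(v)$, adjusted by the standard parity correction when the defect is odd.

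Concretely, for $F_1$ the quadratic part is $q_g+Q_1$, where $q_g$ is zero (for $p\ne2$) or a square of a linear form (for $p=2$). The polar form of $F_1$ is therefore that of $Q_1$ extended by zero on the $x$-variables. This polar form is invariant under linear change of coordinates, i.e.\ under the linear part of a right equivalence, so its rank is an invariant of $F_1$. When $p\ne 2$ this gives $a=b$ and hence $c=d$. When $p=2$, I would separate the alternating rank $2\lfloor a/2\rfloor$ from the one-dimensional non-defective part. Whether a nonsingular $Q$ in an odd number of variables contributes the extra $z^2$ can be read from the quadric's defect: the radical of the polar form restricted to the zero set of $q$. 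Equivalently, one can just invoke the statement ``$c$ is invariant under right equivalence'' recorded in Lemma~\ref{lem:standard-inputs}\ref{std:split}, which already gives $c=c(F_1)=c(F_2)=d$.

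With $c=d$, uniqueness of the residual germ up to right equivalence in Lemma~\ref{lem:standard-inputs}\ref{std:split} gives $g\overset{\R}{\sim}h$ as germs in $c$ variables. One point has to be checked here: the uniqueness statement compares decompositions in which the quadratic forms $Q_1$ and $Q_2$ may differ. Since $a=b$ and both forms are nonsingular over the algebraically closed field $k$, they are linearly equivalent, so one can first normalize $Q_2$ to $Q_1$ by a linear change in the split variables and then apply uniqueness.

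The main obstacle I expect is characteristic $2$ with an odd number of split variables. There a ``nonsingular'' form such as $z_1z_2+z_3^2$ has a square summand that could a priori be traded with the square quadratic part of a residual germ, so the split between residual and quadratic parts is subtle. I would handle this by relying on the Greuel--Pfister convention built into Lemma~\ref{lem:standard-inputs}\ref{std:split}, where residual germs have $\Hess=0$ and $c$ is invariant, rather than re-deriving the invariance by hand.
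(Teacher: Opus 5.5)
Your proposal is correct and follows the paper's proof: you write the stable equivalence as $g+Q_1\overset{\R}{\sim}h+Q_2$ and invoke the uniqueness of the residual germ, together with the invariance of $c$, from Lemma~\ref{lem:standard-inputs}\ref{std:split}. The polar-rank count and the normalization of $Q_2$ to $Q_1$ are harmless additions. For the isolated germs considered here, the split form must have nondegenerate polar form (even-dimensional and hyperbolic in characteristic $2$), so the characteristic-$2$ trading issue you flag does not arise, and your polar-rank count already gives $c=d$ in every characteristic.
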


\begin{proof}
By stable right equivalence there are nonsingular quadratic forms $Q$ and $\widetilde Q$ such
that $g+Q\overset{\R}{\sim}h+\widetilde Q$.  The uniqueness statement in
Lemma~\ref{lem:standard-inputs}\ref{std:split} identifies the residual parts of
right-equivalent germs, also in characteristic $2$; see \cite{GreuelPfister2026}.  Hence the residual variable counts agree and the
residual germs are right equivalent.
\end{proof}

\begin{theorem}
\label{thm:finite-modular-classification}
Fix $p>0$ and $D\ge0$.  Up to nonsingular quadratic suspension, the isolated germs $f$ with
$\rmod(f)\le D$ are covered by finitely many algebraic families $F_i(x,t)$, $t\in T_i$, such
that:
\begin{enumerate}[label=\textup{(\roman*)},leftmargin=*]
\item $\dim T_i\le D$, the number of variables is at most $C_{p,D}$, and
$\deg_xF_i\le N_{p,D}$;
\item every stable right-equivalence class occurs;
\item each stable class occurs at only finitely many points of each $T_i$.
\end{enumerate}
The $T_i$ may be refined so that Milnor number and right modality are constant.
\end{theorem}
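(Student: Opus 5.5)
The plan is to reduce to a finite-dimensional orbit problem on residual jets and then apply Lemma~\ref{lem:finite-modular-principle} to each relevant piece. First, by Lemma~\ref{lem:standard-inputs}\ref{std:split} and Lemma~\ref{lem:stable-cancellation}, stable right-equivalence classes of isolated germs correspond bijectively to right-equivalence classes of residual germs $g\in\m_c^2$ with $\Hess(g)(0)=0$. Here the residual number of variables $c$ is itself an invariant of the class. By Theorem~\ref{thm:uniform}, any such residual germ coming from $f$ with $\rmod(f)\le D$ has $c\le C_{p,D}$ and $\mu(g)\le M_{p,D}$. Its right classification radius is at most $N_{p,D}$, and in particular $g$ is right $N_{p,D}$-determined. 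For $c\ge1$ put $L=N_{p,D}$. We then work in the affine jet space $\mathcal H_{c,L}$ with the action of $\R_L$ on $c$ variables. The case $c=0$ is the single class of the zero residual germ, covered by one point family.

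Second, I would show that for each $c\le C_{p,D}$ and each $m\le M_{p,D}$ the locus
\[
W_{c,m}=\{u\in\mathcal H_{c,L}\mid \mu(u)=m,\ \rmod(u)\le D\}
\]
is $\R_L$-invariant and constructible. Here $\mu(u)$ is computed by the determinantal colength of Proposition~\ref{prop:critical-length-pmod}, which is well defined on $L$-jets since $L\ge 2m$ is forced when $N_{p,D}$ is large; if not, I would enlarge $L$ to $\max\{N_{p,D},2M_{p,D}\}$, at the cost of a worse degree bound. Right modality is upper semicontinuous on the Milnor-constant stratum: it is a Rosenlicht local quotient dimension, so $\{\rmod\le D\}$ is open in the constructible Milnor-$m$ locus. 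Invariance holds because right equivalence preserves $\mu$ and $\rmod$. Since $L$ is a determinacy level for every point of $W_{c,m}$, $\R_L$-orbits in $W_{c,m}$ correspond exactly to right classes. Moreover, the $\R_L$-modality of $W_{c,m}$ at each of its points equals $\rmod$ of that germ, which is at most $D$, because $L$ is sufficiently large in the sense of Definition~\ref{def:rosenlicht-modality}. Refining further by the finitely many values of $\rmod\in\{0,\dots,D\}$ gives constancy of both invariants.

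Third, apply Lemma~\ref{lem:finite-modular-principle} to each of the finitely many $W_{c,m,r}$. This yields orbit-finite subvarieties $T_i$ with $\dim T_i\le D$ that together meet every orbit. Set $F_i(x,t)$ to be the polynomial of degree $\le L$ in $c\le C_{p,D}$ variables whose coefficients are the coordinates of $t\in T_i$. Property (ii) follows because every orbit meets some $T_i$. Property (iii) follows from orbit-finiteness combined with the cancellation lemma, which ensures that a stable class is a single residual orbit.

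The main obstacle is the second step. We must check that $W_{c,m}$ is genuinely constructible and that its modality as an invariant subset is bounded by $D$ uniformly. The latter requires the local Rosenlicht modality of the ambient $\R_L$-action restricted to $W$ to agree with $\rmod$ pointwise. I would first try to deduce this from the independence of the jet level in \cite[Proposition~2.4]{GreuelNguyen2016}, together with the observation that modality of an invariant subset is bounded by the ambient modality at each point. Constructibility would come from Chevalley, since $\rmod$ takes finitely many values on the bounded-$\mu$ locus and each level set is a union of Rosenlicht strata pieces.
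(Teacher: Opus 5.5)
Your outline follows the paper's: reduce to residual jets via splitting and Lemma~\ref{lem:stable-cancellation}, bound $c$, $\mu$ and determinacy by Theorem~\ref{thm:uniform}, take the invariant constructible locus of modality at most $D$ in $\mathcal H_{c,L}$, apply Lemma~\ref{lem:finite-modular-principle}, and refine by $\mu$ and $\rmod$. The gap is in the choice of jet level, and it costs you the degree bound in (i).

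At $L=N_{p,D}$ you assert two things without proof:
\begin{itemize}
\item \emph{Sufficiently large level.} You claim $N_{p,D}$ is sufficiently large in the sense of Definition~\ref{def:rosenlicht-modality} near each point of $W_{c,m}$. Being a determinacy level of the germ itself does not give a neighborhood of determined jets. The jet-level independence in \cite{GreuelNguyen2016} only compares levels already known to be sufficiently large.
\item \emph{Milnor number on jets.} You claim $\mu$ is a lift-independent determinantal function of $L$-jets. Your justification, that $L\ge 2m$ is forced, is not available. The only a priori bound is $\mu\le M_{p,D}$, which grows like $p^{C_{p,D}}$, while $N_{p,D}$ grows roughly linearly in $D$. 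Already for $p=5$, $D=0$ one has $N_{5,0}=16<M_{5,0}+1=24$. So even the truncated colength modulo $\m^{M+1}$ used in Proposition~\ref{prop:critical-length-pmod} is not defined on $N_{5,0}$-jets.
\end{itemize}
Your fallback, enlarging $L$ to $\max\{N_{p,D},2M_{p,D}\}$, then yields families $F_i$ of $x$-degree up to that level. So (i) is not proved.

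The missing idea is to separate the level used for the orbit problem from the degree of the final representatives.
\begin{itemize}
\item The paper works at $L\ge\max\{2M_{p,D},M_{p,D}+1,N_{p,D}\}$. There $\mu$ is the truncated Jacobian colength modulo $\m_c^{M_{p,D}+1}$, upgraded to the true Milnor number by Nakayama.
\item On the open locus $\mu\le M_{p,D}$, every nearby germ is $2M_{p,D}$-determined. So $L$ is automatically sufficiently large, and the $\R_{c,L}$-modality of a jet equals the formal right modality.
\item After applying Lemma~\ref{lem:finite-modular-principle}, the paper truncates the tautological $L$-jets over each $T_i$ to degree $N_{p,D}$. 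Every fiber is right $N_{p,D}$-determined, so the truncation represents the same formal orbit. It is still residual, since its $2$-jet is unchanged.
\item Orbit-finiteness survives. If the truncations at $t$ and $t'$ are stably right equivalent, they are right equivalent by Lemma~\ref{lem:stable-cancellation}. Hence so are the germs represented by the $L$-jets at $t$ and $t'$. These jets then lie in one $\R_{c,L}$-orbit, which meets $T_i$ in only finitely many points.
\end{itemize}
With this step your argument is complete.

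Staying at $L=N_{p,D}$ is also possible, but you would have to prove both facts above. For the first, you could combine semicontinuity of modality at a high level with Corollary~\ref{cor:uniform-full-determinacy}. For the second, you could compute $\mu$ as a colength modulo $\m_c^{B_{p,D}+1}$ via Corollary~\ref{cor:lowmodideal}.
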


\begin{proof}
Let $M=M_{p,D}$ and choose
$L\ge\max\{2M,M+1,N_{p,D}\}$.  The residual corank $c=0$ contributes one stable
class, represented by the zero residual germ; include it as a zero-dimensional family.
We may therefore assume $c\ge1$.  For each $1\le c\le C_{p,D}$ let
$E_{c,L}=\frac{\m_c^2}{\m_c^{L+1}}$ and let $\R_{c,L}$ be the algebraic right-jet group.\\

For a jet $v$ choose a lift $h$ and define $\mu_L(v)$ as the dimension of
$\frac{k[[x_1,\ldots,x_c]]}{\Jac(h)+\m_c^{M+1}}$.  This is independent of the lift because
$L\ge M+1$.  The condition $\mu_L(v)\le M$ is determinantal and hence constructible.
If $\mu_L(v)=m\le M$, the maximal ideal of the length-$m$ truncated Jacobian algebra
has $m$-th power zero, so $\m_c^m\subseteq \Jac(h)+\m_c^{M+1}$.  The Nakayama lemma upgrades
this to $\m_c^m\subseteq \Jac(h)$; hence the truncated length equals the actual Milnor
number.  The converse is immediate.\\

Intersect with the closed locus $\mathcal H_{c,L}$.  Since $L\ge2M$, every isolated germ
there is right $L$-determined, so its formal right modality is the modality of its
$L$-jet.  Upper semicontinuity of modality for algebraic group actions
\cite[Proposition~2.7]{GreuelNguyen2016} makes the locus $W_{c,D}$ of modality at
most $D$ invariant and constructible.\\

Theorem~\ref{thm:uniform} shows that every stable class with modality at most $D$ has a
residual representative in one of the finitely many $W_{c,D}$.  Apply
Lemma~\ref{lem:finite-modular-principle}.  The tautological $L$-jets over the resulting
orbit-finite multisections are algebraic polynomial families.  Since every fiber is
right $N_{p,D}$-determined, truncate to degree $N_{p,D}$.  This does not change the
represented formal orbit.  It also preserves orbit-finiteness: if two truncated
parameters lie in the same $N_{p,D}$-jet orbit, their corresponding formal germs are
right equivalent by $N_{p,D}$-determinacy.  Conversely, because all representatives
under consideration are residual, Lemma~\ref{lem:stable-cancellation} shows that two of
them represent the same stable class only if they are right equivalent.  Thus each
stable class occurs at only finitely many parameters on each multisection.  Finally
refine by the finitely many Milnor numbers and integer modality values.
\end{proof}

\begin{remark}
Greuel--Nguyen observed that, for each fixed positive characteristic, only finitely many
right-simple singularities occur and conjectured an analogous finiteness phenomenon for
an arbitrary number of moduli \cite{GreuelNguyen2016}.  Nguyen subsequently gave
explicit finite lists of modular normal-form families in modalities one and two
\cite{Nguyen2019}.  Theorem~\ref{thm:finite-modular-classification} gives a stable,
finite-atlas form of this finiteness phenomenon in every modality.  Thus it also
realizes the general finiteness expectation stated in the introduction of
\cite{GreuelNguyen2016}.  It is a finite algebraic parametrization statement rather
than an explicit list of normal forms.  For a complementary ADE-type classification
over complete regular local rings, organized by Cohen--Macaulay type rather than right
modality, see \cite{SvorayADE2025}.
\end{remark}

\begin{corollary}\label{cor:stable-finite-classification}
For every fixed characteristic and finite modality bound, stable right classification is
a finite algebraic classification problem: finitely many modular families of dimension
at most the modality suffice.  In modality zero this gives finiteness of stable right-simple classes without using an explicit normal-form classification.
\end{corollary}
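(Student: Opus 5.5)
The corollary is essentially a repackaging of Theorem~\ref{thm:finite-modular-classification}, so the plan is to read its conclusions in the language of the statement and then specialise to $D=0$. Fix the characteristic $p$ and the modality bound $D$. First I invoke Theorem~\ref{thm:finite-modular-classification}. It gives finitely many families $F_i(x,t)$, $t\in T_i$, with $\dim T_i\le D$, at most $C_{p,D}$ variables, and $\deg_x F_i\le N_{p,D}$. Every stable right-equivalence class with $\rmod\le D$ occurs in them, and each occurs at only finitely many points of each $T_i$. In the terminology of Section~\ref{sec:classification}, these are exactly finitely many orbit-finite modular families of dimension at most $D$. Their union covers all stable classes, because Lemma~\ref{lem:stable-cancellation} identifies a stable class with the right class of its residual germ, and Theorem~\ref{thm:uniform} bounds the residual corank by $C_{p,D}$. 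This gives the first sentence of the corollary.

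For modality zero, take $D=0$. Each $T_i$ is then a variety of dimension $0$, so it is a finite set of points. Each such point carries one polynomial germ. By assertion (ii) of the theorem, every stable right-simple class is represented by one of these finitely many germs. Hence there are only finitely many stable right-simple classes.

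Nothing in this argument uses an explicit normal-form list. The only inputs are the Rosenlicht/multisection mechanism of Lemma~\ref{lem:finite-modular-principle}, the uniform determinacy bound $N_{p,D}$, and cancellation of quadratic suspensions.

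The only point needing care is the identification of $\rmod(f)=0$ with right simplicity. Right simplicity means that a jet neighbourhood meets only finitely many orbits. By Definition~\ref{def:rosenlicht-modality}, modality zero means every Rosenlicht quotient piece near $j^Nf$ has dimension $0$. These agree because each piece then has finitely many orbits and there are finitely many pieces. The remaining steps are formal, so I expect this identification to be the main (minor) obstacle.
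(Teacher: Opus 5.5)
Your proposal is correct and follows the paper's route. The paper gives no separate proof and treats the corollary as an immediate reading of Theorem~\ref{thm:finite-modular-classification}. Your two additions fill in exactly the implicit steps: at $D=0$ each $T_i$ is a finite set of points, and modality zero agrees with right simplicity because zero-dimensional Rosenlicht quotient pieces meet only finitely many orbits.
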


\section{Applications to \texorpdfstring{$F$}{F}-singularities}\label{sec:F-applications}

In this section, we apply the classification results of the previous section to
$F$-singularity invariants.  We first recall the definitions that we use, and then prove
uniform results for the $F$-pure threshold, $F$-jumping numbers, test ideals, and
positive-characteristic Bernstein--Sato roots.\\

Throughout this section $k$ is algebraically closed of characteristic $p>0$.  A ring
$A$ of characteristic $p$ is \emph{$F$-finite} if the Frobenius pushforward $F_*A$ is a
finite $A$-module.  Here $F_*^eA$ denotes the additive group of $A$, regarded as an
$A$-module by $a\cdot F_*^eb=F_*^e(a^{p^e}b)$.  Since $k$ is perfect, the regular
power-series ring $R$ is $F$-finite.\\

For $e\ge0$ put $q=p^e$.  For an ideal $I\subseteq R$, its \emph{$q$-th Frobenius power} is
$I^{[q]}=\langle a^q\mid a\in I\rangle$; in particular
$\m^{[q]}=\langle x_1^q,\ldots,x_n^q\rangle$.  We write
$\nu_e(f)=\max\{a\in\mathbb N\mid f^a\notin\m^{[p^e]}\}$.  The \emph{$F$-pure threshold}
of the principal ideal $\langle f\rangle$ at $\m$ is
$\fpt(f)=\lim_{e\to\infty}\frac{\nu_e(f)}{p^e}$; equivalently, it is the first
positive jumping number of the generalized test-ideal filtration.\\

The \emph{$q$-th Frobenius root} $I^{[\frac{1}{q}]}$ is the unique smallest ideal $J$ such that
$I\subseteq J^{[q]}$.  In a regular $F$-finite ring the generalized test ideal of a
principal ideal is, for all sufficiently large $e$,
$\tau(f^t)=\langle f^{\lceil tp^e\rceil}\rangle^{[\frac{1}{p^e}]}$; the positive real numbers at which
$\tau(f^t)$ changes are the \emph{$F$-jumping numbers} of $\langle f\rangle$
\cite{BMS2008}.  We call their set in a specified interval the \emph{numerical
$F$-jumping spectrum}; this forgets the test ideals themselves and remembers only the
jumping numbers.  A homomorphism of $A$-modules is \emph{pure} if it remains injective
after tensoring with every $A$-module.  A reduced $F$-finite ring $A$ is
\emph{$F$-pure} if the Frobenius map $A\to F_*A$ is pure (equivalently, for an
$F$-finite ring, split), and it is \emph{strongly $F$-regular} if for every
$c$ outside the union of the minimal primes some $A$-linear map
$F_*^eA\to A$ sends $F_*^ec$ to $1$ \cite{HochsterHuneke1994}.
Finally, $\tau_{\mathrm{fg}}(A)$ denotes the \emph{finitistic test ideal}, the
intersection of $\Ann_A(0_M^*)$ over all finitely generated $A$-modules
$M$.  Here $0_M^*$ is the \emph{tight closure of zero}: an element $z\in M$ lies in
$0_M^*$ if there is $c\in A$ outside the union of the minimal primes such that
$cF_M^e(z)=0$ in $F_A^e(M)$ for all sufficiently large $e$, where $F_A^e(M)$ denotes
the $e$-fold Frobenius functor applied to $M$.  These are the conventions used in the
sources cited below.

\begin{remark}
The Fedder criterion made Frobenius splitting of hypersurfaces computable
\cite{Fedder1983}.  Hara--Yoshida introduced generalized test ideals
\cite{HaraYoshida2003}, while Musta\c{t}\u{a}--Takagi--Watanabe introduced and
studied $F$-thresholds and related them to the jumping behavior of test ideals
\cite{MTW2005}.  Blickle--Musta\c{t}\u{a}--Smith proved discreteness and rationality
in the regular $F$-finite setting \cite{BMS2008}.  Musta\c{t}\u{a} then constructed
positive-characteristic Bernstein--Sato polynomials and related their roots to
$F$-jumping numbers \cite{Mustata2009}; Blickle--St\"abler extended this framework to
test modules \cite{StaeblerBS}.
\end{remark}

Let $f$ be isolated, put $r=\rmod(f)$, and write
$f\overset{\R}{\sim}g(x_1,\ldots,x_c)+Q$ as in
Lemma~\ref{lem:standard-inputs}\ref{std:split}, with
$c=c(f)$.  By Proposition~\ref{prop:susp}, the Frobenius Loewy length is unchanged by
nonsingular quadratic suspension.  Therefore the residual quantity
$B_F^{\mathrm{res}}=p(\ell_F(f)-1)+c(p-1)$ is exactly the bound $B_F(g)$ for the
residual germ.  We use this quantity in the following threshold estimate.

\begin{theorem}
\label{thm:fpt-modality}
\begin{enumerate}[label=\textup{(\roman*)},leftmargin=*]
\item $\displaystyle \fpt(f)\ge\frac{1}{\max\{2,2B_F^{\mathrm{res}}\}}$.
\item If $c>0$, then $\displaystyle
\fpt(f)\ge\frac{1}{2B_F^{\mathrm{res}}}\ge\frac{1}{2pr+2c(p-1)}$.
\item If $c=0$, then $\fpt(f)=\frac{1}{2}$ in one variable and odd characteristic; if the
quadratic form contains a hyperbolic plane, then $\fpt(f)=1$.
\end{enumerate}
\end{theorem}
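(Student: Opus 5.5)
The plan is to combine two facts. First, $\fpt$ and the numbers $\nu_e$ are right-invariant. Second, a nonzero polynomial of degree at most $N$ vanishing at the origin has $\fpt\ge 1/N$. We then feed in the uniform determinacy order obtained from the Frobenius Loewy length.

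For invariance, let $\varphi\in\Aut_k(R)$. Then $\varphi(\m^{[q]})=\varphi(\m)^{[q]}=\m^{[q]}$, because $\varphi(\m)=\m$ and $\varphi$ is a ring endomorphism commuting with $p$-th powers. Hence $f^a\in\m^{[q]}$ if and only if $\varphi(f)^a\in\m^{[q]}$, so $\nu_e$, and therefore $\fpt$, are unchanged under right equivalence.

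For the degree bound, let $h\ne0$ be a polynomial with $\deg h\le N$ and $h(0)=0$. If $a\le\lceil p^e/N\rceil-1$, then $f^a$ replaced by $h^a$ is a nonzero polynomial of degree at most $aN<p^e$. The ideal $\m^{[p^e]}$ is a monomial ideal, and every monomial in it has total degree at least $p^e$. So $h^a\notin\m^{[p^e]}$, which gives $\nu_e(h)\ge\lceil p^e/N\rceil-1$. Dividing by $p^e$ and letting $e\to\infty$ gives $\fpt(h)\ge1/N$.

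For (i), suppose first that $c>0$. Put $B=B_F^{\mathrm{res}}=B_F(g)$. The argument of Corollary~\ref{cor:uniform-full-determinacy} applies verbatim. Lemma~\ref{lem:loewy} for $g$, together with the fact that $\Jac(Q)$ generates the split variables, gives $\m^{B+2}\subseteq\m^2\Jac(f)$. Then Lemma~\ref{lem:standard-inputs}\ref{std:det} shows that $f$ is right $2B$-determined. Hence $f$ is right equivalent to its polynomial $2B$-jet. This jet is nonzero, since $f$ is isolated and not identically zero, and it lies in $\m^2$. By the two facts above, $\fpt(f)\ge 1/(2B)$. If $c=0$, then $f\overset{\R}{\sim}Q$ is a nonzero quadratic polynomial, and the degree bound gives $\fpt(f)\ge1/2$. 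Together these cases give $\fpt(f)\ge1/\max\{2,2B\}$.

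For (ii), the first inequality is the case $c>0$ of (i), since then $B\ge1$. For the second, note that $\ell_F(f)-1\le\Fmod(f)$: a local Artin algebra of length $\Fmod(f)+1$ has Loewy length at most its length. Combined with $\Fmod(f)\le\rmod(f)=r$ from Proposition~\ref{prop:Flower}, this gives $B\le pr+c(p-1)$, hence $1/(2B)\ge 1/(2pr+2c(p-1))$.

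For (iii), take first one variable in odd characteristic, so $f\overset{\R}{\sim}x^2$. Here $x^{2a}\notin\langle x^{p^e}\rangle$ exactly when $2a<p^e$. Thus $\nu_e=(p^e-1)/2$ and $\fpt=1/2$.

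Next suppose $Q$ contains a hyperbolic plane, so we may write $Q=x_1x_2+Q'(x_3,\ldots,x_n)$. We always have $\fpt(f)\le1$, because $f^{p^e}\in\m^{[p^e]}$ gives $\nu_e\le p^e-1$. For the reverse inequality, $f^{p^e-1}$ contains the monomial $(x_1x_2)^{p^e-1}$. We will show that this monomial occurs with coefficient $1$: in the multinomial expansion, any other contribution to this monomial would require the factors of $Q'$ to supply no $x_1,x_2$, and then the only term producing $(x_1x_2)^{p^e-1}$ is $(x_1x_2)^{p^e-1}\cdot1$. Since $(x_1x_2)^{p^e-1}\notin\m^{[p^e]}$, we get $f^{p^e-1}\notin\m^{[p^e]}$, so $\nu_e=p^e-1$ and $\fpt=1$. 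This is Fedder's criterion for $F$-purity.

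The only delicate step is the determinacy transfer from the residual germ to $f$. That step is already carried out in Corollary~\ref{cor:uniform-full-determinacy}, and we reuse it rather than re-derive it.
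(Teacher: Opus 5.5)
Your proposal is correct and follows essentially the paper's proof. You reduce to a polynomial representative of degree at most $\max\{2,2B_F^{\mathrm{res}}\}$ via the Loewy-length determinacy bound, apply $\fpt(h)\ge 1/\deg h$, use $\ell_F(f)-1\le\Fmod(f)\le r$ for (ii), and compute the quadratic cases directly. The only cosmetic differences are these: you obtain the representative from $2B$-determinacy of $f$ itself (the argument of Corollary~\ref{cor:uniform-full-determinacy}) rather than truncating the residual germ via Theorem~\ref{thm:eta-radius} and re-adding $Q$, and you make explicit the right-invariance of $\nu_e$ and the upper bound $\fpt\le 1$, which the paper leaves implicit.
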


\begin{proof}
Apply Theorem~\ref{thm:eta-radius} to the residual germ $g$.  Since
$\ell_F(g)=\ell_F(f)$ by Proposition~\ref{prop:susp}, a right coordinate change in the
$x$-variables sends $g$ to a polynomial $g_{\le 2B_F^{\mathrm{res}}}$ of degree at most
$2B_F^{\mathrm{res}}$.  Extending that coordinate change trivially across the variables
of $Q$ shows that $f$ is right equivalent to
$g_{\le 2B_F^{\mathrm{res}}}+Q$, whose degree is at most
$\max\{2,2B_F^{\mathrm{res}}\}$.\\

If $h$ is a nonzero polynomial of degree at most $N$, put
$a=\left\lfloor\frac{q-1}{N}\right\rfloor$ for $q=p^e$.  Then $h^a$ is nonzero and has
total degree less than $q$, hence cannot lie in $\langle x_1^q,\ldots,x_n^q\rangle$.  Therefore
$\nu_e(h)\ge a$, and passage to the limit gives $\fpt(h)\ge\frac{1}{N}$.  This proves the
first bound.  If $c>0$, then $B_F^{\mathrm{res}}\ge c(p-1)\ge1$, so the maximum is
$2B_F^{\mathrm{res}}$; moreover
$\ell_F(f)-1\le\Fmod(f)\le r$, which gives the second bound.\\

If $c=0$, Lemma~\ref{lem:standard-inputs}\ref{std:split} leaves only the
nonsingular quadratic form.  Over an
algebraically closed field, a nonsingular quadratic form in at least two variables
contains a hyperbolic plane $uv$.  Then $Q^{q-1}$ contains the monomial
$u^{q-1}v^{q-1}$ with nonzero coefficient, so $\nu_e(Q)=q-1$ and $\fpt(Q)=1$.
The only remaining case is one variable in odd characteristic, where $Q$ is right
equivalent to $x^2$ and $\fpt(Q)=\frac{1}{2}$.
\end{proof}

\begin{proposition}\label{prop:fpt-refined}
If $c=c(f)$, then
\[
 \fpt(f)\ge
 \frac{1-\frac{1}{\ord(f)}}{p^c(\Fmod(f)+1)-c}.
\]
\end{proposition}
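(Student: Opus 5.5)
The plan is to reduce to the residual germ, and then to bound $\nu_e$ from below by combining a Jacobian-derivative argument with the order of $f$. The essential-corank bound \eqref{eq:essential-bound} gives
\[
 \mu(f)\le p^{c}\bigl(\Fmod(f)+1\bigr)-c .
\]
It therefore suffices to prove the Milnor-number form
\[
 \fpt(f)\ge\frac{1-\frac{1}{\ord(f)}}{\mu(f)}=\frac{d-1}{d\,\mu},\qquad d=\ord(f),\ \mu=\mu(f),
\]
for then the stated inequality follows by enlarging the denominator. If $c=0$, then $f$ is a nonsingular quadratic form. Theorem~\ref{thm:fpt-modality}(iii) and the one-variable case $\fpt(x^2)=\tfrac12$ give $\fpt(f)\ge\tfrac12=\frac{1-1/2}{1}$, so we may assume $c>0$. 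Since both $\fpt$ and the right-hand side are right invariant, I would also try to pass to the residual germ $g$ using $\mu(g)=\mu(f)$ from Proposition~\ref{prop:susp}. That step needs a Thom--Sebastiani-type monotonicity $\fpt(g+Q)\ge\min\{1,\fpt(g)\}$. I would prove it directly from the definition of $\nu_e$ by isolating a monomial $g^{a}u^{b}v^{b}$ with a nonzero coefficient outside $\m^{[q]}$. If this causes trouble, I would simply work with $f$ itself, since the argument below only uses $\mu$ and $d$.

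The main step bounds $\nu_e(f)$ from below. Fix $q=p^e$ and let $a=\nu_e(f)+1$, so that $f^{a}\in\m^{[q]}$. The ideal $\m^{[q]}$ is stable under every $\partial_i$, because $\partial_i(x_j^q h)=x_j^q\partial_i h$. Hence, if $p\nmid a$, differentiating gives $f^{a-1}\Jac(f)\subseteq\m^{[q]}$. Corollary~\ref{cor:mu-sep} gives $\m^{\mu}\subseteq\Jac(f)$, and therefore
\[
 f^{a-1}\,\m^{\mu}\subseteq\m^{[q]} .
\]
On the other hand, $\m^{[q]}$ contains no nonzero element of degree below $q$. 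Taking initial forms, the lowest-degree part of $f^{a-1}x^\beta$ with $|\beta|=\mu$ is $\init(f)^{a-1}x^\beta$, which is nonzero of degree $d(a-1)+\mu$. I would sharpen this by choosing the monomial $x^\beta$ so that its product with a suitable monomial of $\init(f)^{a-1}$ avoids $\m^{[q]}$. That should force
\[
 \frac{d-1}{d}\,(a-1)+\mu\;\gtrsim\;q,
\]
the factor $\frac{d-1}{d}$ coming from the exponent of each variable in a degree-$d$ form. In any case one obtains $a\ge (q-\mu)\cdot\frac{d-1}{d\mu}+O(1)$ in the crude version, after iterating the derivative trick: apply it $j$ times to get $f^{a-j}\m^{j\mu}\subseteq\m^{[q]}$, and optimize over $j$. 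Dividing by $q$ and letting $e\to\infty$ would then give $\fpt(f)\ge\frac{d-1}{d\mu}$. The case $p\mid a$ would be handled by replacing $a$ with the next integer prime to $p$, which costs $O(1)$ and disappears in the limit.

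The main obstacle is the counting in the previous paragraph. The naive degree argument only yields $1/(2\mu-d+2)$, via the determinacy bound $2\mu-\ord(f)+2$ from Lemma~\ref{lem:standard-inputs}\ref{std:det}. That matches the target only when $d=2$, so for $d\ge3$ one needs the genuine iteration. The first thing I would try is the iterated inclusion $f^{a-j}\m^{j\mu}\subseteq\m^{[q]}$, choosing $j\approx a/d$. The iterated derivatives must avoid multiples of $p$, and I would use Lucas-type choices of $j$ and $a$ to arrange this. The remaining terms $f^{a-j}$ are then controlled by $f\in\m^{d}$, using that $\m^{N}\not\subseteq\m^{[q]}$ for $N\le n(q-1)$, balanced against the $\m^{j\mu}$ factor. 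If this optimization does not reach exactly $\frac{d-1}{d\mu}$, I would fall back on the Frobenius-algebra bound, replacing $\m^{\mu}\subseteq\Jac(f)$ by the containment $\langle x_1^p,\ldots,x_n^p\rangle^{\ell_F(f)}\subseteq\m\Jac(f)$ from Lemma~\ref{lem:loewy}. That containment is directly compatible with Frobenius powers and produces denominators of the form $p^{c}(\Fmod(f)+1)$.
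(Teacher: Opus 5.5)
Your reduction matches the paper's final step. By \eqref{eq:essential-bound} one has $\mu(f)\le p^{c}(\Fmod(f)+1)-c$, so it suffices to show $\fpt(f)\ge\frac{d-1}{d\,\mu(f)}$ with $d=\ord(f)$. The paper obtains this from the external bound $\fpt(f)\ge\frac{1}{\Tj(f)}\bigl(1-\frac{1}{\ord(f)}\bigr)$ of \cite[Corollary~2.14]{Svoray2026} together with $\Tj(f)\le\mu(f)$.

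Your proof of the Milnor-number bound, however, is not an argument. Its key steps are stated as ``should force'', ``$\gtrsim$'', and ``if this optimization does not reach exactly\ldots''. The iterated inclusion $f^{a-j}\m^{j\mu}\subseteq\m^{[q]}$ is not justified, for two reasons:
\begin{itemize}
\item Repeated first-order differentiation needs $p\nmid a(a-1)\cdots(a-j+1)$. This fails once $j\ge p$, whereas you want $j\approx a/d\to\infty$.
\item Already at the second step, differentiating $f^{a-1}h$ with $h\in\m^{\mu}$ leaves the term $f^{a-1}\partial_jh$, which need not lie in $\m^{[q]}$.
\end{itemize}
The proposed sharpening $\frac{d-1}{d}(a-1)+\mu\gtrsim q$ is actually false. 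With $a\approx q\,\fpt(f)$ it would give $\fpt(f)\ge\frac{d}{d-1}>1$. For a concrete failure take $f=x^d$, where $a\approx q/d$ and $\mu=d-1$.

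The missing observation is that no sharpening is needed: your own initial-form remark already gives more than enough. For $q=p^e$ and $a\le\frac{q-1}{d}$, the series $f^a$ has nonzero initial form $\init(f)^a$ of degree $da<q$. Every monomial of the monomial ideal $\m^{[q]}$ has degree at least $q$. Hence $f^a\notin\m^{[q]}$, so $\nu_e(f)\ge\lfloor\frac{q-1}{d}\rfloor$ and $\fpt(f)\ge\frac1d$. Your ``naive'' bound $1/(2\mu-d+2)$ came from the top degree of a polynomial representative; it is the order that matters.

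On the other hand $\Jac(f)\subseteq\m^{d-1}$, so $\mu(f)\ge\dim_k R/\m^{d-1}\ge d-1$. Therefore
\[
 \frac{1-\frac1d}{p^c(\Fmod(f)+1)-c}\le\frac{d-1}{d\,\mu(f)}\le\frac1d\le\fpt(f).
\]
This closes the gap without the derivative iteration, the Thom--Sebastiani step, the separate case $c=0$, or the Frobenius-algebra fallback. The same computation, using $\langle f\rangle+\Jac(f)\subseteq\m^{d-1}$, shows that even the Tjurina form of the bound cited by the paper is dominated by $\fpt(f)\ge 1/\ord(f)$.
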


\begin{proof}
By \cite[Corollary~2.14]{Svoray2026},
$\fpt(f)\ge\frac{1}{\Tj(f)}\left(1-\frac{1}{\ord(f)}\right)$.  Since $\Tj(f)\le\mu(f)$, apply
\eqref{eq:essential-bound}.
\end{proof}

\begin{proposition}
\label{prop:Freg-new}
Assume $p\nmid N$ and let $f_N=xy-z^N$.  Then $\frac{k[[x,y,z]]}{\langle f_N\rangle}$ is strongly
$F$-regular and $\fpt(f_N)=1$, whereas
$\Fmod(f_N)=\propermod(f_N)=\rmod(f_N)=\left\lfloor\frac{N-1}{p}\right\rfloor$.
\end{proposition}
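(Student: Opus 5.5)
Three separate claims need proof: strong $F$-regularity, $\fpt(f_N)=1$, and the equal values of the three modalities. For the first, I will use Fedder's criterion. The ring $k[[x,y,z]]/\langle f_N\rangle$ is an $A_{N-1}$ surface singularity. Strong $F$-regularity can be seen in two ways. One is that it is the invariant ring $k[[u,v]]^{\mu_N}$ of the cyclic group scheme $\mu_N$, which is linearly reductive because $p\nmid N$; via $x=u^N$, $y=v^N$, $z=uv$, it is a direct summand of a regular ring and hence strongly $F$-regular. The other is a direct Fedder-type check, the quickest being $f_N^{p-1}\notin\m^{[p]}$ together with an isolated-singularity argument. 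I will take the direct-summand route, since it needs no computation.

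For the threshold, $\fpt\le 1$ always holds for a principal ideal. For the reverse inequality I expand $f_N^{q-1}=(xy-z^N)^{q-1}$. It contains the monomial $x^{q-1}y^{q-1}$ with coefficient $1$, and this monomial is not in $\m^{[q]}$. Hence $\nu_e(f_N)=q-1$, and $\fpt(f_N)=1$. This is the same argument as in Theorem~\ref{thm:fpt-modality}(iii).

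For the modalities, $xy$ is a nonsingular quadratic form in $x,y$, so by Lemma~\ref{lem:standard-inputs}\ref{std:split} the residual germ is $g=z^N$ in one variable. Proposition~\ref{prop:susp} gives $\Fmod(f_N)=\Fmod(z^N)$ and $\mu(f_N)=N-1$, and right modality is invariant under suspension. I then compute in one variable. Since $p\nmid N$, we have $\Jac(z^N)=\langle z^{N-1}\rangle$, so $\m\Jac=\langle z^N\rangle$. The proof of Corollary~\ref{cor:Fmod-zero-explicit}(i) shows $\Fmod=\lfloor (N-1)/p\rfloor$; equivalently, Proposition~\ref{prop:BP} gives $\lceil (N-1)/p\rceil+[p\mid N-1]-1$, which equals $\lfloor (N-1)/p\rfloor$ in both cases. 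For the right modality I use the univariate remark at the end of Section~\ref{sec:boundedness}, namely $\rmod=\lfloor\mu/p\rfloor=\lfloor (N-1)/p\rfloor$, which rests on Nguyen's univariate classification. Theorem~\ref{thm:pmod-rmod-general} then identifies $\propermod$ with $\rmod$.

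The main obstacle is justifying the right modality without depending entirely on Nguyen's classification. The lower bound $\Fmod\le\rmod$ is Proposition~\ref{prop:Flower}. For the upper bound in one variable I would argue as follows. After a linear change, nearby germs have the form $z^m u(z)$ with $m\le N-1$, and in the unfolding only the coefficients of $z^{jp}$ with $jp<m$ survive as moduli. The remaining coefficients can be absorbed by coordinate changes $z\mapsto z\,w(z)$, since $m$-th roots exist when $p\nmid m$. I expect this absorption step, in the case where $p$ divides the exponent in question, to need the most care.
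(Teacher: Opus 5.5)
Your proposal is correct and follows essentially the paper's own proof: you split off the hyperbolic block $xy$, compute $\Fmod$ for $z^N$, take $\rmod$ from the univariate formula (which in the paper also rests on Nguyen's classification), identify $\propermod$ with $\rmod$ via Theorem~\ref{thm:pmod-rmod-general}, and get strong $F$-regularity from the direct-summand description $k[[u,v]]^{\boldsymbol{\mu}_N}$; your expansion of $f_N^{q-1}$ just makes explicit the paper's remark that $F$-purity forces $\fpt(f_N)=1$. Your closing worry about depending on Nguyen goes away without the absorption argument: in the semiuniversal unfolding $z^N+\sum_{j=1}^{N-1}t_jz^j$ the Milnor number stays $N-1$ exactly when $jt_j=0$ for all $j$, so the $\mu$-constant stratum is the linear space $\{t_j=0\text{ for }p\nmid j\}$ of dimension $\lfloor (N-1)/p\rfloor$, and Theorem~\ref{thm:pmod-rmod-general} together with suspension invariance then gives $\rmod(f_N)$ directly.
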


\begin{proof}
The hyperbolic block $xy$ splits off, leaving $z^N$.  The univariate formula and
Theorem~\ref{thm:pmod-rmod-general} give the modality statement.  The ring is the tame
cyclic quotient $k[[u,v]]^{\boldsymbol{\mu}_N}$ for the action $\zeta\cdot(u,v)=(\zeta u,\zeta^{-1}v)$, with $x=u^N$, $y=v^N$, and $z=uv$.  Since $p\nmid N$,
the invariant ring is a direct summand of the regular ring and hence strongly
$F$-regular by \cite{HochsterHuneke1994}.  It is $F$-pure, so the principal
$F$-pure threshold is one.
\end{proof}

\begin{corollary}
\label{cor:uniform-m-adic-constancy}
Fix $p>0$ and $D\ge0$.  Let $f$ be isolated with $\rmod(f)\le D$ and
$h\in\m^{\widehat N_{p,D}+1}$.  Then:
\begin{enumerate}[label=\textup{(\roman*)},leftmargin=*]
\item $f+h\overset{\R}{\sim}f$ and $\fpt(f+h)=\fpt(f)$;
\item $f+h$ has the same numerical $F$-jumping spectrum as $f$;
\item for some local coordinate automorphism $\varphi$ we have that 
$\tau((f+h)^\lambda)=\varphi(\tau(f^\lambda))$ for every $\lambda\ge0$.
\end{enumerate}
\end{corollary}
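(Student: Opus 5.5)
The plan is to reduce all three assertions to a single right equivalence, obtained from the uniform determinacy result. Then we observe that every invariant in the statement is transported by coordinate automorphisms of $R$.

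First, Corollary~\ref{cor:uniform-full-determinacy} shows that $f$ is right $\widehat N_{p,D}$-determined, independently of $n$. Since $h\in\m^{\widehat N_{p,D}+1}$, the germs $f+h$ and $f$ have the same $\widehat N_{p,D}$-jet. Hence there is $\varphi\in\Aut_k(R)$ with $f+h=\varphi(f)$. This is the first half of (i), and it is the only step where the modality bound enters.

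Next we show that right equivalence preserves the threshold invariants, beginning with (iii). A $k$-algebra automorphism $\varphi$ of $R$ commutes with Frobenius: $\varphi(a^q)=\varphi(a)^q$. It therefore sends $J^{[q]}$ to $\varphi(J)^{[q]}$. By the defining minimality, it also sends Frobenius roots to Frobenius roots, $\varphi(I^{[1/q]})=\varphi(I)^{[1/q]}$. Indeed, $I\subseteq J^{[q]}$ holds if and only if $\varphi(I)\subseteq\varphi(J)^{[q]}$, and $\varphi$ is a bijection on ideals. Applying this to $I=\langle f^{\lceil\lambda p^e\rceil}\rangle$ and using the formula $\tau(f^\lambda)=\langle f^{\lceil\lambda p^e\rceil}\rangle^{[1/p^e]}$ for $e\gg0$ from \cite{BMS2008} gives
\[
\tau((f+h)^\lambda)=\varphi(\tau(f^\lambda))
\]
for every $\lambda\ge0$. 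We must take $e$ large enough for both $f$ and $f+h$. This is harmless, because the formula stabilizes for all large $e$ and $\varphi$ preserves each level.

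The remaining assertions follow from (iii). Since $\varphi$ is a bijection on ideals, $\tau((f+h)^\lambda)$ changes at $\lambda$ exactly when $\tau(f^\lambda)$ does. So the $F$-jumping numbers coincide, which is (ii). Since $\fpt$ is the first positive jumping number, equality of thresholds in (i) follows. Alternatively, $\varphi$ preserves $\m^{[p^e]}$ because $\varphi(\m)=\m$, so $\nu_e(\varphi(f))=\nu_e(f)$ for every $e$.

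There is no real obstacle once Corollary~\ref{cor:uniform-full-determinacy} is available. The only point needing care is the compatibility of Frobenius roots with $\varphi$. This is immediate from the characterization of $J^{[1/q]}$ as the smallest ideal $J$ with $I\subseteq J^{[q]}$.
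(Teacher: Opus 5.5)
Your proposal is correct and follows the paper's proof. You get the right equivalence $f+h=\varphi(f)$ from Corollary~\ref{cor:uniform-full-determinacy}, then transport the test ideals, the jumping numbers and $\fpt$ along that single automorphism $\varphi$. The only difference is that you check directly that $\varphi$ commutes with Frobenius powers and Frobenius roots, whereas the paper cites the functoriality of generalized test ideals from \cite[Proposition~2.13]{BMS2008}.
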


\begin{proof}
The first assertion is precisely Corollary~\ref{cor:uniform-full-determinacy}.  The
$F$-pure threshold and the set of $F$-jumping numbers are invariant under local
coordinate changes.  Generalized test ideals commute with local isomorphisms, localization, and completion in the regular $F$-finite setting \cite[Proposition~2.13]{BMS2008}; in particular they are functorial under ring automorphisms, which gives the displayed identity.
\end{proof}

\begin{remark}
Hernández--Núñez-Betancourt--Witt proved local $\m$-adic constancy of the
$F$-pure threshold and, with a deeper perturbation order, literal equality of all test
ideals $\tau(f^\lambda)$ for $0\le\lambda<1$ around a fixed polynomial with an
isolated singularity \cite{HNBW2018}.  Their explicit radii depend on the Jacobian
colength and the ambient dimension.  Corollary~\ref{cor:uniform-m-adic-constancy}
has a different form: its radius is uniform on the whole bounded-modality locus and is
independent of ambient dimension, while the test ideals are identified up to the
coordinate change supplied by right determinacy.
\end{remark}

Set $C=C_{p,D}$ and $d_{p,D}=\widehat N_{p,D}$.  Put
$\delta_p=0$ if $p=2$ and $\delta_p=1$ if $p>2$, and set
$n_{p,D}=C+\delta_p$.  Let $e_0$ be the least integer with
$p^{e_0}>d_{p,D}$ and set $H_{p,D}=\binom{d_{p,D}+n_{p,D}}{n_{p,D}}$.
Define
\[
 \Xi_{p,D}=\bigcup_{\substack{0\le a\le e_0+H_{p,D}\\1\le b\le H_{p,D}}}
 \left\{\frac{j}{p^a(p^b-1)}:1\le j\le p^a(p^b-1)\right\}.
\]

\begin{theorem}
\label{thm:universal-Fjump-spectrum}
If $f$ is isolated and $\rmod(f)\le D$, then every $F$-jumping number of $\langle f\rangle$ in
$(0,1]$ belongs to the finite set $\Xi_{p,D}$.  Consequently only finitely many
numerical $F$-jumping spectra, and in particular only finitely many $F$-pure thresholds,
can occur at fixed $p,D$, independently of the original ambient dimension.
\end{theorem}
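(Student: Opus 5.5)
The argument reduces to a bounded polynomial representative and then applies the Blickle--Musta\c{t}\u{a}--Smith denominator bound. The first step is to replace $f$ by a polynomial of controlled degree in a controlled number of variables. By Lemma~\ref{lem:standard-inputs}\ref{std:split}, $f\overset{\R}{\sim} g(x_1,\ldots,x_c)+Q$ with $c\le C=C_{p,D}$ (Theorem~\ref{thm:uniform}). By Theorem~\ref{thm:eta-radius} and Corollary~\ref{cor:uniform-full-determinacy}, $g$ may be taken to be a polynomial of degree at most $N_{p,D}$.

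Next, reduce the quadratic part. Over the algebraically closed field $k$, $Q$ is a sum of hyperbolic planes $u_iv_i$, plus one square $w^2$ when the number of split variables is odd and $p\ne2$. In characteristic $2$, nonsingularity of the polar form forces an even number of split variables, so $Q$ is a pure sum of hyperbolic planes; this is the role of $\delta_p$.

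The reduction of the hyperbolic planes is the step I expect to be the main obstacle. The plan is to show that adding a hyperbolic plane $uv$ in new variables does not change the $F$-jumping numbers in $(0,1]$. I would do this via the Frobenius-root description of $\tau(f^t)$: expand $(g+uv)^a$ in the basis of $R$ over $R^{q}$ and use that $u^jv^j$ with $j<q$ contributes no $q$-th roots in the $u,v$ directions. This should give $\tau((g+uv)^t)=\tau(g^t)R$ for $t<1$, with the jump at $1$ common to both. If this identity is awkward to establish directly, I would fall back on the Thom--Sebastiani-type statements for test ideals.

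Once the hyperbolic planes are removed, every $f$ with $\rmod(f)\le D$ has the same $F$-jumping numbers in $(0,1]$ as a polynomial of degree at most $d_{p,D}=\widehat N_{p,D}$ in at most $n_{p,D}=C+\delta_p$ variables. This is legitimate because these invariants are preserved by right equivalence, by \cite[Proposition~2.13]{BMS2008}.

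Now apply the BMS denominator theorem \cite{BMS2008}. For a polynomial $h$ of degree at most $d$ in $n$ variables, the proof of rationality realizes the test ideals $\tau(h^{\lambda})$ through a $p^{-e}$-linear map acting on the finite-dimensional space of polynomials of degree at most $d$. That space has dimension $H=\binom{d+n}{n}$. Since the chain of Frobenius roots of $h^{m}$ stabilizes after $e_0$ initial steps, with $p^{e_0}>d$, the sequence of ideals attached to the base-$p$ digits of a jumping number is eventually periodic. Its preperiod is at most $e_0+H$ and its period is at most $H$, by pigeonhole on a chain inside an $H$-dimensional space. Hence every jumping number in $(0,1]$ has the form $\frac{j}{p^a(p^b-1)}$ with $a\le e_0+H$ and $1\le b\le H$, and it therefore lies in $\Xi_{p,D}$.

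The set $\Xi_{p,D}$ is finite, so only finitely many numerical spectra in $(0,1]$ can occur. In particular only finitely many values of $\fpt$ are possible, since $\fpt$ is the first jumping number. None of these bounds involves the original ambient dimension, which gives the independence claimed in Theorem~\ref{thm:universal-Fjump-spectrum}.
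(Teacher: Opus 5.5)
Your hyperbolic-plane reduction rests on a false identity. It is not true that $\tau((g+uv)^t)=\tau(g^t)R$ for $t<1$; the Frobenius-root expansion you propose shows the opposite. For $q=p^e$ large and $a=\lceil tq\rceil\le q-1$, one has $(g+uv)^a=\sum_{j=0}^{a}\binom{a}{j}g^{a-j}u^jv^j$. Since $j\le a<q$, the monomials $u^jv^j$ are distinct basis elements of $R$ over $R^q$ in the $u,v$ directions. The summand $j=a$ has coefficient $\binom{a}{a}g^0=1$, so $\langle (g+uv)^a\rangle^{[1/q]}=R$. Hence $\tau((g+uv)^t)=R$ for every $t<1$. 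Adding a hyperbolic plane makes the hypersurface $F$-pure and erases every jumping number of $g$ in $(0,1)$, rather than preserving them. For example, with $g=x^3$ and $p\ge5$, $\frac13=\fpt(x^3)$ is a jumping number of $g$, while $\fpt(x^3+uv)=1$. A Thom--Sebastiani-type fallback would confirm this collapse, not the invariance you want. So the step replacing $g+\sum_i u_iv_i$ (possibly $+w^2$) by $g$ (possibly $+w^2$) without changing the spectrum in $(0,1]$ cannot be proved.

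The conclusion survives, but only through a case split, which is how the paper argues.
\begin{itemize}
\item \emph{Rank of the quadratic block at least two.} The block contains a hyperbolic plane $uv$, and the residual terms do not involve $u,v$. The monomial $u^{q-1}v^{q-1}$ occurs in $f^{q-1}$ with coefficient one, so $\nu_e(f)=q-1$ for all $e$. The only jumping number in $(0,1]$ is then $1$, which lies in $\Xi_{p,D}$ (take $j=p^a(p^b-1)$).
\item \emph{Otherwise.} The block is $0$ in characteristic $2$, or a single square $w^2$ in odd characteristic. Only here do you get a polynomial in at most $C_{p,D}+\delta_p$ variables of degree at most $\widehat N_{p,D}$, to which the Blickle--Musta\c{t}\u{a}--Smith denominator bound applies.
\end{itemize}
One further point needs a sentence. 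That bound concerns the global jumping exponents of the polynomial, so you should note that a jump of the local test-ideal filtration at the origin is already a global jump, because test ideals commute with localization and completion.
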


\begin{proof}
After splitting, either the quadratic block contains a hyperbolic plane or it has
rank at most one.  Over an algebraically closed field the latter alternative can occur
only in odd characteristic. If the quadratic block contains no hyperbolic plane, then
it is zero in characteristic two and has dimension at most
one in odd characteristic. Hence, in the non-hyperbolic case,
Theorem~\ref{thm:finite-modular-classification} reduces the germ to a principal
polynomial in at most $n_{p,D}=C+\delta_p$ variables and of degree at most
$d_{p,D}$.  Test ideals and their jumping numbers are unchanged by the local
coordinate changes used here and commute with localization and completion in this
regular $F$-finite setting; see \cite[Proposition~2.13]{BMS2008}.  Proposition~3.8 of
\cite{BMS2008}, with one generator, therefore puts every global $F$-jumping exponent of
this polynomial in the displayed finite denominator set.  A local jump at the maximal
ideal is necessarily a global jump: if the two adjacent test ideals become distinct
after localization at the maximal ideal, then they were already distinct before
localization.  Hence every local $F$-jumping number occurring here belongs to the same
finite denominator set.\\

If the quadratic block has rank at least two, it contains a hyperbolic plane $uv$;
by Lemma~\ref{lem:standard-inputs}\ref{std:split} the residual term $h$ involves
variables disjoint from $u,v$.  For
every $q=p^e$, the monomial $u^{q-1}v^{q-1}$ occurs with coefficient one in
$(h+uv)^{q-1}$ and cannot cancel.  Thus the hypersurface is $F$-pure and its first jump is $1$; there are no
jumps in $(0,1)$.
\end{proof}

All reduced denominators occurring in the finite candidate set
$\Xi_{p,D}$ are bounded by the single integer
$Q_{p,D}=p^{e_0+H_{p,D}}(p^{H_{p,D}}-1)$.  This gives a uniform separation between
distinct candidates and will allow one Frobenius approximation to determine the
$F$-pure threshold.

\begin{corollary}\label{cor:Fjump-separation}
Any two distinct $F$-jumping numbers in $(0,1]$ arising at modality at most $D$
differ by at least $\frac{1}{Q_{p,D}^2}$.
\end{corollary}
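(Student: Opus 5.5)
The plan is to use Theorem~\ref{thm:universal-Fjump-spectrum} first. It places every $F$-jumping number of $\langle f\rangle$ in $(0,1]$, for isolated $f$ with $\rmod(f)\le D$, inside the finite set $\Xi_{p,D}$. So it suffices to prove the separation for any two distinct elements of $\Xi_{p,D}$, whether or not they come from the same germ.

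The key step is a common-denominator bound. Every element of $\Xi_{p,D}$ has the form $\frac{j}{p^a(p^b-1)}$ with $0\le a\le e_0+H_{p,D}$ and $1\le b\le H_{p,D}$. Since $a\le e_0+H_{p,D}$ and $p^b-1\le p^{H_{p,D}}-1$, the denominator $p^a(p^b-1)$ is at most $Q_{p,D}=p^{e_0+H_{p,D}}(p^{H_{p,D}}-1)$. Reducing the fraction can only decrease the denominator, so every element of $\Xi_{p,D}$ is a rational number $\frac{u}{v}$ in lowest terms with $1\le v\le Q_{p,D}$. This is exactly the remark made just before the corollary.

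Given two distinct candidates $\frac{u}{v}\ne\frac{u'}{v'}$ written this way, we have
\[
\left|\frac{u}{v}-\frac{u'}{v'}\right|=\frac{|uv'-u'v|}{vv'}\ge\frac{1}{vv'}\ge\frac{1}{Q_{p,D}^2},
\]
because $uv'-u'v$ is a nonzero integer. This gives the asserted separation.

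There is no real obstacle once Theorem~\ref{thm:universal-Fjump-spectrum} is available. The only point needing care is that the bound must be uniform over all germs and all ambient dimensions, and it is, because $\Xi_{p,D}$ and $Q_{p,D}$ depend only on $p$ and $D$. One could sharpen the constant using $\mathrm{lcm}$ considerations, but this is not needed.
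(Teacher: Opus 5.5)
Your proposal is correct and follows the paper's own argument. Theorem~\ref{thm:universal-Fjump-spectrum} places every such jumping number in $\Xi_{p,D}$, so every reduced denominator is at most $Q_{p,D}=p^{e_0+H_{p,D}}(p^{H_{p,D}}-1)$, and the elementary estimate $|u/v-u'/v'|\ge 1/(vv')$ for distinct reduced fractions then gives the separation $\frac{1}{Q_{p,D}^2}$, uniformly across different germs as you note.
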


\begin{proof}
Every reduced denominator is at most $Q_{p,D}$.  Two distinct reduced fractions with
denominators at most $Q_{p,D}$ differ by at least $\frac{1}{Q_{p,D}^2}$.
\end{proof}

\begin{theorem}
\label{thm:one-level-fpt}
Choose $E_{p,D}$ so that $q_{p,D}=p^{E_{p,D}}>Q_{p,D}^2+1$.  Then the single integer
$\nu_{E_{p,D}}(f)$ determines $\fpt(f)$ for every isolated germ with
$\rmod(f)\le D$.
\end{theorem}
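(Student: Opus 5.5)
The approach combines the classical sandwich bound for $\nu_e$ with the uniform separation of candidate thresholds from Corollary~\ref{cor:Fjump-separation}. Fix $e=E_{p,D}$ and $q=q_{p,D}=p^{e}$. The standard facts, valid in the regular $F$-finite ring $R$, are:
\begin{itemize}
\item the sequence $\nu_e(f)/p^e$ is nondecreasing in $e$, since $f^a\notin\m^{[p^e]}$ implies $f^{pa}\notin\m^{[p^{e+1}]}$ by flatness of Frobenius;
\item one has $\nu_{e+1}(f)\le p\,\nu_e(f)+p-1$, because $f^{a}\in\m^{[p^e]}$ gives $f^{pa}\in\m^{[p^{e+1}]}$.
\end{itemize}
Together these yield the sandwich
\[
\frac{\nu_e(f)}{q}\le\fpt(f)\le\frac{\nu_e(f)+1}{q}.
\]
So $\fpt(f)$ lies in a closed interval of length $1/q$ determined by the single integer $\nu_e(f)$.

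Next, Theorem~\ref{thm:universal-Fjump-spectrum} places $\fpt(f)$, the first $F$-jumping number, in $(0,1]$, and hence in the finite set $\Xi_{p,D}$. This set depends only on $p$ and $D$, not on $f$. By Corollary~\ref{cor:Fjump-separation}, distinct elements of $\Xi_{p,D}$ differ by at least $1/Q_{p,D}^2$. The choice $q>Q_{p,D}^2+1$ gives $1/q<1/Q_{p,D}^2$. Therefore the interval $[\nu_e(f)/q,(\nu_e(f)+1)/q]$ contains at most one point of $\Xi_{p,D}$, and that point must be $\fpt(f)$. The threshold is thus recovered as the unique element of $\Xi_{p,D}$ in this interval, a rule depending only on $\nu_{E_{p,D}}(f)$.

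The main obstacle is that the separation statement in Corollary~\ref{cor:Fjump-separation} is phrased for jumping numbers that actually arise. We need it for arbitrary pairs of elements of $\Xi_{p,D}$. The proof of that corollary uses only the bound $Q_{p,D}$ on reduced denominators, so it applies verbatim to all of $\Xi_{p,D}$.

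It remains to confirm the boundary case $\fpt(f)=1$, which occurs when a hyperbolic plane splits off. In that case $\nu_e(f)=q-1$, and the interval is $[(q-1)/q,1]$. This interval meets $\Xi_{p,D}\subseteq(0,1]$ only at $1$, again by separation, so the rule still returns the correct value.
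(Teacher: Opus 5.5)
Your proposal is correct and follows essentially the paper's proof: the sandwich $\nu_e(f)/p^e\le\fpt(f)\le(\nu_e(f)+1)/p^e$, combined with membership of $\fpt(f)$ in the finite set $\Xi_{p,D}$ and the $1/Q_{p,D}^2$ separation of Corollary~\ref{cor:Fjump-separation}, leaves only one candidate. Your additions are valid: the flatness justification of the sandwich, and the remark that the separation holds on all of $\Xi_{p,D}$ because only the denominator bound is used. The separate treatment of $\fpt(f)=1$ is harmless but unnecessary, since the general argument already covers that case.
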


\begin{proof}
For every principal germ and every $e\ge0$ one has the elementary interval
$\frac{\nu_e(f)}{p^e}\le\fpt(f)\le\frac{\nu_e(f)+1}{p^e}$.  Indeed, the left inequality follows from
the defining Frobenius sequence, while $f^{\nu_e(f)+1}\in\m^{[p^e]}$ and Frobenius
iteration give the right inequality.  At $e=E_{p,D}$ the interval has length $\frac{1}{q_{p,D}}$, which is
strictly smaller than the separation in Corollary~\ref{cor:Fjump-separation}.
Theorem~\ref{thm:universal-Fjump-spectrum} therefore leaves exactly
one possible $F$-pure threshold.
\end{proof}

The assertions about $F$-pure thresholds concern nonzero
equation families. The zero residual class is represented
by $x^2$ in odd characteristic and by $uv$ in every
characteristic. When recording the numerical invariants
of suspensions, we also consider one-variable quadratic
suspensions in odd characteristic and hyperbolic
suspensions. Every family containing a hyperbolic plane
has $F$-pure threshold $1$.

\begin{theorem}
\label{thm:fpt-stratification}
The modular atlas of Theorem~\ref{thm:finite-modular-classification} can be refined into
finitely many locally closed parameter spaces on which Milnor number, right modality,
and $F$-pure threshold are all constant.  On every irreducible modular family the
$F$-pure threshold is constant on a dense open subset.
\end{theorem}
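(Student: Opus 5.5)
We refine the atlas of Theorem~\ref{thm:finite-modular-classification} by an extra constructible decomposition on each parameter space $T_i$, using two inputs: fpt takes only finitely many values on the bounded-modality locus (Theorem~\ref{thm:universal-Fjump-spectrum}), and a single Frobenius level detects it (Theorem~\ref{thm:one-level-fpt}). The idea is to express ``$\fpt(F_t)=\lambda$'' as a condition on the integer $\nu_{E}(F_t)$ with $E=E_{p,D}$, and to show that each level set of $t\mapsto\nu_E(F_t)$ is constructible in $T_i$.

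First, by Theorem~\ref{thm:finite-modular-classification} we may assume that on each $T_i$ the Milnor number and right modality are already constant and that $F_i(x,t)$ is polynomial in $x$ of degree at most $N_{p,D}$ with coefficients regular on $T_i$. The residual variables are at most $C_{p,D}$. For the zero residual class and the quadratic suspensions we use the convention in the preceding paragraph: a hyperbolic plane gives $\fpt=1$, and the one-variable odd-characteristic quadratic gives $\fpt=\tfrac12$ (Theorem~\ref{thm:fpt-modality}). These are finitely many extra zero-dimensional families with constant threshold, so we restrict attention to the residual families.

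Next, fix $q=p^{E}$. For each integer $a$ with $0\le a\le q-1$ (and $\nu_E\le n_{p,D}(q-1)$ bounds the relevant range), the condition $F_t^a\notin\m^{[q]}$ is the statement that some coefficient of a monomial $x^\beta$ with all $\beta_j<q$ in the polynomial $F_t^a$ is nonzero. Each such coefficient is a regular function of $t$, since $F_t^a$ is a polynomial in $x$ whose coefficients are polynomial in those of $F_t$. Hence $\{t:F_t^a\notin\m^{[q]}\}$ is open, and since $a\mapsto[F_t^a\notin\m^{[q]}]$ is downward closed, $\{t:\nu_E(F_t)\ge a\}$ is open. Therefore $\{t:\nu_E(F_t)=a\}$ is locally closed, and only finitely many $a$ occur.

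By Theorem~\ref{thm:one-level-fpt}, together with Corollary~\ref{cor:uniform-m-adic-constancy}, the fpt of the formal germ agrees with that of the polynomial representative. So $\fpt(F_t)$ is a function of $\nu_E(F_t)$ alone on each $T_i$, namely the unique element of $\Xi_{p,D}$ in $[\nu_E/q,(\nu_E+1)/q]$. Intersecting the strata $\{\nu_E=a\}$ with the Milnor/modality strata and taking irreducible components gives finitely many locally closed pieces on which all three invariants are constant. Orbit-finiteness survives restriction to subvarieties, and every stable class still occurs, so this is again a finite modular atlas.

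For the second statement, on an irreducible $T$ the function $\nu_E$ is lower semicontinuous by the openness shown above, so it attains its maximum on a nonempty open set, which is dense. On that set $\fpt$ is constant.

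The main obstacle is the subtle point that $\nu_E$ must be computed from the polynomial family and must be uniform in $t$. This holds only because $E$ is independent of $t$, which Theorem~\ref{thm:one-level-fpt} guarantees on the entire bounded-modality locus. We also need every member of the family to be a germ at the origin, which $F(0,t)=0$ ensures, and local-versus-global issues do not arise because $\nu_e$ is defined at $\m$ directly.
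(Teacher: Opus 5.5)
Your proposal is correct and follows the paper's proof. You fix $q=p^{E_{p,D}}$ and observe that $F_t^a\in\m^{[q]}$ is the vanishing of finitely many coefficient functions, so the level sets of $\nu_{E_{p,D}}(F_t)$ are locally closed; you then convert these strata into constant-$\fpt$ strata via Theorem~\ref{thm:one-level-fpt} and take the dense open subset of an irreducible family from the stratum containing its generic point (your version of this last step, that the stratum where $\nu_{E_{p,D}}$ is maximal is open, is the same observation made explicit).
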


\begin{proof}
Fix $e=E_{p,D}$ and $q=p^e=q_{p,D}$.  For a polynomial family $F(x,t)$, the condition
$F_t^r\in\m^{[q]}$ is closed because it is the vanishing of finitely many coefficient
functions after reduction modulo $\langle x_1^q,\ldots,x_n^q\rangle$.  Hence
$\nu_e(F_t)=r$ is locally closed.  Theorem~\ref{thm:one-level-fpt} converts these
finitely many strata into constant-$F$-pure-threshold strata.  On an irreducible
parameter space, one of the finitely many locally closed strata contains the generic
point and therefore contains a dense open subset; this gives the final assertion.
\end{proof}

\begin{corollary}
\label{cor:test-uniform}
Let $A=\frac{R}{\langle f\rangle}$ be reduced, write $\m_A=\frac{\m}{\langle f\rangle}$, and assume $\rmod(f)\le D$.  Then:
\begin{enumerate}[label=\textup{(\roman*)},leftmargin=*]
\item the number of $F$-jumping coefficients in $[0,1]$ is at most $M_{p,D}+1$;
\item $\length_k\left(\frac{A}{\tau_{\mathrm{fg}}(A)}\right)\le M_{p,D}$;
\item if $E=pD+C_{p,D}(p-1)+1$, then
$\m_A^E\subseteq\Jac(f)A\subseteq\tau_{\mathrm{fg}}(A)$.
\end{enumerate}
\end{corollary}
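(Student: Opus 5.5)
The plan is to reduce all three assertions to the uniform bounds of Section~\ref{sec:boundedness}, in the order (iii), then (ii), then (i).

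\textbf{Step (iii).} Split $f\overset{\R}{\sim}g(x_1,\ldots,x_c)+Q$ as in Lemma~\ref{lem:standard-inputs}\ref{std:split}, and put $B=B_F(g)=p(\ell_F(f)-1)+c(p-1)$. Since $\ell_F(f)-1\le\Fmod(f)\le\rmod(f)\le D$ and $c\le C_{p,D}$ by Theorem~\ref{thm:uniform}, we get $B+1\le E$. Lemma~\ref{lem:loewy}, applied to $g$, gives $\m_x^{B+1}\subseteq\m_x\Jac(g)$. The partials of $Q$ generate the split variables. So, exactly as in the proof of Corollary~\ref{cor:uniform-full-determinacy}, every monomial of degree at least $B+1$ that involves a split variable lies in $\Jac(f)$. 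Hence $\m^{E}\subseteq\Jac(f)$ in $R$, and reducing modulo $f$ gives $\m_A^E\subseteq\Jac(f)A$. The second containment $\Jac(f)A\subseteq\tau_{\mathrm{fg}}(A)$ is the classical fact that, for a reduced $F$-finite hypersurface (more generally a reduced equidimensional quotient of a regular ring), the Jacobian ideal lies in the test ideal. I would cite Hochster--Huneke \cite{HochsterHuneke1994} for this, or the versions in \cite{KLZ2011,Jiang2021}.

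\textbf{Step (ii).} From (iii), $\length_k\bigl(A/\tau_{\mathrm{fg}}(A)\bigr)\le\length_k\bigl(A/\Jac(f)A\bigr)=\Tj(f)\le\mu(f)$. By Proposition~\ref{prop:susp} the Milnor number equals that of the residual germ. Theorem~\ref{thm:uniform}(ii) then bounds it by $p^{C}(D+1)-C=M_{p,D}$. The boundary case $c=0$ is harmless, since then $\mu(f)=1\le M_{p,D}$.

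\textbf{Step (i).} List the $F$-jumping coefficients in $(0,1)$ as $\lambda_1<\cdots<\lambda_s$. They give a strictly decreasing chain
\[
R=\tau(f^{0})\supsetneq\tau(f^{\lambda_1})\supsetneq\cdots\supsetneq\tau(f^{\lambda_s})=\tau(f^{1-\varepsilon}).
\]
The plan is to show that $\tau(f^{1-\varepsilon})\supseteq\Jac(f)+\langle f\rangle$. I would obtain this from the Katzman--Lyubeznik--Zhang / Jiang description of the test ideal of the reduced hypersurface $A$ through $\tau(f^{1-\varepsilon})$, combined with the containment of $\Jac(f)A$ in that test ideal. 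Granting this, the chain has length at most $\length_k\bigl(R/(\Jac(f)+\langle f\rangle)\bigr)=\Tj(f)\le M_{p,D}$, so $s\le M_{p,D}$. Adding the jump at $1$ gives at most $M_{p,D}+1$ coefficients in $[0,1]$. Here $0$ is treated as the trivial start of the filtration; if the convention counts it, the argument instead bounds the $F$-jumping coefficients in $(0,1]$ by $M_{p,D}+1$.

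\textbf{Main obstacle.} The step I expect to be hardest is justifying $\Jac(f)\subseteq\tau(f^{1-\varepsilon})$, or at least that the colength of $\tau(f^{1-\varepsilon})$ is at most $\Tj(f)$, from the cited results. The first thing I would try is to transport the containment $\Jac(f)A\subseteq\tau_{\mathrm{fg}}(A)$ through the identification $\tau_{\mathrm{fg}}(A)=\tau(f^{1-\varepsilon})A$ from \cite{KLZ2011,Jiang2021}. Everything else reduces to Theorem~\ref{thm:uniform} and Lemma~\ref{lem:loewy}.
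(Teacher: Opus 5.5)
Your parts (ii) and (iii) follow the paper's proof. For (iii), the Loewy containment of Lemma~\ref{lem:loewy} for the residual germ, together with the fact that the split variables lie in $\Jac(f)$, gives $\m^E\subseteq\Jac(f)$. The inclusion $\Jac(f)A\subseteq\tau_{\mathrm{fg}}(A)$ is quoted: the paper uses Jiang's Theorem~4.1, and the Hochster--Huneke Jacobian test-element theorem does the same job. Part (ii) then follows from the surjection of the Tjurina algebra onto $A/\tau_{\mathrm{fg}}(A)$ and $\Tj(f)\le\mu(f)\le M_{p,D}$.

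The gap is in (i). The paper does not use a chain of test ideals there. It quotes the Katzman--Lyubeznik--Zhang bound $\Tj(f)+1$ on the number of jumping coefficients and inserts $\Tj(f)\le M_{p,D}$.

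Your chain argument is fine once $\Jac(f)\subseteq\tau(f^{1-\varepsilon})$ is known. However, you leave that step ungranted, and the justification you propose is false. The equality $\tau_{\mathrm{fg}}(A)=\tau(f^{1-\varepsilon})A$ does not appear in \cite{KLZ2011,Jiang2021}, and it fails already for isolated singularities. Take $f=x^3+y^3+z^3$ with $p\equiv1\pmod 3$. The coefficient of $(xyz)^{p-1}$ in $f^{p-1}$ is nonzero, so Fedder's criterion makes $A$ $F$-pure. Hence $\fpt(f)=1$ and $\tau(f^{1-\varepsilon})A=A$. But $A$ has $a$-invariant $0$, so it is not $F$-regular, and $\tau_{\mathrm{fg}}(A)$ is a proper ideal.

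You only need the inclusion $\tau_{\mathrm{fg}}(A)\subseteq\tau(f^{1-\varepsilon})A$, but it is an $F$-adjunction statement you would have to supply. One way:
\begin{itemize}
\item With $\Phi$ the trace generator, $\Phi^e\bigl(F^e_*(f^{p^e-1}\tau(f^{1-\varepsilon}))\bigr)=\tau(f^{1-\varepsilon/p^e})=\tau(f^{1-\varepsilon})$ for small $\varepsilon$.
\item By Fedder's description of $\Hom_A(F^e_*A,A)$, the ideal $\tau(f^{1-\varepsilon})A$ is therefore stable under every $p^{-e}$-linear map of $A$.
\item It is not contained in any minimal prime of $A$, because $A$ is reduced and so $f$ is a uniformizer at each of them.
\item Hence it contains the big test ideal, which equals $\tau_{\mathrm{fg}}(A)$ because $A$ is Gorenstein.
\end{itemize}
Then $\Jac(f)\subseteq\tau(f^{1-\varepsilon})+\langle f\rangle=\tau(f^{1-\varepsilon})$, and your chain gives at most $\Tj(f)$ jumps in $(0,1)$. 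Either add this argument or cite the KLZ bound directly, as the paper does.
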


\begin{proof}
Katzman--Lyubeznik--Zhang \cite{KLZ2011} bound the number of jumps by $\Tj(f)+1$.
Since $\Tj(f)\le\mu(f)\le M_{p,D}$, the first assertion follows.  Because $A$ is a reduced hypersurface complete local algebra over the algebraically
closed field $k$, it is absolutely reduced and equidimensional.  Jiang
\cite[Theorem~4.1]{Jiang2021} therefore gives
$\Jac(f)A\subseteq\tau_{\mathrm{fg}}(A)$.  Lemma~\ref{lem:loewy}, applied to the residual germ as in
Proposition~\ref{prop:susp}, gives the displayed power containment; quadratic variables already lie in the Jacobian ideal.  Finally,
\[
 \frac{A}{\Jac(f)A}\cong\frac{R}{\langle f\rangle+\Jac(f)}
\]
is the Tjurina algebra, and $\Jac(f)A\subseteq\tau_{\mathrm{fg}}(A)$ shows that
$\frac{A}{\tau_{\mathrm{fg}}(A)}$ is its quotient.
\end{proof}

Following Blickle--St\"abler, for each Frobenius level $e$ there is a finite set of
positive-characteristic Bernstein--Sato roots attached to $\langle f\rangle$. The corollary below concerns
only its eventual level-by-level behavior, as stated explicitly:

\begin{corollary}\label{cor:finite-BS-sequences}
At fixed $p,D$, only finitely many eventual sequences of level-$e$
positive-characteristic Bernstein--Sato root sets occur for isolated hypersurfaces of right
modality at most $D$.
\end{corollary}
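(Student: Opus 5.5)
We reduce the statement to the finite classification of Theorem~\ref{thm:finite-modular-classification}, together with the finiteness of the candidate jumping numbers in Theorem~\ref{thm:universal-Fjump-spectrum}, and then invoke the structure theorem of Blickle--St\"abler \cite{StaeblerBS}. That theorem ties the level-$e$ Bernstein--Sato roots of $\langle f\rangle$ to the $F$-jumping numbers of $f$ in $(0,1]$. For a principal ideal in a regular $F$-finite ring, the level-$e$ root set consists of the negatives of the truncated base-$p$ expansions, to $e$ digits, of the $F$-jumping numbers in $(0,1]$; it is determined by these numbers and by $e$ alone. The eventual sequence $(\mathrm{BS}_e(f))_{e\gg0}$ is therefore a function of the numerical $F$-jumping spectrum of $\langle f\rangle$ in $(0,1]$.

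The steps, in order, are as follows. First, apply Lemma~\ref{lem:standard-inputs}\ref{std:split} and Corollary~\ref{cor:uniform-m-adic-constancy}. These replace $f$ by a right-equivalent polynomial. Level-$e$ Bernstein--Sato roots are invariant under coordinate automorphisms, localization and completion, because they are defined through Frobenius roots $(f^{a})^{[1/p^e]}$, which commute with these operations by \cite[Proposition~2.13]{BMS2008}.

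Second, use Theorem~\ref{thm:universal-Fjump-spectrum}. Every jumping number of $\langle f\rangle$ in $(0,1]$ lies in the finite set $\Xi_{p,D}$. Hence the spectrum is one of the finitely many subsets of $\Xi_{p,D}$; in the hyperbolic case it is the singleton $\{1\}$.

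Third, fix such a subset $\Sigma\subseteq\Xi_{p,D}$. Each element of $\Xi_{p,D}$ has the form $\frac{j}{p^a(p^b-1)}$ with $a,b$ bounded. Its base-$p$ expansion is therefore eventually periodic, with preperiod at most $e_0+H_{p,D}$ and period dividing $b\le H_{p,D}$. The sequence of truncations indexed by $e$ is thus a single fixed sequence for each element. The map from $\Sigma$ to the sequence $(\mathrm{BS}_e)_e$ is well defined, and only finitely many sequences occur.

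The main obstacle is the first reduction, namely confirming the precise dictionary in \cite{StaeblerBS} between level-$e$ roots and jumping numbers for the local germ. Their results are phrased for test modules of a global $F$-finite regular scheme, and for $e$ small the roots may see only partial truncations. I would handle this in three steps. Pass to the polynomial representative, which is legitimate by the invariance noted above. Use that local jumps at $\m$ are global jumps, as in the proof of Theorem~\ref{thm:universal-Fjump-spectrum}. Then restrict to $e$ large enough that distinct elements of $\Xi_{p,D}$ have distinct $e$-digit truncations; by Corollary~\ref{cor:Fjump-separation} this holds once $p^e>Q_{p,D}^2$. For such $e$ the root set at level $e$ is determined by $\Sigma$, which gives the eventual statement.
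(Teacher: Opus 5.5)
Your proposal is correct and follows essentially the same route as the paper. Both use finiteness of the numerical jumping spectra from Theorem~\ref{thm:universal-Fjump-spectrum}, pass to the bounded polynomial representatives from its proof via compatibility with completion \cite[Proposition~2.13]{BMS2008}, and then apply the Blickle--St\"abler description of the level-$e$ roots for large $e$ as $\frac{\lceil\lambda p^e\rceil-1}{p^e}$ (truncations of the non-terminating base-$p$ expansions), which makes the eventual root sequence a function of the spectrum. Your periodicity and separation refinements are harmless but unnecessary, since finiteness only requires that this function be well defined, not injective.
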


\begin{proof}
By Theorem~\ref{thm:universal-Fjump-spectrum}, only finitely many numerical $F$-jumping spectra in $(0,1]$ occur at fixed $p,D$. In the non-hyperbolic case, the proof of that theorem replaces the germ, after splitting off its nonsingular quadratic part, by a bounded-degree polynomial in a uniformly bounded number of variables whose localization and completion have the same numerical $F$-jumping spectrum. In the hyperbolic case the spectrum in $(0,1]$ is already fixed. By compatibility of generalized test ideals with completion \cite[Proposition~2.13]{BMS2008}, we may therefore apply Blickle--Stäbler to these polynomial representatives. Their Theorem~5.4 gives, for all sufficiently large $e$, the root set
$\left\{\frac{\lceil\lambda p^e\rceil-1}{p^e}:\lambda\text{ is an }F\text{-jumping number in }(0,1]\right\}$.
Since only finitely many jumping spectra occur, only finitely many eventual level-by-level Bernstein--Sato root sequences occur.
\end{proof}

For matrices,
prescribed-jet transversality reduces questions about formal equivalence to the geometry
of a finite stratification by rank.  For hypersurfaces, the Frobenius algebra and the
critical cover reduce questions about right equivalence and modality to finite-dimensional
algebraic data.  In particular, once the characteristic and the modality are bounded, the
remaining classification problem and the $F$-singularity invariants considered above are
controlled by finitely many algebraic families.

\end{document}